\newcommand{\rotateRPY}[3]% roll, pitch, yaw
{   \pgfmathsetmacro{\rollangle}{#1}
    \pgfmathsetmacro{\pitchangle}{#2}
    \pgfmathsetmacro{\yawangle}{#3}

    % to what vector is the x unit vector transformed, and which 2D vector is this?
    \pgfmathsetmacro{\newxx}{cos(\yawangle)*cos(\pitchangle)}
    \pgfmathsetmacro{\newxy}{sin(\yawangle)*cos(\pitchangle)}
    \pgfmathsetmacro{\newxz}{-sin(\pitchangle)}
    \path (\newxx,\newxy,\newxz);
    \pgfgetlastxy{\nxx}{\nxy};

    % to what vector is the y unit vector transformed, and which 2D vector is this?
    \pgfmathsetmacro{\newyx}{cos(\yawangle)*sin(\pitchangle)*sin(\rollangle)-sin(\yawangle)*cos(\rollangle)}
    \pgfmathsetmacro{\newyy}{sin(\yawangle)*sin(\pitchangle)*sin(\rollangle)+ cos(\yawangle)*cos(\rollangle)}
    \pgfmathsetmacro{\newyz}{cos(\pitchangle)*sin(\rollangle)}
    \path (\newyx,\newyy,\newyz);
    \pgfgetlastxy{\nyx}{\nyy};

    % to what vector is the z unit vector transformed, and which 2D vector is this?
    \pgfmathsetmacro{\newzx}{cos(\yawangle)*sin(\pitchangle)*cos(\rollangle)+ sin(\yawangle)*sin(\rollangle)}
    \pgfmathsetmacro{\newzy}{sin(\yawangle)*sin(\pitchangle)*cos(\rollangle)-cos(\yawangle)*sin(\rollangle)}
    \pgfmathsetmacro{\newzz}{cos(\pitchangle)*cos(\rollangle)}
    \path (\newzx,\newzy,\newzz);
    \pgfgetlastxy{\nzx}{\nzy};
}
\tikzset{RPY/.style={x={(\nxx,\nxy)},y={(\nyx,\nyy)},z={(\nzx,\nzy)}}}
\theoremstyle{plain}
\newtheorem{thm}{Theorem}[section]
\newtheorem{lem}[thm]{Lemma}
\newtheorem{prop}[thm]{Proposition}
\newtheorem{cor}[thm]{Corollary}
\theoremstyle{definition}
\newtheorem{defn}[thm]{Definition}
\newtheorem{defnProp}[thm]{Definition/Proposition}
\theoremstyle{remark}
\newtheorem{rem}[thm]{Remark}
\newtheorem{que}[thm]{Question}
\newcommand{\R}{\mathbb{R}}
\newcommand{\C}{\mathbb{C}}
\newcommand{\N}{\mathbb{N}}
\newcommand{\Sph}{\mathbb{S}}
\DeclareMathOperator{\Ker}{Ker}
\DeclareMathOperator{\SL}{SL}
\DeclareMathOperator{\Hom}{Hom}
\DeclareMathOperator{\SO}{SO}
\DeclareMathOperator{\Sp}{Sp}
\DeclareMathOperator{\PSp}{PSp}
\DeclareMathOperator{\Gr}{Gr}
\DeclareRobustCommand*{\bfseries}{%
  \not@math@alphabet\bfseries\mathbf
  \fontseries\bfdefault\selectfont
  \boldmath
}
\newcommand{\ie}{i.e. }
\title{Geometric structures for maximal representations and pencils.}
\renewcommand{\Gr}{\text{Gr}}
\newcommand{\X}{\mathbb{X}}
\newcommand{\Z}{\mathbb{Z}}
\DeclareMathOperator{\Gra}{Gr}
\DeclareMathOperator{\Ein}{Ein}
\DeclareMathOperator{\Tr}{Tr}
\DeclareMathOperator{\pr}{pr}
\DeclareMathOperator{\mix}{mix}
\DeclareMathOperator{\Imu}{Im}
\DeclareMathOperator{\Hull}{Hull}
\newcommand{\hgline}[2]{
\pgfmathsetmacro{\thetaone}{#1}
\pgfmathsetmacro{\thetatwo}{#2}
\pgfmathsetmacro{\theta}{(\thetaone+\thetatwo)/2}
\pgfmathsetmacro{\phi}{abs(\thetaone-\thetatwo)/2}
\pgfmathsetmacro{\close}{less(abs(\phi-90),0.0001)}
\ifdim \close pt = 1pt
    \draw[blue] (\theta+180:1) -- (\theta:1);
\else
    \pgfmathsetmacro{\R}{tan(\phi)}
    \pgfmathsetmacro{\distance}{sqrt(1+\R^2)}
    \draw[blue] (\theta:\distance) circle (\R);
\fi
}
\author{Colin Davalo}
\address{Università degli studi di Torino, Via Verdi 8, 10124 Torino, Italy}
\email{colinmarcthiery.davalo@unito.it}
\date{\today}
\begin{document}

\begin{abstract}
We study fibrations of the projective model for the symmetric space associated with $\SL(2n,\R)$ by codimension $2$ projective subspaces, or pencils of quadrics. In particular we show that if such a smooth fibration is equivariant with respect to a representation of a closed surface group, the representation is quasi-isometrically embedded, and even Anosov if the pencils in the image contain only non-degenerate quadrics. We use this to characterize maximal representations among representations of a closed surface group into $\Sp(2n,\R)$ by the existence of an equivariant continuous fibration of the associated symmetric space, satisfying an additional technical property. These fibrations extend to fibrations of the projective structures associated to maximal representations by bases of pencils of quadrics.

%We show that maximal representations in $\Sp(2n,\R)$ of the fundamental group of an oriented closed surface $S_g$ can be characterized by the existence of a continuous equivariant embedding of $\widetilde{S_g}$ into a particular subset of the space of pencils of quadrics on $\R^{2n}$ that admit what we call a fitting flow.
%Such an embedding defines a fibration of the locally symmetric space as well as a fibration of a manifold with a contact projective structure whose holonomy is the maximal representation.
%This characterization generalizes in some way the characterization of photon structures obtained by Collier-Tholozan-Toulisse corresponding to maximal representation in $\PSp(4, \R)$. 
%We show however that some quasi-Fuchsian representations in $\PSL(2,\C)$ do not admit such an equivariant embedding of Hermitian pencils of quadrics on $\C^2$ .

\end{abstract}
\maketitle

\tableofcontents

\section{Introduction}
In this paper we study representations of the fundamental group $\Gamma_g$ of a closed oriented surface of genus $g\geq 2$ into semi-simple Lie groups.
Our goal is to characterize a special class of representations into $\Sp(2n,\R)$, \emph{maximal representations}, by the existence of an equivariant fibration of the projective model for the symmetric space associated to $\SL(2n,\R)$, restricting to a fibration of the symmetric space associated to $\Sp(2n,\R)$. To illustrate these ideas we begin by discussing quasi-Fuchsian representations in $\SL(2,\C)$.

\subsection{Fibrations of  \texorpdfstring{$\mathbb{H}^3$}{H3} by geodesics.}

%We characterize maximal representations by describing the associated projective structure using pencils of quadrics. This characterization builds on the notion of fitting maps of pencils, which are maps that define locally a fibration of projective space, as well as the symmetric space of $\SL(2n,\R)$. 
%
%\medskip

Let $\rho:\Gamma_g\to\SL(2,\C)$ be the composition of a Fuchsian, \ie discrete and faithful, representation and the inclusion $\SL(2,\R)\subset \SL(2,\C)$.
The locally symmetric space $\mathbb{H}^3/\rho(\Gamma_g)$ is a fiber bundle over $S_g$ whose fibers are geodesics.
One can construct such a fibration by taking the geodesics orthogonal to the totally geodesic copy of $\mathbb{H}^2$  in $\mathbb{H}^3$ preserved by the action of $\SL(2,\R)$. 
This fibration extends to a fibration of an open domain in $\mathbb{H}^3 \cup \partial \mathbb{H}^3$.

\medskip

Such a fibration is described by $\rho$-equivariant map $u:\widetilde{S_g}\to \mathcal{G}$ where $\mathcal{G}$ is the space of geodesics in $\mathbb{H}^3$. We say that an immersion $u:\widetilde{S_g}\to \mathcal{G}$ is \emph{fitting} if the corresponding geodesics locally define a smooth fibration of $\mathbb{H}^3\cup \partial \mathbb{H}^3$.

\medskip

Let $\rho:\Gamma_g\to\SL(2,\C)$ be \emph{nearly Fuchsian}, \ie suppose that it admits an equivariant immersion $h:\widetilde{S_g}\to \mathbb{H}^3$ with principal curvature in $(-1,1)$. Epstein showed that the locally symmetric space $\mathbb{H}^3/\rho(\Gamma_g)$ admits a fibration described by the fitting immersion $\mathcal{G}h$ that associates to $x\in \widetilde{S_g}$ the geodesic orthogonal to $h(\widetilde{S_g})$ at $h(x)$ \cite{Epstein}. The map $\mathcal{G}h$ is sometimes referred to as the \emph{Gauss map} of $h$.

\medskip

Nearly Fuchsian representations are quasi Fuchsian, \ie are quasi-isometric embeddings \cite{Epstein}. We generalize this fact to any representation that admits an equivariant fitting immersion.

\begin{thm}
\label{thm:INTROfittingSLC}
Let $\rho:\Gamma_g\to \SL(2,\C)$ be a representation that admits an equivariant fitting immersion $u:\widetilde{S_g}\to \mathcal{G}$. The representation $\rho$ is quasi-Fuchsian.

\end{thm}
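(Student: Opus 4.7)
The strategy is to promote the local fibration guaranteed by the fitting hypothesis to a global $\rho$-equivariant fibration of an open domain $\bar{\Omega}\subset\mathbb{H}^3\cup\partial\mathbb{H}^3$, read off the standard Kleinian-group data of $\rho$ from this picture, and invoke the classical characterization of quasi-Fuchsian representations.

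\textbf{Step 1 (globalization of the fibration).} I would form the closed-interval bundle $\bar E\to\widetilde{S_g}$ whose fiber over $x$ is the closure of the geodesic $u(x)$ in the ball model $\mathbb{H}^3\cup\partial\mathbb{H}^3$, together with its natural evaluation map $\mathrm{ev}\colon \bar E\to\mathbb{H}^3\cup\partial\mathbb{H}^3$. The fitting condition is precisely the statement that $\mathrm{ev}$ is a local diffeomorphism on the interior and a local homeomorphism at the boundary. Since $\bar E$ is a compact-interval bundle over the simply connected base $\widetilde{S_g}$, it is itself simply connected, so the local diffeomorphism $\mathrm{ev}$ is---after checking properness onto its image---a homeomorphism onto an open $\rho$-invariant domain $\bar\Omega$, with $\bar E\cong\widetilde{S_g}\times[0,1]$ as a trivial interval bundle. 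This is the conceptual core of the argument.

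\textbf{Step 2 (cocompactness and Bers' characterization).} Because $\Gamma_g$ acts freely, properly discontinuously and cocompactly on $\widetilde{S_g}$ and the fibers of $\bar E$ are compact, the $\rho(\Gamma_g)$-action on $\bar E$, hence on $\bar\Omega$, is free, properly discontinuous and cocompact. In particular $\rho$ is discrete and faithful. The intersection $\bar\Omega\cap\partial\mathbb{H}^3$ decomposes into two disjoint open topological disks $\Omega_+,\Omega_-\subset S^2$ corresponding to the two boundary components of $\bar E\cong\widetilde{S_g}\times[0,1]$, each $\rho$-equivariantly homeomorphic to $\widetilde{S_g}$ with compact quotient $S_g$. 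By the classical characterization due to Bers, a discrete faithful representation of a closed surface group into $\PSL(2,\C)$ whose domain of discontinuity in $S^2$ contains two invariant disjoint simply connected components with cocompact quotient is quasi-Fuchsian: $\Omega_+\sqcup\Omega_-$ is in fact the entire domain of discontinuity, its complement is a $\rho$-invariant Jordan curve equal to the limit set, and cocompactness on $\bar\Omega\cap\mathbb{H}^3$ upgrades to convex cocompactness in $\mathbb{H}^3$.

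\textbf{Main obstacle.} Essentially all the technical weight is in Step 1. The local diffeomorphism property is immediate from fitting, but global injectivity of $\mathrm{ev}$---equivalently, ruling out that two distinct geodesics $u(x),u(x')$ meet in $\mathbb{H}^3$ or share an endpoint in $\partial\mathbb{H}^3$---requires combining the simple connectivity of $\widetilde{S_g}$ with a careful analysis of $\mathrm{ev}$ near $\partial\mathbb{H}^3$, where the fitting hypothesis only yields a topological local homeomorphism. Properness of $\mathrm{ev}$ onto its image is the other subtle point that must be verified before the covering-space machinery can be invoked. Once these are settled, Step 2 is a formal consequence of the cocompact action combined with standard Kleinian-group technology.
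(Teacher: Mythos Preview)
Your approach is genuinely different from the paper's, and it has a real gap in Step~1 that is not merely technical.

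\textbf{The gap.} You claim that once $\mathrm{ev}$ is a proper local homeomorphism from the simply connected space $\bar E$ onto its image $\bar\Omega$, it must be a homeomorphism. This is not true: a proper local homeomorphism is a covering map, but simple connectivity of the \emph{source} only tells you that $\mathrm{ev}$ is the universal cover of $\bar\Omega$, not that it has degree one. You would additionally need $\bar\Omega$ to be simply connected, and there is no reason a priori for an open subset of the closed $3$-ball to be so. More seriously, properness itself is essentially the whole difficulty: ruling out that geodesics $u(x_n)$ with $x_n\to\infty$ in $\widetilde{S_g}$ accumulate inside $\mathbb{H}^3\cup\partial\mathbb{H}^3$ is exactly what distinguishes quasi-Fuchsian behaviour from degenerate behaviour, and nothing in ``local homeomorphism + simply connected source'' prevents it. You flag this as the main obstacle, but you offer no mechanism to resolve it, and I do not see one that stays within a purely topological covering argument.

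\textbf{How the paper proceeds instead.} The paper does not attempt to globalize the fibration first. It constructs from the fitting immersion an auxiliary \emph{fitting flow} on the circle bundle $\Sph u^*\mathcal{E}$ (Proposition~\ref{prop:ExistenceFittingFlow}), along whose orbits the associated projective half-spaces are strictly nested. Cocompactness of the $\Gamma_g$-action on this bundle then yields uniform quasi-geodesic estimates for flow lines (Proposition~\ref{prop:Flow lines of a fitting flow are quasi geodesic and exist between points}), which already forces $\rho$ to be a quasi-isometric embedding (Corollary~\ref{cor:Globally fitting}); a cross-ratio contraction argument (Theorem~\ref{thm:WellFittedImplesAnosov}) gives the Anosov property directly. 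In rank one this is exactly quasi-Fuchsian. The global disjointness of the geodesics and the fact that they cover all of $\mathbb{H}^3$ (Proposition~\ref{prop:Completeness and fibration of DOD}) are then obtained as \emph{consequences} of the dynamical estimates, not as inputs to them. So the logical order is the reverse of yours: dynamics first, then the global fibration picture falls out; your Step~1 is in effect trying to prove the conclusion before the key estimate is available.
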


This theorem is a consequence of Theorem \ref{thm:WellFittedImplesAnosov}. There may a priori exist representations with equivariant fitting maps that are not nearly Fuchsian, see Remark \ref{rem:Nearly Fuchsian}. 
 However Theorem \ref{thm:INTROfittingSLC} does not provide a characterization of quasi Fuchsian representations in general because of the following result.

\begin{thm}[{Theorem \ref{thm:Quasi-Fuchsian with no fitting}}]
\label{INTRO:quasiFuchsian with no fibrations}
For a genus $g$ large enough, there exist quasi Fuchsian representations $\rho:\Gamma_g\to S_g$ that admit no equivariant fitting immersions $u:\widetilde{S_g}\to \mathcal{G}$.
\end{thm}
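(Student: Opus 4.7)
The plan is to show that an equivariant fitting immersion imposes strong compatibility conditions on the pair of endpoint maps at $\partial\mathbb{H}^3$, and to obstruct these for suitable quasi-Fuchsian representations.

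First, I would extract the structure underlying any hypothetical fitting immersion. Given $u:\widetilde{S_g}\to\mathcal{G}$ fitting and $\rho$-equivariant with $\rho$ quasi-Fuchsian, the extension of the local fibration to $\partial\mathbb{H}^3$ yields two $\rho$-equivariant smooth local diffeomorphisms $\xi_\pm:\widetilde{S_g}\to\partial\mathbb{H}^3$ given by the endpoints. Their images are connected $\rho$-invariant open subsets of $S^2$; using minimality of the $\rho$-action on the Jordan curve limit set $\Lambda_\rho$ together with a compactness argument on the quotient, one sees that these images are exactly the two components $\Omega_\pm$ of $\partial\mathbb{H}^3\setminus\Lambda_\rho$, and that each $\xi_\pm$ is in fact a diffeomorphism $\widetilde{S_g}\to\Omega_\pm$ descending to a self-diffeomorphism of $S_g$ isotopic to the identity.

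Next I would reformulate the fitting condition analytically. Parametrizing $\gamma_x:[0,1]\to\overline{\mathbb{H}^3}$ as the geodesic from $\xi_-(x)$ to $\xi_+(x)$, the sweep-out $\Phi(x,t)=\gamma_x(t)$ must be a local diffeomorphism. For $v\in T_x\widetilde{S_g}$, the derivative $D_x\Phi(x,t)\cdot v$ is a Jacobi field along $\gamma_x$ with boundary values $D\xi_-(v)$ at $t=0$ and $D\xi_+(v)$ at $t=1$. The Jacobi equation $J''=J$ in curvature $-1$ then converts the non-vanishing requirement into a pointwise algebraic inequality relating $D\xi_-$ and $D\xi_+$ after parallel transport along $\gamma_x$.

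The final step, and the main difficulty, is to exhibit quasi-Fuchsian representations violating this compatibility. A natural construction is a Thurston bending deformation: starting from a Fuchsian representation of $\Gamma_g$, bend along a suitable collection of disjoint simple closed curves by angles approaching the critical value $\pi$. For bending angles large but still subcritical the representation remains quasi-Fuchsian, while the two domains $\Omega_\pm$ become highly pinched near the bending loci with pinching directions that are rotated relative to each other. Heuristically this forces any equivariant diffeomorphisms $\xi_\pm$ to have derivatives whose parallel transports along the bending geodesics are misaligned, so that the interpolating Jacobi field must vanish somewhere in $(0,1)$, contradicting fitting. The main obstacle is to make this quantitative and to rule out not only naive pinched choices of $\xi_\pm$ but any smooth equivariant pair, presumably via sharp estimates on the limit sets of bent quasi-Fuchsian groups or an abstract rigidity argument in the spirit of a degree or characteristic-class computation; the large-genus hypothesis then serves to accommodate enough independent bending curves for the accumulated twist between $\Omega_+$ and $\Omega_-$ to become decisive.
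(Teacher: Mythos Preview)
Your proposal takes a genuinely different route from the paper, and the final step contains a real gap.

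The paper's argument is purely topological rather than analytic. It starts from a specific $C^1$ Jordan curve $\gamma\subset\mathbb{CP}^1$ drawn so that three collinear points $x,y,z$ (on a single round circle) have the following property: the parts $U_x,U_z$ of the interior of $\gamma$ that are accessible from $x$ (resp.\ $z$) without crossing the arc $s_z$ from $y$ to $z$ (resp.\ $s_x$ from $x$ to $y$) are disjoint. One then invokes the approximation result that any $C^1$ circle is $\epsilon$-close in Hausdorff distance to the limit set of some quasi-Fuchsian $\rho:\Gamma_g\to\SL(2,\C)$ for $g$ large enough; this is where the large genus enters. If a $\rho$-equivariant map $u$ with a fitting flow existed, take a flow line whose projection is a quasi-geodesic with endpoints $x,z\in\Lambda_\rho$. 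The interior endpoint $x_t$ of the geodesic $u(\gamma(t))$ moves from $x$ to $z$ inside the Jordan domain, and the circles $\pi(\Phi_t(q))$ are nested. At the unique time $t_0$ where the circle passes through $y$, that circle separates $x,s_x$ from $z,s_z$; nesting then forces $x_t\in U_x$ for $t\le t_0$ and $x_t\in U_z$ for $t\ge t_0$, contradicting $U_x\cap U_z=\emptyset$. No Jacobi field analysis or bending estimate is needed.

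Your first two steps are reasonable (the endpoint maps do land in the two components of the domain of discontinuity, and the fitting condition is a Jacobi non-vanishing statement), but the third step is only heuristic. You would need to prove that for bending angles near $\pi$ \emph{every} pair of equivariant diffeomorphisms $\xi_\pm$ fails the Jacobi condition, not just the obvious ones; you yourself flag this as ``the main obstacle.'' There is no evident degree or characteristic-class argument that rules out all smooth equivariant pairs, and controlling the parallel transport along \emph{all} fibers (not only those near the bending locus) uniformly as the bending angle increases is delicate. By contrast, the paper's obstruction is a single topological incompatibility of the nested-circles picture with the shape of the limit curve, and the large genus is used only to realize that shape as a limit set, not to accumulate twist.
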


As a corollary we prove that there exist quasi Fuchsian representations that are not nearly Fuchsian.

\subsection{Fibrations of a convex set.} 

Let $V=\R^{2n}$. We denote by $S^2V$ be the space of symmetric tensors in $V\otimes V$, or equivalently the space of symmetric bilinear forms on $V^*$. Let $S^2V^{>0}$ be the convex cone of positive tensors, \ie tensors that define positive symmetric bilinear forms on $V^*$.

\medskip

The projective convex domain $\mathbb{P}(S^2V^{>0})$ is the projective model for the symmetric space $\X_{\SL}=\SL(2n,\R)/\SO(2n,\R)$. We study fibrations of this convex domain by projective codimension $2$ subspaces.

\medskip

A codimension $2$ projective subspace of $\mathbb{P}(S^2V)$ corresponds to a dimension $2$ subspace of the dual vector space $S^2V^*$, that can be interpreted as the space of \emph{quadrics} on $V$. We will write $\mathcal{Q}=S^2V^*$. A plane in $\mathcal{Q}$ is called a \emph{pencil of quadrics} and we will denote by $\Gra_2(\mathcal{Q})$ the space of such planes. Let $\Gra_2^\text{mix}(\mathcal{Q})$ be the set of \emph{mixed} pencils, \ie pencils $P$ whose corresponding codimension $2$ projective subspace intersects non-trivially the convex domain $\mathbb{P}(S^2V^{>0})$, or equivalently the pencils that do not contain any non-zero semi-positive quadric (in other words the signature of all quadrics of the pencil is mixed).

\medskip

In this setting, in a way that is analog as for immersions in the space $\mathcal{G}$ of geodesics in $\mathbb{H}^3$, we say that an immersion $u:S\to \Gra^\text{mix}_2(\mathcal{Q})$ from a surface $S$ is \emph{fitting} is the corresponding codimension two subsets define locally a smooth fibration of the closure $\mathbb{P}(S^2V^{\geq 0})$ of the convex domain $\mathbb{P}(S^2V^{> 0})$. As in $\mathbb {H}^3$ one can construct examples of such maps by taking the Gauss map of some totally geodesic surfaces in $\SL(2n,\R)/\SO(2n,\R)$, see Proposition \ref{prop:GaussMapSymmetric space for totally geodesic immersions}.

\medskip

Let $\rho:\Gamma\to\SL(2n,\R)$ be a representation and $u:\widetilde{S_g}\to \Gr_2^{\text{mix}}(\mathcal{Q})$ a $\rho$-equivariant fitting immersion, then $u$ is an embedding and defines a fibration of an open domain in $\mathbb{P}(S^2V^{\geq 0})$ that contains $\mathbb{P}(S^2V^{> 0})$, see Proposition \ref{prop:Completeness and fibration of DOD}.

 Moreover let $\Gr_2^{(n,n)}(\mathcal{Q})\subset \Gr_2(\mathcal{Q}) $  be the set of pencils of quadrics $P$ such that every non zero $q\in P$ has signature $(n,n)$, or equivalently the pencils containing no degenerate non-zero quadric.

\begin{thm}
Let $\rho:\Gamma_g\to \SL(2n,\R)$ be a representation that admits an equivariant fitting immersion $u:\widetilde{N}\to \Gr_2^{(n,n)}(\mathcal{Q})$. The representation $\rho$ is $\lbrace n\rbrace$-Anosov.
\end{thm}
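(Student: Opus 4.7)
The plan is to upgrade the quasi-isometric embedding provided by Theorem \ref{thm:WellFittedImplesAnosov} to the full $\{n\}$-Anosov property, using the stronger hypothesis that every pencil in the image of $u$ consists only of non-degenerate quadrics of signature $(n,n)$. Since $\rho$ is already known to be quasi-isometrically embedded, $\Gamma_g$ is Gromov hyperbolic with boundary $\partial \Gamma_g$, and it suffices to produce a continuous, $\rho$-equivariant, transverse and dynamics-preserving boundary map $\xi \colon \partial\Gamma_g \to \Gr_n(V)$.

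First I would construct $\xi$ by a limiting procedure. The subset $\Gr_2^{(n,n)}(\mathcal{Q})$ is open in $\Gra_2^{\mathrm{mix}}(\mathcal{Q})$, so when $y_k\in \widetilde{S_g}$ approaches an endpoint $x\in \partial\Gamma_g$, the pencils $u(y_k)$ accumulate, after passing to a subsequence, on a pencil $P_\infty$ lying in the boundary of $\Gr_2^{(n,n)}(\mathcal{Q})$. A pencil on this boundary must contain a degenerate quadric, and the degeneration locus coming from a signature $(n,n)$ pencil generically produces a rank-$n$ quadric whose kernel is an $n$-dimensional subspace of $V$. I would define $\xi(x)$ to be that kernel. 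The key input here is the smooth extension of the fibration to $\mathbb{P}(S^2V^{\geq 0})$ provided by Proposition \ref{prop:Completeness and fibration of DOD}, which ensures that limiting behaviour on the boundary of the convex domain matches limiting behaviour of pencils.

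Next I would verify that $\xi$ is well-defined (independent of the approaching sequence), continuous, and $\rho$-equivariant. These properties should follow from the smooth structure of the extended fibration together with the shadow principle in Gromov hyperbolic spaces: two sequences tending to $x$ track a common geodesic ray up to bounded distance, hence produce the same limit pencil and therefore the same $n$-plane. Transversality, namely $\xi(x)\oplus\xi(y)=V$ for $x\neq y$, should follow from the fitting condition combined with the smoothness of the extended fibration: distinct pencils $u(y), u(y')$ correspond to disjoint codimension-two projective subspaces inside the convex domain, and this disjointness propagates to their boundary limits, forcing the corresponding degenerate kernels to be complementary.

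The main obstacle I foresee is establishing the dynamical contraction property defining $\{n\}$-Anosov representations, namely the exponential decay of $\sigma_{n+1}(\rho(\gamma))/\sigma_n(\rho(\gamma))$ as the word length $|\gamma|\to\infty$. To reach this, I would exploit the cocompactness of the $\Gamma_g$-action on $\widetilde{S_g}$: the image $u(\widetilde{S_g}/\Gamma_g)$ is compact in the open set $\Gr_2^{(n,n)}(\mathcal{Q})$, so it stays at a uniform distance from the boundary locus of pencils containing a degenerate quadric. Combined with the linear growth of the Cartan projection coming from Theorem \ref{thm:WellFittedImplesAnosov}, this uniform quantitative gap should translate into a uniform exponential singular value gap between indices $n$ and $n+1$, yielding the $\{n\}$-Anosov property via its characterization in terms of such gaps.
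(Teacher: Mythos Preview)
Your proposal rests on a misreading of Theorem~\ref{thm:WellFittedImplesAnosov}. In the paper that theorem already concludes the full $\{n\}$-Anosov property, not merely a quasi-isometric embedding; the quasi-isometric statement is Corollary~\ref{cor:Globally fitting}. The paper's proof of the present statement is therefore: a fitting immersion admits an equivariant fitting flow by Proposition~\ref{prop:ExistenceFittingFlow}, and then Theorem~\ref{thm:WellFittedImplesAnosov} applies directly. That theorem in turn is proved by an argument in the spirit of the multicone criterion of \cite{BPS}: one defines a cross-ratio distance between nested quadric hypersurfaces (Propositions~\ref{prop:TriangIneqCr} and~\ref{prop:Intersection linear}), uses it to build a flow-invariant splitting $E\oplus F$ of the flat $V$-bundle over $\Sph u^*\mathcal{E}$ as intersections of nested half-spaces, and then derives a uniform exponential domination estimate for this splitting directly from the cross-ratio growth along flow lines. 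The $(n,n)$ hypothesis is used precisely to guarantee that $\dim E=\dim F=n$.

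Setting the misreading aside, the alternative route you sketch has genuine gaps. First, there is no reason for the pencils $u(y_k)$ to converge in $\Gr_2(\mathcal{Q})$ as $y_k\to x\in\partial\Gamma_g$: the map $u$ is only equivariant, not proper into any compactification that would force such convergence, and Proposition~\ref{prop:Completeness and fibration of DOD} cannot be invoked here since it already presupposes a fitting flow and uses the Anosov property in its proof. Second, even if a limit pencil $P_\infty$ existed, a degenerate quadric on the boundary of the signature-$(n,n)$ locus need not have kernel of dimension~$n$; its rank can drop by any amount, so the assignment $x\mapsto\xi(x)$ is not well defined as stated. Third, the final step, passing from a ``uniform distance to the degenerate locus'' to an exponential singular-value gap, is not an argument: compactness of $u(\widetilde{S_g})/\Gamma_g$ inside $\Gr_2^{(n,n)}(\mathcal{Q})$ gives no control on singular values of $\rho(\gamma)$ without a mechanism linking the geometry of pencils to contraction in $V$. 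That mechanism is exactly what the paper supplies via the fitting flow and the cross-ratio estimates.
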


This theorem can be generalized by replacing $S_g$ by a closed manifold of some dimension $d$ and considering equivariant maps into $\Gr_d(\mathcal{Q})$, see Theorem \ref{thm:WellFittedImplesAnosov}.
\medskip

In order to show this result we introduce the notion of a fitting flow.
Let $\mathcal{E}$ be the tautological rank $2$ vector bundle over $\Gr_2(\mathcal{Q})$, \ie:
$$\mathcal{E}=\lbrace (P,q)\mid P\in \Gr_2(\mathcal{Q}),\,q\in P \rbrace.$$ 
Let $\pi:\mathcal{E}\to \mathcal{Q}$  be the tautological projection, \ie the projection onto the second factor. The pullback $u^*\Sph \mathcal{E}$ of the circle bundle $\Sph \mathcal{E}$ for a map $u:S\to\Gr^{\text{mix}}_2(\mathcal{Q})$ is the space of pairs $(x,H)$ where $x\in S$ and $H$ is a co-oriented projective hyperplane in $\mathbb{P}(S^2V)$ containing the codimension $2$ projective subspace associated to $u(x)$. Note that a co-oriented projective hyperplane intersecting $\mathbb{P}(S^2V^{>0})$ defines a half space in the convex set $\mathbb{P}(S^2V^{\geq 0})$

A \emph{fitting flow} for the map $u:S\to\Gr^{\text{mix}}_d(\mathcal{Q})$ is a flow $\Phi$ on this circle bundle $ u^*\Sph\mathcal{E}$ over $S$ such that along flow lines the corresponding projective half spaces in $\mathbb{P}(S^2V^{\geq 0})$ are nested, see Defintion \ref{defn:Fitting flow}.

We show that a fitting immersion always admits a smooth fitting flow, see Proposition \ref{prop:ExistenceFittingFlow}, that flow lines must exist between the fibers over any pair of points in $S$ and must be quasi-geodesic, see Propoistion \ref{prop:Flow lines of a fitting flow are quasi geodesic and exist between points}. Finally we use the nestedness of the half spaces when following a flow line to show uniform contraction occuring in projective space that is sufficient to imply that the representation is Anosov, using an argument similar to the criterion on nested multicones from Bochi-Potrie-Sambarino \cite{BPS}.

\subsection{Maximal representations in  \texorpdfstring{$\Sp(2n,\R)$}{Sp(2n,R)}.}

Let us now focus on representations into the group $\Sp(2n,\R)$ of linear transformations of $\R^{2n}$ preserving a symplectic form. 

\medskip

Maximal representations of a closed surface group in $\Sp(2n,\R)$ are representation whose Toledo invariant takes its maximal possible value \cite{BIW10}. Equivalently maximal representations can be characterized as \emph{positive representations} \cite{BIW03}, or again equivalently as representations that admit an equivariant continuous and transverse map $\xi_\rho^n:\partial \Gamma\to \mathcal{L}_n$ having a specific homotopy type, with $\mathcal{L}_n$ the space of Lagrangians in $(\R^{2n},\omega)$, see Theorem \ref{thm:Caract Maximal reprs via homotopy}.

\medskip

 Maximal representations form connected components of the space of representations, and such representations are all discrete and faithful \cite{BIW03}. The space of maximal representations is therefore called a \emph{higher rank Teichmüller space}. The classical Teichmüller space as well as some higher rank Teichmüller spaces, can be interpreted as spaces of geometric structures. For instance Hitchin representations in $\SL(3,\R)$ can be interpreted as spaces of convex projectives structures on the considered surface. Maximal representations in $\Sp(4,\R)$ or $\SO_o(2,n)$ can be interpreted as spaces of projective or photon structures on a bundle over the surface that admit a special fibration \cite{Collier_2019}. In these two examples these structures modeled on flag manifolds can be interpreted as a part of a natural compactification of the locally symmetric space associated with the representation. In the present paper we focus on the locally symmetric structures and ask the following:

\begin{que} Can maximal representations can be characterized by the existence of some fibration of the associated locally symmetric space?
\end{que}

We provide some affirmative answer to this question. In order to study the symmetric space for $\Sp(2n,\R)$ we embed it into the projective model for the symmetric space of $\SL(2n,\R)$.

\subsection{Characterization of maximal representations.} 

The symmetric space $\X_{\Sp}$ associated to $\Sp(2n,\R)$ can be identified with a totally geodesic submanifold of $\X_{\SL}$.
Let $\Gra^\omega_2(\mathcal{Q})\subset \Gra^{(n,n)}_2(\mathcal{Q})$ be the set of pencils $P$ such that every non-zero $q\in P$ is positive on some Lagrangian and negative on some other Lagrangian in $\R^{2n}$.
We show that the projectivization of the codimension $2$ projective subspace of $\mathbb{P}(S^2V)$ corresponding to any $P\in\Gra^\omega_2(\mathcal{Q})$ intersects transversely the totally geodesic symmetric subspace $\X_{\Sp}\subset \mathbb{P}(S^2V^{>0})$, see Lemma \ref{lem:Transverse intersection with symmetric space}.

The set $\Gra^\omega_2(\mathcal{Q})$ is open in $\Gra_2(\mathcal{Q})$, but we show that it is disconnected. We select a special union of connected components that we denote by $\Gra_2^{\max} (\mathcal{Q})$ and we show the following:

\begin{thm}[{Theorem \ref{thm: fitting implies maximal}}]
Let $\rho:\Gamma_g\to \Sp(2n,\R)$ be a representation. If it admits a $\rho$-equivariant fitting immersion $u:\widetilde{S_g}\to \Gra_2^{\max}(\mathcal{Q})$ it is maximal for some orientation of $S_g$.
\end{thm}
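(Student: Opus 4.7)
The plan is to combine the previous Anosov result with the homotopical characterization of maximal representations. Since $\Gra_2^{\max}(\mathcal{Q})\subset \Gra_2^{(n,n)}(\mathcal{Q})$, Theorem \ref{thm:WellFittedImplesAnosov} applies directly and yields that $\rho$ is $\{n\}$-Anosov. In particular $\rho$ admits a continuous, transverse, $\rho$-equivariant boundary map $\xi^n_\rho:\partial\Gamma_g\to\Gra_n(\R^{2n})$, which is the raw material from which the Toledo invariant will eventually be computed.

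First I would check that this boundary map actually lands in the Lagrangian Grassmannian $\mathcal{L}_n\subset\Gra_n(\R^{2n})$. Because $\rho$ preserves the symplectic form $\omega$, the restriction $\omega|_{\xi^n_\rho(\xi)}$ is an equivariant section of a natural line bundle over $\partial\Gamma_g$; the uniform contraction built into the $\{n\}$-Anosov property forces this section to vanish identically, so each $\xi^n_\rho(\xi)$ is isotropic and, by dimension count, Lagrangian. More geometrically, the defining condition of $\Gra^\omega_2(\mathcal{Q})$ (every non-zero quadric in the pencil is positive on some Lagrangian and negative on another) ensures that as $u(x_k)$ degenerates along a sequence $x_k\to\xi\in\partial\Gamma_g$, the limiting codimension $2$ projective subspace singles out a Lagrangian direction rather than a generic $n$-plane.

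The main task is then to identify the homotopy class of $\xi^n_\rho$. By Theorem \ref{thm:Caract Maximal reprs via homotopy}, maximality is equivalent to $\xi^n_\rho$ being transverse and lying in a distinguished homotopy class of equivariant boundary maps into $\mathcal{L}_n$. The role of the components $\Gra_2^{\max}(\mathcal{Q})\subset \Gra_2^\omega(\mathcal{Q})$ is precisely to encode this class: to each $P\in\Gra_2^{\max}(\mathcal{Q})$ one can attach, in a continuous $\Sp(2n,\R)$-equivariant way, a preferred Lagrangian (e.g.\ obtained from the circle of signature $(n,n)$ forms in $P$ via a canonical section). Composing with $u$ produces a $\rho$-equivariant map $\widetilde{S_g}\to\mathcal{L}_n$ whose asymptotic behaviour matches $\xi^n_\rho$, and whose homotopy data computes the Toledo invariant via pullback of a representative of the Kähler class on $\X_{\Sp}$. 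The definition of $\Gra_2^{\max}(\mathcal{Q})$ is calibrated so that this invariant attains the extremal Milnor--Wood value, with the sign absorbed by the choice of orientation on $S_g$.

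The hard part is this last step, which is topological rather than dynamical: one has to identify $\Gra_2^{\max}(\mathcal{Q})$ among the components of $\Gra_2^\omega(\mathcal{Q})$, construct the continuous Lagrangian assignment compatible with the asymptotic behaviour of a fitting immersion, and verify that the resulting characteristic class saturates the Milnor--Wood inequality. In particular, one must control what happens across the boundary between components of $\Gra_2^\omega(\mathcal{Q})$ to see that only the $\max$-components can give rise to globally defined equivariant fitting maps realizing the extremal class. Once this bookkeeping is settled, Theorem \ref{thm:Caract Maximal reprs via homotopy} concludes.
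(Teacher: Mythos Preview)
Your overall strategy is right: produce a fitting flow (Proposition~\ref{prop:ExistenceFittingFlow}), apply Theorem~\ref{thm:WellFittedImplesAnosov} to get $\{n\}$-Anosov, note that the boundary map lands in $\mathcal{L}_n$ since $\rho$ takes values in $\Sp(2n,\R)$, and then use Theorem~\ref{thm:Caract Maximal reprs via homotopy}. This is exactly the skeleton of the paper's argument in Lemma~\ref{lem:Sufficient condition to be maximal}.

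The gap is in your bridge between the pencil data and the homotopy class of $\xi^n_\rho$. You propose to attach to each $P\in\Gra_2^{\max}(\mathcal{Q})$ a single preferred Lagrangian in a continuous $\Sp(2n,\R)$-equivariant way, compose with $u$ to get a map $\widetilde{S_g}\to\mathcal{L}_n$, and then read off the Toledo number by pulling back a K\"ahler primitive. There are two problems. First, no such single-Lagrangian assignment exists: the stabiliser of a pencil in $\Gra_2^{\max}(\mathcal{Q})$ contains a circle acting nontrivially on the relevant Lagrangians, so there is no $\Sp$-equivariant section. What the pencil actually carries is a \emph{circle} of Lagrangians, namely the homotopy class $[\xi_P]$ of Proposition~\ref{prop:There exist a boundary map of pencil}, and this is precisely how $\Gra_2^{\max}(\mathcal{Q})$ is defined. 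Second, even granting some map $\widetilde{S_g}\to\mathcal{L}_n$, you give no mechanism linking its asymptotics to $\xi^n_\rho$; ``asymptotic behaviour matches $\xi^n_\rho$'' is exactly the content that needs to be proved.

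The paper's bridge is the fitting flow itself. Fix $x\in\widetilde{S_g}$ and consider $\zeta_\infty:\Sph u(x)\to\partial\Gamma_g$ sending $[q]$ to the forward endpoint of the flow line through $[q]$. By Lemma~\ref{lem:HomotopyWellFitted} (applied to $\zeta_t$ for small $t$, then homotoped out to $t\to\infty$) this map has degree $\pm 1$. The description of $\xi^n_\rho$ in Theorem~\ref{thm:WellFittedImplesAnosov} as $\mathbb{P}(\xi^n_\rho(\zeta))=\bigcap_{t\geq 0}\{\pi(\Phi_t(q))\leq 0\}$ shows that $q$ has a definite sign on $\xi^n_\rho(\zeta_\infty([q]))$, so $\xi^n_\rho\circ\zeta_\infty$ (up to the antipodal map on $\Sph u(x)$) is a valid $\xi_{u(x)}$ in the sense of Proposition~\ref{prop:There exist a boundary map of pencil}. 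Since $u(x)\in\Gra_2^{\max}(\mathcal{Q})$ means $[\xi_{u(x)}]=n[\tau]$ and $\zeta_\infty$ has degree $\pm 1$, one gets $[\xi^n_\rho]=\pm n[\tau]$, and Theorem~\ref{thm:Caract Maximal reprs via homotopy} finishes. Your K\"ahler-pullback route would bypass this, but as written it is not a proof; the missing step is precisely this identification of $[\xi^n_\rho]$ with $[\xi_{u(x)}]$ via the flow.
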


If a representation $\rho:\Gamma_g\to \Sp(2n,\R)$  admits an equivariant fitting immersion $u:\widetilde{S_g}\to\Gra^{(n,n)}_2(\mathcal{Q})$  the image of $u$ lies necessarily in $\Gra^\omega_2(\mathcal{Q})$. We then use that the homotopy type of the boundary map of $\rho$ is determined by the connected component of $\Gra^\omega_2(\mathcal{Q})$ in which the image of $u$ lies to define $\Gra_2^{\max} (\mathcal{Q})$ and to prove this theorem, see Section \ref{subsec:DefnMaxPencil}. 

\medskip

For $n=2$ using results of \cite{Collier_2019} we show  a converse to this statement, which therefore provides a characterization of maximal representations, see Corollary \ref{cor:MaxReprSP4}. We compare this characterization with the one from Collier-Tholozan-Toulisse in Remark \ref{rem:comparaisonCCT}. For $n\geq 3$ we prove a weaker converse to this theorem. We construct equivariant maps $u:\widetilde{S_g}\to \Gra_2^{\max}(\mathcal{Q})$ for maximal representations that locally define a fibration of $\mathbb{P}(S^2V^{\geq 0})$ but are only continuous and not smooth. 

\begin{thm}[{Theorem~\ref{thm: fitting implies maximal}}]
\label{thm:INTRO charachterization}
\label{thm: fitting implies maximal Intro}
A representation $\rho:\Gamma_g\to \Sp(2n,\R)$ is maximal if and only if it admits a $\rho$-equivariant continuous map of pencils that admits an equivariant fitting flow:
$$u:\widetilde{S_g}\to \Gra_2^{\max}(\mathcal{Q}).$$
\end{thm}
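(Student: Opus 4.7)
The statement is an equivalence, so I address the two directions separately. The forward direction mirrors Theorem~\ref{thm: fitting implies maximal}: given a continuous $u$ equipped with a fitting flow $\Phi$, the proof of Theorem~\ref{thm:WellFittedImplesAnosov} only uses the nested half-space property along the flow lines and not the smoothness of $u$ itself, so it applies to this weaker setting and yields that $\rho$ is $\{n\}$-Anosov in $\SL(2n,\R)$. Since every pencil in the image lies in $\Gra_2^\omega(\mathcal{Q})$, the transverse intersection with $\X_{\Sp}$ from Lemma~\ref{lem:Transverse intersection with symmetric space} forces the Anosov boundary map to take values in the Lagrangian Grassmannian $\mathcal{L}_n$. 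Because $u$ lands in the distinguished union of components $\Gra_2^{\max}(\mathcal{Q})$, the induced homotopy class of $\xi_\rho^n:\partial\Gamma\to\mathcal{L}_n$ is the one characterizing maximal representations via Theorem~\ref{thm:Caract Maximal reprs via homotopy}, for a suitable orientation of $S_g$.

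For the converse I start from a maximal $\rho$ and its continuous positive boundary map $\xi_\rho^n:\partial\Gamma\to\mathcal{L}_n$, and build $u$ by a combinatorial construction followed by interpolation. Concretely, I would fix a $\Gamma_g$-equivariant ideal triangulation of $\widetilde{S_g}$; to each ideal triangle with vertices $\eta_1,\eta_2,\eta_3\in\partial\Gamma$ associate a pencil built algebraically from $\xi_\rho^n(\eta_1),\xi_\rho^n(\eta_2),\xi_\rho^n(\eta_3)$, for instance as the span of two signature $(n,n)$ quadrics whose maximal positive and negative subspaces realize the given Lagrangians in the three possible pairings. Positivity of the Lagrangian triple places the pencil in $\Gra_2^\omega(\mathcal{Q})$, and the positive homotopy class of $\xi_\rho^n$ selects the component $\Gra_2^{\max}(\mathcal{Q})$. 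A convex interpolation across adjacent triangles in $\mathcal{Q}$, together with the openness of $\Gra_2^{\max}(\mathcal{Q})$, yields a continuous $\rho$-equivariant $u$. A fitting flow is then prescribed on $u^*\Sph\mathcal{E}$ by choosing in each fiber an infinitesimal direction whose associated projective half-space moves monotonically along word quasi-geodesics in $\widetilde{S_g}$; monotonicity follows from the nested ordering inherent to positive chains of Lagrangians. For $n=2$ the existence is already provided by \cite{Collier_2019}, giving the sharper Corollary~\ref{cor:MaxReprSP4}.

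\emph{Main obstacle.} The delicate step is the converse for $n\geq 3$, specifically verifying the strict nestedness property required by Definition~\ref{defn:Fitting flow} for the interpolated flow. Because $u$ is only continuous one cannot differentiate along its flow lines, so the nestedness must be extracted directly from the positivity of $\xi_\rho^n$ and checked both across the triangulation strata and at vertices. This is precisely why one must be content with a continuous $u$ admitting a fitting flow rather than a genuine fitting immersion, and it is the point at which the argument genuinely leaves the pattern of Theorem~\ref{thm: fitting implies maximal}.
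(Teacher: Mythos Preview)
Your forward direction is essentially the paper's argument (Lemma~\ref{lem:Sufficient condition to be maximal}): apply Theorem~\ref{thm:WellFittedImplesAnosov}, then identify the homotopy class of $\xi_\rho^n$ with $[\xi_{u(x)}]$ via the flow and Lemma~\ref{lem:HomotopyWellFitted}, and conclude by Theorem~\ref{thm:Caract Maximal reprs via homotopy}. One correction: the fact that $\xi_\rho^n$ lands in $\mathcal{L}_n$ has nothing to do with Lemma~\ref{lem:Transverse intersection with symmetric space}; it is simply because $\rho$ takes values in $\Sp(2n,\R)$, as recorded in the boundary-map theorem of Section~\ref{sec:Subsection on anosov represetnations}.

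Your converse direction, however, is not the paper's and has a real gap. The paper does not triangulate and interpolate; it gives an explicit averaging construction (Proposition~\ref{prop:Exist fitting flow}). One fixes an auxiliary hyperbolic metric on $S_g$, and to a pair of transverse Lagrangians $(\ell_1,\ell_2)$ associates the quadric $q_{\ell_1,\ell_2}(v,v)=\omega(\pi_1 v,\pi_2 v)$. For $v\in T^1_x\widetilde{S_g}$ set $q^\circ_v=q_{\ell_{Jv},\ell_{-Jv}}$ with $\ell_w=\xi_\rho^n(\zeta_w)$, and then \emph{linearly} average
\[
q_v=\int_{w\in T^1_x\widetilde{S_g}}\langle v,w\rangle\, q^\circ_w\,\mathrm d\lambda,
\]
so that $u(x)=\{q_v:v\in T_x\widetilde{S_g}\}$ is genuinely two-dimensional. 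The fitting flow is then the hyperbolic \emph{geodesic flow} on $T^1\widetilde{S_g}\simeq \Sph u^*\mathcal{E}$, and the strict nestedness of Definition~\ref{defn:Fitting flow} comes from the key algebraic Lemma~\ref{lem:ContactHyperboloidsPositive1}: for any maximal quadruple $(\ell_1,\ell_2,\ell_3,\ell_4)$ the difference $q_{\ell_4,\ell_3}-q_{\ell_1,\ell_2}$ is a positive form. This single lemma, applied pointwise inside the integral, gives $q_{v'}-q_v>0$ along every geodesic, with no case analysis across strata.

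By contrast, your triangulation-and-interpolation scheme leaves the hardest steps unspecified: you do not say which pencil is attached to a triple of Lagrangians, ``convex interpolation in $\mathcal{Q}$'' does not produce a map into $\Gra_2(\mathcal{Q})$ (a convex combination of two $2$-planes is not a $2$-plane), and your fitting flow is only described as a desideratum. The obstacle you flag at the end is exactly what the paper's averaging construction and Lemma~\ref{lem:ContactHyperboloidsPositive1} are designed to dissolve; without an analogue of that positivity lemma your approach cannot close.
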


%Note that the quasi-Fuchsian representations from Theorem \ref{INTRO:quasiFuchsian with no fibrations} do not admit continuous maps with an equivariant fitting flow.

A continuous equivariant map $u:\widetilde{S_g}\to \Gra_2^{{\max}}(\mathcal{Q})$ for a representation $\rho:\Gamma_g\to \Sp(2n,\R)$ with a fitting flow defines a $\rho$-equivariant continuous fibration of $\mathbb{P}(S^2V^{\geq0})$, but also of $\X_{\Sp}\subset \mathbb{P}(S^2V^{>0})$ as $\X_{\Sp}$ intersects the fibers transversely.
Thus $u$ defines a continuous fibration of the locally symmetric space $\X_{\Sp}/\rho(\Gamma_g)$.
% Moreover the intersection of the projective zeroes of the quadrics in a given pencil of in $\Gra_2^{\text{max}}(\mathcal{Q})$ defines a codimension $2$ subset of $\mathbb{P}(V)$. 

The space of rank one elements of $\mathbb{P}(S^2V^{\geq 0})$ can naturally be identified with $\mathbb{P}(V)$. 
Such a map $u$ defines a continuous $\rho$-equivariant fibration of a domain in projective space, that is equal to the domain of discontinuity in projective space constructed by Guichard-Wienhard \cite{GWDoD}. The quotient of this domain inherits a $(\Sp(2n,\R),\mathbb{RP}^{2n-1})$-structure, a contact projective structure.
Theorem \ref{thm:INTRO charachterization} implies the following characterization of the contact projective structures corresponding to maximal representations.

\begin{cor}
A contact projective structure on a fiber bundle $M$ with fiber $F$ over $S_g$ corresponds to a maximal representations by the construction of Guichard-Wienhard if and only if, up to homeomorphisms of $M$ that stabilize $\pi_1(F)$ and act trivially on $\pi_1(M)/\pi_1(F)\simeq \Gamma_g$, the fibers are mapped via the developing map onto the bases of maximal pencils of quadrics parametrized by a continuous map that admits an equivariant fitting flow.
\end{cor}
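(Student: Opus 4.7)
The plan is to translate Theorem~\ref{thm:INTRO charachterization} from the setting of pencils of quadrics acting on $\mathbb{P}(S^2V^{\geq 0})$ to the setting of fibrations of the Guichard-Wienhard domain in $\mathbb{P}(V)$. The translation hinges on the natural identification of the rank one locus of $\mathbb{P}(S^2V^{\geq 0})$ with $\mathbb{P}(V)$ sending $[v]$ to $[v\otimes v]$. Under this identification, the codimension $2$ projective subspace of $\mathbb{P}(S^2V)$ associated to a pencil $P$ meets the rank one locus precisely in the \emph{base} $B(P) = \{[v]\in \mathbb{P}(V)\,\vert\, q(v)=0 \text{ for all }q\in P\}$. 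Consequently a fitting flow on an equivariant continuous $u:\widetilde{S_g}\to \Gra_2^{\max}(\mathcal{Q})$, which provides a continuous fibration of an open domain of $\mathbb{P}(S^2V^{\geq 0})$ by codimension $2$ subspaces, restricts to a continuous fibration of an open domain of $\mathbb{P}(V)$ by the bases $B(u(x))$.

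For the forward implication, suppose $\rho:\Gamma_g\to \Sp(2n,\R)$ is maximal. Theorem~\ref{thm:INTRO charachterization} yields an equivariant continuous $u:\widetilde{S_g}\to \Gra_2^{\max}(\mathcal{Q})$ admitting an equivariant fitting flow, and the restriction of the resulting fibration to the rank one locus is a continuous $\rho$-equivariant fibration of $\Omega := \bigcup_{x\in \widetilde{S_g}} B(u(x))\subset \mathbb{P}(V)$. One then checks that $\Omega$ coincides with the Guichard-Wienhard domain of discontinuity for $\rho$ in $\mathbb{P}(V)$; the quotient $M = \Omega/\rho(\Gamma_g)$, with the fibration descending from $\{B(u(x))\}_x$, realizes the structure described in the statement. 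Conversely, given such a structure on $M$, lifting to $\widetilde{M}$ the developing map sends the fibers onto bases of maximal pencils, and by hypothesis the parametrization of these fibers by $\widetilde{S_g}$ is continuous, $\rho$-equivariant, and admits an equivariant fitting flow. Recording which pencil corresponds to each base produces an equivariant continuous $u:\widetilde{S_g}\to \Gra_2^{\max}(\mathcal{Q})$ with a fitting flow, and Theorem~\ref{thm:INTRO charachterization} concludes that $\rho$ is maximal.

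The main delicate point is to verify that the equivalence relation ``up to homeomorphisms of $M$ stabilizing $\pi_1(F)$ and acting trivially on $\pi_1(M)/\pi_1(F)\simeq \Gamma_g$'' on the contact projective side corresponds precisely to the ambiguity in the parametrizing map $u$ once the unparametrized fibration $\{B(u(x))\}_x$ of $\Omega$ is fixed. Such a homeomorphism reparametrizes the fibers of $M\to S_g$ without altering the underlying foliation of $\widetilde{M}$, and does not alter the image of the fibers in $\mathbb{P}(V)$ under the developing map; conversely two fitting parametrizations of the same collection of bases differ by a lift of such a homeomorphism. A secondary verification is that $\Omega=\bigcup_x B(u(x))$ agrees with the Guichard-Wienhard domain; this should follow from maximal representations being $\{n\}$-Anosov together with the description of the Guichard-Wienhard domain via the Lagrangian boundary map, once one identifies the limiting bases $B(u(x))$ as $x$ tends to a boundary point of $\widetilde{S_g}$ with the complement in $\mathbb{P}(V)$ of the corresponding Lagrangian.
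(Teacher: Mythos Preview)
Your approach is correct and essentially matches the paper's: the corollary is stated in the introduction as a direct consequence of Theorem~\ref{thm:INTRO charachterization}, with the translation to $\mathbb{P}(V)$ via the rank one locus exactly as you describe. The one technical verification you flag---that the fibered domain $\Omega=\bigcup_x B(u(x))$ coincides with the Guichard--Wienhard domain---is not left implicit in the paper but is proven as Proposition~\ref{prop:Completeness and fibration of DOD}, using the description of $\xi^n_\rho$ from Theorem~\ref{thm:WellFittedImplesAnosov} along flow lines; so you may cite that directly rather than re-deriving it.
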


\medskip

In order to construct continuous maps that admits a fitting flow for maximal representations, we show  how to associate to a pair of transverse Lagrangians a quadric on $\R^{2n}$. %We discuss an other use for these quadrics in Section \ref{subsec:polygons of hyperboloids} to decompose the projective structures associated to maximal representations into smaller pieces which are polygons of quadric hypersurfaces. These quadrics were used by Burelle-Treib to study Schotty subgroups \cite{Burelle}.

\subsection{Organization of the paper}
This paper begins with a recall of some facts about maximal and Anosov representations in Section \ref{sec:Maximal and Anosov representations}.

The main definitions are introduced in Section \ref{sec:An invariant convex domain and its fibrations} where fibrations of a projective convex set by projective subspaces are discussed, more precisely fibrations of the projective model for the symmetric space of $\SL(2n,\R)$. 

In Section \ref{sec:Fitting flows} we introduce the notion of fitting flows.

In Section \ref{sec:The Anosov property and fibrations.} we discuss how the existence of an equivariant continuous map with a fitting flow implies the Anosov property, Theorem \ref{thm:WellFittedImplesAnosov}, and describes a fibration of a domain of discontinuity in projective space. 

In Section \ref{sec:Characterization of the structures.} we focus on representations into $\Sp(2n,\R)$ and prove Theorem \ref{thm: fitting implies maximal}, which is the characterization of maximal representations by the existence of a locally fitting map of maximal pencils of quadrics that admits a fitting flow.

In Section \ref{sec:Subsection Symmetric space} we prove two propositions relative to the symmetric space of $\SL(2n,\R)$ and $\Sp(2n,\R)$.

In Appendix \ref{sec:H3} we discuss fibrations of the hyperbolic $3$-space with geodesic fibers. 

In Appendix \ref{sec:SP4} we show how spacelike surfaces in $\mathbb{H}^{2,2}$ with a bound on their principal curvatures define a fitting immersion of pencils. 

%Finally in Appendix \ref{subsec:polygons of hyperboloids} we briefly discuss a decomposition of the projective structure associated to a maximal representation into hexagons of quadric hypersurfaces.

\subsection*{Acknowledgments}
I would like to thank Andrea Seppi for the helpful discussions during my stay in Grenoble that helped me a lot on this project. I would like to thank also Beatrice Pozzetti for discussing this topic and giving feedback.
 The author was funded through the DFG Emmy Noether project 427903332 of B. Pozzetti and the RTG 2229 “Asymptotic
Invariants and Limits of Groups and Spaces”, and partially by the European Union via the ERC 101124349 "GENERATE". 

Views and opinions expressed are however those of the authors only and do not necessarily reflect those of the European Union or the European Research Council Executive Agency. Neither the European Union nor the granting authority can be held responsible for them.

\section{Maximal and Anosov representations.}
\label{sec:Maximal and Anosov representations}
\subsection{Maximal representations.}
\label{sec:Subsec on maximal representations}
Let us fix a symplectic from $\omega$ on $\R^{2n}$, i.e. a non-degenerated bilinear antisymmetric pairing. A \emph{symplectic basis} of $\R^{2n}$ is a basis $(x_1,\cdots, x_n,y_1,\cdots, y_n)$ in which :
$$\omega= \sum_{i=1}^n x_i^*\wedge y_i^*.$$

We define $\Gra_n(\R^{2n})$ as the space of $n$-dimensional subspaces of $\R^{2n}$ A \emph{Lagrangian} in $(\R^{2n},\omega)$ is an element $\ell\in \Gra_n(\R^{2n})$ such that $\omega$ restricted to $\ell$ is equal to zero. We denote by $\mathcal{L}_n$ the space of Lagrangians in $(\R^{2n},\omega)$. We say that two Lagrangians are transverse if their intersection is trivial.

\medskip

Let $\Sp(2n,\R)$ be the subgroup of elements in $\SL(2n,\R)$ that preserves $\omega$. This groups acts transitively on $\mathcal{L}_n$, as well as on the space of pairs of transverse Lagrangians. Given a triple $(\ell_1,\ell_2,\ell_3)$ of transverse Lagrangians, one can find a symplectic basis such that for some $(\epsilon_i)\in \lbrace 1,-1\rbrace$:
$$\ell_1=\langle x_1,x_2,\cdots, x_n\rangle, $$
$$\ell_2=\langle x_1+\epsilon_1 y_1,x_2+ \epsilon_2 y_2,\cdots , x_n +\epsilon_ny_n\rangle,$$
$$\ell_3=\langle y_1,y_2,\cdots, y_n\rangle. $$

The sum of the $(\epsilon_i)$ is an invariant of the triple of flags that is called the \emph{Maslov index} $M(\ell_1,\ell_2,\ell_3)$. These facts can be found in \cite[{Section 1.5.7}]{lion2013weil}.
The group $\Sp(2n,\R)$ acts transitively on the space of triples of transverse Lagrangians with a given Maslov index. We say that $(\ell_1,\ell_2,\ell_3)$ is \emph{maximal} if the Maslov index of the triple in equal to $n$.

\medskip

The Lie group $\Sp(2n,\R)$ is of Hermitian type and tube type. Hence it admits a special class in its continuous cohomology group $[\tau]\in H^2_c(\Sp(2n,\R),\mathbb{Z})$. Let $S_g$ be a closed oriented surface of genus $g\geq 2$. The fundamental class of $S_g$ defines a cohomology class $[S_g]\in H^2(\pi_1(S_g),\mathbb{Z})\simeq \mathbb{Z}$. Given a representation $\rho:\pi_1(S_g)\to \Sp(2n,\R)$ one can consider the pullback of this class $\rho^*[\tau]=T(\rho)[S_g]$. the integer $T(\rho)$ is called the \emph{Toledo number} of $\rho$.

\medskip

The Toledo number can take only finitely many values as the space of representations can only have finitely many connected components. More precisely:

\begin{lem}[{\cite{BIW10}}]
Let $\rho:\pi_1(S_g)\to \Sp(2n,\R)$, the Toledo number satisfies :
$$(-2g+2)n \leq T(\rho)\leq (2g-2)n.$$
\end{lem}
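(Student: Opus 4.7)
The plan is to apply the Burger--Iozzi--Wienhard strategy of representing the K\"ahler class by a bounded cocycle and then pairing with the fundamental class of $S_g$, using that the latter has finite Gromov norm.

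First I would exhibit a bounded representative of $[\tau]$ coming from the Maslov index. The map $(\ell_0,\ell_1,\ell_2)\mapsto M(\ell_0,\ell_1,\ell_2)$ on triples of pairwise transverse Lagrangians is $\Sp(2n,\R)$-invariant and, as recalled in the excerpt, takes integer values in $\{-n,-n+2,\ldots,n\}$. Fixing a basepoint $\ell_0\in\mathcal{L}_n$ and extending measurably to all triples yields an $\Sp(2n,\R)$-invariant bounded cocycle on the group, hence a bounded class $\tau_b\in H^2_{cb}(\Sp(2n,\R),\R)$ representing (a non-zero multiple of) $[\tau]$, with $\|\tau_b\|_\infty\leq n/2$ in the appropriate normalization. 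To fix the normalization I would compare with the rank-one case $n=1$: there $\Sp(2,\R)=\SL(2,\R)$, the Maslov cocycle is twice the standard orientation cocycle, and the bounded Euler class is known to have $\ell^\infty$-norm $1/2$; the same scaling then carries through to arbitrary $n$.

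Next, the Toledo number is recovered as a pairing. Since by definition $\rho^*[\tau]=T(\rho)\,[S_g]^\vee$ in $H^2(\Gamma_g,\Z)\cong\Z$, we have
$$T(\rho)=\langle \rho^*\tau_b,[S_g]\rangle,$$
where $[S_g]\in H_2(\Gamma_g,\R)$ is the (real) fundamental class. The standard duality between bounded cohomology and $\ell^1$-homology (Gromov, Ivanov) gives
$$|T(\rho)|=|\langle\rho^*\tau_b,[S_g]\rangle|\leq \|\rho^*\tau_b\|_\infty\cdot \|[S_g]\|_1\leq \|\tau_b\|_\infty\cdot\|[S_g]\|_1.$$
Gromov's computation of the simplicial volume of a closed oriented surface of genus $g\geq 2$ gives $\|[S_g]\|_1=|\chi(S_g)|=4g-4$, and combining with the bound on $\|\tau_b\|_\infty$ yields
$$|T(\rho)|\leq \frac{n}{2}(4g-4)=n(2g-2),$$
which is exactly the claimed inequality.

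The main obstacle is justifying the sharp constant $\|\tau_b\|_\infty=n/2$ in the bounded-cocycle representative; a purely abstract bound from the range of $M$ only gives $\leq n$, which would be off by a factor of two. Beyond the rank-one comparison mentioned above, a concrete way to pin down the constant is to exhibit a representation saturating the bound, e.g.\ the composition of a hyperbolic uniformisation $\Gamma_g\to\SL(2,\R)$ with the standard diagonal embedding $\SL(2,\R)\hookrightarrow \Sp(2n,\R)$, for which one computes directly that $T(\rho_0)=n(2g-2)$; this forces the constant in the upper bound to be no larger than $n/2$, and the bounded-cohomology argument shows it cannot be smaller.
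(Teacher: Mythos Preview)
The paper does not prove this lemma: it is stated with a citation to \cite{BIW10} and no proof is given. Your outline is essentially the Burger--Iozzi--Wienhard argument itself, so in that sense you are reproducing the cited source rather than comparing against anything in the present paper.

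Two remarks on the proposal. First, a minor slip: you write $\|[S_g]\|_1=|\chi(S_g)|=4g-4$, but $|\chi(S_g)|=2g-2$; the simplicial volume is indeed $4g-4$, so your arithmetic is fine but the equation is mislabelled. Second, and more substantively, your proposed fix for the normalization constant has the logic reversed. Exhibiting a representation $\rho_0$ with $T(\rho_0)=n(2g-2)$ shows only that the optimal constant $c$ in $|T(\rho)|\le c(4g-4)$ satisfies $c\ge n/2$; it does nothing to establish $c\le n/2$, which is exactly what you need for the inequality. The sharp bound $\|\tau_b\|_\infty=n/2$ must be proved directly, for instance via the Clerc--{\O}rsted computation of the sup norm of the bounded K\"ahler class, or by the explicit coboundary relating the Maslov cocycle to twice the K\"ahler form as in \cite{BIW10}. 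Your rank-one comparison is the right idea for fixing the scaling of the class, but it does not by itself bound the norm in higher rank.
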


Such a representation is called \emph{maximal} if its Toledo number is equal to $(2g-2)n$.
\subsection{Anosov representations.}
\label{sec:Subsection on anosov represetnations}

Let $\Gamma$ be a finitely generated group. Anosov representations are representations with some exponential gaps between singular values.

\medskip

Fix a word metric $|\cdot|$ on $\Gamma$ and a scalar product on $\R^{2n}$ allowing us to define the singular values $\sigma_1(g)\geq \sigma_2(g)\geq \cdots \geq \sigma_{2n}(g)$ of $g\in \SL(2n,\R)$ as the eigenvalues of $\sqrt{g^tg}$. The following definition is independent of these choices.

\begin{defn}[{\cite{BPS}}]
\label{defn:Anosov2}
We say that a representation $\rho:\Gamma\to \SL(2n,\R)$ is $\lbrace n\rbrace$-Anosov if there exist $A,B>0$ such that for  all $\gamma\in \Gamma$:
$$\frac{\sigma_n(\rho(\gamma))}{\sigma_{n+1}(\rho(\gamma))}\geq e^{A|\gamma|+B}.$$
\end{defn}

If a group admits an Anosov representation, it must be Gromov hyperbolic \cite{BPS}. We denote by $\partial \Gamma$ its Gromov boundary. Anosov representations come with boundary maps.

\begin{thm}[{\cite[{Theorem 1.1}]{Gu_ritaud_2017}}]
Let $\rho:\Gamma\to \SL(2n,\R)$ be $\lbrace n\rbrace$-Anosov. There exist a unique $\rho$-equivariant continuous map $\xi^n_\rho:\partial \Gamma\to \Gra_n(\R^{2n})$ such that:
\begin{itemize}
\item for all distinct $x,y\in \partial \Gamma$, $\xi^n_\rho(x)\oplus \xi^n_\rho(y)=\R^{2n}$ (\emph{transverse}),
\item for all $\gamma\in \Gamma$ that admit an attracting fixed point $\gamma^+\in \partial \Gamma$, $\xi^n_\rho(\gamma^+)$ is the attracting fixed of the action of $\rho(\gamma)$ on $\Gra_{n}(\R^{2n})$ (\emph{dynamic preserving}),
\end{itemize}
If moreover $\rho(\Gamma)\subset \Sp(2n,\R)$, then $\xi^n_\rho(x)$ is a Lagrangian for all $x\in \partial \Gamma$.
\end{thm}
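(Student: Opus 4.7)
The plan is to construct the boundary map via Cartan attractors, following the framework of \cite{BPS}. For $g\in\SL(2n,\R)$ with $\sigma_n(g)>\sigma_{n+1}(g)$, I would set $U_n(g)\in\Gra_n(\R^{2n})$ to be the $n$-plane spanned by the top $n$ singular directions of $g$, i.e.\ the eigenvectors of $g g^T$ associated with the $n$ largest eigenvalues. The key analytic estimate is that if $g$ has a large singular gap then $U_n(gh)$ is close to $U_n(g)$, with distance controlled by $\sigma_{n+1}(g)/\sigma_n(g)$ and the operator norm of $h$. Given $x\in\partial\Gamma$ and a geodesic ray $(\gamma_k)$ in the Cayley graph of $\Gamma$ converging to $x$, the Anosov inequality $\sigma_n(\rho(\gamma))/\sigma_{n+1}(\rho(\gamma))\geq e^{A|\gamma|+B}$ then makes $(U_n(\rho(\gamma_k)))_k$ a Cauchy sequence in $\Gra_n(\R^{2n})$, and the same estimate shows that the limit depends only on $x$ and not on the approaching sequence. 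I would define $\xi_\rho^n(x)$ to be this limit.

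The remaining properties will follow from the Cartan attractor framework. Continuity of $\xi_\rho^n$ would come from making the Cauchy estimate uniform on the compact space $\partial\Gamma$. Equivariance is obtained by comparing $U_n(\rho(\gamma)\rho(\gamma_k))$ with $\rho(\gamma)U_n(\rho(\gamma_k))$, which coincide up to an error that vanishes as the gap of $\rho(\gamma_k)$ grows. For the dynamics-preserving property, taking $\gamma\in\Gamma$ with attracting fixed point $\gamma^+\in\partial\Gamma$, the Anosov gap applied to the sequence $(\gamma^k)$ forces $\rho(\gamma)$ to have a strict modulus gap between its $n$-th and $(n+1)$-st eigenvalues, and its attracting $n$-plane is then $\lim_k U_n(\rho(\gamma)^k)=\xi_\rho^n(\gamma^+)$. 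Transversality of $\xi_\rho^n(x)$ and $\xi_\rho^n(y)$ for distinct $x,y$ will follow from the exponential contraction of the Cartan attractor near $\xi_\rho^n(x)$ applied to vectors near $\xi_\rho^n(y)$: failure of transversality would contradict the linear divergence of $\rho(\gamma_k^{-1}\delta_k)$. Uniqueness follows because attracting fixed points of infinite-order elements are dense in $\partial\Gamma$, and any continuous map satisfying the dynamics-preserving condition is determined on this dense set.

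For the symplectic refinement, I would equip $\R^{2n}$ with an inner product compatible with $\omega$, so that the maximal compact subgroup of $\Sp(2n,\R)$ coincides with $\Sp(2n,\R)\cap\Or(2n)$. Then the Cartan decomposition of any $g\in\Sp(2n,\R)$ places the singular values in reciprocal pairs $\sigma_1\geq\cdots\geq\sigma_n\geq 1\geq\sigma_n^{-1}\geq\cdots\geq\sigma_1^{-1}$, and whenever $\sigma_n>\sigma_{n+1}$ the top $n$-plane $U_n(g)$ is a Lagrangian. Since $\mathcal{L}_n$ is closed in $\Gra_n(\R^{2n})$, the limit $\xi_\rho^n(x)$ is then a Lagrangian for every $x\in\partial\Gamma$. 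The main technical obstacle will be the quantitative estimate controlling the variation of the Cartan attractor under right multiplication: comparing $U_n(gh)$ and $U_n(g)$ in terms of the singular gap of $g$ is exactly where the exponential gap from the Anosov hypothesis enters in a crucial way, and it is the only step of the argument not formal once that estimate is in hand.
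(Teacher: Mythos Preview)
Your approach via Cartan attractors is the standard one from \cite{BPS} and is correct; the sketch you give would assemble into a complete proof with the usual quantitative estimate on $d(U_n(gh),U_n(g))$ in terms of $\sigma_{n+1}(g)/\sigma_n(g)$ and $\lVert h\rVert$. However, note that the paper does not actually prove this theorem: it is stated in Section~\ref{sec:Subsection on anosov represetnations} as background, recalling a known result from the Anosov literature (the definition immediately preceding it is attributed to \cite{BPS}), and no proof is supplied. So there is nothing to compare your argument against in the paper itself; your proposal simply fills in what the paper takes for granted.
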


The fact that $\xi^n_\rho(x)$ is a Lagrangian is a consequence of the fact that an attracting fixed $n$-dimensional subspace for an element $g\in \Sp(2n,\R)$ is necessarily Lagrangian, and every $x\in \partial \Gamma$  is a limit of attracting fixed points $\gamma^+$ of elements $\gamma\in \Gamma$.

\medskip

Maximal representations have been characterized in \cite{BurgerIozziWienhardLabourie}, \cite{BIW03}:

\begin{thm}
\label{thm:BIW charac}
A representation $\rho:\pi_1(S_g)\to \Sp(2n,\R)$ is maximal if and only if it is $\lbrace n\rbrace$-Anosov and for one and hence any positively oriented triple $(x,y,z)\in \partial \pi_1(S)$ the triple $(\xi^n_\rho(x),\xi^n_\rho(y),\xi^n_\rho(z))$ is a maximal triple of Lagragians.
\end{thm}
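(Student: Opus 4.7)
The plan is to prove the equivalence as two separate implications, exploiting the interpretation of the Toledo number via bounded cohomology and the Shilov boundary of $\Sp(2n,\R)$.

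For the ``maximal implies Anosov with maximal Maslov triples'' direction, I would invoke the Burger--Iozzi--Wienhard characterization recalled implicitly in Section~\ref{sec:Subsec on maximal representations}: a representation is maximal if and only if it is \emph{positive}, that is, if and only if it admits an equivariant boundary map $\xi:\partial \pi_1(S_g)\to \mathcal{L}_n$ sending every positively oriented cyclic triple in $\partial \pi_1(S_g)$ to a triple of Lagrangians of maximal Maslov index $n$. This immediately yields the statement on triples. To upgrade positivity to the $\{n\}$-Anosov property, I would use that a positive triple automatically consists of pairwise transverse Lagrangians, so $\xi$ is transverse, and then exploit the positive structure to get uniform estimates: for any $\gamma\in \pi_1(S_g)$ of infinite order with attracting/repelling boundary points $\gamma^\pm$, the Maslov cocycle $M(\xi(\gamma^-),\xi(z),\xi(\gamma z))=n$ for all $z$ between $\gamma^-$ and $\gamma^+$ forces a uniform contraction of the $\rho(\gamma)$-action on a neighborhood of $\xi(\gamma^+)$ in $\mathcal{L}_n$, from which one extracts the singular value gap of Definition~\ref{defn:Anosov2} along the lines of \cite{BurgerIozziWienhardLabourie}.

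For the converse ``$\{n\}$-Anosov plus one positively oriented triple of maximal Maslov index implies maximal'' direction, the strategy is cohomological. The Anosov hypothesis produces the continuous, transverse, dynamics-preserving boundary map $\xi^n_\rho$ into $\mathcal{L}_n$. One then writes the Toledo number as the pullback of the bounded Kähler class $[\tau]_b\in H^2_{cb}(\Sp(2n,\R),\R)$: up to a normalization,
\begin{equation*}
T(\rho)=\frac{1}{2g-2}\,\big\langle \rho^*[\tau]_b,[S_g]\big\rangle,
\end{equation*}
and the bounded class $[\tau]_b$ is represented by the Maslov cocycle $M$ on triples of Lagrangians. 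Pulling back along $\xi^n_\rho$ turns this into an integral of $M(\xi^n_\rho(x),\xi^n_\rho(y),\xi^n_\rho(z))$ over a fundamental cycle of $\partial \pi_1(S_g)^{(3)}$. The hypothesis that one (and hence, by $\pi_1(S_g)$-equivariance and density of orbits of positively oriented triples of fixed points of hyperbolic elements, any) positively oriented triple maps to a maximal triple forces the integrand to take the maximal value $n$ almost everywhere, so the integral reaches its upper bound and $T(\rho)=(2g-2)n$.

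The main obstacle in this plan is the forward implication from maximality to the Anosov property. The positivity characterization gives a continuous boundary map with strong geometric properties, but translating positivity into a quantitative exponential singular value gap requires careful use of the dynamics of $\rho(\gamma)$ on the Shilov boundary and a projective contraction argument; this is the step where one genuinely needs the combined input of \cite{BIW03} and \cite{BurgerIozziWienhardLabourie} rather than a short elementary argument. The rest of the proof is essentially a cohomological computation using the Maslov cocycle together with the fact, already recorded in Section~\ref{sec:Subsec on maximal representations}, that $\Sp(2n,\R)$ acts transitively on triples of transverse Lagrangians with fixed Maslov index.
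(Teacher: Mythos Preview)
The paper does not prove this statement at all: Theorem~\ref{thm:BIW charac} is stated as a result of \cite{BurgerIozziWienhardLabourie} and \cite{BIW03}, with no proof given in the text. Your proposal is therefore not to be compared against an argument in the paper, but is rather a sketch of how those references establish the result; as such it is broadly accurate in identifying positivity and bounded cohomology via the Maslov cocycle as the two key ingredients, and in flagging the passage from positivity to the Anosov singular-value gap as the genuinely nontrivial step.

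One small correction: your justification of the ``one and hence any'' clause via equivariance and density of orbits is not the right mechanism. The Maslov index $M(\xi^n_\rho(x),\xi^n_\rho(y),\xi^n_\rho(z))$ is an integer-valued function that is continuous (hence locally constant) on the space of pairwise transverse triples of Lagrangians; since $\xi^n_\rho$ is continuous and transverse and the space of positively oriented triples in $\partial\pi_1(S_g)$ is connected, the value is globally constant. Density of a particular class of triples plays no role here.
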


One can also characterize maximal representations among $\lbrace n\rbrace$-Anosov representations by looking at the homotopy type of their boundary map. The fundamental group of the space of Lagrangians $\mathcal{L}_n$ is isomorphic to $\Z$ \cite{ThesisTopologyFlag} where a generator is :
$$\tau:\theta\in \mathbb{S}^1\mapsto \langle \cos\left(\frac{\theta}{2}\right)x_1+\sin\left(\frac{\theta}{2}\right)y_1, x_2,\cdots, x_n\rangle\in \mathcal{L}_n.$$

\begin{thm}
\label{thm:Caract Maximal reprs via homotopy}
A representation $\rho:\pi_1(S_g)\to \Sp(2n,\R)$ is maximal if and only if it is $\lbrace n\rbrace$-Anosov and the free homotopy type of the curve $\xi^n_\rho$ is equal to $n[\tau]$.
\end{thm}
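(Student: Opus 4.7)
The plan is to combine the Burger--Iozzi--Wienhard characterization of Theorem \ref{thm:BIW charac} with a topological computation relating the free homotopy class of the boundary curve $\xi^n_\rho$ to the Maslov index on positive triples. First observe that by Theorem \ref{thm:BIW charac}, $\rho$ is maximal if and only if it is $\lbrace n\rbrace$-Anosov and some (equivalently every) positively oriented triple $(x,y,z)\in \partial\pi_1(S_g)^3$ satisfies $M(\xi^n_\rho(x),\xi^n_\rho(y),\xi^n_\rho(z))=n$. For an $\lbrace n\rbrace$-Anosov $\rho$, the boundary map $\xi^n_\rho$ is continuous and transverse, so the integer-valued Maslov index is locally constant on the connected space of positively oriented triples, and hence equal to a single constant $M_\rho\in\lbrace-n,-n+2,\ldots,n-2,n\rbrace$.

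Both directions then follow from the identity $[\xi^n_\rho]=M_\rho\cdot[\tau]$ in $\pi_1(\mathcal{L}_n)\simeq\Z$: if $\rho$ is maximal then $M_\rho=n$ and so $[\xi^n_\rho]=n[\tau]$; conversely if $\rho$ is Anosov with $[\xi^n_\rho]=n[\tau]$, then $M_\rho=n$, so every positive triple maps to a maximal triple and Theorem \ref{thm:BIW charac} gives that $\rho$ is maximal.

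To prove this identity I would work in the universal cover $\widetilde{\mathcal{L}_n}\to\mathcal{L}_n$, an infinite cyclic cover whose deck group is generated by a lift $T$ of $[\tau]$. The Maslov cocycle admits an $\Sp(2n,\R)$-equivariant integer refinement $\widetilde{M}$ on $\widetilde{\mathcal{L}_n}^3$ that is antisymmetric, continuous away from non-transverse triples, and shifted by a fixed constant when one entry is translated by $T$. Lifting $\xi^n_\rho$ through the universal cover $\R\to\partial\pi_1(S_g)\simeq S^1$ yields a map $\widetilde{\xi}:\R\to\widetilde{\mathcal{L}_n}$ satisfying $\widetilde{\xi}(t+1)=T^{[\xi^n_\rho]}\widetilde{\xi}(t)$. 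Tracking $\widetilde{M}(\widetilde{\xi}(t_1),\widetilde{\xi}(t_2),\widetilde{\xi}(t))$ as $t$ increases from $t_3$ to $t_3+1$ and comparing the total shift (which by the equivariance rule is $[\xi^n_\rho]$) with the constant value $M_\rho$ of the Maslov index on the positive triples traversed identifies $[\xi^n_\rho]$ with $M_\rho$.

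The main obstacle is fixing normalization conventions coherently: the sign of the Maslov cocycle, the chosen generator $[\tau]$ of $\pi_1(\mathcal{L}_n)$, and the cyclic orientation of $\partial\pi_1(S_g)$ induced by that of $S_g$ must all be aligned so that positively oriented maximal triples correspond to winding in the $+[\tau]$ direction. Once these conventions are compatible, the argument above is essentially the bounded-cohomology computation underlying the identification of the Toledo invariant with the pullback $\rho^*[\tau_b]$ of the bounded Maslov class.
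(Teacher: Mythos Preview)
Your overall strategy is sound and reduces, as does the paper's proof, to the identity $[\xi^n_\rho]=M_\rho\,[\tau]$ relating the free homotopy class of the boundary curve to the common Maslov index $M_\rho$ on positively oriented triples. The paper, however, establishes this identity by a much more elementary and explicit argument: it fixes one positive triple $(x,y,z)$, puts $(\xi^n_\rho(x),\xi^n_\rho(y),\xi^n_\rho(z))$ into the standard form with signs $(\epsilon_i)$, writes down a concrete loop $\tau_0$ in $\mathcal{L}_n$ passing through these three Lagrangians, decomposes $\tau_0$ as a concatenation of $n$ elementary loops each homotopic to $\pm\tau$, and then uses that the set of Lagrangians transverse to a fixed Lagrangian is contractible to homotope $\xi^n_\rho$ to $\tau_0$ on each of the three arcs. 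This gives $[\xi^n_\rho]=[\tau_0]=(\sum_i\epsilon_i)[\tau]=M_\rho[\tau]$ with no covering-space machinery and no normalization headaches.

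Your universal-cover route is a legitimate alternative, but the ``tracking'' step as you describe it has a gap. As $t$ runs from $t_3$ to $t_3+1$, the downstairs triple $(\xi^n_\rho(t_1),\xi^n_\rho(t_2),\xi^n_\rho(t))$ is \emph{not} always a positively oriented transverse triple: it degenerates when $t$ passes $t_1+1$ and $t_2+1$, and on the middle interval the cyclic orientation is reversed, so the Maslov index there equals $-M_\rho$, not $M_\rho$. Hence ``the constant value $M_\rho$ on the positive triples traversed'' does not directly match the total $T$-shift. To salvage the argument you must bookkeep the jumps of the lifted index at these two walls (this is exactly where the identification of the Maslov cocycle with the generator of $H^1(\mathcal{L}_n;\Z)$ enters, and where your unspecified ``fixed constant'' has to be pinned down). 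Alternatively, and more in the spirit of the paper, you can bypass all of this by replacing the three arcs of $\xi^n_\rho$ with explicit paths inside the contractible transverse charts, which hands you a loop whose class is visibly $(\sum_i\epsilon_i)[\tau]$.
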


\begin{rem}
In Theorem \ref{thm:BIW charac} and hence also in Theorem  \ref{thm:Caract Maximal reprs via homotopy}, one can relax the assumption that $\rho$ is $\lbrace n\rbrace$-Anosov. In \cite[{Theorem 8}]{BIW03} is is shown that it is sufficient to assume that $\rho$ admits an equivariant, continuous and transverse map $\xi^n_\rho: \partial \Gamma \to \mathcal{L}_n$ satisfying the additional property of sending positive triples to maximal triples.
\end{rem}

\begin{proof}
Let $\rho:\pi_1(S_g)\to \Sp(2n,\R)$ be $\lbrace n\rbrace$-Anosov. Let $(x,y,z)$ be a positively oriented triple in $\partial \pi_1(S_g)$. Up to changing the symplectic basis, we can assume that for some $(\epsilon_i)\in \lbrace -1,1\rbrace$:
$$\xi^n_\rho(x)=\langle x_1,x_2,\cdots, x_n\rangle, $$
$$\xi^n_\rho(y)=\langle x_1+\epsilon_1 y_1,x_2+ \epsilon_2 y_2,\cdots, x_n +\epsilon_ny_n\rangle,$$
$$\xi^n_\rho(z)=\langle y_1,y_2,\cdots, y_n\rangle. $$

Here the Maslov index of the triple $(\xi^n_\rho(x),\xi^n_\rho(y),\xi^n_\rho(z))$ is equal to the sum of the $(\epsilon_i)$.

\medskip

Consider the following curve:
$$\tau_0:\theta\mapsto \langle \cos\left(\frac{\theta}{2}\right)x_1+\epsilon_1\sin\left(\frac{\theta}{2}\right)y_1,\cdots, \cos\left(\frac{\theta}{2}\right)x_n+ \epsilon_n\sin\left(\frac{\theta}{2}\right)y_n\rangle.$$

This loop is homotopic to the concatenation of the loops $\tau_i$ for $1\leq i\leq n$:
$$\tau_i:\theta\mapsto \langle x_1, x_2,\cdots , \cos\left(\frac{\theta}{2}\right)x_i+ \epsilon_i\sin\left(\frac{\theta}{2}\right)y_i, \cdots , x_n\rangle.$$
 
These loops are homotopic to $\tau$ or its inverse depending on the sign of $\epsilon_i$. The homotopy type of $\tau_0$ is hence equal to $(\epsilon_1+\epsilon_2+\cdots+\epsilon_n)[\tau]$. Moreover the set of Lagrangians tranverse to a given Lagrangian is contractible, so one can homotope $\tau_0$ on the intervals $[0,\frac{\pi}{2}]$, $[\frac{\pi}{2}, \pi]$ and $[\pi,2\pi]$ to coincide with $\xi^n_\rho$. Hence the free homotopy class of $\xi^n_\rho$ is equal to the one of $\tau_0$, which is equal to $M(\xi^n_\rho(x),\xi^n_\rho(y),\xi^n_\rho(z))[\tau]$. We therefore deduce that $\rho$ is maximal if and only if it is $\lbrace n\rbrace$-Anosov and $[\xi^n_\rho]=n[\tau]$.
\end{proof}

\section{An invariant convex domain and its fibrations}
\label{sec:An invariant convex domain and its fibrations}
In this section we define globally fitting maps and fitting immersions, which are maps that parametrize fibrations of the projective model for the symmetric space of $\SL(2n,\R)$ by projective subspaces of codimension $d$.

\subsection{Pencils of quadrics}
\label{sec:Subsection on geometry of Grassmanians}

Let $V$ be a finite even-dimensional vector space with a fixed volume form. Let $S^2V$ be the space of symmetric bilinear tensors on $V$, which we interpret as maps $V^*\to V$. The dual space $\mathcal{Q}=S^2V^*$ is the space of symmetric bilinear forms on $V$, or the space of \emph{quadrics} on $V$, that we interpret as maps $V\to V^*$.

We denote by $S^2V^{\geq 0}$ and $S^2V^{>0}$ respectively the space of semi-positive and positive symmetric tensors, \ie elements $s\in S^2V$ that define respectively a semi-positive and positive symmetric bilinear form on $V^*$. The Lie group $\SL(V)$ acts on $S^2V$, and preserves the properly convex set $\mathbb{P}(S^2V^{\geq 0})$. The convex domain $\mathbb{P}(S^2V^{>0})$ is a projective model for the symmetric space associated to $\SL(V)$.

\medskip

The Grassmanian of $d$-dimensional linear subspaces of $\mathcal{Q}$ will be denoted by $\Gra_d(\mathcal{Q})$. An element of $\Gra_2(\mathcal{Q})$ is usually called a \emph{pencil of quadrics} on $V$. We will here also call elements of $\Gra_d(\mathcal{Q})$ pencils of quadrics.

\medskip

To an element $P\in \Gra_d(\mathcal{Q})$ one can associate its  annihilator, or dual, codimension $d$ subspace $P^\circ\subset S^2V$. This dual space can be described as the space of symmetric tensors $s\in S^2V$ on which one has $q(s)=\Tr(q\circ s)=0$ for all $q\in P$. Note that the projectivization $\mathbb{P}(P^\circ)$ also has codimension $d$ in $\mathbb{P}(S^2V)$.

\medskip

The subspace $\mathbb{P}(P^\circ)$ does not necessarily intersect the convex $\mathbb{P}(S^2V^{>0})$.

\begin{defn} We say that a pencil $P\in \Gra_d(\mathcal{Q})$ is \emph{mixed} if $P^\circ$ contains a positive element, \ie if $P^\circ\cap S^2V^{>0}\neq \lbrace 0\rbrace$. We call the set of mixed pencils $\Gra_d^{{\mix}}(\mathcal{Q})$.
\end{defn}

\begin{prop}
\label{prop:MixingCharacterization}
A pencil $P\in \Gra_d(\mathcal{Q})$ is mixed if and only if there is no semi-positive quadric $0\neq q\in P$.
\end{prop}

Morally this proposition is a consequence of the fact that the dual of the cone of positive elements $S^2V^{>0}$ is the cone of semi-positive bilinear forms in $\mathcal{Q}$, in the sense that $S^2V^{>0}$ is the set of tensors $s$ such that $q(s)>0$ for all semi-positive $q\in \mathcal{Q}$.

\begin{proof}
Fix $P\in \Gra_d(\mathcal{Q})$ and suppose that there is a positive element $s\in E^\circ$. We write $s$ as a finite sum of positive rank one elements of the form $v_i\otimes v_i$ with $v_i\in V$ non-zero for $i\in I$ in a finite set. Indeed $s$ defines a positive bilinear form on $V^*$ and hence is a sum of rank one symmetric positive bilinear forms. Let $q\in P$ be a semi-positive element. One has $q(s)=0$ since $s\in P^\circ$ but $q(v_i\otimes v_i)=q(v_i,v_i)\geq 0$ since $q$ is positive and hence for all $i\in I$, $q(v_i,v_i)=0$. 

Note that for all $v\in V$ non-zero we can chose a decomposition of $s$ such that $v$ is one of the $v_i$ , since $s-\epsilon v\otimes v$ is still positive for $\epsilon$ small enough. 

We proved that $q(v,v)=0$ for all $v\in V$ so $q$ is necessarily equal to $0$. Hence $P$ does not contain any non-zero semi-positive element.

\medskip

Now suppose that $P^\circ$ is disjoint from the cone $S^2V^{>0}$, so there must be a hyperplane $H$ in $S^2V$ containing $P^\circ$ and disjoint from $S^2V^{>0}$. This hyperplane corresponds to $\langle q\rangle^\circ$ for some non-zero $q\in P$. This element has the property that $q(s)\neq 0$ for all $s\in S^2V^{>0}$. Up to exchanging $q$ by $-q$ one can assume that $q(s)>0$ on $S^2V^{>0}$, so $q(s)\geq 0$ on $S^2V^{\geq 0}$. In particular for all $v\in V$, one has $q(v\otimes v)=q(v,v)\geq 0$, so $q$ is semi-positive.
 
\end{proof}

\subsection{Fitting pairs.}

To a pencil $P\in \Gra_d(\mathcal{Q})$ we associate the codimension $d$ projective subspace $\mathbb{P}(P^\circ)$, and its intersection $\mathbb{P}(P^\circ\cap S^2V^{\geq 0})$ in the convex set $\mathbb{P}(S^2V^{\geq 0})$.

\begin{defn}

We say that two elements $P_1, P_2\in \Gra^\text{mix}_d(\mathcal{Q})$ form a \emph{fitting pair} if the associated subspaces $\mathbb{P}(P_1^\circ \cap S^2V^{\geq 0})$ and $\mathbb{P}(P_2^\circ \cap S^2V^{\geq 0})$ are disjoint.
\end{defn}

The structure of the convex set $\mathbb{P}(S^2V^{\geq 0})$ is involved, but the set of its extreme points $S^2\mathbb{P}(V)\subset \mathbb{P}(S^2V^{\geq 0})$ is the projectivization of the set of rank one tensors, which is in one-to-one correspondence with $\mathbb{P}(V)$. Here the rank of a symmetric tensor is also the rank of the corresponding map $V^*\to V$. The \emph{extreme points} of a closed properly convex subset $C$ of projective space are the points $x\in C$ such that if $s\subset C$ is a projective segment containing it, $x$ is an endpoint of $s$. We show that the condition of being a fitting pair can be checked by looking only at $\mathbb{P}(V)$.

\medskip

Given a symmetric bilinear form $q\in \mathcal{Q}$, we will write respectively $\lbrace q=0\rbrace$, $\lbrace q>0\rbrace$ and $\lbrace q\geq 0\rbrace \subset \mathbb{P}(V)$ the set of lines that are respectively null, positive and non-negative for $q$.

\begin{prop}
\label{prop:WellFitted=FibrationCContinuous}
Let $P_1,P_2\in \Gra^{{\mix}}_d(\mathcal{Q})$ be two mixed pencils.
\begin{itemize}
\item[(i)] $(P_1,P_2)$ form a fitting pair,
\item[(ii)] there exist $q_1\in P_1$ and $q_2\in P_2$ such that $q_2-q_1$ is positive,
\item[(iii)] there exist $q_1\in P_1$ and $q_2\in P_2$ such that $\lbrace q_1\geq 0\rbrace\subset \lbrace q_2> 0\rbrace$.

\end{itemize}
\end{prop}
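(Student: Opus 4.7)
The plan is to establish the cycle (ii) $\Rightarrow$ (iii) $\Rightarrow$ (i) $\Rightarrow$ (ii). The implication (ii) $\Rightarrow$ (iii) is immediate: if $q_2 - q_1$ is positive definite, then for $v \in V \setminus \{0\}$ with $q_1(v) \geq 0$ one has $q_2(v) = q_1(v) + (q_2 - q_1)(v) > 0$, hence $\{q_1 \geq 0\} \subset \{q_2 > 0\}$.

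For (i) $\Leftrightarrow$ (ii), I would argue by duality. The projective subspaces in (i) are disjoint if and only if $(P_1 + P_2)^\circ \cap S^2V^{\geq 0} = P_1^\circ \cap P_2^\circ \cap S^2V^{\geq 0} = \{0\}$. On the other hand, since each $P_i$ is a linear subspace of $\mathcal{Q}$, condition (ii) is equivalent to the statement that the subspace $P_1 + P_2$ contains a positive definite quadric (take $q_2 - q_1$). The equivalence of these two statements is a standard Hahn--Banach separation argument: a linear subspace $W \subset \mathcal{Q}$ fails to contain a positive definite element precisely when there is a nonzero element in $W^\circ \cap S^2V^{\geq 0}$, using that the cone of semi-positive bilinear forms in $\mathcal{Q}$ and $S^2V^{\geq 0}$ are dual cones (as already recorded in the excerpt).

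The main content is (iii) $\Rightarrow$ (i). Suppose for contradiction that there is a nonzero $p \in P_1^\circ \cap P_2^\circ \cap S^2V^{\geq 0}$. Fix an auxiliary inner product on $V$ and view $p$ as a symmetric positive-semidefinite operator; set $r = \rank p$ and $W = \im(p) \subset V$. The goal is to write $p = \sum_{i=1}^r w_i \otimes w_i$ with every $w_i$ lying in the null locus $\{q_1 = 0\}$: condition (iii) then forces $q_2(w_i) > 0$ for each $i$, contradicting $q_2(p) = \sum_i q_2(w_i) = 0$ (which holds because $p \in P_2^\circ$). Searching for $w_i$ of the form $\sqrt{p}\, f_i$ with $(f_i)$ an orthonormal basis of $W$, the identity $\sum_i w_i \otimes w_i = p$ holds automatically, and $q_1(w_i) = f_i^T A f_i$, where $A$ is the restriction to $W$ of the symmetric operator $\sqrt{p}\, q_1 \sqrt{p}$.

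The main obstacle is thus to produce an orthonormal basis of $W$ along which $A$ has vanishing diagonal. This is possible because $\Tr(A) = \Tr(q_1 p) = q_1(p) = 0$---the hypothesis $p \in P_1^\circ$ enters exactly here---together with the standard linear algebra fact that any trace-zero symmetric operator on a finite-dimensional inner product space admits an orthonormal basis with zero diagonal entries. This fact is a special case of the Schur--Horn theorem, but can also be proved by an elementary induction that rotates pairs of eigenvectors of opposite sign inside a $2$-plane until each diagonal entry is killed.
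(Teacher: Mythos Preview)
Your argument is correct. The implications (ii) $\Rightarrow$ (iii) and (i) $\Leftrightarrow$ (ii) match the paper's treatment essentially verbatim (the paper only states (i) $\Rightarrow$ (ii), but the duality argument is the same ``iff'' you give). The difference lies in (iii) $\Rightarrow$ (i).

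The paper proves (iii) $\Rightarrow$ (i) via a convex-geometry lemma (Lemma~\ref{lem:The convex C is nice}): it shows that the positive-semidefinite cone has no one-dimensional facets, hence every extremal point of a hyperplane section $\mathbb{P}(\langle q_1\rangle^\circ \cap S^2V^{\geq 0})$ is already a rank-one tensor, so this section is the hull of $S^2\{q_1=0\}$. Positivity of $q_2$ on $\{q_1=0\}$ then propagates to the whole section. Your approach reaches the same intermediate conclusion---any $p\in \langle q_1\rangle^\circ\cap S^2V^{\geq 0}$ decomposes as $\sum w_i\otimes w_i$ with $q_1(w_i)=0$---but constructively, via $w_i=\sqrt{p}\,f_i$ and the Schur--Horn/zero-diagonal trick applied to the trace-zero operator $\sqrt{p}\,q_1\sqrt{p}$ on $\mathrm{im}(p)$. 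This is a nice self-contained linear-algebra substitute for the facet analysis; it avoids any discussion of the face structure of the PSD cone, at the cost of invoking (or reproving) the zero-diagonal lemma. The paper's route, on the other hand, yields the reusable structural statement of Lemma~\ref{lem:The convex C is nice}, which is referenced again later.
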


In order to prove this statement, the key fact that we will prove and use the is that the convex set $\mathbb{P}(S^2V^{\geq 0})$ does not have any segment as a \emph{face}, i.e. the intersection of this convex with any hyperplane intersecting trivially the interior $\mathbb{P}(S^2V^{> 0})$ is never a non-trivial segment.

\medskip

 The set $\lbrace q=0\rbrace$ is identified via the identification $\mathbb{P}(V)\simeq S^2\mathbb{P}(V)\subset \mathbb{P}(S^2V)$ with the intersection $\langle q\rangle^\circ\cap S^2\mathbb{P}(V)$. 

\begin{lem}
\label{lem:The convex C is nice}
For all linear hyperplane $H$ in $S^2V$ the extreme points of $\mathbb{P}(H\cap S^2V^{\geq 0})$ are also extreme points of $\mathbb{P}(S^2V^{\geq 0})$. In particular:
$$\mathbb{P}(H\cap S^2V^{\geq 0})=\Hull\left(H\cap S^2\mathbb{P}(V)\right).$$
Furthermore if $H=\langle q\rangle ^\circ$ for $q\in \mathcal{Q}$, $H\cap S^2V=S^2\lbrace q=0\rbrace$.
\end{lem}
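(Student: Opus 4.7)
The plan is to show that every extremal point of $\mathbb{P}(H\cap S^2V^{\geq 0})$ has rank one, i.e.\ lies in $S^2\mathbb{P}(V)$; since the rank-one semi-positive tensors are exactly the extremal points of $\mathbb{P}(S^2V^{\geq 0})$, this gives the first statement. First I would recall the standard description of the face structure of the positive semi-definite cone: the faces of $S^2V^{\geq 0}$ are the subcones $S^2W^{\geq 0}$ indexed by linear subspaces $W\subseteq V$, and a tensor $p$ of rank $r$ lies in the relative interior of the face $F_p = S^2W^{\geq 0}$ corresponding to $W=\im p$ (viewing $p$ as a map $V^*\to V$).

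The core step is to rule out extremality in $\mathbb{P}(H\cap S^2V^{\geq 0})$ for any $p$ of rank $r\geq 2$. Since $F_p$ has linear span $S^2W$ of dimension $\binom{r+1}{2}\geq 3$, and $H$ is a hyperplane in $S^2V$, the intersection $H\cap S^2W$ has dimension at least $\binom{r+1}{2}-1\geq 2$. As $p$ already lies in $H\cap S^2W$, I can choose $\epsilon\in H\cap S^2W$ not proportional to $p$ and small enough that $p\pm\epsilon$ remain in the relative interior of $F_p$, hence in $S^2V^{\geq 0}$. Then $p=\tfrac{1}{2}(p+\epsilon)+\tfrac{1}{2}(p-\epsilon)$ is a non-trivial convex combination inside $H\cap S^2V^{\geq 0}$, so $[p]$ is not extremal. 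The only mildly delicate point is ensuring the perturbations stay in the cone, but because $F_p$ is a single face and $p$ is in its relative interior, staying within $S^2W$ and small enough is enough.

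For the ``In particular'' identity I would invoke Krein--Milman: $\mathbb{P}(H\cap S^2V^{\geq 0})$ is a compact convex subset of an affine chart of $\mathbb{P}(S^2V)$, so it equals the convex hull of its extremal points, which by the preceding step lie in $H\cap S^2\mathbb{P}(V)$. For the last sentence, when $H=\langle q\rangle^\circ$, a rank-one tensor $[v\otimes v]$ lies in $H$ iff $\langle q,v\otimes v\rangle = q(v,v)=0$, so $H\cap S^2\mathbb{P}(V)$ is precisely the quadric $\lbrace q=0\rbrace$ under the identification $\mathbb{P}(V)\simeq S^2\mathbb{P}(V)$.
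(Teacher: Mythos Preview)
Your proof is correct and follows essentially the same approach as the paper: both arguments exploit the face structure of the positive semi-definite cone, observing that a non-extremal point of $\mathbb{P}(S^2V^{\geq 0})$ lies in the relative interior of a face $\mathbb{P}(S^2W^{\geq 0})$ of projective dimension at least $2$, so that intersecting with the hyperplane $H$ still leaves a segment through the point. Your version is slightly more explicit in the dimension count and in justifying why the perturbation $p\pm\epsilon$ remains in the cone, and you also spell out the Krein--Milman step and the ``Furthermore'' clause, which the paper's proof leaves implicit.
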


Recall that the Krein–Milman theorem states that the convex Hull of the extreme points of a closed convex set $C$ in a finite dimensional vector space is equal to $C$.

\begin{proof}

Let $f$ be a face of $\mathbb{P}(S^2V^{\geq 0})$, \ie the intersection of $\mathbb{P}(S^2V^{\geq 0})$ with a projective hyperplane corresponding to $[q]\in \mathbb{P}(S^2V^*)=\mathbb{P}(\mathcal{Q})$ not intersecting $\mathbb{P}(S^2V^{>0})$ trivially. It is the intersection of this convex with a projective hyperplane corresponding to $[q]\in \mathbb{P}(S^2V^*)=\mathbb{P}(\mathcal{Q})$. The fact that this hyperplanes intersects $\mathbb{P}(S^2V^{>0})$ trivially implies that there is a lift $q\in \mathcal{Q}$ that is a semi-positive element, by Proposition \ref{prop:MixingCharacterization}. Let $W\subset V$ be the vector subspace of isotropic vectors for $q$. The corresponding face is equal to $\mathbb{P}(S^2W^{\geq 0})$.

Hence faces of $\mathbb{P}(S^2V^{\geq 0})$ are of the form $\mathbb{P}(S^2W^{\geq 0})$ for $W\subset V$ a linear subspace.
This has dimension $0$ or at least $2$, and therefore no face is a segment.

\medskip

Suppose that for some general projective hyperplane, some extreme point $[s]$ of $\mathbb{P}(H\cap S^2V^{\geq 0})$ is not an extreme point of $\mathbb{P}(S^2V^{\geq 0})$, then it belongs to the interior of a face of $\mathbb{P}(S^2V^{\geq 0})$, that has dimension at least $2$ as stated previously. The intersection of this face with $H$ contains therefore an open segment containing $s$, so $s$ is not an extreme point of $H\cap \mathbb{P}(S^2V^{\geq 0})$.
\end{proof}

\begin{proof}[{Proof of Proposition \ref{prop:WellFitted=FibrationCContinuous}}]

Let us prove that $(i)$ implies $(ii)$. The set $P_1^\circ\cap P_2^\circ$ is disjoint from $\mathbb{P}(S^2V^{\geq 0})$ if and only if there exist an element in $P_1+P_2$ that belongs to the dual of $\mathbb{P}(S^2V^{\geq 0})$, i.e. if there exist a positive bilinear form $q\in P_1+P_2$.
This form can be written as $q=q_1-q_2$ with $q_1\in P_1$ and $q_2\in P_2$, therefore $(i)$ implies $(ii)$.

\medskip

Moreover $(ii)$ implies $(iii)$. Indeed, if $q_2-q_1$ is positive then $\lbrace q_1\geq 0\rbrace\subset \lbrace q_2> 0\rbrace$.

\medskip

It only remains to show that $(iii)$ implies $(i)$. Lemma \ref{lem:The convex C is nice} implies that:
$$\mathbb{P}(\langle q_1\rangle^\circ\cap S^2V^{\geq 0})=\text{Hull}(S^2\lbrace q_1=0\rbrace).$$

Hence $q_2\in P_2\subset \mathcal{Q}=S^2V^*$ is positive on the cone $\langle q_1\rangle^\circ\cap S^2V^{\geq 0}\subset S^2V$ and therefore $\mathbb{P}(P_2^\circ)$ does not intersect $\mathbb{P}(\langle q_1\rangle^\circ\cap S^2V^{\geq 0})$.
\end{proof}

\subsection{Fitting directions.}

The space $\Gra_d(\mathcal{Q})$ inherits the structure of a smooth manifold. A chart around a point $P\in \Gra_d(\mathcal{Q})$ can be constructed given a subspace $Q$ such that $P\oplus Q=\mathcal{Q}$. We denote by $U_Q\subset \Gra_d(\mathcal{Q})$ the open set of elements transverse to $Q$. Every element of $U_Q$ can be written uniquely as the graph $\lbrace x+u(x)|x\in P\rbrace$ for some linear map $u:P\to Q$. Hence $U_Q$ can be identified with the vector space $\Hom(P,Q)$.

\medskip

The tangent space $T_P\Gra_d(\mathcal{Q})$ can be naturally identified with $\Hom(P,\mathcal{Q}/P)$, so that for each chart $U_Q$ containing $P$, the tangent space identifies with the tangent space in the chart via the identification $\Hom(P,Q)\simeq \Hom(P,\mathcal{Q}/P)$.

\medskip

Because of the identification $T_{P}\Gra_d(\mathcal{Q})\simeq T_{P^\circ}\Gra_{N-d}(\mathcal{Q}^*)$ where $N=\dim(\mathcal{Q})$, to such an element $\mathrm{v}$ corresponds an element: $$\mathrm{v}^\circ\in \Hom\left(P^\circ,S^2V/P^\circ\right).$$

\medskip

Let $\mathrm{v}^\circ\in T_P\Gra_d(\mathcal{Q}^*)$, that we see as an element of $\Hom(P^\circ,\mathcal{Q}/P^\circ)$. Let $(P^\circ_t)_{t\geq 0}$ a curve of elements of $\Gra_{N-d}(\mathcal{Q}^*)$ with $P^\circ_0=P^\circ$ and derivative $\mathrm{v}^\circ$ at $t=0$.
One can interpret $\Ker(\mathrm{v}^\circ)$ as the set of vectors in $P^\circ$ which remain in $P^\circ_t$ at first order around $t=0$.
More precisely the lines $[s]$ in $\Ker(\mathrm{v}^\circ)$ are exactly the ones such that for any Riemannian metric on $\mathbb{P}(V)$ and any such curve $(P^\circ_t)_{t>0}$ one has $d\left(\mathbb{P}\left(P^\circ_t\right),[s]\right)=o(t)$ close to $t=0$. 

\medskip

We will call \emph{fitting directions} in the Grassmanian $\Gra_d(\mathcal{Q})$ the tangent directions such that if $(P_t)\in \Gra_d(\mathcal{Q})$ moves in this direction, the corresponding codimension $d$ subspaces $\mathbb{P}\left(P_t^\circ \cap S^2V^{\geq 0}\right)$ are disjoint from $\mathbb{P}\left(P_0^\circ \cap S^2V^{\geq 0}\right)$, at order one, see Proposition \ref{prop:fitting vector description}.

\begin{defnProp}
\label{defn:fitting vectors}
We say that a vector $\mathrm{v}\in T_P\Gra_d(\mathcal{Q})$ is \emph{fitting} if one of the following equivalent statements holds:
\begin{itemize}
\item $\Ker(\mathrm{v}^\circ)\subset P^\circ$ intersects trivially $S^2V^{\geq 0}$,
\item $\Imu(\mathrm{v})\subset \mathcal{Q}/P$ contains $[q]$ where $q$ is a positive element.
\end{itemize}
\end{defnProp}

We now check that these two statements in the definition are indeed equivalent.

\begin{proof}
 We first show that $\Ker(\mathrm{v}^\circ)=\Imu(\mathrm{v})^\circ$, where we use the natural identification $(\mathcal{Q}/P)^*\simeq P^\circ$.

 We prove this by writing an equation that relates $\mathrm{v}$ and $\mathrm{v}^\circ$. By definition, $\Tr(qs)=0$ for $q\in P$ and $s\in P^\circ$. Let us fix $q\in P$ and $s\in P^\circ$ and choose some representatives $\overline{\mathrm{v}(s)}\in S^2V$ and $\overline{\mathrm{v}^\circ(q)}\in \mathcal{Q}$ for $\mathrm{v}(s)\in S^2V/P^\circ$ and $\mathrm{v}^\circ(q)\in \mathcal{Q}/P$. If $(P_t)$ is a smooth curve with $P_0=P$ and with derivative $\mathrm{v}$ at $t=0$, $s+t\overline{\mathrm{v}^\circ(p)}+o(t)\in P_t$ and $q+t\overline{\mathrm{v}(q)}+o(t)\in P^\circ_t$. Hence we get:
$$\Tr\left(q \overline{\mathrm{v}^\circ(s)}+ \overline{\mathrm{v}(q)}s\right)=0.$$

An element $s\in P^\circ$ satisfies $s\in \Ker(\mathrm{v}^\circ)$ if and only if $\Tr(\overline{\mathrm{v}(q)}s)=0$ for all $q\in \mathcal{Q}$, hence if and only if the corresponding linear form on $Q/P$ belongs to $\Imu(\mathrm{v})^\circ$.

\medskip

Now we prove the equivalence of the two definitions. If there exist $[q]\in \Imu(\mathrm{v})$ with $q$ positive, then for any $s\in\Ker(\mathrm{v}^\circ)$, $\Tr(q\circ s)=0$. Hence $s$ is not a positive tensor. Therefore $\Ker(\mathrm{v}^\circ)\subset P^\circ$ intersects trivially $S^2V^{\geq 0}$.

\medskip

Conversely if $\Ker(\mathrm{v}^\circ)$ intersects trivially $S^2V^{\geq 0}$, there exist $q\in \mathcal{Q}$ that do not vanish on $S^2V^{\geq 0}$, \ie $q$ is positive. The class $[q]\in \mathcal{Q}/P$ belongs to $\Imu(\mathrm{v})$.

\end{proof}

Fitting directions are related to fitting pairs. More precisely:

\begin{prop}
\label{prop:fitting vector description}
A vector $\mathrm{v}\in T_P\Gra_d(\mathcal{Q})$ is fitting if and only if, for every $\mathcal{C}^1$ curve $\gamma:[0,1]\to \Gra_d(\mathcal{Q})$ with $\gamma_0=P$ and $\gamma'_0=\mathrm{v}$, the pair $(\gamma_t,\gamma_0)$ is fitting for all $t>0$ small enough. 

\medskip

Moreover in this case for any Riemannian metric on $\mathbb{P}(S^2V)$ there exists an $\epsilon>0$ such that for any $t>0$ small enough the Riemannian distance between $\mathbb{P}(\gamma_0^\circ\cap S^2V^{\geq 0})$ and $\mathbb{P}(\gamma_t^\circ\cap S^2V^{\geq 0})$ is greater that $\epsilon t$.
\end{prop}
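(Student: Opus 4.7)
The plan is to work in a chart of $\Gra_d(\mathcal{Q})$ around $P$. I fix a complement $W$ so that $\mathcal{Q} = P \oplus W$; this identifies $\mathcal{Q}/P$ with $W$ and lifts $\mathrm{v}$ to $\widetilde{\mathrm{v}}:P \to W \subset \mathcal{Q}$. Every $\mathcal{C}^1$ curve $\gamma$ with $\gamma_0 = P$ and $\gamma'_0 = \mathrm{v}$ is then the graph of a family $u_t:P \to W$ with $u_0 = 0$ and $u'_0 = \widetilde{\mathrm{v}}$. Both sides of the equivalence will be read through the characterization from Proposition~\ref{prop:WellFitted=FibrationCContinuous}: $(P_1,P_2)$ is a fitting pair iff there exist $q_1 \in P_1$, $q_2 \in P_2$ with $q_2 - q_1$ a positive quadric.

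For the forward direction, I pick $p \in P$ and a positive quadric $q$ with $\mathrm{v}(p) = [q]$, and decompose $q = \widetilde{\mathrm{v}}(p) + s$ along $\mathcal{Q} = W \oplus P$. Setting $q_t := p + u_t(p) \in \gamma_t$ and $q_{0,t} := p - ts \in P = \gamma_0$, a first-order Taylor expansion yields $q_t - q_{0,t} = tq + o(t)$, which remains a positive quadric for $t>0$ small since the cone of positive quadrics is open. Hence $(\gamma_t,\gamma_0)$ is fitting by Proposition~\ref{prop:WellFitted=FibrationCContinuous}.

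For the quantitative estimate I reuse the same $q_t, q_{0,t}$. Fix a Euclidean norm on $S^2V$; since $q$ is positive definite and $\{p \in S^2V^{\geq 0} : \|p\| = 1\}$ is compact, there is $c>0$ with $\Tr(qp) \geq c$ on this set. For unit-norm representatives $p$ of a point of $\mathbb{P}(\gamma_0^\circ \cap S^2V^{\geq 0})$ and $p''$ of a point of $\mathbb{P}(\gamma_t^\circ \cap S^2V^{\geq 0})$, the annihilation identities $\Tr(q_{0,t}p) = 0 = \Tr(q_t p'')$ combined with the previous expansion give $\Tr(q_t(p - p'')) \geq tc/2$ for $t$ small; since $\|q_t\|$ remains uniformly bounded, this yields $\|p - p''\| \gtrsim t$. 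The bi-Lipschitz equivalence between the chord distance on unit-norm representatives in the salient cone $S^2V^{\geq 0}$ and the ambient Riemannian distance on $\mathbb{P}(S^2V)$ converts this to the required $\epsilon t$ lower bound.

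For the converse I argue by contraposition: assuming $\mathrm{v}$ is not fitting, I exhibit a single witnessing curve, namely the linear curve $\gamma_t := \{p' + t\widetilde{\mathrm{v}}(p') : p' \in P\}$. Were $(\gamma_t,\gamma_0)$ fitting at some small $t>0$ with witnesses $q_t = p_t + t\widetilde{\mathrm{v}}(p_t) \in \gamma_t$ and $q_{0,t} \in P$, projecting modulo $P$ via $\pi$ gives $t\mathrm{v}(p_t) = \pi(q_t - q_{0,t})$, the $\pi$-image of the positive quadric $q_t - q_{0,t}$. In the only nontrivial case $P \in \Gra_d^{\mix}(\mathcal{Q})$, no positive quadric lies in $\Ker(\pi) = P$, so $\mathrm{v}(p_t)$ is the nonzero class of the positive quadric $(q_t - q_{0,t})/t$ in $\Imu(\mathrm{v})$, contradicting non-fittedness of $\mathrm{v}$. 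The main technical delicacy is in the quantitative estimate: ensuring that the constant $c$ and the bound on $\|q_t\|$ are independent of the chosen representative points, and that the chord-to-projective distance comparison is controlled uniformly over the compact image of $S^2V^{\geq 0}$.
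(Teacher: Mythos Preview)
Your proof is correct and follows the same overall strategy as the paper's: work in a linear chart around $P$ (resp.\ around $P^\circ$), Taylor-expand the curve to first order, and read off the fitting-pair condition. The difference is that you stay on the primal side, using the characterization $\Imu(\mathrm{v})\ni[q]$ with $q$ positive together with Proposition~\ref{prop:WellFitted=FibrationCContinuous}(ii), whereas the paper passes to the dual side and uses $\Ker(\mathrm{v}^\circ)\cap S^2V^{\ge 0}=\{0\}$ to directly exhibit a common point $p\in\gamma_0^\circ\cap\gamma_t^\circ\cap S^2V^{\ge 0}$ for the linear curve. The dual argument is a touch more direct for the converse (no need to invoke Proposition~\ref{prop:WellFitted=FibrationCContinuous} or worry about whether the class is nonzero), while your primal argument makes the forward direction and especially the quantitative estimate more explicit: the paper merely asserts the linear growth in one sentence, whereas you actually produce the constant via the trace pairing $\Tr(q\,\cdot)\ge c$ on unit semi-positive tensors. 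Two small cleanups: the ``only nontrivial case $P\in\Gra_d^{\mix}(\mathcal{Q})$'' remark is unnecessary, since your conclusion $[\,(q_t-q_{0,t})/t\,]=\mathrm{v}(p_t)\in\Imu(\mathrm{v})$ already yields fittingness whether or not that class is zero; and it is worth remarking that the $o(t)$ in $q_t-q_{0,t}=tq+o(t)$ is a fixed element of $\mathcal{Q}$ independent of $p$ and $p''$, which is what makes the $tc/2$ bound uniform over all unit representatives.
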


\begin{proof}
Let us fix a complement $H$ of $P^\circ$ in $S^2V$. Let $N=\dim(\mathcal{Q})=\frac{n(n+1)}{2}$. We identify a neighborhood of $P^\circ\subset \Gra_{N-d}(\mathcal{Q})$ with $\Hom(P^\circ,H)$. If there exist a non-zero element $s\in \Ker(\mathrm{v}^\circ)\cap S^2V^{\geq 0}$, one can consider the curve where $\gamma_t$ corresponds to 
$$q\in P^\circ\mapsto t\mathrm{v}(q).$$ 

The non-zero element $s \in S^2V^{\geq 0}$ belongs to $P^\circ=\gamma_0^\circ$ and $\gamma_t^\circ$ hence the pair $(\gamma_0, \gamma_t)$ is not fitting for any $t>0$.

\medskip

In general the element corresponding to $\gamma_t$ is equal for $t$ close to $0$ to the element of $\Gra_{N-d}(\mathcal{Q})$ which is the graph of the map $P^\circ \to H$:
$$q \mapsto t\mathrm{v}(q)+o(t).$$ 

The pair $(\gamma_o, \gamma_t)$ is fitting if and only if $\Ker(\mathrm{v})\cap S^2V^{\geq 0}=\lbrace 0\rbrace$, hence if $\mathrm{v}$ is fitting.

\medskip

In this case the distance between $\mathbb{P}(\gamma_0^\circ\cap S^2V^{\geq 0})$ and $\mathbb{P}(\gamma_t^\circ\cap S^2V^{\geq 0})$ grows at least linearly in $t$ for $t$ close to $0$.
\end{proof}

Fitting vectors can be thought of analogs of spacelike vectors in a pseudo-Riemannian manifold of signature $(d,N)$ for some $N>0$. For instance the set of spacelike vectors is the union of a family of cones parametrized by $\mathbb{S}^{d-1}$, and so is the set of fitting vectors, as shown in the following proposition. This analogy is also emphasized by Remark \ref{rem:SpacelikeFittingH3} and Theorem \ref{thm:Gauss map is fitting in H22}.

\medskip

For a vector space $W$ we write $\Sph W=(W\setminus \lbrace 0\rbrace)/\R_{>0}$

\begin{prop}
\label{prop:The space of fitting vector is a sphere of cones}
The set of fitting vectors in $T_P\Gra_d(\mathcal{Q})$ is equal to the union:

$$\bigcup_{[q]\in \Sph\mathcal{E}_{P}}C_{[q]}.$$ Here $C_{[q]}$ is the convex open cone of elements $\mathrm{v}\in T_P\Gra_d(\mathcal{Q})$ such that there exist a positive element in the class $\mathrm{v}(q)\in \mathcal{Q}/P$. 
\end{prop}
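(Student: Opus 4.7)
My plan is to derive the equality directly from the image-version characterization of fitting vectors in Definition~\ref{defn:fitting vectors}, working in the standard identification $T_P\Gra_d(\mathcal{Q})\simeq\Hom(P,\mathcal{Q}/P)$. Under this identification, $C_{[q]}$ becomes the set of linear maps $\mathrm{v}\colon P\to\mathcal{Q}/P$ whose value at $q$ is the class of a positive quadric in $\mathcal{Q}/P$, and $\Sph\mathcal{E}_P$ is the ray-space of $P$ viewed as the fiber over $P$ of the tautological bundle.

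For the inclusion $\bigcup_{[q]\in\Sph\mathcal{E}_P}C_{[q]}\subseteq\{\text{fitting vectors}\}$: if $\mathrm{v}\in C_{[q]}$ then $\mathrm{v}(q)=[q']$ with $q'$ positive, so $[q']\in\Imu(\mathrm{v})$ and $\mathrm{v}$ is fitting by the second bullet of Definition~\ref{defn:fitting vectors}. Conversely, if $\mathrm{v}$ is fitting, pick a positive $q'\in\mathcal{Q}$ with $[q']\in\Imu(\mathrm{v})$ and $q_0\in P$ with $\mathrm{v}(q_0)=[q']$; then $\mathrm{v}\in C_{[q_0]}$. A small subtlety is that $q_0$ must be nonzero in order to define $[q_0]\in\Sph\mathcal{E}_P$; this follows because $q_0=0$ would give $q'\in P$, contradicting that $P\in\Gra_d^{\mix}(\mathcal{Q})$ contains no nonzero semi-positive quadric.

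To see that each $C_{[q]}$ is a convex open cone depending only on the ray $[q]$, I consider the linear evaluation $\mathrm{ev}_q\colon\Hom(P,\mathcal{Q}/P)\to\mathcal{Q}/P$, $\mathrm{v}\mapsto\mathrm{v}(q)$, which is surjective since $q\neq 0$. Writing $\mathcal{Q}^{>0}\subset\mathcal{Q}$ for the open convex cone of positive quadrics and $\pi\colon\mathcal{Q}\to\mathcal{Q}/P$ for the projection, the set $\pi(\mathcal{Q}^{>0})$ is an open convex cone in $\mathcal{Q}/P$ because $\pi$ is an open linear surjection; hence $C_{[q]}=\mathrm{ev}_q^{-1}(\pi(\mathcal{Q}^{>0}))$ is an open convex cone. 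Replacing $q$ by $\lambda q$ with $\lambda>0$ rescales $\mathrm{ev}_q$ by $\lambda$ while leaving $\pi(\mathcal{Q}^{>0})$ invariant, so $C_{[q]}$ depends only on $[q]\in\Sph\mathcal{E}_P$.

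There is no serious obstacle: the argument reduces to unwinding Definition~\ref{defn:fitting vectors} and to the elementary fact that the image of an open convex cone under a surjective linear map is open and convex. The only point that requires genuine input is the nonvanishing of $q_0$ above, where the mixedness of $P$ is used essentially. It is worth noting that the parametrization $[q]\mapsto C_{[q]}$ is not injective, so the union on the right is not a disjoint decomposition; this matches the analogy with the spacelike cone of a pseudo-Riemannian form of signature $(d,N)$, which is a likewise non-disjoint union of convex cones indexed by $\mathbb{S}^{d-1}$.
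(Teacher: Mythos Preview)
Your proof is correct and follows essentially the same route as the paper: both deduce the equality directly from the image characterization of fitting vectors in Definition~\ref{defn:fitting vectors}, and both then verify that each $C_{[q]}$ is a convex cone. Your argument is in fact more complete than the paper's, which checks convexity by hand via positive combinations but does not explicitly address openness or the well-definedness of $C_{[q]}$ on rays; your preimage description $C_{[q]}=\mathrm{ev}_q^{-1}(\pi(\mathcal{Q}^{>0}))$ handles all three at once, and your observation that $q_0\neq 0$ requires the mixedness of $P$ is a point the paper passes over in silence.
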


Recall that we use the identification $\mathrm{v}\in T_P\Gra_d(\mathcal{Q})\simeq \text{Hom}(P,\mathcal{Q}/P)$.

\begin{proof}
Because of the second part of Definition \ref{defn:fitting vectors} a vector is well-fitting if and only if it belongs to $C_{[q]}$ for some $[q]\in P$.
We just check that the sets $C_{[q]}$ are indeed open convex cones. 

Let us fix a complement $H$ of $P$ in $\mathcal{Q}$ to identify $T_P\Gra_d(\mathcal{Q})$ with $\Hom(P,H)$.
If $\mathrm{v}_1,\mathrm{v}_2$ lie in $C_{[q]}$ and if $\lambda,\mu\in \R_{>0}$, then for some $q_1,q_2\in P$ one has $\mathrm{v}_1(q)+q_1$ and $\mathrm{v}_2(q)+q_2$ positive. Therefore $(\lambda\mathrm{v}_1+\mu\mathrm{v}_2)(q)+\lambda q_1+\mu q_2$ is positive so $\lambda\mathrm{v}_1+\mu\mathrm{v}_2$ belongs to $C_{[q]}$.

\end{proof}

%The condition that $u:M\to \Gra_d(\mathcal{Q})$ is a fitting immersion is a local property. For it to imply that $u$ defines a globally fitting map, it is necessary to require some completeness, as in the theorey of maximal surfaces in $\mathbb{H}^{2,n}$. A notion of completeness that would do here would be to require the fibers to cover all of $\mathbb{P}(S^2V^{>0})$. This is guaranteed if the fitting immersion is periodic.
%
%\begin{prop}
%\label{prop:Complete fitting is a fibration}
%Let $N$ be a compact connected manifold of dimension $d$ with fundamental group $\Gamma$. Let $\rho:\Gamma\to \SL(V)$ be a representation and let $u:\widetilde{N}\to \Gra_d^{{\mix}}(\mathcal{Q})$ be a $\rho$-equivariant fitting immersion. Then it is a continuous fitting map, and the fibers cover all of $\mathbb{P}(S^2V^{>0})$. 
%\end{prop}
% 
%\begin{proof}
%\wip
%\end{proof}
 
\subsection{Fibration of a convex set and globally fitting maps.}

We consider continuous and smooth fibrations of the $\SL(V)$-invariant convex set $\mathbb{P}(S^2V^{\geq 0})$ by projective codimension $d$ subspaces. Let $M$ be a connected manifold of dimension $d$. We are interested in continuous injective maps, or smooth immersions $u:M\to \Gra_d(\mathcal{Q})$.

\medskip

We write $u^\circ(x)=(u(x))^\circ$. The map $u$ determines a collection of projective subsets of codimension $d$:
\begin{equation}
\label{eq:Fibers}
\left(\mathbb{P}\left( u^\circ(x)\cap S^2V^{\geq 0}\right)\right)_{x\in M}.
\end{equation}

Note here that for any linear subspace $A\subset V$ the projective codimension of $P(A)$ in $\mathbb{P}(V)$ is equal to the codimension of $A$ in $V$.

\medskip

If the image of $u$ contains only mixed elements, then all the submanifolds in \eqref{eq:Fibers} are non-empty.

\begin{defn}

We call a continuous map $u:M\to \Gra_d^{{\mix}}(\mathcal{Q})$ a \emph{globally fitting map} if the subsets in the collection \eqref{eq:Fibers} are disjoint. 

A continuous map $u:M\to \Gra_d^{{\mix}}(\mathcal{Q})$ is a \emph{locally fitting map} if for all $x\in M$ there is a neighborhood $U\subset M$ of $x$ such that $u_{|U}$ is a globally fitting map.

\end{defn}

Since $\dim(M)=d$, the invariance of domain implies that the sets \eqref{eq:Fibers} for a globally fitting map form a fibration for all $x\in M$ of a neighborhood in  $\mathbb{P}(S^2V^{\geq 0})$ of $\mathbb{P}\left(u^\circ(x)\cap S^2V^{\geq 0}\right)$.

We now consider immersions from a manifold $M$ of dimension $d$ whose tangent directions are all fitting.

\begin{defn}
\label{defn:Smooth fibration of C}
A smooth immersion $u:M\to \Gra_d^{{\mix}}(\mathcal{Q})$ is a \emph{fitting immersion} if $\mathrm{d}u(v)$ is fitting for all $v\in TM$.
\end{defn}

\begin{rem} In Appendix \ref{subsec:Example with SL2C} we discuss the simpler case when the pencils are hermitian pecils of quadrics on $\C^2$. In this cases, fitting immersions are fibrations of $\mathbb{H}^3$ by geodesics.
\end{rem}

%We will be interested in a smooth version of this proposition. We consider some tangent directions on $\Gra_d(\mathcal{Q})$ that we call \emph{fitting}. fitting directions have similar properties as spacelike directions in pseudo-Riemannian manifolds.
%
Because of proposition \ref{prop:fitting vector description}, fitting immersions are locally fitting maps.

\medskip 

The following proposition is the infinitesimal equivalent of Proposition \ref{prop:WellFitted=FibrationCContinuous}. We write the statement in a way to emphasize this analogy.

\medskip

Consider the tautological rank $d$ vector bundle $\pr:\mathcal{E}\to\Gra_d(\mathcal{Q})$. The fiber at $P\in \Gra_d(\mathcal{Q})$  of bundle $\mathcal{E}$ is identified with the vector subspace $P\subset \mathcal{Q}$. Since all the fibers are naturally identified with subsets of $\mathcal{Q}$, there is a tautological projection $\pi:\mathcal{E}\to \mathcal{Q}$ such that for $(P,q)$ in the tautological bundle, \ie $P\in \Gra_d(\mathcal{Q})$ and $q\in P$ with $q\in P$, $\pi(P,q)=q$. We still denote by $\pi:u^*\mathcal{E}\to \mathcal{Q}$ the corresponding projection with a slight abuse of notations.

Note that that an element in $T\mathcal{Q}$ is a pair $(q, \dot{q})\in T_q\mathcal{Q}$ and we say that it is positive if the symmetric bilinear form $\dot{q}\in \mathcal{Q}$ is positive. 
\begin{prop}
\label{prop:fitting immersions 3 items}

Given an immersion $u:M\to \Gra^{{\mix}}_d(\mathcal{Q})$, let $x_0\in M$. The following are equivalent:
\begin{itemize}
\item[(i)] the manifolds $\left(\mathbb{P}(u^\circ(x))\right)_{x\in M}$ define locally a smooth fibration of an open neighborhood of $\mathbb{P}\left(u^\circ(x_0)\cap S^2V^{\geq 0}\right)$,
\item[(ii)] for all $v\in T_{x_0}M$, $\mathrm{d}u(v)$ is fitting,
\item[(iii)] for all  $v\in T_{x_0}M$ there exist $\mathrm{w}\in T\mathcal{E}$ such that $\mathrm{d}\pr(\mathrm{w})=\mathrm{d}u(v)$ and $\mathrm{d}\pi(\mathrm{w})\in T\mathcal{Q}$ is positive.
\end{itemize}
\end{prop}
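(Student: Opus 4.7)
The plan is to prove (ii) $\Leftrightarrow$ (iii) by unpacking Definition \ref{defn:fitting vectors}, (i) $\Rightarrow$ (ii) by a direct application of Proposition \ref{prop:fitting vector description}, and to save the bulk of the work for (ii) $\Rightarrow$ (i), which I would handle by an inverse-function-theorem argument on an incidence variety associated with $u^\circ$.

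For (ii) $\Leftrightarrow$ (iii): differentiating the tautological relation $q_t \in P_t$ along a smooth curve $(P_t, q_t)$ in $\mathcal{E}$ identifies $T_{(P, q)}\mathcal{E}$ with the pairs $(v, \dot q) \in T_P \Gra_d(\mathcal{Q}) \times \mathcal{Q}$ satisfying $[\dot q] = v(q)$ in $\mathcal{Q}/P$, with $dp(v, \dot q) = v$ and $d\pi(v, \dot q) = \dot q$. A lift $w$ of $du(v_0)$ with $d\pi(w)$ positive therefore amounts to a pair $(q, q_+)$ with $q \in u(x_0)$, $q_+ \in \mathcal{Q}$ positive, and $[q_+] = du(v_0)(q)$ in $\mathcal{Q}/u(x_0)$; such a pair exists if and only if $\Imu(du(v_0))$ contains the class of a positive quadric, which by the second description in Definition \ref{defn:fitting vectors} is equivalent to $du(v_0)$ being fitting. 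For (i) $\Rightarrow$ (ii): given $v_0 \in T_{x_0}M$ one picks a $\mathcal{C}^1$ curve $\gamma$ in $M$ with $\gamma'(0) = v_0$; the local fibration hypothesis forces $\mathbb{P}(u^\circ(\gamma(t)) \cap S^2V^{\geq 0})$ and $\mathbb{P}(u^\circ(x_0) \cap S^2V^{\geq 0})$ to be disjoint for small $t > 0$, so $(u(\gamma(t)), u(x_0))$ is a fitting pair and Proposition \ref{prop:fitting vector description} applied to $u \circ \gamma$ gives $du(v_0)$ fitting.

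For (ii) $\Rightarrow$ (i) the first step is to shrink $M$ so that $u$ is an embedding, set $P_x = u(x)$, and introduce the smooth incidence manifold $I := \{(x, [p]) \in M \times \mathbb{P}(S^2V) : p \in P_x^\circ\}$, a fiber bundle over $M$ with fiber $\mathbb{P}(P_x^\circ)$ of dimension $N - d - 1$ (here $N = \dim S^2V$), so $\dim I = N - 1 = \dim \mathbb{P}(S^2V)$. The evaluation $F : I \to \mathbb{P}(S^2V)$, $(x, [p]) \mapsto [p]$, encodes the desired fibration: property (i) is equivalent to $F$ being a local diffeomorphism near $\{x_0\} \times F_{x_0}$, where $F_{x_0} := \mathbb{P}(P_{x_0}^\circ \cap S^2V^{\geq 0})$. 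The key differential computation is that $dF$ is injective at every $(x_0, [p_0]) \in I$ with $p_0 \in S^2V^{\geq 0} \setminus \{0\}$: if $\dot z \in T_{(x_0, [p_0])} I$ is represented by $(x_t, [p_t])$ with $dF(\dot z) = 0$, then $\dot p_0 = \lambda p_0$ for some scalar $\lambda$. Differentiating $\Tr(q_t p_t) = 0$ at $t = 0$ along any smooth lift $q_t \in P_{x_t}$ and using $p_0 \in P_{x_0}^\circ$ gives $\Tr(\dot q_0\, p_0) = 0$; this quantity depends only on the class $du(v_0)(q_0) \in \mathcal{Q}/P_{x_0}$, where $v_0 = \dot x_0$. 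Hence $p_0$ annihilates $\Imu(du(v_0))$ under the natural pairing $(\mathcal{Q}/P_{x_0}) \times P_{x_0}^\circ \to \R$; if $v_0 \neq 0$, hypothesis (ii) supplies a positive $q_+$ with $[q_+] \in \Imu(du(v_0))$, and the inequality $\Tr(q_+ p_0) > 0$ (which holds because $q_+$ is positive and $p_0 \in S^2V^{\geq 0}$ is nonzero) contradicts the annihilation. So $v_0 = 0$, forcing $\dot z = 0$. The inverse function theorem then promotes $F$ to a local diffeomorphism at each such point.

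The main obstacle I expect lies in the final compactness-and-patching step: the closed fiber $F_{x_0}$ is not a point but an entire $(N - d - 1)$-dimensional convex body meeting the boundary of $S^2V^{\geq 0}$, and one must cover it by finitely many open sets on which $F$ is a diffeomorphism, then rule out cross-fiber collisions via a sequential compactness argument, in order to upgrade pointwise local diffeomorphisms into a genuine diffeomorphism from a neighborhood of $\{x_0\} \times F_{x_0}$ in $I$ onto an open neighborhood of $F_{x_0}$ in $\mathbb{P}(S^2V)$. The injectivity of $dF$ does extend uniformly to $\partial F_{x_0}$ since the decisive inequality $\Tr(q_+ p_0) > 0$ only requires $p_0 \geq 0$ and $p_0 \neq 0$, which makes me optimistic that the patching can be pushed through.
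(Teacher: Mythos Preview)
Your proposal is correct. The equivalence (ii) $\Leftrightarrow$ (iii) and the implication (i) $\Rightarrow$ (ii) match the paper's argument essentially verbatim: both rely on the identification of $T_{(P,q)}\mathcal{E}$ with pairs $(\mathrm{v},\dot q)$ satisfying $[\dot q]=\mathrm{v}(q)$, and on Proposition~\ref{prop:fitting vector description}.

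For (ii) $\Rightarrow$ (i) you take a genuinely different route. The paper does not set up an incidence variety; instead it observes directly that, since $\dim M=d$ equals the codimension of the leaves $\mathbb{P}(u^\circ(x))$, statement (i) is equivalent to a uniform linear lower bound $d_R\bigl(\mathbb{P}(u^\circ(x)\cap S^2V^{\geq 0}),\mathbb{P}(u^\circ(x_0)\cap S^2V^{\geq 0})\bigr)\geq \epsilon\, d_M(x,x_0)$, and then reads this linear separation off from the second clause of Proposition~\ref{prop:fitting vector description} (the $\epsilon t$ lower bound along any curve), using compactness of the unit sphere in $T_{x_0}M$ to make $\epsilon$ uniform. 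Your approach instead packages the family $x\mapsto \mathbb{P}(u^\circ(x))$ into the evaluation map $F:I\to\mathbb{P}(S^2V)$ and shows $dF$ is injective at every point of $\{x_0\}\times F_{x_0}$ by the pairing computation $\Tr(q_+ p_0)>0$; this is really the same linear-algebra content as the equivalence proved inside Definition~\ref{defn:fitting vectors} (namely $\Ker(\mathrm{v}^\circ)=\Imu(\mathrm{v})^\circ$), but deployed in a different place. What you gain is a self-contained argument that bypasses the metric estimates of Proposition~\ref{prop:fitting vector description}; what you pay is the compactness--patching step you flag. That step is indeed routine: $F$ is already injective on the single fibre $\{x_0\}\times\mathbb{P}(P_{x_0}^\circ)$, $\{x_0\}\times F_{x_0}$ is compact, and injectivity of $dF$ is open, so a standard sequential argument (extract limits along a hypothetical pair of colliding sequences) upgrades the pointwise local diffeomorphisms to a diffeomorphism of a tube.
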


\begin{proof}

Note that since $M$ has dimension $d$ and $u^\circ(x_0)$ has codimension $d$, the statement $(i)$ is equivalent to having for any Riemannian distance $d_R$ on $\mathbb{P}(S^2V)$ and $d_M$ on $M$, for some $\epsilon>0$ when $x$ is close to $x_0$:
$$d_R \left(\mathbb{P}\left(u^\circ(x)\cap S^2V^{\geq 0}\right),\mathbb{P}\left(u^\circ(x_0)\cap S^2V^{\geq 0}\right)\right)\geq \epsilon d_M(x,x_0).$$

Proposition \ref{prop:fitting vector description} shows that the fitting condition is equivalent to having this distance growing linearly, hence the statements $(i)$ and $(ii)$ are equivalent.

%The choice of $\mathrm{v}$ is a way to choose the derivative of a section of $u^*\mathcal{E}$, such that the differences between the corresponding elements of $\mathcal{E}$ are positive.

\medskip

A pair $(P,q_0)\in \Gra_d(\mathcal{Q})\times \mathcal{Q}$ correspond to an element in $\mathcal{E}$ if and only if $q_0\in P$. 
A pair $\left(\mathrm{v},\dot{q}\right)\in T_P\Gra_d(\mathcal{Q})\times T_q\mathcal{Q}$ can be written as $\left(\mathrm{d}\pr(\mathrm{w}),\mathrm{d}\pi(\mathrm{w})\right)$ for some $\mathrm{w}\in T\mathcal{E}$ if and only if $\dot{q}$ belongs to the class defined by  $\mathrm{v}(q_0)$. 

Let us show this last claim. Let $H$ be a complement of $P$ in $\mathcal{Q}$ let $u:[0,1]\to \Hom(P,H)$ and $q:[0,1]\to \mathcal{Q}$ be smooth curves with derivative $\mathrm{v}$ and $\dot{q}$ at $q=0$ and such that $q(t)$ belongs the the graph of $u(t)$ for $t\in[0,1]$. For some smooth curve $\tilde{q}:[0,1]\to P$ with $\tilde{q}(0)=q_0$ and all $t\in [0,1]$:
$$q(t)= \tilde{q}(t)+u(t)(\tilde{q}(t)).$$
Differentiating this at $t=0$ and we get exactly $\mathrm{v}(q_0)+\tilde{q}'(0)=\dot{q}$, so $\dot{q}$ belongs to the class defined by $\mathrm{v}(q_0)$, since $\tilde{q}'(0)\in P$. Reciprocally if this holds, one can construct such a curve $\tilde{q}$, so the pair corresponds to an element of $T\mathcal{E}$.

\medskip

We conclude that $(iii)$ is equivalent to the second characterization of fitting vectors in Definition \ref{defn:fitting vectors}: one can find such a positive lift $\mathrm{w}$ if and only if one can find a class in $\Imu(\mathrm{v})$ that contains a positive element.

%A pair $\left(\mathrm{v},(q,\dot{q})\right)\in T_P\Gra_2(\mathcal{Q})\times T\mathcal{Q}$ can be written as $\left(\mathrm{d}\pr(\mathrm{w}),\mathrm{d}\pi(\mathrm{w})\right)$ if and only if $q\in P$ and $\dot{q}$ belongs to the class defined by  $\mathrm{v}(q)$. Indeed $q\in P$ can move in the direction $\dot{q}$ while stating in the pencil while the pencil $P$ is moving in the direction $\mathrm{v}$. 

\end{proof}

In Proposition \ref{prop:GaussMapSymmetric space for totally geodesic immersions} we show how to construct some examples of fitting maps from a totally geodesic immersion in the symmetric space.

\begin{rem}
The definition of a fitting immersion and the previous two propositions can be generalized to the more general setup when $S^2V$ is replaced by a vector space $W$ and $S^2V^{\geq 0}$ is replaced by a closed proper convex cone $C$ in $W$. In this setup positive quadrics should be replaced by elements in the dual cone of $C$ in $W^*$.
\end{rem}

\section{Fitting flows.}
\label{sec:Fitting flows}

In this section we define the notion of a fitting flow, and study the consequence of the existence of such a flow. We show next that such flows always exist for fitting immersions. In this section let us fix a map $u:M\to \Gra_d(\mathcal{Q})$.

\subsection{Definition and application of fitting flows.}

The pullback $u^*\mathcal{E}$ of the tautological bundle $\pr:\mathcal{E}\to \Gra_d(\mathcal{Q})$ defines a rank $d$ vector bundle over $M$. We define the sphere bundle $\mathbb{S}u^*\mathcal{E}$ as the quotient of the vector bundle $u^*\mathcal{E}$ by the action of positive scalars. 

Recall that $M$ has dimension $d$. We consider flows on $\mathbb{S}u^*\mathcal{E}$ so that some form of contraction occurs along the flow lines. We denote also by $\pr$ the bundle maps $u^*\mathcal{E}\to M$, $\Sph u^*\mathcal{E}\to M$, with a slight abuse of notations. 

\medskip

The fiber at $P\in \Gra_d(\mathcal{Q})$ of the bundle $\mathcal{E}$ is identified with the vector subspace $P\subset \mathcal{Q}$. Recall that there is a natural projection $\pi:\mathcal{E}\to \mathcal{Q}$ defined by $\pi(P,q)=q$. We still denote by $\pi:u^*\mathcal{E}\to \mathcal{Q}$ the corresponding projection with a slight abuse of notations.

\begin{defn}
\label{defn:Fitting flow}
A \emph{fitting flow} for a continuous map $u:M\to\Gra_d(\mathcal{Q})$ is a continuous flow $(\Phi_t)_{t\in \R}$ on $\Sph u^*\mathcal{E}$ such that

one can choose representatives $q$ of $[q]$ and $q'$ of $[q']=\Phi_t([q])$ such that $\pi(q')-\pi(q)\in \mathcal{Q}$ is positive.

\end{defn}

%\begin{rem}
%If one is given a map $u:M\to \Gra_d(\mathcal{Q})$ with an identification of $\Sph TM$ and $\Sph u^*\mathcal{E}$ and a special metric on $M$, one can consider the geodesic flow and check if it is 
%\end{rem}

Note that the last condition is equivalent to asking that $\lbrace \pi(q)\geq 0\rbrace \subset \lbrace \pi(q')> 0\rbrace $ in $\mathbb{RP}^{2n-1}$.
%\begin{defn}
%We say that a \emph{fitting flow} for an immersion $u:M\to\Gra_d(\mathcal{Q})$ is a flow $\Phi$ on $\Sph u^*\mathcal{E}$ that is symmetric, \ie $\Phi_t(-\mathrm{v})=\Phi_{-t}(\mathrm{v})$, and such that every partial flow line $\gamma:I\to \Sph u^*\mathcal{E}$ can be lifted to a $\overline{\gamma}:I\to \mathcal{E}$ so that $\mathrm{d}\pi(\overline{\gamma})\in \mathcal{Q}$ is positive.
%\end{defn}
Along such flows, the associated quadric hypersurfaces are nested into one another. In particular if $u$ admits a fitting flow it is locally a fitting map.

\begin{lem}
\label{lem:Embedded fitting flow lines}
Let $u:M\to\Gra^{{\mix}}_d(\mathcal{Q})$ be a continuous map that admits a fitting flow. The projection to $\Gra^{{\mix}}_d(\mathcal{Q})$ of the flow lines of the fitting flow are embedded.
\end{lem}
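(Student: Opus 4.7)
My plan is to argue by contradiction, leveraging only the two ingredients in the hypothesis: the positivity provided by the fitting flow, and the definition of a mixed pencil. Suppose, for some $q_0 \in \Sph u^*\mathcal{E}$ and times $t_1 < t_2$, that the projected orbit returns to the same pencil, so that
$$u(p(\Phi_{t_1}(q_0))) = u(p(\Phi_{t_2}(q_0))) =: P \in \Gra^{\mix}_d(\mathcal{Q}).$$
Write $[q_1] := \Phi_{t_1}(q_0)$ and $[q_2] := \Phi_{t_2 - t_1}([q_1]) = \Phi_{t_2}(q_0)$, and fix any representative $q_1 \in u^*\mathcal{E}$ of $[q_1]$.

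Next I would invoke Definition \ref{defn:Fitting flow} applied at $q_1$ with time $t_2 - t_1 > 0$: this produces a representative $q_2' \in u^*\mathcal{E}$ of $[q_2]$ such that $\pi(q_2') - \pi(q_1) \in \mathcal{Q}$ is a positive element. Now the fiber of $u^*\mathcal{E}$ above $x \in M$ is by construction the pencil $u(x) \subset \mathcal{Q}$, so
$$\pi(q_1) \in u(p(q_1)) = P, \qquad \pi(q_2') \in u(p(q_2)) = P.$$
Consequently $\pi(q_2') - \pi(q_1)$ is a non-zero positive element of $P$, which directly contradicts the defining property of $P \in \Gra^{\mix}_d(\mathcal{Q})$, namely that $P$ contains no non-zero semi-positive quadric. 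This yields the injectivity of $t \mapsto u(p(\Phi_t(q_0)))$.

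To upgrade injectivity to the stronger embedded conclusion, I would observe that the same positivity argument applies uniformly to every pair $t_1 < t_2$, so it provides a strict monotone invariant along each orbit that forbids any accumulation of the orbit image on itself; together with the continuity of $\Phi$ this shows the orbit projection is a topological embedding. The only delicate point, which is the sole real obstacle, is the book-keeping: one must unwind the tautological identification of the fibers of $u^*\mathcal{E}$ with the corresponding pencils and verify that the representative $q_2'$ produced by the fitting flow condition really maps under $\pi$ to an element of $P$. Once this identification is laid out, the contradiction between fitting-flow positivity and mixedness is essentially a one-line consequence of the definitions.
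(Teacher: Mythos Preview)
Your argument is correct and is essentially the same as the paper's proof: assume the projected orbit revisits a pencil $P$, use the fitting-flow condition to produce a positive quadric as a difference of two elements of $P$, and contradict mixedness. You have written out the identifications more carefully than the paper does, and you also address the distinction between injectivity and being a topological embedding, which the paper's proof does not explicitly treat.
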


\begin{proof}
Assume by contradiction that for some $t_0>0$ and $q\in \Sph u^*\mathcal{E}$ one has $u(x)=\pr(q)=\pr(\Phi_{t_0}(q))$. The fact that the flow is fitting implies that for some $\lambda>0$, $\lambda\pi(\Phi_{t_0}(q))-\pi(q)\in \mathcal{Q}$ is positive. This positive quadric belongs to $u(x)$, contradicting the fact that $u(x)\in  \Gra_d^{{\mix}}(\mathcal{Q})$ by Proposition \ref{prop:MixingCharacterization}.

\end{proof}

Some fitting flows can be constructed by taking a geodesic flow on $M$ for some Riemannian metric and identifying $u^*\mathcal{E}$ with the tangent bundle to $M$. In general the projections of the flow lines of a fitting flow satisfy the following topological property, which is clearly satisfied for geodesic flows.

\begin{lem}
\label{lem:HomotopyWellFitted}
Let $u:M\to\Gra^{{\mix}}_d(\mathcal{Q})$ be a continuous map equipped with a fitting flow $\Phi$ in a neighborhood of $x\in M$. For $t>0$ small enough the sphere $S_t:[q]\in \Sph u^*\mathcal{E}_{|x}\mapsto p\circ \Phi_t([q])\in M$ is homotopic to a generator of the homology of $U\setminus \lbrace x\rbrace$ for any open neighborhood $U$ of $x$ in $M$ that is diffeomorphic to $\R^d$.

\end{lem}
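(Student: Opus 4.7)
The plan is to realize $S_t$ as the boundary sphere of a continuous conical parametrization of a neighborhood of $x$ in $M$, and to identify its homology class with the local degree of this parametrization at $x$, which the fitting condition forces to be $\pm 1$.

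By Lemma \ref{lem:Embedded fitting flow lines}, $p(\Phi_t([q])) \neq x$ for every $[q]\in \Sph u^*\mathcal{E}_{|x}$ and every $t>0$. Continuity of the flow and compactness of the fiber provide $T>0$ such that
$$F : \Sph u^*\mathcal{E}_{|x} \times [0,T] \to U, \qquad F([q],t) = p(\Phi_t([q])),$$
is well-defined, continuous, takes the value $x$ exactly on $\{t=0\}$, and sends $\{t>0\}$ into $U\setminus\{x\}$. Collapsing the slice $\{t=0\}$ produces a continuous map $\bar F : C \to U$ from the $d$-dimensional topological disk $C = (\Sph u^*\mathcal{E}_{|x}\times[0,T])/(\Sph u^*\mathcal{E}_{|x}\times\{0\})$, sending the cone point to $x$ and restricting to $S_T$ on $\partial C\cong S^{d-1}$. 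The boundary map in the long exact sequence of $(U,U\setminus\{x\})$ identifies $[S_T]\in H_{d-1}(U\setminus\{x\})\cong\Z$, up to sign, with the image under $\bar F_*$ of the fundamental class $[C,\partial C]$, equivalently with the local degree of $\bar F$ at $x$. It therefore suffices to prove this local degree equals $\pm 1$.

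I would then establish injectivity of $\bar F$ for $T$ small enough, so that invariance of domain upgrades $\bar F$ to a homeomorphism onto an open neighborhood of $x$ and hence forces the local degree to be $\pm 1$. Suppose $\bar F([q_1],t_1)=\bar F([q_2],t_2)=y$ for distinct pairs with $t_i>0$. Then $\pi(\Phi_{t_i}(q_i))\in u(y)$, and by the definition of a fitting flow one may pick representatives so that $\pi(\Phi_{t_i}(q_i))=\pi(q_i)+w_i$ with $w_i\in S^2V^{>0}$; both these elements lie in $u(y)$. As $T\to 0$, continuity gives $y\to x$, $u(y)\to u(x)$ in $\Gra_d(\mathcal{Q})$, and $w_i\to 0$. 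Adapting the mixedness argument from the proof of Lemma \ref{lem:Embedded fitting flow lines} — a nonzero semi-positive element of $u(x)$ cannot exist — one derives a contradiction with the persistence of such coincidences for arbitrarily small $T$.

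The main obstacle is making the injectivity step above rigorous and quantitative: since $u$ is only assumed continuous one cannot differentiate, so the estimate must be purely continuous and controlled by how fast $u(y)$ approaches $u(x)$ and $w_i$ approaches $0$. The underlying geometric picture is that along flow lines the projective half-spaces $\{p\in\mathbb{P}(S^2V^{\geq 0}):\tr(\pi(q)p)\geq 0\}$ are strictly nested, so distinct starting directions $[q_1]\neq[q_2]$ cut out genuinely different families of half-spaces and their flow lines cannot meet the same fiber $\mathbb{P}(u^\circ(y)\cap S^2V^{\geq 0})$ at small time. Turning this rigidity into a quantitative continuity estimate valid without any smoothness of $u$ is the delicate core of the argument.
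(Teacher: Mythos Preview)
Your reduction to a local degree computation is sound, but the injectivity step is a genuine gap, and you correctly flag it as such. The difficulty is not merely technical: for a flow that is only continuous there is no apparent reason why two distinct flow lines through the fiber over $x$ cannot project to paths in $M$ that cross each other arbitrarily close to $x$. Your sketch produces two elements $\pi(q_i)+w_i\in u(y)$ with $\pi(q_i)\in u(x)$ and $w_i$ positive and small, but this configuration is not contradictory: nothing prevents two distinct elements of the mixed pencil $u(y)$ from each being a small positive perturbation of an element of the nearby mixed pencil $u(x)$. Mixedness only rules out a \emph{single} positive element lying in the pencil; it says nothing about differences of two such perturbations. So the argument as written does not close, and it is not clear that it can be closed without additional regularity hypotheses on $u$ or $\Phi$.

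The paper bypasses injectivity entirely. It fixes a positive tensor $p_0\in u(x)^\circ$ and a $d$-dimensional complement $P'$ of $u(x)^\circ$ in $S^2V$, and defines a continuous map $\phi:U\to P'$ by the condition $p_0+\phi(y)\in u(y)^\circ$. The fitting condition forces $\pi(q)$ to evaluate negatively on $\phi(S_t([q]))$ for every $[q]$ and every $t>0$, because $\pi(\Phi_t(q))$ vanishes on $p_0+\phi(S_t([q]))$ while $\pi(\Phi_t(q))-\pi(q)$ is positive. The resulting map $[\phi\circ S_t]:\Sph u^*\mathcal{E}_x\to\Sph P'$ thus sends every $[q]$ into the open half-space where the linear form $\pi(q)$ is negative; a standard straight-line homotopy shows such a map has the same degree as (the negative of) the duality identification $\Sph P\simeq\Sph P'$, namely $\pm1$. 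Factoring this degree as $\deg(\phi)\cdot\deg(S_t)$ gives $\deg(S_t)=\pm1$ directly, with no injectivity needed. The key idea you are missing is this auxiliary projection into a $d$-dimensional linear transversal, which converts the geometric nesting of half-spaces into a sign condition amenable to a one-line degree argument.
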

 The proof relies on the fact the dimension of $M$ is equal to $d$, and hence the manifolds $\mathbb{P}(u^\circ(x)\cap S^2V^{\geq 0})$ locally define a fibration of $\mathbb{P}(S^2V^{\geq 0})$.

\begin{proof}

Let $P=u(x)$. Let $s_0\in P^\circ\subset S^2V$ be a positive tensor, which exists since the pencil $P$ is assumed to be in $\Gra^{{\mix}}_d(\mathcal{Q})$.
Let $P'\in \Gra_d(S^2V)$ be a supplement of $P^\circ$.
Since $u$ is continuous for all $y$ close enough to $x$ the vector subspace $u^\circ(y)$ is transverse to $P'$, therefore there exist a unique vector $\phi(y)\in P'$ such that:
 $$s_0+\phi(y)\in (s_0+P') \cap u^\circ(y) \subset S^2V^{>0}.$$
This defines a continuous map $\phi$ from a neighborhood $U$ of $x\in M$ to $P'$.

\medskip

Let $[q]\in \Sph u^*\mathcal{E}_x$.  For all $t$ such that $s\circ \Phi_t(q)\in U$, the linear form $\pi\left(\Phi_t(q)\right)\in \mathcal{Q}=S^2V^*$ vanishes on $\phi\left( s\circ \Phi_t(q)\right)\in S^2V$ since this point belongs to $u^\circ(\pr\circ \Phi_t(q))$. Moreover $\pi\left(\Phi_t(q)\right)-\pi(q)$ is a positive bilinear form since $\Phi$ is a fitting flow.

In particular $\pi(q)\in \mathcal{Q}=S^2V^*$ is always negative on $\phi\left( \pr\circ \Phi_t(q)\right)\in S^2V$.
% Let $[q]\in \Sph u^*\mathcal{E}_x$.  For all $t$ such that $p\circ \Phi_t(q)\in U$, the evaluation of $\pi(q)\in \mathcal{Q}=S^2V^*$ to $\phi\left( p\circ \Phi_t(q)\right)\in S^2V$ is positive since $\pi\left(\Phi_t(q)\right)-\pi(q)$ is positive and the linear form $\pi\left(\Phi_t(q)\right)$ vanishes on $\phi\left( p\circ \Phi_t(q)\right)$
Hence for $t$ small enough $[\phi\circ S_t]:\Sph u^*\mathcal{E}_x \to \Sph P'$ has the same degree as $[\pi]: \Sph u^*\mathcal{E}_x \to \Sph P'^*\simeq \Sph P$ which associates to $[q]\in \Sph u^*\mathcal{E}_x $ the class $[\pi(q)]\in \Sph P$. The map $[\pi]$ is a diffeomorphism, so in particular $1=|\deg([\pi])|=|\deg(\phi)\deg(S_t)|$. Hence $S_t$ is a generator of the homotopy group of $U\setminus \lbrace x\rbrace$. 

\end{proof}

\begin{rem}
In particular for an immersion $u:M\to \Gra^{{\mix}}_d(\mathcal{Q})$ that admits a fitting flow, choosing continuously an orientation of the pencils $u(x)$ for $x\in M$ is equivalent to choosing an orientation of $M$.
%This lemma in particular defines a natural orientation on $M$ for every map $u:M\to\Gra^{o}_{d,+}(\mathcal{Q})$ into the space of oriented subspaces if the map admits a fitting flow. Indeed an orientation is given by the choice of a generator for tho homology of $U\setminus \lbrace x\rbrace$.
\end{rem}

Let $M=\widetilde{N}$ where $N$ is compact and $\Gamma=\pi_1(N)$. Let $\rho:\Gamma\to\SL(V)$ be a representation. Recall that a $(C,D)$-quasi geodesic in a metric space $(X,d)$ is a curve $\gamma:\R\to X$ such that for all $t_1,t_2\in \R$: 
$$ 1/C|t_1-t_2|-D\leq d(\gamma(t_1),\gamma(t_2))\leq C|t_1-t_2|+D.$$

\begin{prop}
\label{prop:Flow lines of a fitting flow are quasi geodesic and exist between points}
Let $u:\widetilde{N}\to\Gra^{{\mix}}_d(\mathcal{Q})$ be a $\rho$-equivariant continuous map that admits a $\rho$-equivariant fitting flow $\Phi$. There exist $C,D>0$ such that the projection to $\widetilde{N}$ of the flow lines of $\Phi$ are $(C,D)$-quasi geodesics. Moreover for every $(x,y)\in \widetilde{N}^2$ there exist a flow line whose projection to $\widetilde{N}$ starts at $x$ and ends at $y$.
\end{prop}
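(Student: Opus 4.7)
The plan is to split the statement into the quasi-geodesic estimate on flow lines and the existence of a flow line between any prescribed pair of points. The structural input throughout is the compactness of $\bar{X}:=\Sph u^*\mathcal{E}/\rho(\Gamma)$, which holds because $N$ is closed and $\Phi$ is $\rho$-equivariant. Fix $\rho$-invariant Riemannian metrics on $\widetilde{N}$ and on $\Sph u^*\mathcal{E}$ pulled back from $N$ and $\bar{X}$. Since $\Phi$ is generated by a $\rho$-equivariant continuous vector field $X$ on $\Sph u^*\mathcal{E}$, both $X$ and its projection $dp(X)$ have uniformly bounded norm on the compact quotient, so the Lipschitz upper bound $d_{\widetilde{N}}(p(\Phi_s q), p(\Phi_t q)) \leq C|t-s|$ is immediate.

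For the lower bound I would argue by contradiction. Suppose there exist $(q_n) \subset \Sph u^*\mathcal{E}$ and $t_n \to \infty$ with $d_{\widetilde{N}}(p(q_n), p(\Phi_{t_n}(q_n))) \leq R$. Translating $q_n$ by an element of $\rho(\Gamma)$ so that $p(q_n)$ lies in a fixed compact fundamental domain, the bound forces $p(\Phi_{t_n}(q_n))$ into a bounded region, and compactness of fibres yields subsequential limits $q_n \to q_\infty$ and $\Phi_{t_n}(q_n) \to r_\infty$ in $\Sph u^*\mathcal{E}$. Projecting the trajectories $[0, t_n] \to \bar{X}$, compactness of $\bar{X}$ produces a non-empty invariant set containing a recurrent point $\bar{r}$ for the induced flow; lifting and using properness of the $\Gamma$-action on $\widetilde{N}$, one obtains $\tilde{r} \in \Sph u^*\mathcal{E}$, $\gamma \in \Gamma$ and $s_k \to \infty$ with $\Phi_{s_k}(\tilde{r}) \to \rho(\gamma)\tilde{r}$. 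Choosing lifts $\hat{r}_t \in u^*\mathcal{E}$ of $\Phi_t(\tilde{r})$ compatible with Definition \ref{defn:Fitting flow}, so that $\pi(\hat{r}_{t_2}) - \pi(\hat{r}_{t_1})$ is positive definite whenever $t_2 > t_1$, and iterating the $\rho$-equivariance, one obtains $\hat{r}_{m s_k} \approx \mu^m \rho(\gamma^m)\hat{r}_0$ for every fixed $m\geq 1$, and hence a monotone family $\mu^m \rho(\gamma^m) \pi(\hat{r}_0) - \pi(\hat{r}_0) \geq 0$ in $\mathcal{Q}$. The contradiction is obtained by comparing the growth of this family of quadrics with the boundedness of the orbit of $\tilde{r}$ modulo $\Gamma$, sharpened by Lemma \ref{lem:Embedded fitting flow lines}. \emph{This quantitative iteration is the main obstacle:} the finite-time strict positivity of $\pi(\hat{r}_{s_k}) - \pi(\hat{r}_0)$ degenerates to mere semi-definiteness in the limit, so one must extract from the compactness of $\bar{X}$ a uniform positive lower bound on the short-time quadric increase, in a normalization that interacts correctly with the $\rho(\Gamma)$-action, and then use this lower bound to force the orbit to escape any bounded region of $\bar{X}$.

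For the second claim I would study the forward reachable set $R(x) := \{p(\Phi_t(q)) : q \in \Sph u^*\mathcal{E}_x,\ t \geq 0\}$ and show $R(x) = \widetilde{N}$. By Lemma \ref{lem:HomotopyWellFitted} applied at $x$, the spheres $S_t$ for $t \in (0,\varepsilon]$ sweep a punctured neighborhood of $x$, so $R(x)$ contains a neighborhood of $x$. The lower bound established above makes $R(x)$ closed: if $y_n = p(\Phi_{t_n}(q_n)) \to y_\infty$ with $q_n \in \Sph u^*\mathcal{E}_x$, then $d(x,y_n) \geq t_n/C - D$ forces $t_n$ to remain bounded, and compactness of $\Sph u^*\mathcal{E}_x \times [0, t_{\max}]$ produces a limit $(q_\infty, t_\infty)$ with $y_\infty = p(\Phi_{t_\infty}(q_\infty)) \in R(x)$. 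Openness at an interior point $y = p(\Phi_T(q))$ follows from a second application of Lemma \ref{lem:HomotopyWellFitted}, now at $\Phi_T(q) \in \Sph u^*\mathcal{E}_y$, combined with a perturbation of the flow time; together these produce a full neighborhood of $y$ inside $R(x)$. Connectedness of $\widetilde{N}$ then concludes that $R(x) = \widetilde{N}$, which gives the desired flow line.
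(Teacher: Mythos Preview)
Your proposal has genuine gaps in both halves, and in each case the paper's argument supplies a cleaner idea that you are missing.

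\textbf{Quasi-geodesic lower bound.} Your contradiction/recurrence scheme never closes: you correctly identify the obstacle (passing from strict finite-time positivity to a uniform lower bound that survives the $\rho(\Gamma)$-action and the scalar ambiguity in $\Sph u^*\mathcal{E}$) but do not overcome it. The iteration $\hat r_{ms_k}\approx \mu^m\rho(\gamma^m)\hat r_0$ is not justified, and even granting it, you have not explained why this forces the orbit to escape compact sets in the quotient. The paper avoids all of this by working in the Hilbert metric on $\mathbb{P}(S^2V^{>0})$, which is $\SL(V)$-invariant and hence insensitive to the $\rho$-action and to rescaling. One picks an equivariant section $s:\Sph u^*\mathcal{E}\to \mathbb{P}(S^2V^{>0})$ with $s[q]\in \mathbb{P}(\langle\pi(q)\rangle^\circ)$; the fitting condition gives, by compactness of the quotient, a uniform $\epsilon>0$ such that the closed slices $\mathbb{P}(\langle\pi(q)\rangle^\circ\cap S^2V^{\geq 0})$ and $\mathbb{P}(\langle\pi(\Phi_1(q))\rangle^\circ\cap S^2V^{\geq 0})$ are at Hilbert distance $\geq\epsilon$. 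The Hilbert geodesic from $s(q)$ to $s(\Phi_t(q))$ then crosses $\lfloor t\rfloor$ such nested slices, so $d_H(s(q),s(\Phi_t(q)))\geq \epsilon(t-1)$. Combined with a coarse Lipschitz upper bound for $s$, this yields the two-sided estimate directly. The point is that the Hilbert metric converts ``$\pi(q')-\pi(q)$ positive'' into an honest additive distance increment without any normalization bookkeeping.

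\textbf{Existence of flow lines.} Your open--closed argument breaks at the openness step. Applying Lemma~\ref{lem:HomotopyWellFitted} at $y=p(\Phi_T(q))$ tells you that the \emph{full} fibre $\Sph u^*\mathcal{E}_y$ sweeps a punctured neighbourhood of $y$ under short-time flow; but from $x$ you only reach the single point $\Phi_T(q)$ in that fibre, not the whole sphere, so the lemma does not give $R(x)\supset$ a neighbourhood of $y$. Perturbing the flow time adds one dimension, while you are still missing $d-1$ directions. The paper instead runs a degree/intersection argument in $\widetilde N$: if some $y$ were missed by every sphere $S_t=p\circ\Phi_t(\Sph u^*\mathcal{E}_x)$, then the homological intersection of a path $\eta$ from $x$ to $y$ with $S_t$ in $\widetilde N\setminus\{x,y\}$ would be independent of $t$; it equals $\pm 1$ for small $t$ by Lemma~\ref{lem:HomotopyWellFitted}, but vanishes for large $t$ since the quasi-geodesic lower bound pushes $S_t$ uniformly far from $x$ (and hence from the compact curve $\eta$), a contradiction. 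Your closedness argument is fine and is the one place where your use of the lower bound is correct; but note that the paper's intersection argument also relies on that lower bound, so fixing the first half is prerequisite to the second in either approach.
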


In the previous statement one can consider any fixed Riemannian metric on the compact space $\Sph u^*\mathcal{E}/\rho(\Gamma)$. 

\begin{rem}
\label{rem:Fitting flow implies globally fitting}
Note that since there exist a flow line between any pair of points in $\widetilde{N}$, the map $u$ is necessarily a globally fitting map.
\end{rem}

In this proof we will use the \emph{Hilbert distance} $d_H$ on the properly convex domain $\mathbb{P}(S^2V^{>0})$. It is defined which is defined using the cross ratio as $d_H([s_1],[s_2])=\log\left(\text{cr}\left([s^-],[s_1],[s_2],[s^+]\right)\right)$ where $[s_1],[s_2]\in \mathbb{P}(S^2V^{>0})$ and $[s^-]$, $[s^+]$ are the intersection of the projective line through $[s_1]$ and $[s_2]$ with the boundary of the domain $\mathbb{P}(S^2V^{>0})$. If the closure in $\mathbb{P}(S^2V)$ of two sets $A,B\subset \mathbb{P}(S^2V^{>0})$ are disjoint, then the two sets are at positive distance for the Hilbert distance. Indeed for every Riemannian metric $d_\Sph$ on the compact manifold $\mathbb{P}(S^2V)$, the Hilbert distance between any two points in $\mathbb{P}(S^2V^{>0})$ is bounded from below by some uniform multiple of $d_\Sph$. 

\begin{proof}
For all $[q]\in \Sph u^*\mathcal{E}$ we consider the convex $\mathbb{P}(\langle \pi(q)\rangle ^\circ \cap S^2V^{>0})$.

We choose a continuous and $\rho$-equivariant map $\sigma:\Sph u^*\mathcal{E}\to \mathbb{P}(S^2V^{>0})$ with the property that $\sigma([q])\in \mathbb{P}(\langle \pi(q)\rangle ^\circ \cap S^2V^{>0})$ for all $[q]\in \Sph u^*\mathcal{E}$. One can always find such a choice, as it amonts to finding a section of an open ball bundle over a compact set. Indeed $\mathbb{P}(\langle \pi(q)\rangle ^\circ \cap S^2V^{>0})$ is a non-empty convex set of codimension $1$ that forms an open ball for all  $[q]\in \Sph u^*\mathcal{E}$.

\medskip

We fix a $\Gamma$-invariant Riemannian metric $g$ on $\Sph u^*\mathcal{E}$ with associated distance $d_g$. We set $C_1$ to be the supremum of $d_H\left(\sigma([q_1]),\sigma([q_2])\right)$ for all $[q_1],[q_2]\in \Sph u^*\mathcal{E}$ such that $d_g([q_1],[q_2])\leq 1$, which exists since $\Gamma$ acts cocompactly on $\widetilde{N}$. The following inequality follows for all $[q_1],[q_2]\in \Sph u^*\mathcal{E}$ from the triangular inequality:

 $$d_H\left(\sigma([q_1]),\sigma([q_2])\right)\leq C_1 d_g([q_1],[q_2])+C_1.$$ 
 
Let $n$ be the unique integer $d_g([q_1],[q_2])\leq n< d_g([q_1],[q_2])+1$. One can find elements $x_0=[q_1], x_1,\cdots, x_n=[q_2]$ in $\Sph u^*\mathcal{E}$ such that for all $1\leq i\leq n$ one has $d_g(x_{i-1},x_i)\leq 1$. The triangular inequality for the Hilbert distance implies that:
$$d_H\left(\sigma([q_1]),\sigma([q_2])\right)\leq nC_1.$$
 
\medskip 
 
We now set $C_2$ to be the supremum of $d_H\left(\sigma([q_1]),\sigma(\Phi_t([q]))\right)$ for all $[q_1]\in \Sph u^*\mathcal{E}$ and $0\leq t\leq 1$. Similarly we get the following inequality for all $[q]\in \Sph u^*\mathcal{E}$ and $t\geq 0$:
$$d_H\left(\sigma([q]),\sigma(\Phi_t([q]))\right)\leq C_2 t+C_2.$$
 
  Let $K$ be a compact fundamental domain for the action of $\Gamma$ on $\Sph u^*\mathcal{E}$ and let $\epsilon$ be the infimum of the Hilbert distance for any $[q]\in K$ between $\mathbb{P}\left(\langle\pi(q)\rangle^\circ \cap S^2V^{>0}\right)$ and $\mathbb{P}\left(\langle \pi(\Phi_1(q))\rangle^\circ \cap S^2V^{>0}\right)$. Since the flow is fitting, the closures of these two sets in $\mathbb{P}(S^2V)$ are disjoint for any $[q]$, and hence their Hilbert distance is positive. Since $K$ is compact, the infimum $\epsilon$ is also positive. 
%  Let $C$ be the supremum for $q\in \Sph u^*\mathcal{E}$ and $t\in [0,1]$ of the Hilbert distance between $s(q)$ and $s(\Phi_t(q))$.

The Hilbert distance between $\sigma(q)$ and $\sigma(\Phi_t(q))$ for $t>0$ and $q\in \Sph u^*\mathcal{E}$ is greater than $\epsilon(t-1)$.
Indeed for all integer $0\leq n\leq t$ the projective segment between $\sigma(q)$ and $\sigma(\Phi_t(q))$, which is a geodesic for the Hilbert distance, intersects $\mathbb{P}\left(\langle \pi(\Phi_n(q))\rangle^\circ \cap S^2V^{>0}\right) $ in exactly one point $x_n$. Hence the Hilbert distance between $x_n$ and $x_{n+1}$ for $0\leq n\leq t-1$ is at least $\epsilon$.

\medskip

Putting all of these inequalities together we get that for all $t\geq 0$ and $[q]\in \Sph u^*\mathcal{E}$:
$$ \frac{\epsilon}{C_1}(t-1)-1\leq d_g\left(\sigma([q]),\sigma(\Phi_t([q]))\right)\leq C_2 (t+1) $$

Hence the flow lines are quasi-isometric embeddings.

\medskip

We now check that flow lines exist between any pair of points. 
Let $x\in M$. Given $t\in \R$ we consider the $d$-sphere $S_t:q\in \Sph u^*\mathcal{E}_{|x}\mapsto \Phi_t(q)\in u^*\mathcal{E}$.
Suppose that some $y\in M$, avoids the sphere $S_t$ for all $t>0$. Consider a curve $\eta$ between $x$ and $y$.
The homological intersection between this segment and the spheres $S_t$ in $M\setminus \lbrace x,y\rbrace$ is constant, and is equal to zero for $t$ large enough since the spheres $S_t$ are then uniformly far from $x$. However for $t$ small enough, the homotopy class of $S_t$ is the one of any small sphere encircling $x$ by Lemma \ref{lem:HomotopyWellFitted}. This leads to a contradiction since such a sphere will have homological intersection equal to $1$ or $-1$ with the curve $\eta$. Hence there exists a flow line joining any pair of points.

\end{proof}

As a consequence, we get the following.

\begin{cor}
\label{cor:Globally fitting}
Let $u:\widetilde{N}\to\Gra^{{\mix}}_d(\mathcal{Q})$ be a $\rho$-equivariant continuous map that admits an equivariant fitting flow, then it is an embedding. In particular it is a globally fitting map. Moreover $ \rho:\Gamma\to \SL(V)$ is a quasi-isometric embedding. 
\end{cor}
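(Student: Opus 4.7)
The plan is to deduce all three conclusions from the existence-of-flow-line result (Proposition~\ref{prop:Flow lines of a fitting flow are quasi geodesic and exist between points}), the embeddedness-along-flow-lines result (Lemma~\ref{lem:Embedded fitting flow lines}), and the $\rho$-equivariant Lipschitz map $s\colon\Sph u^*\mathcal{E}\to\mathbb{P}(S^2V^{>0})$ constructed in the proof of the first.

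For injectivity and the globally fitting property, I would assume $u(x)=u(y)$ for distinct $x,y\in\widetilde N$, use Proposition~\ref{prop:Flow lines of a fitting flow are quasi geodesic and exist between points} to produce $q_0\in\Sph u^*\mathcal{E}$ and $t_0>0$ with $p(q_0)=x$ and $p(\Phi_{t_0}(q_0))=y$, and then invoke Lemma~\ref{lem:Embedded fitting flow lines}, which forbids $u\circ p$ from returning to the same pencil along a flow line. This contradiction gives that $u$ is an injective continuous map, hence an embedding in the sense of this paper. The same ingredient, combined with Proposition~\ref{prop:WellFitted=FibrationCContinuous} applied to the pair $\pi(q_0)\in u(x)$, $\pi(\Phi_{t_0}(q_0))\in u(y)$ (whose difference is positive by definition of a fitting flow), yields that $(u(x),u(y))$ is a fitting pair for every $x\neq y$, which is the globally fitting claim and Remark~\ref{rem:Fitting flow implies globally fitting}.

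For the quasi-isometric embedding of $\rho$, the upper bound is immediate: the $\Gamma$-action on $\Sph u^*\mathcal{E}$ is properly discontinuous and cocompact, so by the Milnor--Schwarz lemma any orbit map $\Gamma\to (\Sph u^*\mathcal{E},d_g)$ is a quasi-isometry. The Lipschitz bound $d_H(s(q_1),s(q_2))\le C_1 d_g(q_1,q_2)+C_1$ from the proof of Proposition~\ref{prop:Flow lines of a fitting flow are quasi geodesic and exist between points}, combined with the $\rho$-equivariance $s(\gamma q_0)=\rho(\gamma)s(q_0)$, gives $d_H(s(q_0),\rho(\gamma)s(q_0))\lesssim |\gamma|+1$.

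The main step is the matching lower bound. I would fix $\gamma\in\Gamma$, apply Proposition~\ref{prop:Flow lines of a fitting flow are quasi geodesic and exist between points} to obtain $t>0$ and a flow line with $p(\Phi_t(q_0))=\gamma\cdot p(q_0)$, use the $(C,D)$-quasi-geodesic property to conclude $t\gtrsim |\gamma|$, and invoke the Hilbert estimate $d_H(s(q_0),s(\Phi_t(q_0)))\geq \epsilon(t-1)$ established in the proof of that proposition. Since $\Phi_t(q_0)$ and $\gamma q_0$ both project to $\gamma\cdot p(q_0)$ under $p$, they lie in a common fibre of $p$, which has uniformly bounded $d_g$-diameter by cocompactness; so $d_H(s(\Phi_t(q_0)),\rho(\gamma)s(q_0))$ is uniformly bounded, and the triangle inequality yields $d_H(s(q_0),\rho(\gamma)s(q_0))\gtrsim|\gamma|$. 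The statement for the symmetric space metric follows since that $G$-invariant metric is quasi-isometric to the Hilbert metric on $\mathbb{P}(S^2V^{>0})$.
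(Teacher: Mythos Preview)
Your proposal is correct and follows essentially the same approach as the paper. The paper's proof also obtains injectivity and the globally fitting property from the existence of a flow line between any two points (its inline argument that a positive quadric cannot lie in a mixed pencil is exactly the content of Lemma~\ref{lem:Embedded fitting flow lines}), and it proves the quasi-isometric embedding by the same mechanism you describe: combining the Hilbert-metric growth $d_H(s(q),s(\Phi_t(q)))\geq \epsilon(t-1)$ along flow lines, the Lipschitz upper bound for $s$, and the uniformly bounded fibre diameter, the paper phrases this as showing that $s$ itself is a quasi-isometric embedding of $\Sph u^*\mathcal{E}$, which via Milnor--Schwarz is equivalent to your orbit-map formulation.
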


\begin{proof}
Since there are flow lines between any pair of points in $\widetilde{N}$, for every $x \neq y\in M$ one can find $q\in u(x)$ and $q'\in u(y)$ such that $q-q'$ is positive. Since the pencils $u(x)$ and $u(y)$ are mixed, one cannot have $q-q'\in u(x)=u(y)$ so $u(x)\neq u(y)$. Furthermore the pair $(u(x),u(y))$ is a fitting pair for all $x \neq y\in M$, hence $u$ is a globally fitting map by Proposition \ref{prop:WellFitted=FibrationCContinuous}.

\medskip

For any $\Gamma$-invariant Riemannian metric $g$ on $\Sph \mathcal{E}$, any map $\sigma$ as the proof of Proposition \ref{prop:Flow lines of a fitting flow are quasi geodesic and exist between points} is a quasi-isometric embedding. Indeed let $\delta$ be the maximum of $d_g(x,x')$ or $d_H(\sigma(x),\sigma(x'))$ for any $x,x'$ in the same fiber of $\pr:\Sph \mathcal{E}\to \widetilde{N}$. For every $x,y\in \Sph \mathcal{E}$ one can find $x',y'$ in the same fibers respectively as $x,y$ and in the same flow line for $\Phi$. Hence for the constants $C,D$ from Proposition \ref{prop:Flow lines of a fitting flow are quasi geodesic and exist between points}:
$$\frac{1}{C}d_g(x,y)-D-(\frac{2}{C}+2)\delta\leq d_H(\sigma(x),\sigma(y))\leq Cd_g(x,y)+D+(2C+2)\delta .$$

Hence $\rho$ is a quasi-isometric embedding.
\end{proof}

\subsection{Existence of fitting flows}

We now prove that the existence of a fitting flow is guaranteed on compacts for fitting immersions. It is not clear if it is the case in general for locally fitting maps. 

\begin{prop}
\label{prop:ExistenceFittingFlow}
Let $M$ be a manifold of dimension $d$.
\begin{itemize}

\item[$(i)$]A fitting immersion $u:M\to\Gra_d(\mathcal{Q})$ admits a fitting flow.

\item[$(ii)$]An equivariant fitting immersion $u:M\to\Gra_d(\mathcal{Q})$ for a representation $\rho:\Gamma\to \SL(V)$, and a proper action of $\Gamma$ on $M$ admits a $\rho$-equivariant fitting flow.

\end{itemize}
\end{prop}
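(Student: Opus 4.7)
The plan is to construct, in both cases, a smooth vector field $X$ on the bundle $\Sph u^*\mathcal{E}$ whose flow is fitting, using a local-to-global partition-of-unity argument. The main technical input, and the step I expect to be the hardest, is the following pointwise claim:

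$(\star)$ For every $(x_0,[q_0]) \in \Sph u^*\mathcal{E}$, there exists $v_0 \in T_{x_0}M$ such that $du(v_0) \in C_{[q_0]}$.

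Once $(\star)$ is established, Proposition \ref{prop:fitting immersions 3 items}~$(iii)$ produces, for each such $v_0$, a lift $\mathrm{w}_0 \in T(u^*\mathcal{E})$ over a representative $q_0$ of $[q_0]$ with $dp(\mathrm{w}_0) = du(v_0)$ and $d\pi(\mathrm{w}_0) \in \mathcal{Q}$ positive; this is precisely the infinitesimal fitting condition one needs at each point.

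To prove $(\star)$, I would argue by Hahn-Banach separation. Let $L \subset \mathcal{Q}$ be the linear subspace consisting of all $\dot{q}$ whose class modulo $u(x_0)$ equals $du(v)(q_0)$ for some $v \in T_{x_0}M$; then $(\star)$ amounts to $L \cap \mathcal{Q}^{>0} \neq \emptyset$. If the intersection were empty, separation would yield a nonzero $p \in S^2V^{\geq 0}$ such that $\hat{p}(q) := \Tr(qp)$ vanishes on $L$. Vanishing on $u(x_0) \subset L$ forces $p \in u^\circ(x_0)$. Differentiating the duality $\Tr(q_t p_t) = 0$ along smooth curves $q_t \in u(x_t)$, $p_t \in u^\circ(x_t)$ with base derivative $v$ yields
\[
\hat{p}\bigl(du(v)(q_0)\bigr) = -\Tr\bigl(q_0 \cdot du^\circ(v)(p)\bigr),
\]
so vanishing on $L$ would imply $\Tr(q_0 \cdot du^\circ(v)(p)) = 0$ for every $v$. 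The fitting hypothesis now enters decisively: by Definition \ref{defn:fitting vectors}, $du(v)$ fitting means $\Ker(du^\circ(v)) \cap S^2V^{\geq 0} = \{0\}$, so for every $v \neq 0$ one has $du^\circ(v)(p) \neq 0$. Hence $v \mapsto du^\circ(v)(p)$ is an injective linear map from $T_{x_0}M$ to $S^2V/u^\circ(x_0)$; both spaces have dimension $d$, so it is an isomorphism. Then $q_0$ would pair trivially with all of $S^2V/u^\circ(x_0)$, forcing $q_0 = 0$ via the duality $u(x_0) \simeq (S^2V/u^\circ(x_0))^*$, contradicting $[q_0] \in \Sph u(x_0)$.

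Given $(\star)$, the globalization is routine. The admissible set $\{v \in T_xM : du(v) \in C_{[q]}\}$ is a nonempty open convex cone in $T_xM$, by convexity of $C_{[q]}$ from Proposition \ref{prop:The space of fitting vector is a sphere of cones}, and it varies continuously with $(x,[q])$, so local smooth sections patch via a locally finite smooth partition of unity subordinate to an open cover of $\Sph u^*\mathcal{E}$, with convexity preserving the defining condition. Applying the analogous patching to the lifts $\mathrm{w}$ produced by Proposition \ref{prop:fitting immersions 3 items}~$(iii)$ yields a smooth vector field $X$ on $\Sph u^*\mathcal{E}$ admitting at each point a lift to $u^*\mathcal{E}$ with positive $d\pi$-projection. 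Integrating, the flow $\Phi_t$ of $X$ satisfies $\pi(q_t) - \pi(q_0) = \int_0^t d\pi(\dot{q}_s)\,ds \in \mathcal{Q}^{>0}$ for $t>0$ along any continuous lift $q_t \in u^*\mathcal{E}$, which is exactly the fitting condition, establishing case $(i)$. For case $(ii)$, the proper $\Gamma$-action on $M$ lifts to a proper action on $\Sph u^*\mathcal{E}$, and carrying out the same construction with a $\Gamma$-equivariant smooth partition of unity produces a $\Gamma$-equivariant fitting flow.
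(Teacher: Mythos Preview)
Your proof is correct, and your route to the pointwise claim $(\star)$ is genuinely different from the paper's. The paper proves the analogue of $(\star)$ (stated there as Lemma~\ref{lem:fitting flow sphere}) by a topological argument: it builds, via a partition of unity on $\Sph T_xM$, a continuous odd map $\overline{\phi}:\Sph T_xM\to \Sph u^*\mathcal{E}_x$ together with a compatible lift $\psi$ with positive $d\pi$-projection, and then invokes the fact that an odd map between spheres of equal dimension is surjective. Your argument replaces this by Hahn--Banach separation and a dimension count: the identity $\hat p(du(v)(q_0))=-\Tr(q_0\cdot du^\circ(v)(p))$ together with $\Ker(du^\circ(v))\cap S^2V^{\ge 0}=\{0\}$ forces $v\mapsto du^\circ(v)(p)$ to be an isomorphism $T_{x_0}M\to S^2V/u^\circ(x_0)$, and then the perfect pairing $u(x_0)\simeq (S^2V/u^\circ(x_0))^*$ gives the contradiction. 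This is more elementary (no Borsuk--Ulam) and exploits the convex structure of the positive cone directly; the paper's approach is more robust in principle to situations where the target cone lacks good convexity, but in the present setting your argument is cleaner. The globalization step is essentially the same in both proofs; just note that when patching the lifts $\mathrm{w}$ you should work with $\R_{>0}$-invariant neighborhoods and partitions of unity on $u^*\mathcal{E}\setminus\{0\}$ (equivalently pulled back from $\Sph u^*\mathcal{E}$) so that the resulting vector field is homogeneous and descends to $\Sph u^*\mathcal{E}$, exactly as the paper does.
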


Recall that $\pr:\mathcal{E}\to \Gra_d(\mathcal{Q})$ is the bundle map and $\pi:\mathcal{E}\to \mathcal{Q}$ is the tautological map. 

\medskip

In order to construct the fitting flow we construct the vector field $W$ on $\Sph u^*\mathcal{E}$ that generates the flow. This first step of the proof uses crucially the hypothesis that $\dim(M)=d$.

\medskip

%
%%A vector $\mathrm{w}\in T\mathcal{E}$ is characterized by its projection $\mathrm{d}\pr(\mathrm{w})$ to $T\Gra_d(\mathcal{Q})$ and its projection $\mathrm{d}\pi(w)$ to $T\mathcal{Q}$. Moreover to such a pair $((P,\mathrm{v}),(q,\dot{q}))\in T\Gra_d(\mathcal{Q})\times T\mathcal{Q}$ corresponds a vector $\mathrm{w}\in T\mathcal{E}$ if and only if $q\in P$ and $[\dot{q}]=\mathrm{v}(q)\in \mathcal{Q}/P$, where $\mathrm{v}$ is identified with an element of $\Hom(P,\mathcal{Q}/P)$. R
%
%\medskip
%
%The first step of the proof uses crucially the hypothesis that $\dim(M)=d$. 

\begin{lem}
\label{lem:fitting flow sphere}
 Let $u:M\to\Gra_d(\mathcal{Q})$ be a fitting immersion with $\dim(M)=d$. For every $x\in M$ and any $q\in u(x)$, there exists a lift $\mathrm{w}\in T_{(u(x),q)}u^*\mathcal{E}$ such that $\mathrm{d}\pi(\mathrm{w})\in T\mathcal{Q}$ is positive.
\end{lem}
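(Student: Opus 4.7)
The plan is to reformulate the lemma as a linear-algebraic statement about the $d$-dimensional subspace $\mathrm{d}u(T_{x_0}M)\subset T_P\Gra_d(\mathcal{Q})$, where $P:=u(x_0)$, and then to settle it by an antipodal degree argument on spheres of the same dimension $d-1$ — the matching of dimensions being precisely what makes the hypothesis $\dim M=d$ crucial.

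First I translate. From the description of tangent vectors to $\mathcal{E}$ given in the proof of Proposition~\ref{prop:fitting immersions 3 items}, a vector $\mathrm{w}\in T_{(u(x_0),q)}u^*\mathcal{E}$ is specified by a pair $(v,\dot q)\in T_{x_0}M\times T_q\mathcal{Q}\cong T_{x_0}M\times\mathcal{Q}$ with $\dot q$ projecting to the class $\mathrm{d}u(v)(q)\in \mathcal{Q}/P$, and $\mathrm{d}\pi(\mathrm{w})=\dot q$. In the notation of Proposition~\ref{prop:The space of fitting vector is a sphere of cones}, the lemma is therefore equivalent to showing that for every non-zero $q\in P$ there exists $v\in T_{x_0}M$ with $\mathrm{d}u(v)\in C_{[q]}$.

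For each fitting vector $\mathrm{v}\in \mathrm{d}u(T_{x_0}M)\setminus\{0\}$ I consider the set $U_\mathrm{v}:=\{[q]\in \Sph P : \mathrm{v}\in C_{[q]}\}$ and its preimage $\tilde U_\mathrm{v}\subset P\setminus\{0\}$. The cone $\tilde U_\mathrm{v}$ is open (openness of $\mathcal{Q}^+$) and convex (positive representatives can be added). The key property is that it is \emph{pointed}: if both $q$ and $-q$ lay in $\tilde U_\mathrm{v}$, the positive representatives $\mathrm{v}(q)+q_1$ and $-\mathrm{v}(q)+q_2$ would sum to a non-zero positive element $q_1+q_2\in P$, contradicting that $P$ is a mixed pencil. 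Non-emptiness of $U_\mathrm{v}$ comes from $\mathrm{v}$ being fitting (Definition~\ref{defn:fitting vectors}). Hence $U_\mathrm{v}$ is a non-empty, open, spherically convex subset of $\Sph P$ contained in an open hemisphere.

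Fixing a Euclidean inner product on $P$, I define $\bar s([\mathrm{v}])\in \Sph P$ to be the normalized Euclidean centroid of $\{q\in\tilde U_\mathrm{v}:\|q\|=1\}$. Because $\tilde U_\mathrm{v}$ lies in an open hemisphere, this centroid is non-zero, and by convexity its direction lies in $\tilde U_\mathrm{v}$, so $\bar s([\mathrm{v}])\in U_\mathrm{v}$. The symmetry $U_{-\mathrm{v}}=-U_\mathrm{v}$ gives $\bar s([-\mathrm{v}])=-\bar s([\mathrm{v}])$. The hypothesis $\dim M=d$ now yields a continuous antipode-preserving map
\begin{equation*}
\bar s:\Sph\bigl(\mathrm{d}u(T_{x_0}M)\bigr)\longrightarrow \Sph P
\end{equation*}
between two spheres of dimension $d-1$. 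By Borsuk's theorem, any such map has odd (in particular non-zero) degree, and is therefore surjective. This produces, for every $[q]\in\Sph P$, a $v\in T_{x_0}M$ with $\mathrm{d}u(v)\in C_{[q]}$, completing the argument.

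The main obstacle is the pointedness of the cones $\tilde U_\mathrm{v}$ — without it the centroid construction would collapse and antipodal equivariance would be lost. Pointedness holds precisely because $P$ is a mixed pencil, and it is paired with the dimension count $\dim M=d$ which brings the source and target of $\bar s$ to a common dimension so that Borsuk's theorem applies.
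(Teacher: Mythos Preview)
Your argument is correct and follows the same core strategy as the paper: produce a continuous odd map from $\Sph(\mathrm{d}u(T_xM))$ to $\Sph P$ --- both $(d-1)$-spheres --- and invoke Borsuk to obtain surjectivity, which is exactly the statement that every $[q]\in\Sph P$ lies in some $U_{\mathrm{d}u(v)}$. The only substantive difference is in how the odd map is manufactured: the paper patches local choices $\phi_i$ by a partition of unity and then proves non-vanishing of the sum via an auxiliary section of the fibration of $S^2V^{>0}$, whereas your centroid construction, combined with the pointedness observation (which uses precisely that $P$ is mixed), handles well-definedness and non-vanishing in one stroke. This is arguably cleaner; the one place that could use an extra sentence is the continuity of $\bar s$, which follows because membership in $\tilde U_\mathrm{v}$ is stable under perturbation of $\mathrm{v}$ off the measure-zero boundary, the relevant closedness coming from properness of the projection $\mathcal{Q}^{\geq 0}\to\mathcal{Q}/P$ (again a consequence of $P\cap\mathcal{Q}^{\geq 0}=\{0\}$).
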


We say that an element $(q,\dot{q})\in T\mathcal{Q}$ is positive if $\dot{q}\in \mathcal{Q}$ is positive. Given any quadric in the pencil $u(x)$, we want to find an infinitesimal direction in which to move this quadric as well as the pencil containing it inside the image of $u$ so that the derivative of the quadrics is positive. 

\medskip

This lemma is an inverse of point $(iii)$ of Proposition \ref{prop:fitting immersions 3 items}: here we fix an element $q\in u(x)$ whereas before we were fixing a $v\in TM$. In order to construct this inverse we will use the fact that a continuous odd map between spheres of equal dimensions must have odd degree \cite[{Proposition 2B.6}]{Hatcher} and therefore it must be surjective, as any non-surjective map into a sphere is contractible.

\begin{proof}

Let us fix $x\in M$. We construct a continuous map :
$$\phi:T_xM\setminus \lbrace 0\rbrace\to u^*\mathcal{E}_{x}\setminus  \lbrace 0\rbrace.$$

We require that this map satisfies $\lambda \phi(v)=\phi(\lambda v)$ for all $v\in T_xM\setminus \lbrace 0\rbrace$ and $\lambda\in \R$. Note that in particular $\phi$ defines an odd map:
$$\overline{\phi}:\Sph T_xM\to \Sph u^*\mathcal{E}_x.$$

We furthermore construct a lift:
$$\psi:T_xM\setminus \lbrace 0\rbrace\to Tu^*\mathcal{E}_.$$

In other words we assume that $\psi(v)\in T_{\phi(v)}(u^*\mathcal{E})$ for $v\in T_xM\setminus \lbrace 0\rbrace$.  
We require that $\psi(\lambda v)=\lambda(\mathrm{d}m_\lambda)(\psi(v))$ for all $v\in T_xM\setminus \lbrace 0\rbrace$ and $\lambda\in \R$ where $m_\lambda:u^*\mathcal{E}\to u^*\mathcal{E}$ is the multiplication by $\lambda$. We require $\mathrm{d}\pi(\psi(v))\in T\mathcal{Q}$ to be positive for all $v\in T_xM\setminus \lbrace 0\rbrace$.  

Finally we will make this construction so that in addition $\mathrm{d}\pr\left( \psi(v)\right)=v$ for $v\in T_xM\setminus \lbrace 0\rbrace$, but this property will be only used during the construction. 

\medskip

The following diagram illustrates the situation.

\begin{center}
\begin{tikzcd}[row sep=2.5em]
T u^*\mathcal{E}\arrow[loop, out=135+30, in=135-30,looseness=3,"\mathrm{d}m_\lambda"]\ar[r, "\mathrm{d}\pi"]\ar[d, "\mathrm{d}\pr"']\ar[rd]
& T\mathcal{Q} \ar[rd]
\\
TM \ar[rd] \ar[u, dashed, "\psi"', bend right]\ar[r, dashed,"\phi"]
& u^*\mathcal{E}\ar[loop, out=315+30, in=315-30,looseness=4,looseness=3, "m_\lambda"] \ar[r, "\pi"]\ar[d, "\pr"']
& \mathcal{Q}
\\
  & M
\end{tikzcd}
\end{center}

%\begin{center}
%\begin{tikzcd}[row sep=2.5em]
% T u^*\mathcal{E} \arrow[r]&u^*\mathcal{E} \arrow[r, "\pi"]\arrow[d,"p"]& \mathcal{Q}\\
%TM \arrow[u, "{\psi}"]\arrow[ur, "{\phi}"]\arrow{r}& M&
%\end{tikzcd}
%\end{center}
 
% and $\psi(- v)=-\mathrm{d}m_{-1}(\psi(v))$ for all $v\in T_xM\setminus \lbrace 0\rbrace$ in order to construct a fitting flow.
%\begin{figure}
%\begin{center}
%\begin{tikzcd}[row sep=2.5em]
%
%Tu^*\mathcal{E} \arrow[r]&u^*\mathcal{E} \arrow[r, "\pi"]\arrow[d, "p"]& \mathcal{Q}\\
%TM \arrow[u, "\psi"]\arrow[ur, "\phi"]\arrow[r]& M&
%
%
%
%%T\mathcal{E} \arrow[rr,"\mathrm{d}\phi"] \arrow[dr,swap,"\mathrm{d}\pr"] \arrow[dd,swap,"\pi_\mathcal{E}"] &&
%%  T\left(T\mathbb{P}(S^2V)\right) \arrow[dd,swap,"\pi_{T\X}" near start] \arrow[dr,"\mathrm{d}\pi_\X"] \\
%%& T\mathcal{Y} \arrow[rr,crossing over,"\mathrm{d}\pr_\mathcal{Y}" near start] &&
%%  T\X \arrow[dd,"\pi_\X"] \\
%%\mathcal{E} \arrow[rr,"\phi" near end] \arrow[dr,swap,"p_\mathcal{E}"] && T\mathbb{P}(S^2V) \arrow[dr,swap,"\pi_\X"] \\
%%& \mathcal{Y} \arrow[rr,"p_\mathcal{Y}"] \arrow[uu,<-,crossing over,"\pi_\mathcal{Y}" near end]&& \mathbb{P}(S^2V)
%\end{tikzcd}
%\end{center}
%\caption{A commutative diagramm}
%\label{fig:DiagramCube}
%\end{figure}

If we can construct such continuous maps, the fact that $\overline{\phi}$ is an odd map between two spheres of the same dimension implies that it is homotopically non-trivial and therefore surjective. In particular for all $[q]\in \Sph\mathcal{E}_{u(x)}\simeq\Sph u(x)$ there exist a $v\in T_xM$  such that $\phi(v)=q$. The element $\mathrm{w}=\psi(v)$ then satisfies the required conditions, so this finishes the proof.

\medskip

Now let us construct the maps $\phi$ and $\psi$. We first show that for all $v\in T_xM\setminus \lbrace 0\rbrace$ we can define $\phi(v)$ and $\psi(v)$, and then we explain how to glue these maps together to get a continuous map.

\medskip

Since $u$ is a fitting immersion, the point $(iii)$ of Proposition \ref{prop:fitting immersions 3 items} implies that given $v_0\in T_xM$ one can construct $\phi(v_0)\in u^*\mathcal{E}_{x}$ and $\psi(v_0)\in T_{\phi(v_0)} u^*\mathcal{E}$ such that $\mathrm{d}\pi(\psi(v_0))$ is positive and $\mathrm{d}\pr\left( \psi(v_0)\right)=v_0$. 

Note that the condition that $\mathrm{d}\pi(\psi(v))$ is positive is an open condition and the condition that $\mathrm{d}\pr\left( \psi(v)\right)=v$ requires that $\psi$ is a section of an affine sub-bundle. Hence for every $v\in T_xM\setminus \lbrace 0\rbrace$ we can find a small neighborhood $\mathcal{S}$ in a sphere in $T_xM$ containing $v_0$ on which we can define $\phi$ and a lift $\psi$ such that $\mathrm{d}\pi(\psi(v))$ is positive and $\mathrm{d}\pr\left( \psi(v)\right)=v$ for all $v\in \mathcal{S}$. We take $\mathcal{S}$ small enough so that it does not contain any antipodal pair of points.

We define $U$ to be the set of non-zero elements $\lambda v$ for all $\lambda\in \R$ and $v\in \mathcal{S}$, and we extend $\phi$ and $\psi$ to $U$ in a homogeneous way. We define $\phi$ on $U$ so that $\phi_i(\lambda v)=\lambda\phi_i(v)$ for all $\lambda\in \R$ non-zero and $v\in \mathcal{S}$. We set $\psi(\lambda v)=\lambda(\mathrm{d}m_\lambda) \psi_i(v)$ for $\lambda\in \R$, where $m_\lambda$ is the multiplication by $\lambda$ on $u^*\mathcal{E}_x$. Note that $\mathrm{d}\pi\left(\psi_i(\lambda v)\right)=\lambda ^2\mathrm{d}\pi\left(\psi_i(v)\right)$ is positive and $\mathrm{d}\pr\left( \psi_i( \lambda v)\right)=\lambda v$ for all $\lambda v\in U$. Indeed $\pi \circ m_\lambda=\lambda \pi$ so $\mathrm{d}\pi\circ \mathrm{d}m_\lambda=\lambda\mathrm{d}\pi$ and $p\circ m_\lambda=p$ so $\mathrm{d}\pr\circ\mathrm{d}m_\lambda=\mathrm{d}\pr$.

\medskip

%\begin{center}
%\begin{tikzcd}[row sep=2.5em]
%TM\arrow[d] &  Tu^*\mathcal{E} \arrow[r, "\mathrm{d}\pi"]\arrow[l, "\mathrm{d}\pr"]\arrow[d]\arrow[loop above, "\mathrm{d}m_\lambda"]& T\mathcal{Q}\arrow[d] \\
%M & u^*\mathcal{E} \arrow[r, "\pi"]\arrow[l, "p"]\arrow[loop below, "m_\lambda"]& \mathcal{Q}
%\end{tikzcd}
%\end{center}

 We therefore can construct an open cover $\lbrace U_i\rbrace _{i\in I}$ of $T_xM$ and continuous maps $\phi_i:U_i\to \Sph u^*\mathcal{E}\subset u^*\mathcal{E}$, and lifts $\psi_i:U_i\to Tu^*\mathcal{E}$ such that $\mathrm{d}\pi(\psi_i(v))$ is positive and $\mathrm{d}\pr\left( \psi_i(v)\right)=v$ for any $i\in I$ and $v\in U_i$. The $U_i$ can be assumed invariant by scalar multiplication, and $\phi$ and $\psi$ satisfy the aforementioned homogeneity conditions. 

\medskip

%Indeed if $\psi_i(v)\in T_{\phi_i(v)}\mathcal{E}$ is associated to the pair $(\mathrm{d}u(v), (\phi_i(v), \dot{q}))$, then one can set $\phi_i(-v)=-\phi(v)$ and define $\psi_i(-v)\in T_{q'}\mathcal{E}$ where $q'$ is a representent of $\phi_i(-v)$ to be the element associated ton the pair $(\mathrm{d}u(-v), (-\phi_i(v), \dot{q}))$. This new element is well defined and still satisfies $\mathrm{d}\pi(\psi_i(-v))$ is positive since $\dot{q}$ is positive, and $\mathrm{d}\pr(\psi_i(-v))=\mathrm{d}u(-v)$.

We now glue these maps together. Let $\chi_i:U_i\to [0,1]$ for $i\in I$ be a family of functions that forms a locally finite partition of the unit. We define :
$$\phi:v\in \Sph T_xM \mapsto \sum_{i\in I}\chi_i(v)\phi_i(v)\in \left(u^*\mathcal{E}\right)_x.$$

Let us check that $\phi(v)$ is always non-zero. This is where we use that $\mathrm{d}\pr\left( \psi_i(v)\right)=v$ for $i\in I$, and  we also use that the fitting immersion $u$ is defining a smooth fibration of the cone $S^2 V^{>0}$. Let $\gamma:\R \to M$ be a curve such that $\gamma(0)=x$  and $\gamma'(0)=v$. For all $t\in \R$ the intersection $ u^\circ(\gamma(t))\cap S^2V^{>0}$ is a non-empty convex set, so we can construct a section $s:t\in \R \mapsto S^2V^{>0}$ of the fibration, \ie such that for all $t\in \R$, $s(t)\in u^\circ(\gamma(t))$. 

Let us fix $i\in I$. Let $q:\R\to \Sph u^*\mathcal{E}$ be such that $q'(0)=\psi(v)$, which implies that $p\circ q(t)$ is equal to $\gamma(t)$ at the first order around $t=0$ since $\gamma'(0)=\mathrm{d}\pr\left(\psi(v)\right)=v$. Since $s(t)\in u^\circ(\gamma(t))$, one has $ \pi(q(t))\cdot s(t)=0$ at the first order around $t=0$. Taking the first derivative at $t=0$ of this equation we get:
$$\pi(\phi_i(v))\cdot s'(0)+\mathrm{d}\pi(\psi_i(v) )\cdot s(0)=0.$$
 Since $\mathrm{d}\pi(\psi_i(v) )$ is positive and $s(0)$ is a positive tensor:
 $$\mathrm{d}\pi(\psi_i(v) )\cdot s(0)>0.$$

 Hence for all $i\in I$, $\pi(\phi_i(v))\cdot s'(0)<0$ and therefore $\pi(\phi(v))\cdot s'(0)<0$. In particular $\phi(v)$ does not vanish.

\medskip

In order to glue the $\psi_i$ we need to be careful since the vectors $\psi_i(v)$ do not belong to the same fiber of the tangent bundle $Tu^*\mathcal{E}$. Let $\Sigma$ be the following map:
$$\Sigma: (q_i)_{i\in I}\in \left(u^*\mathcal{E}\right)_x^I\mapsto \sum_{i\in I}q_i\in \left(u^*\mathcal{E}\right)_x.$$ 

Given $v\in \Sph T_xM$ we set:
$$\psi(v)=\mathrm{d}\Sigma\left(\left(\chi_i(v) \psi_i(v)\right)_{i\in I}\right).$$

This combination still satisfies that $\mathrm{d}\pi(\psi)$ is positive, indeed:
$$\mathrm{d}\pi(\psi(v))=\sum_{i\in I}\chi_i(v)\mathrm{d}\pi(\psi_i(v)).$$

Note that we also get the following:
$$\mathrm{d}\pr(\psi(v))=\sum_{i\in I}\chi_i(v)\mathrm{d}\pr(\psi_i(v))=\left(\sum_{i\in I}\chi_i(v)\right)v=v.$$

%\medskip
%
%It remains to check that $\psi(v)\neq 0$ for $v\in \Sph T_xM$, using the map $\psi$. Let $i$ be the immersion of $u^*\mathcal{E}$ into $ Tu^*\mathcal{E}$ where each fiber  $u^*\mathcal{E}_x$ is identified with its tangent space. We have the following short exact sequence:
%
%\begin{center}
%\begin{tikzcd}[row sep=2.5em]
% 0\arrow[r,"i"]& u^*\mathcal{E} \arrow[r]& Tu^*\mathcal{E}\arrow[r, "\mathrm{d}\pr"]& TM \arrow[r]&0
%\end{tikzcd}
%\end{center}
%
%Moreover for $q\in u^*\mathcal{E}$, $\pi(q)=\mathrm{d}\pi\circ i(q)$. Since $\mathrm{d}\pi(\psi(v))$ is positive and $u(x)$ does not contain any positive element, $\mathrm{d}\pr(\psi(v))$ does not vanish.
%
%\medskip

This concludes the construction of $\phi$ and $\psi$, and hence this concludes the proof.

\end{proof}

To prove Proposition \ref{prop:ExistenceFittingFlow} we use the directions $\mathrm{w}$ from Lemma \ref{lem:fitting flow sphere} and we glue these vectors into a vector field using one again a partition of the unit. We construct the vector field $W:\Sph u^*\mathcal{E}\to T\Sph u^*\mathcal{E}$ in a similar manner as $\psi:\Sph TM\to T\Sph u^*\mathcal{E}$. Morally "$W=\psi\circ \phi^{-1}$", but the map $\phi$ constructed previously is not a priori bijective.

\begin{proof}[{Proof of Proposition \ref{prop:ExistenceFittingFlow}}]

%We now do the opposite construction of Lemma \ref{lem:fitting flow sphere}.
 We construct a continuous vector field $\mathrm{W}$ over $u^*\mathcal{E}$, except the zero section that is homogeneous, \ie such that for all $x\in M$ and non zero $q\in u(x)$ one has $W_{\lambda q}=\mathrm{d}(m_\lambda)W_q$ for all $\lambda\in \R^{> 0}$, where $m_\lambda$ is the multiplication by $\lambda$ on $\mathcal{E}$.
Such a vector field defines a vector field $\overline{W}$ on $\Sph u^*\mathcal{E}$. 
We require moreover that $\mathrm{d}\pi(W)$ is always positive. 

\medskip

Given any non-zero $q_0\in u^*\mathcal{E}$, Lemma \ref{lem:fitting flow sphere} provides the existence of some $\mathrm{w}_0\in T_{(u(x),q)}\mathcal{E}$ for all $(x,q)\in u^*\mathcal{E}$ such that:
\begin{enumerate}
 \item[$(i)$] $\mathrm{d}\pr(\mathrm{w}_0)\in \mathrm{d}u(T_xM)$,
 \item[$(ii)$] $\mathrm{d}\pi(\mathrm{w}_0)$ is positive.
 \end{enumerate}
 
Property $(i)$ implies that $\mathrm{w}_0$ defines a vector in $T_{x,q_0}u^*\mathcal{E}$. For each such $q_0$ one can find a neighborhood $U$ of it in $u^*\mathcal{E}$ that is invariant by the $\R^{> 0}$-action, and on which one can define a map $W$ satisfying $(i)$ as well as the homogeneity condition $W_{\lambda q}=\mathrm{d}(m_\lambda)W_q$. Condition $(ii)$ being an open condition invariant by the action of $m_\lambda$ it is satisfied automatically for $U$ small enough.

\medskip

Using a partition of the unit as in Lemma \ref{lem:fitting flow sphere} we construct the desired vector field on $u^*\mathcal{E}$. Note that both property $(i)$ and the homogeneity are preserved by linear combinations, and $(ii)$ is preserved by positive combinations.

\medskip

Finally note that the collection $U_i$ as well as the partition of the unity can be chosen to be $\rho$-equivariant, so that the vector field $W$ is also $\rho$-equivariant, and hence also the fitting flow $\Phi$.

%Lemma \ref{prop:ExistenceFittingFlow} implies that one can find an open cover $\lbrace V_i\rbrace _{i\in I}$ of the compact space $\Sph u^*\mathcal{E}$ and a vector fields $\psi_i:V_i\to T
%u^*\mathcal{E}$ such that $\mathrm{d}\pi(\overline{\psi}_i(q))$ is positive. 
%
%\medskip
%
%Using a partition of unity, one can glue these vector fields to a vector field defined over $\Sph u^*\mathcal{E}$. The flow of this vector field defines a fitting flow over any compact set $K$. When $M=\widetilde{N}$ and $u$ is equivariant, the vector field can be made $\rho$-equivariant so the flow can be made $\rho$-equivariant.
%
%\wip

\end{proof}

\section{The Anosov property and fibrations.}
\label{sec:The Anosov property and fibrations.}

In this section we show that the existence of an equivariant map of pencils that admits a fitting flow implies that the representation is Anosov. Moreover we describe the domain that is fibered in $\mathbb{RP}^{2n-1}$. Finally we apply this to show that some quasi-Fuchsian representations do not admit equivariant maps that admit a fitting flow.

\subsection{The Anosov property}
In order to show that a uniform contraction is taking place along the flow lines of the fitting flow, we define a way to measure the distance between two quadric hypersurfaces nested into one another. The characterization of Anosov representations that we use is similar to the characterization in terms of inclusion of multicones from \cite{BPS}.

\medskip

Let $\Sph\mathcal{Q}^{{\mix}}$ be the set of quadrics that are not semi-positive or semi-negative up to a positive scalar.

\medskip

 If we fix an affine chart of $L$ so that $\ell_1,\ell_2,\ell_2',\ell_1'$ correspond to the real numbers $x_1'< x_2'\leq x_2< x_1$, the cross ratio $[\ell_1,\ell_2,\ell_2',\ell_1']$ is defined as:
\begin{equation}\label{eq:crossratio}[\ell_1,\ell_2,\ell_2',\ell_1']=\frac{x_2-x_1'}{x_2-x_2'}\times\frac{x_1-x_2'}{x_1-x_1'} .
\end{equation}

An other equivalent definition, if we take non-zero representatives $v_1,v_2,v_2',v_1'$ of $\ell_1,\ell_2,\ell_2',\ell_1'$:
$$[\ell_1,\ell_2,\ell_2',\ell_1']=\frac{v_2\wedge v_1'}{v_2\wedge v_2'}\times\frac{v_1\wedge v_2'}{v_1\wedge v_1'} .$$
This expression makes sense as $\bigwedge^2 L\simeq \R$, and it is independent of the chosen identification. The equivalence betweens these formulations is due to the fact that if $v_1=e_1+x_1e_2$ and $v_2=e_1+x_2e_2$ then $v_2\wedge v_1=(x_2-x_1)e_2\wedge e_1$.

\begin{defnProp}
\label{defn:Cross ratio distance}

Let $[q_1],[q_2]\in \Sph\mathcal{Q}^{\mix}$ be such that for some choice of representatives, the difference $q_2-q_1$ is positive. For every projective line $L$ intersecting $\lbrace q_1> 0\rbrace$ and $\lbrace q_2< 0\rbrace$  we can define $\ell_1$ and $\ell_1'$ are the zeros of $q_1$ on $L$ and $\ell'_1$ and $\ell_2'$ and the zeros of $q_2$ on $L$ so that the points $\ell_1,\ell_2,\ell_2',\ell_1'\in\mathbb{P}(V)$ are cyclically in this order on $L$. We define the  \emph{cross-ratio distance} $\text{cr}([q_2],[q_1])\in [1, \infty)$ between $[q_2]$ and $[q_1]$ as the minimum of the cross ratio $[\ell_1,\ell_2,\ell_2',\ell_1']$ for any such projective line $L$, which is well defined and in $(1,\infty)$.
\end{defnProp}

We now verify that this minimum exists and is in $(1,\infty)$.
\begin{proof}
First note that the assumption $q_2-q_1$ is positive implies that $\lbrace q_1\geq 0 \rbrace\subset\lbrace q_2>0\rbrace$ and $\lbrace q_2\leq 0 \rbrace\subset\lbrace q_1<0\rbrace$. If $L$ intersects $\lbrace q_1>0\rbrace$ and $\lbrace q_1< 0 \rbrace$ then  the quadratic forms $q_1$ and $q_2$ are both positive on some points and negative on some points of $L$, and hence they both admit exactly two zeroes on $L$. These zeroes, denoted respectively $\ell_1$,$\ell_1'$ for $q_1$ and $\ell_2$,$\ell_2'$ for $q_2$ are in the following cyclic order on L up to exchanging $\ell_1$ and $\ell_1'$: $(\ell_1,\ell_2,\ell_2',\ell_1')$. One can refer to Figure \ref{fig:Anosov 1} where $\lbrace q_i>0\rbrace$ is the interior of the corresponding ellipse.

\medskip

Consider the compact space $X$ of projective lines $L$ intersecting $\lbrace q_1\geq  0\rbrace$ and $\lbrace q_2\leq 0\rbrace$. For every such line we can define the lines $\ell_1,\ell_1'$ and $\ell_2, \ell_2'$ as in previously where $\ell_1,\ell_1'\neq \ell_2,\ell_2'$ but eventually $\ell_1=\ell_1'$ or $\ell_2=\ell_2'$ which occur respectively exactly if $L$ intersects trivially $\lbrace q_1>0\rbrace$ or $\lbrace q_2>0\rbrace$. Indeed in these cases the quadratic forms $q_i$ are semi-positive or semi negative on $L$ and hence have a double zero.
By looking at the formula \eqref{eq:crossratio} on an affine chart such that $x_1'< x_2'\leq x_2< x_1$ we see the cross ratio $[\ell_1,\ell_2,\ell_2',\ell_1']$ is in $(1,\infty]$. This ratio is finite if and only if $\ell_1\neq\ell_1'$ and $\ell_2\neq\ell_2'$ which occurs exactly when $L$ belongs to the interior of $X$, \ie when $L$ intersects $\lbrace q_1> 0\rbrace$ and $\lbrace q_2< 0\rbrace$. Finally note that this ratio depends continuously on $L\in X$  and that the interior of $X$ is non-empty since $q_1$ and $q_2$ are mixed. Therefore the minimum on $X$ is well defined in $ (1,\infty)$ and is reached on the interior of $X$.
\end{proof}

%$$ \frac{\overline{\ell_1}\wedge \overline{\ell_2}}{\overline{\ell_1}\wedge \overline{\ell_1'}}\times \frac{\overline{\ell_1}\wedge \overline{\ell_1'}}{\overline{\ell_2'}\wedge \overline{\ell_2}}.$$

 The logarithm of this quantity satisfies a reversed triangular inequality.

\begin{prop}

\label{prop:TriangIneqCr}
Let $[q_1],[q_2],[q_3]\in \Sph\mathcal{Q}^{\mix}$ be such that $q_3-q_2$ and $q_2-q_1$ are positive. Then :
$$\text{cr}([q_3],[q_1])\geq \text{cr}([q_3],[q_2])\text{cr}([q_2],[q_1]) .$$
\end{prop}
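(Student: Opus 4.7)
The plan is to prove the inequality line-by-line and then pass to the infimum that defines each cross-ratio distance.

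First I would observe that if $q_1$ and $q_3$ are both indefinite on a projective line $L$, then $q_2$ is as well: any $v\in L$ with $q_1(v)>0$ gives $q_2(v)>q_1(v)>0$, and any $v\in L$ with $q_3(v)<0$ gives $q_2(v)<q_3(v)<0$. So it suffices to work with lines $L$ on which all three restrictions have two distinct zeros. Writing $I_i(L):=\{q_i\ge 0\}\cap L$, the strict inequalities $q_2-q_1>0$ and $q_3-q_2>0$ give the arc nestings $I_1(L)\subset I_2(L)\subset I_3(L)$; moreover $q_i(v)=0$ forces $q_{i+1}(v)>0$, so the zeros of consecutive quadrics cannot coincide. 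In an affine chart on $L$ in which $I_3(L)$ is bounded, denoting by $x_i,x_i'$ the zeros of $q_i|_L$, these six points appear in the order
$$x_3'<x_2'<x_1'\le x_1<x_2<x_3.$$

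The heart of the argument is then the elementary algebraic inequality on these six real numbers:
$$\frac{(x_1-x_3')(x_3-x_1')(x_2-x_2')^2}{(x_1-x_2')(x_2-x_1')(x_2-x_3')(x_3-x_2')}\ \ge\ 1,$$
which is precisely the ratio of the $(q_1,q_3)$ cross-ratio on $L$ to the product of the $(q_1,q_2)$ and $(q_2,q_3)$ cross-ratios on $L$. I would introduce the substitutions $u=x_2-x_2'$, $p=x_1-x_2'$, $q=x_2-x_1'$, $c=x_2'-x_3'$, $e=x_3-x_2$, for which $c,e,p,q>0$ and $p,q\le u$, and rewrite the inequality as
$$(p+c)(q+e)\,u^2-pq(u+c)(u+e)\ =\ ce(u^2-pq)+eu\,p(u-q)+cu\,q(u-p)\ \ge\ 0,$$
a sum of three manifestly nonnegative terms (each vanishing only in degenerate cases). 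This establishes the pointwise inequality on every admissible line $L$.

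To finish, I would choose a sequence of admissible lines $L_n$ along which the $(q_1,q_3)$ cross-ratio converges to $\text{cr}([q_3],[q_1])$. The line-wise inequality just proved, combined with the infimum characterization of $\text{cr}([q_3],[q_2])$ and $\text{cr}([q_2],[q_1])$, yields
$$[\ell_1,\ell_3,\ell_3',\ell_1']_{L_n}\ \ge\ [\ell_1,\ell_2,\ell_2',\ell_1']_{L_n}\cdot[\ell_2,\ell_3,\ell_3',\ell_2']_{L_n}\ \ge\ \text{cr}([q_2],[q_1])\,\text{cr}([q_3],[q_2]),$$
and passing to the limit gives the desired reversed-triangle inequality. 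The only nonroutine step is the algebraic identity above; the rest of the proof consists of verifying that the mixed hypothesis ensures the relevant lines exist and that nesting survives on each of them.
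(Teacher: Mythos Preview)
Your proof is correct and follows the same overall strategy as the paper: establish the line-wise inequality
\[
[\ell_1,\ell_3,\ell_3',\ell_1']_L \ \ge\ [\ell_1,\ell_2,\ell_2',\ell_1']_L\cdot[\ell_2,\ell_3,\ell_3',\ell_2']_L
\]
for the six ordered zeros $x_3'<x_2'<x_1'\le x_1<x_2<x_3$, and then pass to the infimum defining $\mathrm{cr}$. The only difference is in how that algebraic inequality is verified. The paper factors
\[
[\ell_1,\ell_3,\ell_3',\ell_1']=\Big(\tfrac{x_3-x_1'}{x_3-x_2'}\cdot\tfrac{x_1-x_2'}{x_1-x_1'}\Big)\Big(\tfrac{x_3-x_2'}{x_3-x_3'}\cdot\tfrac{x_1-x_3'}{x_1-x_2'}\Big)
\]
and bounds each factor below by one of the two intermediate cross-ratios using the monotonicity of $x\mapsto\frac{x-a}{x-b}$; you instead compute the ratio directly and exhibit the difference as the sum of three nonnegative terms $ce(u^2-pq)+eup(u-q)+cuq(u-p)$. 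Your identity is a clean self-contained verification and avoids any appeal to monotonicity, at the cost of a substitution; the paper's factoring is a bit more conceptual but relies on spotting the right intermediate expressions. Neither approach is materially shorter or more general than the other.
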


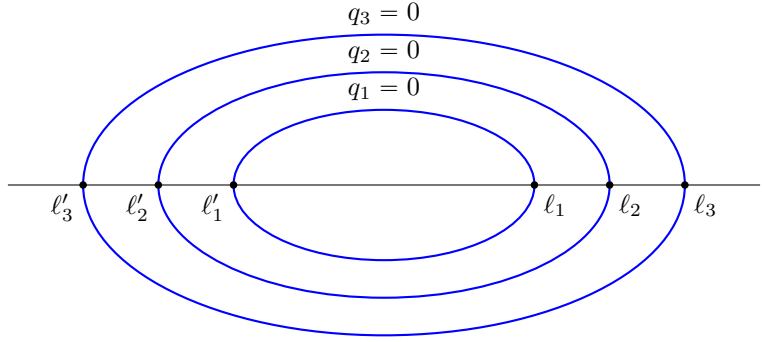
\begin{figure}
\begin{center}
\begin{tikzpicture}

\draw[thick, blue] (0,0) ellipse (2 and 1);
\draw[thick, blue] (0,0) ellipse (3 and 1.5);
\draw[thick, blue] (0,0) ellipse (4 and 2);
\draw (-5,0) -- (5,0);

\node[above] () at (0,1) {$q_1=0$};
\node[above] () at (0,1.5) {$q_2=0$};
\node[above] () at (0,2) {$q_3=0$};

\fill (2,0) circle(.05);
\fill (3,0) circle(.05);
\fill (4,0) circle(.05);
\fill (-2,0) circle(.05);
\fill (-3,0) circle(.05);
\fill (-4,0) circle(.05);
\node[below right] () at (2,0) {$\ell_1$};
\node[below right] () at (3,0) {$\ell_2$};
\node[below right] () at (4,0) {$\ell_3$};
\node[below left] () at (-2,0) {$\ell_1'$};
\node[below left] () at (-3,0) {$\ell_2'$};
\node[below left] () at (-4,0) {$\ell_3'$};

\end{tikzpicture}
\end{center}
\caption{Illustration of Proposition \ref{prop:TriangIneqCr}}
\label{fig:Anosov 1}
\end{figure}

We illustrate this proposition and its proof in Figure \ref{fig:Anosov 1}. This figure illustrates 3 quadrics of signature $(1,2)$ in $\mathbb{RP}^2$. In these pictures the quadrics are positive on the inside of the ellipse they define.

\begin{proof}
Pick any projective line that crosses $\lbrace q_1>0\rbrace$ and $\lbrace q_3<0\rbrace$. Let the intersections of $L$ with the zeroes of $q_1$, $q_2$ and $q_3$ be respectively $(\ell_1,\ell_2,\ell_3,\ell_3',\ell_2',\ell_1')$, counted with multiplicity and cyclically ordered. We fix an affine chart for $L$ such that this tuple corresponds to the tuple $x_3'< x_2'<x_1'\leq x_1<x_2<x_3$ of real numbers. This yields the following :
$$[\ell_1,\ell_3,\ell_3',\ell_1']=\frac{x_3-x_1'}{x_3-x_3'}\times\frac{x_1-x_3'}{x_1-x_1'} .$$
$$[\ell_1,\ell_3,\ell_3',\ell_1']= \left( \frac{x_3-x_1'}{x_3-x_2'}\times\frac{x_1-x_2'}{x_1-x_1'}\right)\times \left(\frac{x_3-x_2'}{x_3-x_3'}\times\frac{x_1-x_3'}{x_1-x_2'}\right) .$$

Note that if $B>A>0$, for all $c>0$ one has  $\frac{A}{B}\leq \frac{A+c}{B+c}$ and respectively $\frac{B+c}{A+c}\leq \frac{B}{A}$. Hence, due to the ordering of the tuple, we have the following inequalities by setting respectively $A=x_2-x_1'$, $B=x_2-x_2'$, $c=x_3-x_2$ and $A=x_1-x_2'$, $B=x_1-x_3'$, $c=x_2-x_1$:
$$\frac{x_2-x_1'}{x_2-x_2'}\leq \frac{x_3-x_1'}{x_3-x_2'}\;,\;\frac{x_2-x_3'}{x_2-x_2'} \leq \frac{x_1-x_3'}{x_1-x_2'}.$$

Hence we obtain:
$$\text{cr}([q_2],[q_1])\leq [\ell_1,\ell_2,\ell_2',\ell_1']=\frac{x_2-x_1'}{x_2-x_2'}\times\frac{x_1-x_2'}{x_1-x_1'} \leq \frac{x_3-x_1'}{x_3-x_2'}\times\frac{x_1-x_2'}{x_1-x_1'} .$$
$$\text{cr}([q_3],[q_2])\leq [\ell_2,\ell_3,\ell_3',\ell_2']=\frac{x_3-x_2'}{x_3-x_3'}\times\frac{x_2-x_3'}{x_2-x_2'} \leq \frac{x_3-x_2'}{x_3-x_3'}\times\frac{x_1-x_3'}{x_1-x_2'}.$$

Hence one has $[\ell_1,\ell_3,\ell_3',\ell_1']\geq \text{cr}([q_2],[q_1])\text{cr}([q_3],[q_2])$ for every such projective line $L$.

Therefore $\text{cr}([q_3],[q_1])\geq \text{cr}([q_3],[q_2])\text{cr}([q_2],[q_1])$.
\end{proof}

A sequence of quadrics such that the cross ratio distance between the first and last quadric goes to $+\infty$ satisfies that the intersection of all half-spaces determined by the quadrics is a projective subspace.

\begin{prop}
\label{prop:Intersection linear}
Let $(q_n)_{n\in \mathbb{N}}$ be a sequence of quadrics such that $q_{n+1}-q_n$ is positive for all $n\in \mathbb{N}$ and $\text{cr}([q_{n}],[q_0])$ goes to $+\infty$. Then $\bigcap_{n\in \N} \lbrace q_n\leq 0\rbrace \subset \mathbb{P}(V)$ is a projective subspace.

\end{prop}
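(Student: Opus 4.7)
The plan is to reduce the statement to verifying the projective-line property: for any two distinct points $p_1, p_2 \in K := \bigcap_{n \in \N} \lbrace q_n \leq 0\rbrace$, the whole projective line $L \subset \mathbb{P}(V)$ through them lies in $K$. This suffices because the preimage in $V$ of such a subset, together with $0$, is a linear subspace, so $K$ arises as $\mathbb{P}(U)$ for some linear $U \subset V$. The main obstacle will be to produce a contradiction from a hypothetical $\ell_0 \in L \setminus K$: the key observation is that $\text{cr}$ is defined as an \emph{infimum} over all lines, so a bounded cross-ratio on one carefully chosen line is enough to bound it globally.

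I would argue by contradiction. Suppose $\ell_0 \in L \setminus K$, so $q_{n_0}(\ell_0) > 0$ for some $n_0$. Since $q_m - q_{n_0}$ is positive definite for $m \geq n_0$, one has $q_m(\ell_0) > 0$ while $q_m(p_1), q_m(p_2) \leq 0$, forcing $q_m|_L$ to have signature $(1,1)$ and the arc $A_m := \lbrace q_m \leq 0\rbrace \cap L$ to be a closed proper arc containing $p_1, p_2$ but not $\ell_0$, with $A_{m+1} \subset A_m$. Choosing the affine chart on $L$ in which $\ell_0 = 0$, I would write $A_m = (-\infty, -\gamma_m] \cup [\delta_m, \infty]$ with $\gamma_m, \delta_m > 0$ non-decreasing in $m$. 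Because $p_1 \neq p_2$, at most one of them is the point at infinity of the chart, so the other has a finite nonzero coordinate; this provides a uniform upper bound on either $\delta_m$ or $\gamma_m$, so $\min(\delta_m, \gamma_m)$ stays bounded as $m \to \infty$.

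With the zeros of $q_{n_0}|_L$ at $-\gamma_{n_0}, \delta_{n_0}$ and those of $q_m|_L$ at $-\gamma_m, \delta_m$ in the appropriate nested order, the cross-ratio entering the definition of $\text{cr}([q_m], [q_{n_0}])$ restricted to $L$ simplifies to
\[
\frac{(\delta_m + \gamma_{n_0})(\delta_{n_0} + \gamma_m)}{(\delta_m + \gamma_m)(\delta_{n_0} + \gamma_{n_0})} = 1 + \frac{(\delta_m - \delta_{n_0})(\gamma_m - \gamma_{n_0})}{(\delta_m + \gamma_m)(\delta_{n_0} + \gamma_{n_0})} \leq 1 + \frac{\min(\delta_m, \gamma_m)}{\delta_{n_0} + \gamma_{n_0}},
\]
using $\delta_m \gamma_m/(\delta_m + \gamma_m) \leq \min(\delta_m, \gamma_m)$ in the last step. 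Taking the infimum over all lines, $\text{cr}([q_m], [q_{n_0}])$ is uniformly bounded in $m$. On the other hand, iterating Proposition~\ref{prop:TriangIneqCr} yields $\text{cr}([q_m], [q_0]) \geq \text{cr}([q_m], [q_{n_0}]) \cdot \text{cr}([q_{n_0}], [q_0])$, so the hypothesis $\text{cr}([q_m], [q_0]) \to \infty$ combined with the finiteness of the fixed factor $\text{cr}([q_{n_0}], [q_0])$ forces $\text{cr}([q_m], [q_{n_0}]) \to \infty$, contradicting the previous bound. Hence no such $\ell_0$ exists, $L \subset K$, and $K$ is a projective subspace.
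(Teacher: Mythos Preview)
Your proof is correct and follows essentially the same approach as the paper: pick two points of $K$, a third point on their line outside $K$, and bound the cross-ratio on that line to contradict the divergence hypothesis. The differences are purely technical: the paper first replaces $p_1,p_2$ by the points of $L\cap K$ closest to $\ell_0$ so that the zeros $z_n^\pm$ actually converge to $p_1,p_2$, making the cross-ratio bound a limit; you skip that reduction by bounding the cross-ratio directly via $\min(\delta_m,\gamma_m)$, and you make explicit the use of Proposition~\ref{prop:TriangIneqCr} to pass from boundedness of $\text{cr}([q_m],[q_{n_0}])$ to a contradiction with $\text{cr}([q_m],[q_0])\to\infty$, a step the paper leaves implicit.
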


This proposition is proven in \cite{BG}. We write a version of the argument here for the sake of completeness. We illustrate this proposition with quadrics of signature $(1,2)$ in $\mathbb{RP}^2$ in Figure \ref{fig:Anosov 2}. 

\begin{figure}
\begin{center}
\begin{tikzpicture}

\draw[thick, blue] (0,0) ellipse (1 and 0.75);
\draw[thick, blue] (0,0) ellipse (2 and 1.5);
\draw (-4,0) -- (4,0);

\node[above] () at (0,0.75) {$q_{n_0}=0$};
\node[above] () at (0,1.5) {$q_n=0$};

\fill (4,0) circle(.05);
\fill (-4,0) circle(.05);
\fill (0,0) circle(.05);
\fill (2,0) circle(.05);
\fill (-2,0) circle(.05);
\fill (1,0) circle(.05);
\fill (-1,0) circle(.05);
\node[right] () at (4,0) {$y$};
\node[left] () at (-4,0) {$x$};
\node[below right] () at (2,0) {$z^+_n$};
\node[below left] () at (-2,0) {$z^-_n$};
\node[below right] () at (1,0) {$z^+_{n_0}$};
\node[below left] () at (-1,0) {$z^-_{n_0}$};
\node[below] () at (0,0) {$z$};

\end{tikzpicture}
\end{center}
\caption{Illustration of Proposition \ref{prop:Intersection linear}}
\label{fig:Anosov 2}
\end{figure}
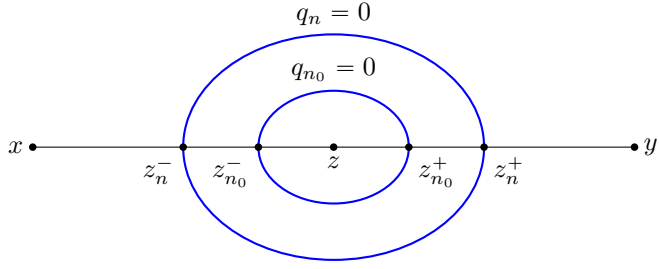

\begin{proof}
The intersection $I=\bigcap_{n\in \N} \lbrace q_n\leq 0\rbrace$ is a  compact non-empty subset. Let $x\neq y\in I$ and let $L$ be the projective line from $x$ to $y$. Suppose that there exist $z\in L$ such that $z\notin I$. Without any loss of generality one can assume that the open interval $S\subset L$ bounded by $x,y$ and containing $z$ does not intersect $I$. Indeed one can otherwise replace $x,y$ by the points on $L\cap I$ closest to $z$ on both 
sides.

\medskip

Since $z\notin I$ there exist $n_0\in \N$ such that $q_{n_0}$ is positive on $z$. For $n\geq n_0$, let $z_1,z_2$ be the two intersections of $L$ with the zeroes of $q_n$ so that the points $(x,z_n^-,z,z_n^+,y)$ are cyclically ordered. The sequences $(z_n^-)$ and $(z_n^+)$ are monotonic in $S$ and must converge to $x$ and $y$ since $S\cap I=\emptyset$. 
The value of $\text{cr}([q_{n_0}],[q_n])$ is bounded from above by the cross ratio $[z_{n_0}^+,z_n^+,z_n^-,z_{n_0}^-]$, which is turns converges to the cross ratio $[z_{n_0}^+,x,y,z_{n_0}^-]<\infty$ when $n$ goes to $+\infty$.
This contradicts the fact that $\text{cr}([q_{n}],[q_0])$ goes to $+\infty$.
Hence for every pair of points in $I$, the associated projective line is contained in $I$. In particular $I$ is a projective subspace.
\end{proof}

Let $\Gra_d^{(n,n)}(\mathcal{Q})\subset \Gra_d^\text{mix}(\mathcal{Q})$ be the open subset of $\Gra_d(\mathcal{Q})$ of pencils $P$ such that for all $q\in P$ non-zero, $q$ has signature $(n,n)$. We now apply the previous results to prove that representations that admit an equivariant fitting immersion in this open set, and more generally a fitting map with a fitting flow, are Anosov. For examples of such immersions, see Propositon \ref{prop:GaussMapSymmetric space for totally geodesic immersions}. Here again $M=\widetilde{N}$ for a compact $N$, $\Gamma=\pi_1(N)$ and $\rho:\Gamma\to \SL(V)$ is a representation.

%\begin{lem}
%Let $[q_1],[q_2]\in \Sph(\mathcal{Q}_V)$ be of signature $(p,q)$ and such that $q_1-q_2$ is positive. Let $g_1,g_2\in \mathcal{Q}^*_V$ be positive and such that $\tr(g_1q_1)=\tr(g_2q_2)=0$. Then the ratio of the $p$ and $p+1$ largest singular values of $q_1^{-1}q_2$ is greater than $\text{cr}([q_1],[q_2])$.
%\end{lem}

%Let $E\oplus F\subset V$ be a decomposition of $V$ with $\dim(E)=p$. Let $g\in \mathcal{Q}_V$ be positive such that $E$ and $F$ are orthogonal with respect to $g$.  the \emph{standard quadric} $q(E,F,g)$ is the symmetric bilinear form whose evaluation at $(v,v)$ for $v=v_e+v_f\in E\oplus F$ equals $g(e,e)-g(f,f)$. 
%
%\begin{lem} Suppose that for some  positive $g_1,g_2$ one has $q(E,F,g_2)-q(E,F,g_1)$ positive. The ratio $\frac{\sigma_p}{\sigma_{p+1}}$ where $\sigma_1\geq\cdots \geq \sigma_n$ are the eigenvalues of $g_2g_1^{-1}$ is greater than $\text{cr}([q(E,F,g_2)],[q(E,F,g_1)])$.
%\end{lem}
%
%\begin{proof}
%
%Let $v\in E$ and $w\in F$ be non-zero vectors. 
%
%\wip
%
%\end{proof}

\begin{thm}
\label{thm:WellFittedImplesAnosov}
Let $u:\widetilde{N}\to \Gra_d^{(n,n)}(\mathcal{Q})$ be a continuous $\rho$-equivariant map that admits an equivariant fitting flow. The representation $\rho$ is $\lbrace n\rbrace$-Anosov, and for any $q\in \Sph u^*\mathcal{E}$, the limit map $\xi^n_\rho(\zeta)\in \Gra_n(\R^{2n})$ at the limit $\zeta\in \partial \Gamma$ of the the flow line $(\Phi_t(q))$ is the following projective subspace:

$$\bigcap_{t\geq 0} \lbrace \pi(\Phi_t(q))\leq 0\rbrace.$$
\end{thm}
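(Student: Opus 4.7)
The strategy is to convert the nestedness of half-spaces along the fitting flow into uniform exponential contraction via the cross-ratio distance, and then use this contraction to both produce the boundary map $\xi^n_\rho$ and verify the singular value gap defining the $\{n\}$-Anosov property.

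\emph{Step 1 (uniform exponential contraction).} Since $N$ is closed, $\Gamma$ acts cocompactly on $\Sph u^*\mathcal{E}$. For every $q\in \Sph u^*\mathcal{E}$ and $t>0$, the fitting condition provides a representative of $\Phi_t(q)$ with $\pi(\Phi_t(q))-\pi(q)$ positive, and since $u$ takes values in $\Gra_d^{(n,n)}(\mathcal{Q})$, every quadric $\pi(\Phi_t(q))$ has signature $(n,n)$ and in particular lies in $\Sph \mathcal{Q}^{\mix}$, so $\text{cr}([\pi(\Phi_1(q))],[\pi(q)])$ is finite and strictly greater than $1$. By continuity and $\rho$-equivariance, this cross-ratio distance achieves a minimum $c_0>1$ on a compact fundamental domain. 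Iterating via Proposition \ref{prop:TriangIneqCr} yields
$$\text{cr}([\pi(\Phi_t(q))],[\pi(q)])\;\geq\; c_0^{\lfloor t\rfloor}.$$

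\emph{Step 2 (limit projective subspaces).} Applying Proposition \ref{prop:Intersection linear} to the sequence $(\pi(\Phi_n(q)))_{n\in \N}$, the set
$$L_q\;:=\;\bigcap_{t\geq 0}\{\pi(\Phi_t(q))\leq 0\}$$
is a projective subspace of $\mathbb{P}(V)$. Since each $\pi(\Phi_t(q))$ has signature $(n,n)$, any projective subspace contained in $\{\pi(\Phi_t(q))\leq 0\}$ has dimension at most $n-1$, with the bound achieved on projectivizations of maximal isotropic $n$-planes. Picking such a maximal isotropic for each $t\in \N$ and extracting a convergent subsequence via compactness of the isotropic Grassmannian produces an $(n-1)$-dimensional projective subspace inside $L_q$. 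Hence $L_q$ has dimension exactly $n-1$ and determines an $n$-plane $V_q\in \Gra_n(V)$.

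\emph{Step 3 (boundary map and Anosov property).} By Proposition \ref{prop:Flow lines of a fitting flow are quasi geodesic and exist between points} and Corollary \ref{cor:Globally fitting}, flow lines of $\Phi$ project to uniform quasi-geodesics in $\widetilde{N}$ existing between any two points, and $\rho$ is a quasi-isometric embedding. The exponential contraction from Step 1 provides nested contracting half-spaces in $\mathbb{P}(V)$ indexed along these quasi-geodesics, which is precisely the input of a Bochi--Potrie--Sambarino style multicone criterion: from it one deduces both that $\Gamma$ is Gromov hyperbolic and that the singular value gap of Definition \ref{defn:Anosov2} holds, so $\rho$ is $\{n\}$-Anosov. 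Two flow lines whose projections to $\widetilde{N}$ stay at bounded distance necessarily yield the same $L_q$ (if $q,q'$ satisfy $\sup_t d(\Phi_t(q),\Phi_t(q'))<\infty$, then by Step 1 the half-space $\{\pi(\Phi_t(q))\leq 0\}$ is sandwiched between $\{\pi(\Phi_{t-c}(q'))\leq 0\}$ and $\{\pi(\Phi_{t+c}(q'))\leq 0\}$ for a uniform $c$). Therefore $q\mapsto V_q$ descends to the Gromov boundary and defines the equivariant map $\xi_\rho^n:\partial \Gamma\to \Gra_n(V)$, and the displayed formula is the definition of $V_q$ from Step 2.

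\emph{Main obstacle.} The delicate point is Step 3: extracting from the cross-ratio contraction the precise singular value gap of Definition \ref{defn:Anosov2}, and showing that this simultaneously forces Gromov hyperbolicity of $\Gamma$. The translation from projective contraction to singular values relies on the standard dictionary (matching cross-ratio contraction of nested half-spaces in $\mathbb{P}(V)$ with the $\sigma_n/\sigma_{n+1}$-gap on the Grassmannian), while the identification of forward endpoints of flow lines with $\partial \Gamma$ requires hyperbolicity to be established first; the BPS nested-multicone framework handles both points in one stroke, and verifying that our setting satisfies its hypotheses is the bulk of the work.
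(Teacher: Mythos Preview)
Your outline follows the same architecture as the paper: establish exponential growth of the cross-ratio along flow lines via cocompactness and Proposition~\ref{prop:TriangIneqCr}, use Proposition~\ref{prop:Intersection linear} to get an $n$-plane $F_q$ (your $L_q$), and then appeal to \cite{BPS}. Steps~1 and~2 are essentially what the paper does; your dimension argument in Step~2 (extracting a limit of maximal isotropics) is in fact more explicit than the paper's one-line remark.

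The gap is Step~3. You invoke a ``BPS multicone criterion'' as a black box, but the multicone/cone-field statements in \cite{BPS} are formulated for the linear cocycle over the $\Gamma$-action, not for a flow on $\Sph u^*\mathcal{E}$, and your nested half-spaces are indexed by flow lines rather than by group elements. The paper does \emph{not} try to match the multicone hypotheses directly; instead it carries out the contraction estimate by hand. Concretely, it defines the companion $n$-plane $E_q=\bigcap_{t\ge 0}\{\pi(\Phi_{-t}(q))\ge 0\}$, builds an explicit $\Phi$-adapted metric $h$ on the flat bundle by gluing $\pm\pi(q)$ on $E_q\oplus F_q$, introduces an auxiliary form $h'_q$, and then shows via a cross-ratio computation on projective lines through $E_q$ and $F_q$ that
\[
\frac{h_{\Phi_t(q)}(v)\,h_q(w)}{h_{\Phi_t(q)}(w)\,h_q(v)}\;\ge\; e^{2\alpha t-4\delta-2\alpha}
\]
for $v\in E_q$, $w\in F_q$. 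This is a genuine dominated splitting for $\Phi$, and only then does the paper cite \cite[Theorem~2.2]{BPS} (domination over a cocompact flow with quasi-geodesic orbits implies the singular value gap). The identification of $F_q$ with $\xi^n_\rho(\zeta)$ then follows from uniqueness of the Anosov limit map rather than from your sandwiching argument. Your ``Main obstacle'' paragraph correctly flags that this is where the work lies; what is missing from your proposal is precisely this metric construction and the resulting domination estimate, without which no BPS statement applies.
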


Note that the flow lines of an equivariant fitting flow are quasi-geodesics  by Proposition \ref{prop:Flow lines of a fitting flow are quasi geodesic and exist between points}, and $\mathbb{S}u^*\mathcal{E}$ is quasi-isometric to $\Gamma$, hence the limit point $\zeta$ is well-defined.

\begin{figure}
\begin{center}
\begin{tikzpicture}

\draw[thick, dashed, blue] (0.5,0) ellipse (2 and 1);
\draw[thick, red] (-0.5,0) ellipse (2 and 1);

\draw[thick, dashed, blue] (0.5,0) ellipse (5 and 2);
\draw[thick, red] (-0.5,0) ellipse (5 and 2);

\draw (-6,0) -- (6,0);

\node () at (2,1) {$\bar{h}_q=0$};
\node () at (-2,1) {$\pi(q)=0$};
\node () at (4,2) {$\bar{h}_{\Phi_t(q)}=0$};
\node () at (-4,2) {$\pi(\Phi_t(q))=0$};

\fill (5.5,0) circle(.05);
\fill (-5.5,0) circle(.05);
\fill (4.5,0) circle(.05);
\fill (-4.5,0) circle(.05);
\fill (-2.5,0) circle(.05);
\fill (2.5,0) circle(.05);
\fill (-1.5,0) circle(.05);
\fill (1.5,0) circle(.05);
\fill (0,0) circle(.05);
%\fill (6.5,0) circle(.05);

\node[below] () at (0,0) {$e$};
%\node[below] () at (6.5,0) {$f$};
\node[above] () at (-5.75,0) {$L$};
\node[below left] () at (-2.5,0) {$\ell_1'$};
\node[below right] () at (1.5,0) {$\ell_1$};
\node[below left] () at (-1.5,0) {$\ell_2'$};
\node[below right] () at (2.5,0) {$\ell_2$};
\node[below left] () at (-5.5,0) {$\ell_3'$};
\node[below right] () at (4.5,0) {$\ell_3$};
\node[below left] () at (-4.5,0) {$\ell_4'$};
\node[below right] () at (5.5,0) {$\ell_4$};

\node[below] () at (3,-2) {$f\text{ at }\infty\text{ on  }L \text{ } \to$};

\end{tikzpicture}
\end{center}
\caption{Illustration of the proof of Theorem \ref{thm:WellFittedImplesAnosov}.}
\label{fig: Anosov 3}
\end{figure}
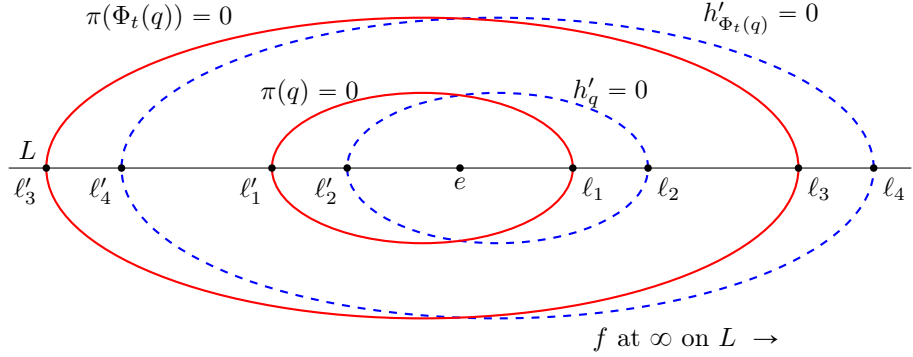

\begin{proof}

The flat $V$-bundle over $\Sph u^*\mathcal{E}$ associated with $\rho$ admits a continuous splitting $E\oplus F$ where for $q\in \Sph u^*\mathcal{E}$:
$$\mathbb{P}\left(E_q\right) =\bigcap_{t\geq 0} \lbrace \pi(\Phi_{-t}(q))\geq 0\rbrace,$$
$$\mathbb{P}\left(F_q\right) =\bigcap_{t\geq 0} \lbrace \pi(\Phi_{t}(q))\leq 0\rbrace.$$

This defines transverse vector subspaces by Proposition \ref{prop:Intersection linear} since $\text{cr}(\Phi_t(q),q)$ and $\text{cr}(-\Phi_{-t}(q),-q)$ go to $+\infty$ when $t$ goes to $+\infty$.
Moreover the quadrics in the pencils in the image of $u$ are of signature $(n,n)$, one must have $\dim(E_q)=\dim(F_q)=n$ so this splitting is well-defined.
This splitting is preserved by $\Phi$.

\medskip

We now construct a metric $h$ on this flat $V$-bundle over $\Sph u^*\mathcal{E}$. Given $q\in \Sph u^*\mathcal{E}$ we define the symmetric bilinear form $h_q$ on $V=E_q\oplus F_q$ so that this sum is orthogonal and $h_q$ is equal to $\pi(q)\in \mathcal{Q}$ on $E_q$ and $-\pi(q)\in \mathcal{Q}$ on $F_q$. Note that by definition of $E_q$ and $F_q$, $h_q$ is positive.

We also introduce an auxiliary symmetric bilinear form $\bar{h}$ of signature $(n,n)$ on this flat $V$-bundle over $\Sph u^*\mathcal{E}$ so that the sum $E_q\oplus F_q$ is orthogonal and $\bar{h}_q$ is equal to $\pi(q)\in \mathcal{Q}$ on $E_q$ and on $F_q$. 

\medskip

Our first step is to compare the quadric $\bar{h}_q$ with $\pi(q)$. Let $L$ be a projective line intersecting $\mathbb{P}(E_q)$ at some $e$ and $\mathbb{P}(F_q)$ at some $f$. In Figure \ref{fig: Anosov 3}, we illustrate some $\mathbb{RP}^2\subset \mathbb{RP}^{2n-1}$ containing the projective line $L$. Let $\ell_1,\ell'_1$ be the zeroes of $\pi(q)$ on $L$ and $\ell_2,\ell'_2$ be the zeroes of $\bar{h}_q$ on $L$, so that $\ell_1,\ell_2$ lie in the same connected component of $L\setminus \lbrace e,f\rbrace$. Since $N$ is compact, there exist a maximum $\delta<\infty$ for all $q\in \Sph u^*\mathcal{E}$ and all such projective line $L$ of the quantity $|\log\left([\ell_1,\ell_2,f,e]\right)|$.

\medskip

Now we turn our attention to the contraction properties of $\Phi$. Let $t>0$ be a real number and let $q\in \Sph u^*\mathcal{E}$. Let $v\in E_q$ and $w\in F_q$. We are interested in the following ratio :
$$R=\frac{h_{\Phi_t(q)}(v)h_q(w)}{h_{\Phi_t(q)}(w)h_q(v)}.$$
Let $e,f$ be the lines generated by $v,w$ and $L$ be the projective line joining them. Let $\ell_1,\ell'_1$ be the zeroes of $\pi(q)$ on $L$, $\ell_2,\ell'_2$ the zeroes of $\bar{h}_q$ on $L$, $\ell_3,\ell'_3$ be the zeroes of $\pi(\Phi_t(q))$ on $L$ and finally $\ell_4,\ell'_4$ the zeroes of $h_{\Phi_t(q)}$ on $L$.
We assume that $\ell_1,\ell_2,\ell_3,\ell_4$ all lie on the same component of $L\setminus \lbrace e,f\rbrace$.

The cross ratio $[\ell_2,\ell_4,f,e]$ is equal to $R^{\frac{1}{2}}$. Indeed $\ell_2$ is generated by $h^{\frac{1}{2}}_q(w)v+h^{\frac{1}{2}}_q(v)w$ and $\ell_4$ is generated by $h^{\frac{1}{2}}_{\Phi_t(q)}(w)v+h^{\frac{1}{2}}_{\Phi_t(q)}(v)w$, up to changing $\ell_i$ by $\ell_i'$ for $1\leq i\leq 4$. Hence:
$$[\ell_2,\ell_4,f,e]=\frac{\left( h^{\frac{1}{2}}_{q}(w)v+h^{\frac{1}{2}}_{q}(v)w\right)\wedge w}{\left( h^{\frac{1}{2}}_{q}(w)v+h^{\frac{1}{2}}_{q}(v)w\right)\wedge v}\times \frac{\left( h^{\frac{1}{2}}_{\Phi_t(q)}(w)v+h^{\frac{1}{2}}_{\Phi_t(q)}(v)w\right)\wedge v}{\left( h^{\frac{1}{2}}_{\Phi_t(q)}(w)v+h^{\frac{1}{2}}_{\Phi_t(q)}(v)w\right)\wedge w}=R^{\frac{1}{2}}.$$
However due to our comparison of $\pi(q)$ and $\bar{h}_q$ one has:
$$e^{-\delta}\leq [\ell_2,\ell_4,f,e],[\ell_1,\ell_3,f,e]\leq e^{\delta}.$$
Therefore $[\ell_2,\ell_4,f,e]/[\ell_1,\ell_3,f,e]\geq e^{-2\delta}$.
Hence $R^{\frac{1}{2}}>e^{-2\delta}[\ell_1,\ell_3,f,e]$. This last cross ratio is larger than:
$$[\ell_1,\ell_3,\ell_3',\ell_1']\geq \text{cr}\left(\pi(\Phi_t(q)),\pi(q),\right).$$

Since $\Phi$ is a fitting flow and since $N$ is compact, there exist $\alpha>0$ such that $\text{cr}\left(\pi(\Phi_1(q)),\pi(q)\right)\geq e^\alpha$ for all $q\in \Sph u^*\mathcal{E}$. Hence by the triangular inequality from Proposition \ref{prop:TriangIneqCr}, for all $t>0$:
$$\text{cr}\left(\pi(\Phi_t(q)),\pi(q)\right)\geq e^{\alpha (t-1)}.$$

Hence we get the following estimate:
$$\frac{h_{\Phi_t(q)}(v)h_q(w)}{h_{\Phi_t(q)}(w)h_q(v)}\geq e^{2\alpha t-4\delta-2\alpha} .$$

This implies that the splitting $V=E_q\oplus F_q$ is $\lbrace n\rbrace $-contracting in the sense of \cite{BPS} with respect to the flow $\Phi$ for the metric $h$. Moreover $\Gamma$ acts cocompactly on $\Sph u^*\mathcal{E}$ and every geodesic in $\Gamma$ is at uniform distance from a flow line of $\Phi$. The domination of this splitting implies an exponential gap for the singular values \cite[{Theorem 2.2}]{BPS}, which implies that $\rho$ is $\lbrace n\rbrace $-Anosov. The vector subspace $\xi_\rho^n(\zeta)$ is the contracted subspace $F_q$.

\end{proof}

%Suppose that $\rho$ factors through a representation $\eta:G\to\GL(V)$ so that for some set of simple roots $\Theta$. We define $\mathcal{Q}_V^\Theta$ the space of quadrics of signature $(n,n)$ that are positive and respectively negative on some $n$-dimensional space $E_+,E_-$ such that $G\cap \text{Stab}(E_+)$ and $G\cap \text{Stab}(E_-)$ are conjugate to $P_\Theta$.
%
%
%\begin{thm}
%Let $u:\widetilde{N}\to \Gra_d(\mathcal{Q}^{\Theta}_V)$ be a fitting $\rho$-equivariant immersion. Then $\rho$ is $\Theta$-Anosov.
%\end{thm}

\subsection{Fibered domain of discontinuity}

Such an equivariant map into the space of pencils that admits a fitting flow induces a fibration of the Guichard-Wienhard domain of discontinuity.

\medskip

Recall that $u^o(x)\subset S^2V$ is the anihilator subspace of $u(x)\subset \mathcal{Q}=\left(S^2V\right)^*$.

\begin{prop}
\label{prop:Completeness and fibration of DOD}
Let $N$ be a compact manifold of dimension $d$ with fundamental group $\Gamma$. Let $\rho:\Gamma\to \SL(V)$ and $u:\widetilde{N}\to \Gra^{(n,n)}_d(\mathcal{Q})$ be a $\rho$-equivariant continuous map that admits an equivariant fitting flow $\Phi$ on $\Sph u^*\mathcal{E}$. 
The union of $\mathbb{P}(u^\circ(x)\cap S^2V^{>0})$ for $x\in \widetilde{N}$ cover all of $\mathbb{P}(S^2V^{>0})$, and the closure of this union intersects the space of rank one points $\mathbb{P}(V)\simeq S^2\mathbb{P}(V)\subset \mathbb{P}(S^2V^{\geq 0})$ exactly at the domain of discontinuity for $\lbrace n\rbrace$-Anosov representations considered by Guichard-Wienhard \cite{GWDoD}:

\begin{equation}
\label{eq:GWDOD}
\Omega=\mathbb{P}(V)\setminus \bigcup_{\zeta\in \partial \Gamma} \mathbb{P}(\xi^n_\rho(\zeta)).
\end{equation}

The intersection of $\mathbb{P}(u^\circ(x)\cap S^2V^{>0})$ with the set of rank one points for $x\in \widetilde{N}$ defines a fibration over $\widetilde{N}$ of $\Omega$.
\end{prop}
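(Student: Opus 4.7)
The plan is to realize the union $U=\bigcup_{x\in \widetilde{N}} F_x$ of fibers $F_x:=\mathbb{P}(u^\circ(x)\cap S^2V^{>0})$ as the homeomorphic image of a natural incidence manifold under invariance of domain, then combine the fitting flow with the $\{n\}$-Anosov structure of Theorem~\ref{thm:WellFittedImplesAnosov} to control boundary behavior and identify the rank-one part of $\overline{U}$ with $\Omega$.

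For the first assertion, set
$$\mathcal{F}=\{(x,[p])\in \widetilde{N}\times \mathbb{P}(S^2V^{>0}) : [p]\in \mathbb{P}(u^\circ(x))\}$$
with projections $\pi_1$ to $\widetilde{N}$ and $\pi_2$ to $\mathbb{P}(S^2V^{>0})$. Continuity of $u$ makes $\pi_1$ a topological fiber bundle with fiber $F_x$, so $\mathcal{F}$ is a topological manifold of the same dimension as $\mathbb{P}(S^2V^{>0})$. Corollary~\ref{cor:Globally fitting} gives injectivity of $\pi_2$, and invariance of domain then makes it an open embedding onto an open subset $U\subset \mathbb{P}(S^2V^{>0})$. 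To show $U=\mathbb{P}(S^2V^{>0})$ I argue $U$ is closed: given $p_n\in F_{x_n}$ with $p_n\to p\in \mathbb{P}(S^2V^{>0})$, a subsequential limit $x_n\to x\in \widetilde{N}$ gives $p\in F_x\subset U$ by continuity of the incidence relation. The case $x_n\to\infty$ is ruled out by an escape argument: writing $x_n=\rho(\gamma_n)y_n$ with $y_n$ in a fundamental domain and $|\gamma_n|\to\infty$, the $\{n\}$-Anosov contraction of $\rho(\gamma_n)$ on $S^2V$ combined with the nested-quadric estimates of Propositions~\ref{prop:TriangIneqCr} and~\ref{prop:Intersection linear} along a flow line from $x_0$ to $x_n$ (existing by Proposition~\ref{prop:Flow lines of a fitting flow are quasi geodesic and exist between points}) forces any accumulation point of $p_n$ onto $\partial \mathbb{P}(S^2V^{>0})$, contradicting $p_n\to p$ in the interior.

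For the identification $\overline{U}\cap \mathbb{P}(V)=\Omega$, I use that the rank-one points of $\overline{F_x}$ are precisely $\{v\otimes v:v\in B(u(x))\}$ with $B(u(x)):=\bigcap_{q\in u(x)}\{q=0\}$ the base of the pencil. For $\overline{U}\cap \mathbb{P}(V)\subseteq \Omega$: if $v\otimes v=\lim p_n$ with $p_n\in F_{x_n}$, the refined escape analysis shows that escape limits on $\partial \mathbb{P}(S^2V^{>0})$ are semi-positive tensors of rank $n$ supported on $\xi^n_\rho(\zeta)$, hence not rank one for $n\geq 2$; so $x_n$ stays bounded and produces $x\in \widetilde{N}$ with $v\in B(u(x))$. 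The fitting property then gives $v\in \Omega$: for any $q\in u(x)$, $\pi(\Phi_t(q))(v,v)>\pi(q)(v,v)=0$ for $t>0$, so $v\notin \bigcap_{t\geq 0}\{\pi(\Phi_t(q))\leq 0\}= \mathbb{P}(\xi^n_\rho(\zeta^+))$ by Theorem~\ref{thm:WellFittedImplesAnosov}, and symmetrically for $t<0$; since every $\zeta\in\partial\Gamma$ is the forward or backward limit of a flow line through $x$ by Proposition~\ref{prop:Flow lines of a fitting flow are quasi geodesic and exist between points}, this yields $v\in\Omega$. For $\Omega\subseteq \overline{U}\cap \mathbb{P}(V)$: given $v\in\Omega$, fix $p_0\in \mathbb{P}(S^2V^{>0})$ and consider $p_t=(1-t)p_0+t\,v\otimes v\to v\otimes v$; by the first assertion each $p_t$ lies in some $F_{x_t}$ for $t\in[0,1)$, and the rank-one character of the limit $v\otimes v$ combined with the escape analysis forces $x_t$ to stay bounded, producing $x\in\widetilde{N}$ with $v\otimes v\in u^\circ(x)\cap S^2V^{\geq 0}\subset \overline{U}$.

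The third assertion is then immediate: the map $\Omega\to \widetilde{N}$ sending $v$ to the unique $x$ with $v\in B(u(x))$ is well-defined by disjointness of the bases (global fitting applied to $v\otimes v\in u^\circ(x)\cap S^2V^{\geq 0}$), continuous from the extension of the homeomorphism $\pi_2\colon \mathcal{F}\to U$ to the rank-one boundary, and has fiber $B(u(x))$, which realizes $\Omega$ as the desired fiber bundle over $\widetilde{N}$. The main obstacle throughout is the escape argument: quantifying that fibers $F_{x_n}$ with $x_n\to \zeta\in\partial\Gamma$ accumulate on $\partial\mathbb{P}(S^2V^{>0})$ at semi-positive tensors supported on $\xi^n_\rho(\zeta)$ of rank $n\geq 2$, so that rank-one accumulation points force bounded $x_n$. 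This is the dynamical heart of the proposition and I expect the proof to combine the $\{n\}$-Anosov contraction estimates from the proof of Theorem~\ref{thm:WellFittedImplesAnosov} with the quasi-isometric structure of flow lines from Proposition~\ref{prop:Flow lines of a fitting flow are quasi geodesic and exist between points}.
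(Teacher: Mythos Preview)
Your overall architecture---invariance of domain for openness, a closedness/escape argument for surjectivity, and then flow-line analysis for the rank-one boundary---is reasonable in outline, and the second half is close in spirit to what the paper does. But the crucial ``escape argument'' you flag as the main obstacle is in fact a genuine gap, and the specific version you sketch does not hold.

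For the covering of $\mathbb{P}(S^2V^{>0})$: your claim that if $x_n\to\infty$ then any accumulation point of $p_n\in F_{x_n}$ lies on $\partial\mathbb{P}(S^2V^{>0})$ is not justified by Anosov contraction on $V$. The action of $\SL(V)$ on $\mathbb{P}(S^2V^{>0})$ is by Hilbert isometries, so $\rho(\gamma_n)$ does not push interior points toward the boundary; the fibers $F_{x_n}$ are codimension-$d$ convex slices that continue to fill most of the domain no matter how far $x_n$ goes. The paper bypasses this entirely with a direct \emph{Hilbert-metric stepping argument}: by cocompactness there is a uniform $\epsilon>0$ such that the $\epsilon$-Hilbert-neighborhood of each $F_x$ is covered by fibers over the unit ball around $x$; then any point $[p']$ is reached from a fixed $[p_0]\in F_{x_0}$ by a finite $\epsilon$-chain, and completeness of the metric on $\widetilde{N}$ lets one inductively find the required $x_i$. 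No escape argument is needed.

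For the rank-one analysis: your assertion that when $x_n\to\zeta$ the accumulation points of $F_{x_n}$ on the boundary are ``semi-positive tensors of rank $n$ supported on $\xi^n_\rho(\zeta)$, hence not rank one'' is incorrect---the boundary of $F_{x_n}$ contains the full base locus $S^2\{q=0:q\in u(x_n)\}$, which already consists of rank-one points. What the paper actually uses is that the \emph{rank-one part} $\mathbb{P}(u^\circ(x_n))\cap S^2\mathbb{P}(V)$ becomes close to $S^2\mathbb{P}(\xi^n_\rho(\zeta_n))$ as $t_n\to\infty$ along the flow line to $x_n$; this is a direct consequence of the nested-half-space description of $\xi^n_\rho$ in Theorem~\ref{thm:WellFittedImplesAnosov}, and it immediately contradicts $[p]\in S^2\Omega$. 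Your direction $\overline{U}\cap\mathbb{P}(V)\subset\Omega$ via the fitting inequality $\pi(\Phi_t(q))(v,v)>0$ is essentially the paper's argument and is fine once you know $v$ lies in some base $B(u(x))$; but the reason $v\otimes v\in u^\circ(x)$ for some $x$ is again the boundedness of $(x_n)$, which you have not established independently of the flawed rank-$n$ claim.
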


In this argument we will use the Hilbert distance on $\mathbb{P}(S^2V^{>0})$ already introduced for the proof  of Proposition \ref{prop:Flow lines of a fitting flow are quasi geodesic and exist between points}.
For a subset $A\subset \mathbb{P}(V)$ we write $S^2 A\subset \mathbb{P}(S^2V)$ the corresponding set of rank one lines. 

Proving first that $\mathbb{P}(S^2V^{>0})$ is fully covered helps us proving that the Guichard -Wienhard domain is also fully covered.

\begin{proof}
Let us fist prove that all of $\mathbb{P}(S^2V^{>0})$ is covered by the union of $\mathbb{P}(u^\circ(x)\cap S^2V^{>0})$ for $x\in \widetilde{N}$. We fix a Riemanian metric on $N$ that defines a Riemannian metric $g$ on $\widetilde{N}$, with associated distance $d_g$. Since $u$ is globally fitting, see Remark \ref{rem:Fitting flow implies globally fitting}, for all $x\in \widetilde{N}$ there exist a neighborhood $U$ of $\mathbb{P}(u^\circ(x)\cap S^2V^{\geq 0})$ in $\mathbb{P}(S^2V^{\geq 0})$ that is covered by the manifolds $\mathbb{P}(u^\circ(y)\cap S^2V^{\geq 0})$ for $y$ in the ball for $d_g$ of radius $1$ centered at $0$, by Proposition \ref{prop:fitting immersions 3 items}, part (i).

This neighborhood contains an $\epsilon$-neighborhood of $\mathbb{P}(u^\circ(x)\cap S^2V^{> 0})$ for the Hilbert metric on $\mathbb{P}(S^2V^{>0})$ for some $\epsilon>0$. Since $N$ is compact, this $\epsilon>0$ can be chosen independently of $x$.

\medskip

Now let us fix some $x_0\in \widetilde{N}$ and some $[s_0]\in \mathbb{P}(u^\circ(x_0)\cap S^2V^{> 0})$. Given any $[s']\in \mathbb{P}(S^2V^{> 0})$ one can find a finite sequence $s_0,s_1,\cdots, s_k=s'$  in $S^2V^{>0}$ so that the Hilbert distance between $[s_i]$ and $[s_{i+1}]$ is less than $\epsilon$ for all $0\leq i< k$. By induction, and since the Riemannian metric $d_g$ is complete, one can construct for all $1\leq i\leq k$ a point $x_i\in \widetilde{N}$ such that $[s_i]\in \mathbb{P}(u^\circ(x_i)\cap S^2V^{> 0})$  and $d_H(x_{i-1},x_i)\leq 1$. Therefore the manifolds $\mathbb{P}(u^\circ(x)\cap S^2V^{> 0})$ cover all of $\mathbb{P}(S^2V^{>0})$.

\medskip

Now let us consider the fibered domain in projective space. We first prove that the fibers are contained in the domain $S^2\Omega$.
Consider a rank one line $[s]\in S^2\mathbb{P}(\xi^n_\rho(x))$ for some $\zeta\in \partial \Gamma$. Suppose that $p$ belongs to $u^\circ(x)$ for some $x\in \widetilde{N}$.
There exist a flow line $(\Phi_t([q]))_{t\geq 0}$ starting at $\pr([q])=x$ and converging to $\zeta \in \partial \Gamma \simeq \partial \widetilde{N}$ by Proposition \ref{prop:Flow lines of a fitting flow are quasi geodesic and exist between points}.
Theorem \ref{thm:WellFittedImplesAnosov} implies that $\pi(q)\in\mathcal{Q}$ must be negative on $\xi^2_\rho(\zeta)$ and hence $p$ cannot belong to $u^\circ(x)$. 

Conversely let us prove that every point in $S^2\Omega$ is covered by some fiber. Fix a point $x\in \widetilde{N}$ and take any rank one point $[s]\in S^2\Omega$ in the Guichard-Wienhard domain of discontinuity. There exist a sequence $(x_n)$ such that $[s]$ belongs to the limit of $\mathbb{P}(u^\circ(x_n)\cap S^2V^{\geq 0})$, since these manifolds cover $\mathbb{P}(S^2V^{>0})$. We consider some $[q_n]\in \Sph\mathcal{E}_x$ such that $\Phi_t([q_n])\in \Sph\mathcal{E}_{x_n}$ for some $t_n>0$, which exist by Proposition \ref{prop:Flow lines of a fitting flow are quasi geodesic and exist between points}. If $t_n$ diverges when $n$ varies, then the set $\mathbb{P}(u^\circ(x_n)\cap S^2\mathbb{P}(V))$ becomes arbitrarily close to $S^2\mathbb{P}(\xi^n_\rho(\zeta_n))$ where $\zeta_n$ is the limit when $t$ goes to $+\infty$ of $\Phi_t([q_n])$. This would contradict the fact that $[s]\in S^2\Omega$, as in this case $[s]\in \mathbb{P}(\xi_\rho^n(\zeta))$ where $\zeta$ is a limit point of $(\zeta_n)$. Hence the sequence $(x_n)$ is bounded and therefore converges up to subsequence to some $x_\infty\in \widetilde{N}$, and $[s]\in \mathbb{P}(u^\circ(x_\infty))$.

\end{proof}

\section{Fitting maps and maximal representations.}
\label{sec:Characterization of the structures.}
Let us consider representations $\rho:\Gamma\to \Sp(2n,\R)$. We prove our main result, which is the characterization of maximal representations in terms of maps of pencils.  The first part introduces $\omega$-regular pencils, as well as a connected component of the space of $\omega$-regular pencils. We then state the characterization, and then present the construction of a map of pencils with a fitting flow for any maximal representation. This construction relies to a map from the space of pairs of Lagragians to the space of quadrics. %We briefly discuss how one can also use this map to decompose the projective structure into polygons of quadric hypersurfaces in Section \ref{subsec:polygons of hyperboloids}.

\medskip

Throughout this section we set $d=2$, and consider the case when $N=S_g$ is a surface.

%\subsection{Characterization as foliations.}
%
%\begin{defn}
%A contact projective structure of dimension $n$ on a compact manifold is \emph{foliated by contact hyperboloids} if its tangent bundle admits a fibration such that each leaf of the foliation is locally mapped via the differential of the developing map to the tangent bundle of a contact hyperboloid without ends.
%\end{defn}
%
%Each leaf inherits a structure modeled on a contact hyperboloid (considered withpout its ends) $\hyperb$, with structure groups the stabilizer $$H\subset\PSp(2n,\R)$ of $\hyperb$ which is isomorphic to $PGL(n,\R)$. We say a $(H,\hyperb)$-structure is \emph{complete} if the corresponding developing map is surjective onto $\hyperb$.
%
%\medskip
%
%We say that a contact projective structure foliated by contact hyperboloids is \emph{complete} if the induced structure on every of its leaves is complete.
%
%\begin{lem}
%If $M$ is compact, every contact projective structure on $M$ foliated by contact hyperboloids is \emph{complete}.
%\end{lem} 
%
%\begin{lem}% True ??
%
%Let $M$ be a connected manifold with a contact projective structure foliated by contact hyperboloids and let $n\geq 3$. Every $2$ points in $M$ belong to a common leaf.
%
%\end{lem}
%
%\begin{prop}
%Let $M$ be a compact manifold with a contact projective structure foliated by contact hyperboloids. The uniform cover of each leaf is mapped bijectively via the developing map onto a full hyperboloid without ends. The image of the develloping map si equal to the 
%\end{prop}
%
%\begin{proof}
%
%\end{proof}

\subsection{Definition of maximal pencils.}
\label{subsec:DefnMaxPencil}

We say that a quadric $q$ in $\left(\R^{2n},\omega\right)$ is $\omega$-\emph{regular} if it is positive on some Lagrangian $\ell_1$ and negative on some Lagrangian $\ell_2$. Note that if $\ell_1,\ell_2$ are only assumed to be $n$ dimensional subspaces, this condition is exactly the condition of having signature $(n,n)$. We call $\Gra_2^\omega(\mathcal{Q})$ the space of pencils hose non-zero elements are all $\omega$-regular, which is an open subset of  $\Gra_2^{{(n,n)}}(\mathcal{Q})$. These pencils have in particular the property that the corresponding subsets of $\mathbb{P}(S^2V)$ intersect transversely the symmetric space of $\Sp(2n,\R)$, see Lemma \ref{lem:Transverse intersection with symmetric space}.

\begin{rem}\label{rem:BoundaryMapLagran}
If a locally fitting map $u:\widetilde{S_g}\to \Gra_2^{{(n,n)}}(\mathcal{Q})$ admits a fitting flow which is equivariant with respect to a representation $\rho:\Gamma\to \Sp(2n,\R)$, the image of $u$ must lie in $\Gra_2^\omega(\mathcal{Q})$ as $\rho$ is $\lbrace n\rbrace$-Anosov by Theorem \ref{thm:WellFittedImplesAnosov}, and the limit map of $\lbrace n\rbrace$-Anosov representations in $\Sp(2n,\R)$ take values in the space of Lagrangians. This last fact is a consequence of the fact that an attractive fixed $n$-dimensional subspace of an element in $\Sp(2n,\R)$ is necessarily Lagrangian, see for instance \cite{GWDoD}.
\end{rem}

There are non-maximal representations admitting equivariant fitting immersions, for instance almost-Fuchsian representations in $\SL(2,\C)\subset \Sp(4,\R)$, see Appendix \ref{sec:H3}. In order to obtain the maximality property, we need to restrict ourselves to the correct union of connected component of $\Gra_2^\omega(\mathcal{Q})$. 

\medskip

Let $P\in \Gra_2^\omega(\mathcal{Q})$ be a pencil, and fix an orientation for $P$. Recall that $\mathcal{L}_n$ in the space of Lagrangians in $\R^{2n}$. We construct a "boundary map" for an $\omega$-regular pencil of quadrics, defined up to homotopy. Before defining this map, note the following:

\begin{lem}
\label{lem:There exist a boundary map of pencil}
Let $q\in \mathcal{Q}$ be an $\omega$-regular element. The set of Lagrangians $\ell$ such that $q$ is positive on $\ell$ is homeomorphic to an open ball.
\end{lem}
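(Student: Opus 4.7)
The plan is to identify the set of positive Lagrangians with a bounded open star-shaped subset of a Euclidean space, then invoke the standard fact that such a set is homeomorphic to an open ball.

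By $\omega$-regularity, I would first fix Lagrangians $\ell_+$ and $\ell_-$ on which $q$ is respectively positive and negative. These must be transverse, since a common non-zero vector would have to satisfy both $q(v,v) > 0$ and $q(v,v) < 0$. The same sign reasoning shows that every Lagrangian $\ell$ with $q|_\ell > 0$ is automatically transverse to $\ell_-$, so every such $\ell$ lies in the affine chart of $\mathcal{L}_n$ consisting of Lagrangians transverse to $\ell_-$. Identifying $\ell_-$ with $\ell_+^*$ via $\omega$, this chart is parametrized by $\Sym^2(\ell_+^*) \cong \R^{n(n+1)/2}$, where $S$ corresponds to the graph of a symmetric map $\ell_+ \to \ell_-$, and $S = 0$ corresponds to $\ell_+$ itself.

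The heart of the argument is to show that positivity of $q$ on the graph depends on $S$ in a star-shaped way. Expanding
\[
q|_{\mathrm{graph}(tS)}(v+tSv, v+tSv) = q(v,v) + 2t\,q(v,Sv) + t^2 q(Sv, Sv),
\]
for each fixed $v \in \ell_+ \setminus \{0\}$ this is a concave quadratic in $t$: the leading coefficient $q(Sv,Sv)$ is nonpositive since $Sv \in \ell_-$, and the value at $t=0$ equals $q(v,v) > 0$. If the quadratic is also positive at $t=1$, concavity then forces positivity on all of $[0,1]$. Thus the set of admissible $S$ is star-shaped with respect to $0$. Boundedness is immediate: for $\|S\|$ sufficiently large, the term $t^2 q(Sv,Sv) < 0$ dominates for some $v$ and ruins positivity at $t=1$.

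To upgrade from star-shaped to a ball, I would verify that along each ray from the origin the set of admissible $t$ is an interval $[0, t_*(S/\|S\|))$, and that the radial depth $t_*$ is continuous in the direction. This is the main technical point I expect to require care with: it follows from the continuous dependence of the roots of the polynomial family $t \mapsto \det(q|_{\mathrm{graph}(tS)})$ on $S$, controlled here because the matrix is symmetric (so all its eigenvalues are real) and the sign of the leading coefficient is fixed. Once $t_*$ is continuous, the rescaling $v \mapsto t_*(v/\|v\|)\,v$ provides an explicit homeomorphism from the open unit ball in $\R^{n(n+1)/2}$ onto the set of positive Lagrangians.
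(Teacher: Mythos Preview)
Your proof is correct and follows essentially the same route as the paper: fix $\ell_+,\ell_-$, observe that every positive Lagrangian lies in the affine chart of Lagrangians transverse to $\ell_-$, and use the concavity in $t$ of $q(v+tSv,v+tSv)$ to deduce star-shapedness about the origin. The only difference is in the last step: the paper simply invokes the standard fact that an open star-shaped subset of a Euclidean space is homeomorphic to an open ball, whereas you work harder to establish boundedness and continuity of the radial function $t_*$. That extra work is not needed (and your sketch of the continuity of $t_*$ via roots of $\det(q|_{\mathrm{graph}(tS)})$ would require some care with degenerate $S$), but it does no harm.
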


\begin{proof}
There exist some $\ell_+\in \mathcal{L}_n$ on which $q$ is positive. Moreover there exist some $\ell_-\in \mathcal{L}_n$ on which $q$ is negative. 

\medskip

Every Lagrangian $\ell$ on which $q$ is positive must be transverse to $\ell_-$, hence it can be written as the graph $\lbrace x+u(x)\mid x\in \ell_+\rbrace$ of some linear map $u:\ell_+\to \ell_-$, and one has for all $v\in \ell^+$: 
$$q(v,v)+q(u(v),u(v))+2q(v,u(v))=q(v+u(v),v+u(v))>0.$$

Since $q(v,v)>0$ and $q(u(v),u(v))<0$, for all $0<\lambda<1$:
%$$q(v+\lambda u(v),v+ \lambda u(v))=q(v,v)+\lambda^2 q(u(v),u(v))+2\lambda q(v,u(v))>0;$$
$$q(v+\lambda u(v),v+ \lambda u(v))= q(v,v)+\lambda^2 q(u(v),u(v))+2\lambda q(v,u(v))>0.$$

We can identify the elements of $\mathcal{L}_n$ transverse to $\ell_-$ as the vector subspace of the space of maps $u:\ell_+\to \ell_-$. We just proved that in this chart the set of elements on which $q$ is positive is open and star-shaped, hence it is a open ball.
\end{proof}

We now define the "boundary map" of the pencil.

\begin{prop}
\label{prop:There exist a boundary map of pencil}
Let $P\in \Gra_2^\omega(\mathcal{Q})$. There exist a continuous map $\xi_P:\Sph P\to \mathcal{L}_n$ such that for all $[q]\in \Sph P$, $q>0$ on $\xi_P([q])$. Moreover any two such maps are homotopic, so the free homotopy type $[\xi_P]$ is well defined.
\end{prop}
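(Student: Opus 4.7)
The plan is to realize $\xi_P$ as a continuous section of the map
$$\pi: E = \{([q],\ell) \in \Sph P \times \mathcal{L}_n : q|_\ell > 0\} \longrightarrow \Sph P,$$
which is the first projection restricted to the open subset $E$. A continuous section of $\pi$ is precisely a continuous map $\xi_P: \Sph P \to \mathcal{L}_n$ satisfying the required positivity. By Lemma \ref{lem:There exist a boundary map of pencil}, each fiber $\pi^{-1}([q]) \cong U_{[q]}$ is an open ball, and in particular contractible.

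The key technical step would be to show that $\pi$ is a locally trivial fiber bundle. Given $[q_0] \in \Sph P$, I would choose $\ell_0^+ \in U_{[q_0]}$ and a Lagrangian $\ell_0^-$ with $q_0|_{\ell_0^-} < 0$; such $\ell_0^-$ exists by $\omega$-regularity and is automatically transverse to $\ell_0^+$ (the opposite signs of $q_0$ rule out any nonzero vector in their intersection). The same argument shows that every $\ell \in U_{[q]}$ is transverse to $\ell_0^-$, so the entire $U_{[q]}$ sits in the affine chart $\mathcal{C}$ of Lagrangians transverse to $\ell_0^-$. On a neighborhood $V$ of $[q_0]$ in $\Sph P$ where $\ell_0^+$ remains positive and $\ell_0^-$ remains negative, the proof of Lemma \ref{lem:There exist a boundary map of pencil} gives that each $U_{[q]} \subset \mathcal{C}$ is star-shaped about $\ell_0^+$; together with the boundedness of $U_{[q]}$ in $\mathcal{C}$ (which follows since $q|_{\text{graph}(u)}$ becomes negative for $\|u\|$ large due to the negative block $q|_{\ell_0^-}$) and the continuity of the defining inequality, this yields a fiberwise radial rescaling centered at $\ell_0^+$ providing a homeomorphism $\pi^{-1}(V) \cong V \times U_{[q_0]}$.

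With $\pi: E \to \Sph P$ identified as a locally trivial fiber bundle over $\Sph P \cong \Sph^1$ with contractible fibers, both existence and homotopy uniqueness of sections follow by classical obstruction theory or a direct elementary argument. For existence, I would cover $\Sph P$ by two closed arcs $A_1, A_2$ meeting at two points; triviality of the bundle over each $A_i$ gives a section $s_i$, and these may be adjusted to agree at the gluing points using path-connectedness of the fibers. For the uniqueness statement, given two sections $\xi_0, \xi_1$, the pullback bundle over $\Sph P \times [0,1]$ has contractible fibers and comes with a prescribed section over the subcomplex $\Sph P \times \{0,1\}$; extending this section over the $2$-dimensional base is possible since the obstructions lie in cohomology groups with coefficients in vanishing homotopy groups of the fiber, and this extension provides the required homotopy.

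The most delicate point will be the local triviality, and in particular the continuity of the radial boundary function $\rho([q],\mathrm{dir})$ measuring the extent of $U_{[q]}$ from $\ell_0^+$ in direction $\mathrm{dir}$ in the chart $\mathcal{C}$. This reduces to showing that a continuously varying family of bounded strictly star-shaped open sets in a finite-dimensional affine space, all containing a common star point, admits a continuous family of radial homeomorphisms to a reference ball---a standard but nontrivial verification that ultimately rests on the fact that $q|_{\text{graph}(\lambda u)}$ becomes properly indefinite (rather than merely semi-definite) as $\lambda$ crosses the boundary radius in the direction $u$.
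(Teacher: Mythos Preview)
Your approach is correct and matches the paper's: both identify $\xi_P$ with a section of the projection $\{([q],\ell) : q|_\ell > 0\} \to \Sph P$, invoke Lemma~\ref{lem:There exist a boundary map of pencil} to see that the fibers are open balls, and conclude that sections exist and are unique up to homotopy. The paper's proof is a one-line invocation of these facts, whereas you supply the local triviality via a radial trivialization and spell out the obstruction-theoretic conclusion.
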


\begin{proof}
A map $\xi_P$ is exactly a section of the bundle $\lbrace ([q],\ell)\mid q_{\mid \ell}>0\rbrace\to \Sph P$, which is a fiber bundle whose fibers are open balls. Such sections always exist and are unique up to homotopy.

%One can define a map $\xi'_P$ that is continuous except at a finite number of points such that $q$ is positive on $\xi'_P([q])$ for all $q\in P\setminus \lbrace 0\rbrace$ since $P\in \Gra_2^\omega(\mathcal{Q})$. For each $[q]$ for which $\xi'_P$ is discontinuous, we fix some Lagrangian $\ell_-,\ell^+$ as before. We change $\xi'_P$ in a neighborhood $U$ of $[q]$ so that it becomes a continuous map $\xi_P$ and $q'$ is still positive on $\xi_P([q'])$ for $q'\in U$. Indeed for $U$ small enough one can extend $\xi'_P$ on the left and on the right to continuous maps $\xi^+_P,\xi_P^-$ satisfying this condition, and one can construct $\xi_P$ as a convex combination of these maps in this fixed affine chart so that it coincides with $\xi'_P$ on the extremities of $U$.
%
%\medskip
%
%Moreover if we have two such maps $\xi_1$ and $\xi_2$, We can define for $\lambda\in [0,1]$ the map that associates to $[q]$ the  Langrangian which is the graph of $\lambda u_1+(1-\lambda)u_2:\xi_1([q])\to \xi_1([-q])$, where $u_1,u_2$ are the maps whose graph is equal to $\xi_1([q])$ and $\xi_2([q])$ respectively. This defines a homotopy between $\xi_1$ and $\xi_2$.
\end{proof}

We say that a pencil is \emph{maximal} for some orientation if $[\xi_P]=n[\tau]$, where $[\tau]$ is the generator of $\pi_1(\mathcal{L}_n)$ introduced in Section \ref{sec:Subsection on anosov represetnations}. We denote by $\Gra_2^{\max}(\mathcal{Q})$ the space of pencils that are maximal for some orientation. This is a union of connected components of $\Gra^\omega_2(\mathcal{Q})$ as the homotopy type $[\xi_P]$ is locally invariant for $P\in \Gra_2^\omega(\mathcal{Q})$.

\begin{rem}
The previous discussion allow us to distinguish several connected components of the open subspace $\Gra^\omega_2(\mathcal{Q})$ by looking at the homotopy type of the boundary map $\xi_P$.
\end{rem}

As a recall we have the following inclusions:
$$\Gra_2^{\max}(\mathcal{Q})\subset \Gra_2^\omega(\mathcal{Q})\subset \Gra_2^{(n,n)}(\mathcal{Q})\subset \Gra_2^{{\mix}}(\mathcal{Q})\subset \Gra_2(\mathcal{Q}).$$

All these inclusions are open, and the inclusion $Gr_2^{\max}(\mathcal{Q})\subset \Gra_2^\omega(\mathcal{Q})$ is a union of connected components.

\subsection{Statement of the characterization.}

We obtain the following characterization of maximal representations in terms of the existence of locally fitting maps that admit a fitting flow.

\begin{thm}
\label{thm: fitting implies maximal}
A representation $\rho:\Gamma_g\to \Sp(2n,\R)$ admits a $\rho$-equivariant locally fitting map $u:\widetilde{S_g}\to \Gra_2^{\max}(\mathcal{Q})$ that admits a $\rho$-invariant fitting flow if and only if it is maximal for some orientation of $S_g$.
\end{thm}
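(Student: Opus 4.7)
I would handle the two implications separately.

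\emph{Fitting flow implies maximality.} Assume $u:\widetilde{S_g}\to \Gra_2^{\max}(\mathcal{Q})$ is a continuous $\rho$-equivariant map admitting an equivariant fitting flow $\Phi$. Since $\Gra_2^{\max}(\mathcal{Q})\subset \Gra_2^{(n,n)}(\mathcal{Q})$, Theorem \ref{thm:WellFittedImplesAnosov} yields that $\rho$ is $\lbrace n\rbrace$-Anosov with boundary map $\xi_\rho^n:\partial\Gamma\to \mathcal{L}_n$ (the values lie in $\mathcal{L}_n$ because $\rho(\Gamma)\subset \Sp(2n,\R)$). By Theorem \ref{thm:Caract Maximal reprs via homotopy} it suffices to show $[\xi_\rho^n]=\pm n[\tau]$ in $\pi_1(\mathcal{L}_n)$.

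Fix $x\in \widetilde{S_g}$ and define the forward-endpoint map $\phi_x:\Sph u(x)\to \partial\Gamma$ sending $[q]$ to the limit of the quasi-geodesic $p\circ \Phi_t([q])$ provided by Proposition \ref{prop:Flow lines of a fitting flow are quasi geodesic and exist between points}. Lemma \ref{lem:HomotopyWellFitted} applied to small $t$, together with the preservation of topological degree under the radial limit into the Gromov boundary of $\widetilde{S_g}$, shows that $\phi_x$ is continuous of degree $\pm 1$. By Theorem \ref{thm:WellFittedImplesAnosov}, $\xi_\rho^n(\phi_x([q]))$ is a Lagrangian in $\lbrace q\leq 0\rbrace$, while $\xi_{u(x)}(-[q])$ is a Lagrangian in $\lbrace q<0\rbrace$ (Lemma \ref{lem:There exist a boundary map of pencil} applied to $-q$). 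Both are continuous sections of the bundle over $\Sph u(x)$ with contractible fibers $F_{[q]}=\lbrace \ell\in \mathcal{L}_n\mid q|_\ell\leq 0\rbrace$, so obstruction theory makes them homotopic as maps to $\mathcal{L}_n$. Since the antipodal map on $\Sph^1$ is isotopic to the identity, one obtains
\[ \deg(\phi_x)\cdot [\xi_\rho^n]=[\xi_\rho^n\circ \phi_x]=[\xi_{u(x)}]=n[\tau], \]
so $[\xi_\rho^n]=\pm n[\tau]$ and $\rho$ is maximal for some orientation of $S_g$.

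\emph{Maximality implies a fitting flow.} Given a maximal $\rho$ with boundary map $\xi_\rho^n$, attach to each pair $(\ell^-,\ell^+)$ of transverse Lagrangians the $\omega$-regular quadric $Q(\ell^-,\ell^+)\in \mathcal{Q}$ with zero locus $\ell^-\cup \ell^+$ determined by $Q(v^-+v^+,v^-+v^+)=2\omega(v^-,v^+)$ for $v^\pm\in\ell^\pm$. Fix a Fuchsian structure on $S_g$, identifying $\widetilde{S_g}$ with $\mathbb{H}^2$. For each $x\in \widetilde{S_g}$ and unit tangent vector $v\in T_x^1 \widetilde{S_g}$, let $\gamma_v^\pm\in \partial \Gamma$ be the endpoints of the corresponding geodesic and consider the family $Q_{x,v}=Q(\xi_\rho^n(\gamma_v^-),\xi_\rho^n(\gamma_v^+))$. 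The plan is to organize this one-parameter family into a continuous $\rho$-equivariant choice of $2$-plane $u(x)\subset\mathcal{Q}$ in such a way that $\Sph u(x)$ identifies with $T_x^1\widetilde{S_g}$ and the parametrization recovers $Q_{x,v}$ up to positive rescaling; the assumption $[\xi_\rho^n]=n[\tau]$ then forces $u(x)\in \Gra_2^{\max}(\mathcal{Q})$ by reversing the homotopy argument of the first part. The lift of the geodesic flow to $\Sph u^*\mathcal{E}\simeq T^1\widetilde{S_g}$ is the candidate fitting flow, and the condition of Definition \ref{defn:Fitting flow} reduces to a direct monotonicity computation for $Q(\ell_t^-,\ell_t^+)$ as the endpoints of a geodesic move apart.

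\emph{Main obstacle.} The hard step is the converse construction of $u(x)$ as a canonical $2$-plane: the family $\lbrace Q_{x,v}\rbrace$ does not generally span only two dimensions in $\mathcal{Q}$, so a genuine extraction---in the spirit of the Gauss-map picture used in Appendix \ref{sec:SP4} for $n=2$---is required, and only a continuous (not smooth) $u$ can be expected for $n\geq 3$. The subsequent verification of the fitting-flow positivity along geodesics then rests on an explicit calculation using the formula for $Q$.
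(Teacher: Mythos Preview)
Your forward direction is essentially the paper's Lemma~\ref{lem:Sufficient condition to be maximal}: apply Theorem~\ref{thm:WellFittedImplesAnosov}, identify the endpoint map $\phi_x$ with the paper's $\zeta_\infty$, use Lemma~\ref{lem:HomotopyWellFitted} for the degree, and compare homotopy classes via Proposition~\ref{prop:There exist a boundary map of pencil}. One small tightening: the fibre $F_{[q]}$ you use should be the \emph{open} set $\{\ell\in\mathcal{L}_n:q|_\ell<0\}$, which is the ball from Lemma~\ref{lem:There exist a boundary map of pencil}; the strict inequality for $\xi_\rho^n(\phi_x([q]))$ follows from $\pi(\Phi_t(q))-\pi(q)>0$ for $t>0$, so $q<\pi(\Phi_t(q))\leq 0$ on that Lagrangian.

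For the converse you assemble exactly the right pieces (the quadrics $Q(\ell^-,\ell^+)=2q_{\ell^-,\ell^+}$ of Section~\ref{subsec:quadrics special}, an auxiliary hyperbolic metric, the geodesic flow as candidate fitting flow), and you correctly isolate the obstacle: the circle $v\mapsto Q_{x,v}$ does not lie in a $2$-plane. The resolution in the paper is not a Gauss-map extraction but a one-line linear trick: average against the first Fourier mode,
\[
q_v \;=\; \int_{T^1_x\widetilde{S_g}} \langle v,w\rangle\, q_{\ell_{Jw},\,\ell_{-Jw}}\, d\lambda(w),
\]
which is visibly \emph{linear} in $v\in T_x\widetilde{S_g}$ and therefore spans a $2$-plane $u(x)$. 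The rest is then exactly your plan. The $\omega$-regularity of $q_v$ comes from Lemma~\ref{lem:Triples of Lagrangians quadric}: for each $w$ with $\langle v,w\rangle>0$ the integrand is negative on $\ell_v$ and positive on $\ell_{-v}$, and the signs reverse for $\langle v,w\rangle<0$, so the weighted integral is negative on $\ell_v$ and positive on $\ell_{-v}$. The fitting property of the geodesic flow is checked not by the infinitesimal computation you allude to but by the finite-time Lemma~\ref{lem:ContactHyperboloidsPositive1}: parallel transport from $x$ to $y$ along the geodesic takes $w$ to $\phi(w)$, and negative curvature forces the quadruple $(\zeta_{-Jw},\zeta_{-J\phi(w)},\zeta_{J\phi(w)},\zeta_{Jw})$ to be maximal when $\langle v,w\rangle>0$, so $q^\circ_{\phi(w)}-q^\circ_w>0$ pointwise and one integrates. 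The maximality of the pencils then follows by running your forward homotopy argument backwards.
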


In this case the orientation of $S_g$ for which $\rho$ is maximal is induced by the orientation of the maximal pencils $u(x)$ for $x\in \widetilde{S_g}$ and Lemma \ref{lem:HomotopyWellFitted}.

\medskip

In particular if a representation $\rho$ admits an equivariant fitting immersion $u:\widetilde{S_g}\to \Gra_2^{\max}(\mathcal{Q})$ then it is maximal because of Proposition \ref{prop:ExistenceFittingFlow}. This theorem leaves the following question open:
%However it is not clear if fitting immersions exist for all maximal representation when $n\geq 3$.

\begin{que}
Given a maximal representation $\rho:\Gamma_g\to \Sp(2n,\R)$, is there always an equivariant fitting immersion $u:\widetilde{S_g}\to \Gra_2^{\max}(\mathcal{Q})$ ?
\end{que}

%A positive answer to this question would improve Theorem \ref{thm: fitting implies maximal}, and a fitting immersion always admits a fitting flow, see Proposition \ref{prop:ExistenceFittingFlow}. 
We show in Section \ref{sec:SP4} that this is true for $\Sp(4,\R)$, and in this case there exist a fitting immersion whose image lies in a single special $\Sp(4,\R)$-orbit of $\Gra_2^{\max}(\mathcal{Q})$.

\medskip

The following Lemma shows one direction of Theorem \ref{thm: fitting implies maximal}, the other direction is proven by Lemma \ref{prop:Exist fitting flow}.

\begin{lem}
\label{lem:Sufficient condition to be maximal}
Let $\rho:\Gamma_g\to \Sp(2n,\R)$ be a representation that admits an equivariant continuous map $u:\widetilde{S_g}\to \Gra_2^{\max}(\mathcal{Q})$ that admits an equivariant fitting flow. Then $\rho$ is maximal for some orientation of $S_g$.
\end{lem}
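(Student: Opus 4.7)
The plan is to invoke Theorem \ref{thm:WellFittedImplesAnosov} to first establish that $\rho$ is $\{n\}$-Anosov, and then apply Theorem \ref{thm:Caract Maximal reprs via homotopy} by computing the free homotopy class $[\xi_\rho^n] \in \pi_1(\mathcal{L}_n) \cong \Z$. Since $\Gra_2^{\max}(\mathcal{Q}) \subset \Gra_2^{(n,n)}(\mathcal{Q})$, Theorem \ref{thm:WellFittedImplesAnosov} produces a boundary map $\xi_\rho^n : \partial \Gamma \to \mathcal{L}_n$, and it remains to show that $[\xi_\rho^n] = \pm n [\tau]$.

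Fix $x \in \widetilde{S_g}$. By Proposition \ref{prop:Flow lines of a fitting flow are quasi geodesic and exist between points}, for every $[q] \in \Sph u^*\mathcal{E}_x$ the flow line $t \mapsto p(\Phi_t([q]))$ is a quasi-geodesic, hence has well-defined endpoints $\phi^\pm([q]) \in \partial \Gamma$ at $\pm \infty$, and the resulting maps $\phi^\pm : \Sph u^*\mathcal{E}_x \to \partial \Gamma$ are continuous. The proof of Theorem \ref{thm:WellFittedImplesAnosov} constructs an Anosov splitting $V = E_q \oplus F_q$ along flow lines with $F_q = \xi_\rho^n(\phi^+([q]))$; the symmetric construction for the backward flow, or equivalently the standard identification of the Anosov splitting of a quasi-geodesic with the pair of boundary subspaces at its endpoints, gives $E_q = \xi_\rho^n(\phi^-([q]))$. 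Moreover, for every nonzero $v \in E_q$ and every $t > 0$, the fitting inequality $\pi(q) - \pi(\Phi_{-t}(q)) > 0$ combined with $\pi(\Phi_{-t}(q))(v,v) \geq 0$ yields $\pi(q)(v,v) > 0$, so $\pi(q)$ is strictly positive on the Lagrangian $\xi_\rho^n(\phi^-([q]))$. Under the tautological identification $\Sph u^*\mathcal{E}_x \simeq \Sph u(x)$, the composition $\xi_\rho^n \circ \phi^-$ is therefore a section of the open-ball bundle featured in Proposition \ref{prop:There exist a boundary map of pencil}, hence is homotopic to $\xi_{u(x)}$. Since $u(x) \in \Gra_2^{\max}(\mathcal{Q})$, this produces $[\xi_\rho^n \circ \phi^-] = n[\tau]$ for some orientation of $u(x)$.

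The remaining step is to show $\deg(\phi^-) = \pm 1$. Because $g \geq 2$, the universal cover $\widetilde{S_g}$ is an open topological disk and $\overline{\widetilde{S_g}} = \widetilde{S_g} \cup \partial \Gamma$ is a closed disk. Lemma \ref{lem:Embedded fitting flow lines} guarantees that for every $t \neq 0$ the loop $S_t : [q] \mapsto p(\Phi_t([q]))$ avoids $x = p([q])$. The family $(S_{-t})_{t \in (0,\infty]}$, extended by $S_{-\infty} := \phi^-$, is then a continuous homotopy inside $\overline{\widetilde{S_g}} \setminus \{x\}$ between a small loop $S_{-\epsilon}$ near $x$ and $\phi^-$; continuity at $t = \infty$ follows from the uniformity of the quasi-geodesic constants over the compact circle $\Sph u^*\mathcal{E}_x$. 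By Lemma \ref{lem:HomotopyWellFitted}, $S_{-\epsilon}$ represents a generator of $\pi_1(U \setminus \{x\}) \cong \Z$ for any small disk neighborhood $U$ of $x$, so it has winding number $\pm 1$ around $x$. Since $\overline{\widetilde{S_g}} \setminus \{x\}$ deformation retracts onto $\partial \Gamma$, this same winding number is inherited by $\phi^-$, hence $\phi^- : S^1 \to \partial \Gamma \cong S^1$ has degree $\pm 1$.

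Combining the two computations yields $n[\tau] = [\xi_\rho^n \circ \phi^-] = \deg(\phi^-) \cdot [\xi_\rho^n] = \pm [\xi_\rho^n]$, so $[\xi_\rho^n] = \pm n[\tau]$, and Theorem \ref{thm:Caract Maximal reprs via homotopy} concludes that $\rho$ is maximal for a suitable orientation of $S_g$. I expect the main obstacle to be the degree computation: it requires controlling the extension of the family of loops $(S_t)$ continuously up to the Gromov boundary and carefully comparing the local winding given by Lemma \ref{lem:HomotopyWellFitted} with the asymptotic winding of $\phi^-$; the other ingredients (the Anosov property, the identification of $E_q$ with $\xi_\rho^n \circ \phi^-$, the homotopy uniqueness of $\xi_{u(x)}$) follow rather directly from results already established in the paper.
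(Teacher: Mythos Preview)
Your proposal is correct and follows essentially the same approach as the paper's proof: apply Theorem \ref{thm:WellFittedImplesAnosov} to get the Anosov property, identify $\xi_\rho^n$ composed with the endpoint map of flow lines through $x$ with a boundary map $\xi_{u(x)}$ of the pencil (Proposition \ref{prop:There exist a boundary map of pencil}), and use Lemma \ref{lem:HomotopyWellFitted} together with the homotopy $(S_t)$ out to $\partial\Gamma$ to show that the endpoint map has degree $\pm 1$, concluding via Theorem \ref{thm:Caract Maximal reprs via homotopy}. Your choice of the \emph{backward} limit $\phi^-$ (paired with $E_q$) makes the positivity of $\pi(q)$ on the relevant Lagrangian immediate, whereas the paper works with the forward limit $\zeta_\infty$ (paired with $F_q$, on which $\pi(q)$ is negative), which tacitly introduces an antipodal/orientation flip; since the conclusion is only ``maximal for some orientation'', both variants are fine, and your version is arguably cleaner on this point.
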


Recall that the limit map of $\lbrace n\rbrace$-Anosov representations in $\Sp(2n,\R)$ take values in the space of Lagrangians, see Remark \ref{rem:BoundaryMapLagran}.

\begin{proof}

We first apply Theorem \ref{thm:WellFittedImplesAnosov} which shows that $\rho$ is $\lbrace n\rbrace$-Anosov. Let $x\in \widetilde{S_g}$, we prove that the homotopy type of the boundary curve $\xi^n_\rho$ is the same as the homotopy type $[\xi_{u(x)}]$ from Proposition \ref{prop:There exist a boundary map of pencil}. We consider the map $\zeta_\infty$ that associates to $[q]\in \Sph u^*\mathcal{E}_{x}$ the limit of $\Phi_t([q])$ in $\partial \Gamma$. This map is homotopic in $\widetilde{S_g}\cup \partial \Gamma\setminus \lbrace x\rbrace$ to the map $\zeta_{1}$ that associates the projection $p(\Phi_{1})$. Hence by Lemma \ref{lem:HomotopyWellFitted}, $\zeta_1$ defines a diffeomorphism of degree $1$ between the circle $\Sph u(x)$ with its maximal orientation and the boundary $\partial \Gamma$ for the induced orientation. 

\medskip

The map $\xi^n_\rho\circ \zeta_\infty$ has the homotopy type $[\xi_{u(x)}]$ associated to $u(x)$: it defines a boundary map as in Proposition \ref{prop:There exist a boundary map of pencil}. The fact that $\rho$ is maximal is then a consequence of the characterization of maximal representations from Theorem \ref{thm:Caract Maximal reprs via homotopy}. Indeed $u(x)\in \Gra_2^{\max}(\mathcal{Q})$ implies that $[\xi_{u(x)}]=n[\tau]$, and we already know that the degree of $\zeta_\infty$ is equal to the degree of $\zeta_1$ which is equal to $1$.

\end{proof}

\subsection{Construction of a fitting flow}
\label{subsec:quadrics special}
In this section we study special quadrics in $\R^{2n}$ associated to pairs of transverse Lagrangians. These objects will allow us to construct fitting continuous embeddings of pencils.

\begin{defn}
Let $\ell_1$, $\ell_2$ be two transverse Lagrangians in $\R^{2n}$. We define $q_{\ell_1,\ell_2}$ to be the symmetric bilinear form on $\R^{2n}$ such that if $\pi_1,\pi_2$ are the projections on $\ell_1,\ell_2$ associated to the direct sum $\ell_1\oplus \ell_2=V$:
$$q_{\ell_1,\ell_2}(v,v)=\omega(\pi_1(v),\pi_2(v)).$$
\end{defn}

Note that $q_{\ell_2,\ell_1}=-q_{\ell_1,\ell_2}$.

\begin{rem}
In particular $q_{\ell_1,\ell_2}$ is characterized by the fact that $\ell_1$ and $\ell_2$ are isotropic and for all $v\in \ell_1,w\in \ell_2$:
$$q_{\ell_1,\ell_2}(v,w)=\omega(v,w).$$
\end{rem}

%The \emph{contact hyperboloid} associated to $\ell_1$ and $\ell_2$ is the set $\hyperb(\ell_1,\ell_2)\subset \mathbb{P}(\R^{2n})$  of all lines that are isotropic for $q_{\ell_1,\ell_2}$.
%
%In other words $\hyperb(\ell_1,\ell_2)$ is the set of vectors $\mathrm{v}+\mathrm{w}\in \R^{2n}$ with $\mathrm{v}\in \ell_1$ and $\mathrm{w}\in\ell_2$ such that $\omega(\mathrm{v},\mathrm{w})=0$.

Maximal triples of Lagrangians can be characterized as follows:

\begin{lem}
\label{lem:Triples of Lagrangians quadric}
A triple of Lagrangians $(\ell_1,\ell_2,\ell_3)$ is maximal if and only if $q_{\ell_1,\ell_3}$ is positive on $\ell_2$.
\end{lem}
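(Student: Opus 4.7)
The plan is to reduce to a standard symplectic basis adapted to the triple and then compute the restriction of $q_{\ell_1,\ell_3}$ to $\ell_2$ explicitly in coordinates.

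First, I would invoke the normal form for transverse triples of Lagrangians recalled in Section~\ref{sec:Subsec on maximal representations}: choose a symplectic basis $(x_1,\dots,x_n,y_1,\dots,y_n)$ and signs $(\epsilon_i)\in\{\pm 1\}$ such that
\[
\ell_1=\langle x_1,\dots,x_n\rangle,\quad \ell_2=\langle x_1+\epsilon_1 y_1,\dots,x_n+\epsilon_n y_n\rangle,\quad \ell_3=\langle y_1,\dots,y_n\rangle,
\]
so that $M(\ell_1,\ell_2,\ell_3)=\sum_i\epsilon_i$, and maximality is equivalent to $\epsilon_i=1$ for all $i$.

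Next I would compute $q_{\ell_1,\ell_3}$ restricted to $\ell_2$ in this basis. The projections along $\ell_1\oplus\ell_3$ are the obvious ones: $\pi_1(x_i)=x_i$, $\pi_1(y_i)=0$, and $\pi_3(x_i)=0$, $\pi_3(y_i)=y_i$. For $v=\sum_i a_i(x_i+\epsilon_i y_i)\in\ell_2$ this gives $\pi_1(v)=\sum_i a_i x_i$ and $\pi_3(v)=\sum_i a_i\epsilon_i y_i$. Using $\omega(x_i,y_j)=\delta_{ij}$ we get
\[
q_{\ell_1,\ell_3}(v,v)=\omega\bigl(\pi_1(v),\pi_3(v)\bigr)=\sum_{i,j}a_ia_j\epsilon_j\,\omega(x_i,y_j)=\sum_{i=1}^n \epsilon_i a_i^2.
\]
Thus the restriction of $q_{\ell_1,\ell_3}$ to $\ell_2$ is diagonal with eigenvalues $(\epsilon_1,\dots,\epsilon_n)$, so it is positive definite if and only if every $\epsilon_i$ equals $1$, i.e.\ if and only if the Maslov index of $(\ell_1,\ell_2,\ell_3)$ equals $n$.

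There is no real obstacle here; the only minor point to verify is that the formula defining $q_{\ell_1,\ell_3}$ is indeed symmetric so that plugging $v$ in for both arguments is well-defined. This follows from the remark after the definition: $q_{\ell_1,\ell_3}$ is characterized by having $\ell_1$ and $\ell_3$ isotropic and satisfying $q_{\ell_1,\ell_3}(v,w)=\omega(v,w)$ for $v\in\ell_1$, $w\in\ell_3$. One can check symmetry directly: for general $v,w$ writing $v=\pi_1(v)+\pi_3(v)$ and similarly for $w$, the identity $\omega(\pi_1(v),\pi_3(w))=\omega(\pi_1(w),\pi_3(v))$ up to polarization follows from expanding $\omega(\pi_1(v+w),\pi_3(v+w))$, so the quadratic form statement above suffices to conclude.
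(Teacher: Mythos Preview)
Your proof is correct and follows essentially the same approach as the paper: both put the triple in the standard normal form and then compute the restriction of $q_{\ell_1,\ell_3}$ to $\ell_2$ in coordinates, obtaining the diagonal form with entries $\epsilon_i$. The final paragraph on symmetry is unnecessary since $q_{\ell_1,\ell_3}$ is defined via its quadratic form and extended by polarization, so symmetry is automatic.
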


\begin{proof}

Let us write $\ell_1,\ell_2,\ell_3$ as in Section \ref{sec:Subsec on maximal representations} for some symplectic basis :
$$\ell_1=\langle x_1,x_2,\cdots x_n\rangle, $$
$$\ell_2=\langle x_1+\epsilon_1 y_1,x_2+ \epsilon_2 y_2,\cdots x_n +\epsilon_ny_n\rangle,$$
$$\ell_3=\langle y_1,y_2,\cdots y_n\rangle. $$
 
The form $q_{\ell_1,\ell_3}$ can be written in this basis as:

$$q_{\ell_1,\ell_3}=\frac{1}{2}\sum_{i=1}^n x_i^*\otimes y_i^* +y_i^*\otimes x_i^*.$$

This form is positive on $\ell_2$ if and only if all of the $\epsilon_i$ are positive, and hence if the triple is maximal.
\end{proof}

These quadrics have also the following remarkable properties for maximal quadruples of Lagrangians.

%Since $\hyperb(\ell_1,\ell_2)$ depends only on $\ell_1,\ell_2$ once the symplectic form $\omega$ has been fixed, the image $g\cdot \hyperb( \ell_1, \ell_2)$ of a contact hyperboloid by a symplectic projective transformation $g\in PSp(2n,\R)$ must be the contact hyperboloid $\hyperb(g\cdot \ell_1, g\cdot \ell_2)$. 

A quadruple of Lagrangians $(\ell_1,\ell_2,\ell_3,\ell_4)$ is called \emph{maximal} if each cyclic oriented subtriple is maximal.

\begin{lem}
\label{lem:ContactHyperboloidsPositive1}
Let $(\ell_1,\ell_2,\ell_3,\ell_4)$ be a maximal quadruple of Lagrangians. The bilinear form $q_{\ell_4,\ell_3}-q_{\ell_1,\ell_2}$ is positive. In particular the zero set of these quadrics define two disjoint quadric hypersurfaces in $\mathbb{P}(\R^{2n})$.
\end{lem}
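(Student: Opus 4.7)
The plan is to reduce to a normal form via the $\Sp(2n,\R)$-action and then verify positivity by a direct block-matrix computation. Since $\Sp(2n,\R)$ acts transitively on maximal triples of Lagrangians, I choose a symplectic basis $(x_1,\ldots,x_n,y_1,\ldots,y_n)$ such that $\ell_1=\langle x_i\rangle_i$, $\ell_2=\langle x_i+y_i\rangle_i$, $\ell_3=\langle y_i\rangle_i$ and $\omega=\sum x_i^*\wedge y_i^*$. Since $(\ell_3,\ell_4,\ell_1)$ is a maximal triple, $\ell_4$ is in particular transverse to both $\ell_1$ and $\ell_3$, so it has a unique presentation as the graph $\ell_4=\{v+Av\mid v\in\ell_1\}$ of a linear map $A:\ell_1\to\ell_3$; the Lagrangian condition, once $\ell_3$ is identified with $\ell_1^*$ via $\omega$, translates into symmetry of $A$.

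Next I would compute the Gram matrices of $q_{\ell_1,\ell_2}$ and $q_{\ell_4,\ell_3}$ in the basis $(x_i,y_j)$. Using the remark that $q_{\ell_a,\ell_b}$ is characterized by having $\ell_a$ and $\ell_b$ isotropic and by $q_{\ell_a,\ell_b}(v,w)=\omega(v,w)$ for $v\in\ell_a$, $w\in\ell_b$, a routine calculation yields
$$q_{\ell_1,\ell_2}=\begin{pmatrix}0&I\\ I&-2I\end{pmatrix},\qquad q_{\ell_4,\ell_3}=\begin{pmatrix}-2A&I\\ I&0\end{pmatrix},$$
so that
$$q_{\ell_4,\ell_3}-q_{\ell_1,\ell_2}=\begin{pmatrix}-2A&0\\ 0&2I\end{pmatrix}.$$
This block-diagonal form is positive definite precisely when $-A$ is positive definite.

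To finish, I would extract negative-definiteness of $A$ from the maximality of $(\ell_3,\ell_4,\ell_1)$. By Lemma \ref{lem:Triples of Lagrangians quadric}, maximality of this triple is equivalent to $q_{\ell_3,\ell_1}=-q_{\ell_1,\ell_3}$ being positive on $\ell_4$. But $q_{\ell_3,\ell_1}$ has Gram matrix $\bigl(\begin{smallmatrix}0&-I\\-I&0\end{smallmatrix}\bigr)$ in our basis, so its restriction to a vector $v+Av\in\ell_4$ with $v=\sum a_ix_i$ equals $-2\,a^\top A a$. Requiring this to be positive for every $a\neq 0$ is exactly the statement that $A$ is negative definite. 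Combined with the previous paragraph, $q_{\ell_4,\ell_3}-q_{\ell_1,\ell_2}$ is positive, which by Proposition \ref{prop:WellFitted=FibrationCContinuous} implies that the zero hypersurfaces $\{q_{\ell_1,\ell_2}=0\}$ and $\{q_{\ell_4,\ell_3}=0\}$ are disjoint in $\mathbb{P}(\R^{2n})$.

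There is no serious obstacle here: the computation is elementary once the normal form is fixed. The only minor subtlety is noticing that among the four cyclic maximality hypotheses, only the maximality of $(\ell_1,\ell_2,\ell_3)$ (used to fix the normal form) and of $(\ell_3,\ell_4,\ell_1)$ (used to conclude $A\prec 0$) enter the argument; the maximality of the other two triples is automatically inherited and gives no additional constraint on $A$.
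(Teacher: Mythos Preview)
Your proof is correct. The approach, however, differs from the paper's. The paper does not normalize via the $\Sp(2n,\R)$-action; instead it keeps $\ell_3,\ell_4$ as the reference pair, writes $\ell_1$ and $\ell_2$ as graphs of maps $u_1,u_2:\ell_4\to\ell_3$, decomposes a general vector $v=v_1+v_2\in\ell_1\oplus\ell_2$ as $v_1=x+u_1(x)$, $v_2=y+u_2(y)$ with $x,y\in\ell_4$, and obtains
\[
q_{\ell_4,\ell_3}(v,v)-q_{\ell_1,\ell_2}(v,v)=\omega(x,u_1(x))+\omega(y,u_2(y))+2\omega(y,u_1(x)),
\]
which is then bounded below using positivity of $\omega(\cdot,u_1(\cdot))$ and of $\omega(\cdot,(u_2-u_1)(\cdot))$. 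This uses three of the four cyclic maximality hypotheses. Your normalization of $(\ell_1,\ell_2,\ell_3)$ and expression of $\ell_4$ as a graph over $\ell_1$ leads instead to a clean block-diagonal difference $\operatorname{diag}(-2A,2I)$, so positivity reduces to a single sign condition on $A$; this makes it transparent that only the two maximality conditions sharing the diagonal $(\ell_1,\ell_3)$ are needed, the other two then following from the Maslov cocycle identity. Your route is more explicitly computational; the paper's is coordinate-free and closer in spirit to the infinitesimal version proved immediately afterward.

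One cosmetic remark: invoking Proposition~\ref{prop:WellFitted=FibrationCContinuous} for the disjointness of the zero hypersurfaces is overkill, since positivity of $q_{\ell_4,\ell_3}-q_{\ell_1,\ell_2}$ already forbids any common zero directly.
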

Note that a triple of Lagrangians $(\ell_1,\ell_2,\ell_3)$ is maximal if and only if corresponding linear map $u\in \Hom(\ell_1,\ell_3)$ whose graph is equal to $\ell_2$ is such that $\omega(\cdot , u(\cdot))$ is positive on $\ell_1$.

\begin{proof}

 Let us prove the first part of the statement. Since $(\ell_4,\ell_1,\ell_3)$ is a maximal triple of Lagrangians, $\ell_1$ can be written as the graph of some linear map $u_1:\ell_4\to\ell_3$ such that $\omega\left(\cdot ,u_1(\cdot)\right)$ is a positive bilinear form on $\ell_4$. Similarly since $(\ell_4,\ell_2,\ell_3)$ is a maximal triple of Lagrangians, $\ell_2$ can be written as the graph of some linear map $u_2:\ell_4\to\ell_3$ such that $\omega\left(\cdot ,u_2(\cdot)\right)$ is a positive bilinear form on $\ell_4$. 
 
 Let ${v}\in \R^{2n}$, it can be decomposed uniquely as ${v}={v}_1+{v}_2$ with ${v}_1\in \ell_1$ and ${v}_2\in \ell_2$. Moreover there exist some unique $x,y\in \ell_4$ such that ${v}_1=x+u_1(x)$ and ${v}_2=y+u_2(y)$. The vector ${v}$ decomposes therefore as ${v}=x+y+u_1(x)+u_2(y)$. One computes that :
 \begin{flalign*}
 q_{\ell_4,\ell_3}({v},{v})-q_{\ell_1,\ell_2}({v},{v})&=\omega\left(x+y,u_1(x)+u_2(y)\right)-\omega\left(x+u_1(x),y+u_2(y)\right),\\
 &=\omega\left(x,u_1(x)\right)+\omega\left(y,u_2(y)\right)+2\omega\left(y,u_1(x)\right).
 \end{flalign*}

 Finally the fact that $(\ell_1,\ell_2,\ell_3)$ forms a maximal triple implies that the bilinear form $\omega\left(\cdot,\widetilde{u_2}(\cdot)\right)$ is positive on $\ell_1$, where $\widetilde{u_2}\in\Hom(\ell_1,\ell_3)$ corresponding to $u_2$. In particular for all non-zero $x\in \ell_4$: 
 $$\omega(x+u_1(x),\widetilde{u_2}(x+u_1(x))=\omega(x,\widetilde{u_2}(x+u_1(x)) > 0.$$
 However note that by definition $x+u_1(x)+\widetilde{u_2}(x+u_1(x))=x+u_2(x)$ and in particular $\widetilde{u_2}(x+u_1(x))=(u_2-u_1)(x)$. Hence the symmetric bilinear form $\omega\left(\cdot,(u_2-u_1)(\cdot)\right)$ is positive on $\ell_4$. 
 
In particular for $y\neq 0$ the previous expression is strictly greater than the following $\omega\left(x,u_1(x)\right)+\omega\left(y,u_1(y)\right)+2\omega\left(y,u_1(x)\right)$, which is non-negative since $\omega\left(\cdot ,u_1(\cdot)\right)$ is positive. In the case when $y=0$, this last inequality is strict for $x\neq 0$. Otherwise the previous inequality $\omega\left(y,(u_2-u_1)(y)\right)>0$ is strict. Therefore for ${v}\neq 0$,  $q_{\ell_4,\ell_3}({v},{v})-q_{\ell_1,\ell_2}({v},{v})>0$.
\end{proof}

We now state an infinitesimal version of Lemma \ref{lem:ContactHyperboloidsPositive1} that we will use in Section \ref{sec:SP4}.

\begin{lem}
\label{lem:ContactHyperboloidsPositive2}

Let $\ell^+,\ell^-:[0,1]\to \mathcal{L}_n$ be smooth and such that $\ell_0^+=\ell^+(0)$ and $\ell_0^-=\ell^-(0)$ are transverse and the linear maps $\dot{u}^+\in \Hom(\ell^+_0,\ell^-_0)$ and $\dot{u}^-\in \Hom(\ell^+_0,\ell^+_0)$ corresponding to $(\ell^+)'(0)$ and $(\ell^-)'(0)$ are such that $\omega(\cdot, \dot{u}^+(\cdot))$ and $\omega(\dot{u}^-(\cdot), \cdot)$ are positive respectively on $\ell^+_0$ and $\ell^-_0$. The derivative at $t=0$ of $q_{\ell^+(t), \ell^-(t)}$ is positive.
\end{lem}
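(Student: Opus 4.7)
First I would parametrize the two curves via the standard charts on $\mathcal{L}_n$: write $\ell^+(t)$ as the graph of $u^+(t)\colon \ell_0^+\to \ell_0^-$ with $u^+(0)=0$ and $(u^+)'(0)=\dot u^+$, and symmetrically $\ell^-(t)$ as the graph of $u^-(t)\colon \ell_0^-\to \ell_0^+$ with $(u^-)'(0)=\dot u^-$ (correcting an apparent typo in the statement, since $\dot u^-$ must land in $\ell_0^+$ for $\ell^-(t)$ to be a Lagrangian transverse to $\ell_0^+$). Differentiating the Lagrangian condition on $\ell^\pm(t)$ at $t=0$ forces the bilinear forms $(x,x')\mapsto \omega(x,\dot u^+(x'))$ on $\ell_0^+$ and $(y,y')\mapsto \omega(\dot u^-(y),y')$ on $\ell_0^-$ to be symmetric, so the positivity hypotheses of the lemma are well-posed.

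For a fixed vector $v=x_0+y_0\in \ell_0^+\oplus \ell_0^-$ and small $t$, I would decompose $v=\pi^+_t(v)+\pi^-_t(v)$ along $V=\ell^+(t)\oplus \ell^-(t)$ by writing $\pi^+_t(v)=x(t)+u^+(t)(x(t))$ and $\pi^-_t(v)=y(t)+u^-(t)(y(t))$ with $x(0)=x_0$, $y(0)=y_0$. Differentiating the identity $v=\pi^+_t(v)+\pi^-_t(v)$ at $t=0$ and projecting onto $\ell_0^+$ and $\ell_0^-$ separately gives $\dot x(0)=-\dot u^-(y_0)$ and $\dot y(0)=-\dot u^+(x_0)$. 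Feeding these into the defining formula for $q_t$ in terms of the projections and differentiating at $t=0$ produces an expansion whose cross-terms pair either $\dot u^-(y_0)\in \ell_0^+$ with $x_0\in \ell_0^+$ or $\dot u^+(x_0)\in \ell_0^-$ with $y_0\in \ell_0^-$; both vanish by $\omega$-isotropy of $\ell_0^\pm$. What remains is, up to explicit constants and signs, exactly the two positive bilinear forms of the hypothesis evaluated at $(x_0,x_0)$ and $(y_0,y_0)$, and positivity of $\dot q_0$ on $V=\ell_0^+\oplus \ell_0^-$ follows.

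The only delicate point is sign bookkeeping: antisymmetry of $\omega$ together with the chosen sign convention for $q_{\ell_1,\ell_2}$ (as fixed by Lemma~\ref{lem:Triples of Lagrangians quadric}) must be tracked carefully to confirm that the two surviving summands combine with the correct overall sign. The scheme mirrors the proof of Lemma~\ref{lem:ContactHyperboloidsPositive1} but is appreciably shorter, since $u^\pm(0)=0$ means quadratic contributions in $u^\pm$ drop out of the derivative at $t=0$ and only first-order information is used.
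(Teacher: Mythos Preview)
Your proposal is correct and follows essentially the same route as the paper: parametrize $\ell^\pm(t)$ as graphs of $u^\pm(t)$, decompose a fixed $v$ along $\ell^+(t)\oplus\ell^-(t)$, differentiate at $t=0$ to obtain $\dot x(0)=-\dot u^-(y_0)$ and $\dot y(0)=-\dot u^+(x_0)$, and use isotropy of $\ell_0^\pm$ to kill the cross terms, leaving only the two positive forms. Your remark that the sign bookkeeping is the only delicate point is well taken (the paper's own computation has a stray factor/sign to track), and your correction of the typo $\dot u^-\in\Hom(\ell_0^-,\ell_0^+)$ is right.
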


\begin{proof}

The proof is similar to the previous one. Let $u^+(t)\in \Hom(\ell^+_0,\ell^-_0)$ and $u^-(t)\in \Hom(\ell^-_0,\ell^+_0)$ be the linear maps whose graph is equal to $\ell^+(t)$ and $\ell^-(t)$ respectively. Let ${v}\in \R^{2n}$. The derivative of the evaluation of $q_t=q_{\ell^+(t), \ell^-(t)}$ to ${v}$ can be written in term of the derivative  $\dot{{v}}_t^+$, $\dot{{v}}_t^-$ of the vectors ${v}_t^+\in \ell_+(t)$, ${v}_t^-\in \ell_-(t)$ such that ${v}={v}_t^++{v}_t^-$:
$$\dot{q}_0({v},{v})= \omega({v}^+_0,\dot{{v}}^-_0)+\omega(\dot{{v}}^+_0,{v}^-_0).$$

One has $\dot{{v}}^+_0=-\dot{{v}}^-_0$ since ${v}=v^+_t+v^-_t$ does not vary with $t$. The fact that $v^+_t\in \ell_+(t)$ for all $t$ implies that $\dot{{v}}^+_0$ can be written ${w}^++\dot{u}^+({{v}}_0^+)$ with ${w}^+\in \ell^+_0$. Similarly $\dot{{v}}^-_0$ can be written ${w}^-+\dot{u}^-({{v}}_0^+)$ with ${w}^-\in \ell^-_0$.
%The vector $v_0$ admits a unique decomposition in the direct sum $\ell^+_0\oplus \ell^-_0$, therefore $\dot{{v}}^+_0=\dot{u}^+({{v}}_0^+)-\dot{u}^-({{v}}_0^-)$ and $\dot{{v}}^-_0=\dot{u}^-({{v}}_0^-)-\dot{u}^+({{v}}_0^+)$.
Note that $\omega(v_0^+,w^+)=0$ as they belong to the same Lagrangian $\ell^+_0$. Similarly $\omega(v_0^-,w^-)=0$ and hence:
$$\dot{q}_0= 2\omega({v}^+_0,\dot{u}^+({v}^+_0))+ 2\omega(\dot{u}^-({v}^-_0),{v}^-_0).$$
This is positive by our assumption.

\end{proof}

We now construct an equivariant continuous map that admits a well-fitting flow.

\begin{prop}
\label{prop:Exist fitting flow}
Let $\rho:\Gamma_g\to \Sp(2n,\R)$ be a maximal representation. There exist a continuous map $u:\widetilde{S_g}\to \Gra_2^{\max}(\mathcal{Q})$ that admits a fitting flow.
\end{prop}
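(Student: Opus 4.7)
The plan is to construct $u$ explicitly from the boundary map $\xi=\xi_\rho^n:\partial\Gamma_g\to\mathcal{L}_n$, which exists for any maximal representation by Theorem \ref{thm:Caract Maximal reprs via homotopy}, together with an auxiliary Fuchsian representation $\rho_0:\Gamma_g\to\PSL(2,\R)$ giving equivariant identifications $\widetilde{S_g}\simeq\mathbb{H}^2$ and $\partial\Gamma_g\simeq\partial\mathbb{H}^2$. For each pair of distinct boundary points $(\zeta^-,\zeta^+)\in(\partial\Gamma_g)^{(2)}$ the Lagrangians $\xi(\zeta^\pm)$ are transverse (Anosov transversality, Theorem \ref{thm:BIW charac}), so the quadric $q_{\xi(\zeta^-),\xi(\zeta^+)}\in\mathcal{Q}$ from Section \ref{subsec:quadrics special} is well defined and the assignment $(\zeta^-,\zeta^+)\mapsto q_{\xi(\zeta^-),\xi(\zeta^+)}$ is $\rho$-equivariant.

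I would define $u:\widetilde{S_g}\to\Gra_2(\mathcal{Q})$ so that for each $x\in\widetilde{S_g}$ the projective circle $\Sph u(x)$ is parametrized by $T^1_x\widetilde{S_g}$ via $v\mapsto[q_{\xi(v^-),\xi(v^+)}]$, where $v^\pm\in\partial\Gamma_g$ are the endpoints of the geodesic through $v$. This is modelled on the rank-one Fuchsian case discussed in Appendix \ref{subsec:Example with SL2C}, where all such quadrics lie in the $2$-plane $\langle p_x\rangle^\circ\subset\mathcal{Q}$ orthogonal to the symmetric tensor $p_x$ representing $x$. With this identification $\Sph u^*\mathcal{E}$ is $\Gamma_g$-equivariantly identified with $T^1\widetilde{S_g}$. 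Verification that $u(x)\in\Gra_2^{\max}(\mathcal{Q})$ is then immediate from Proposition \ref{prop:There exist a boundary map of pencil}: the boundary map $\xi_{u(x)}:\Sph u(x)\to\mathcal{L}_n$ of the pencil becomes, under the above identification, the composition of the degree-one homeomorphism $\Sph u(x)\simeq T^1_x\widetilde{S_g}\to\partial\Gamma_g$, $v\mapsto v^+$, with $\xi$ itself, which has homotopy class $n[\tau]$ by Theorem \ref{thm:Caract Maximal reprs via homotopy}.

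For the fitting flow I would take $\Phi_t$ on $\Sph u^*\mathcal{E}\simeq T^1\widetilde{S_g}$ to be a $\Gamma_g$-equivariant flow that simultaneously pushes both endpoints of $v$ in the cyclic direction of $\partial\Gamma_g$: for $t>0$ the new endpoints $(v^-_t,v^+_t)$ are chosen so that $(v^-,v^-_t,v^+_t,v^+)$ is cyclically ordered on $\partial\Gamma_g$ and the projection of the flow line to $\widetilde{S_g}$ remains quasi-geodesic (obtained for instance by adding to the geodesic flow a small equivariant combination of stable and unstable horocyclic pushes). The quadruple of Lagrangians $(\xi(v^-),\xi(v^-_t),\xi(v^+_t),\xi(v^+))$ is then a maximal quadruple, and Lemma \ref{lem:ContactHyperboloidsPositive1} yields the positivity $q_{\xi(v^+_t),\xi(v^-_t)}-q_{\xi(v^-),\xi(v^+)}>0$, which is exactly the defining condition of a fitting flow (Definition \ref{defn:Fitting flow}); infinitesimally, the same positivity follows from Lemma \ref{lem:ContactHyperboloidsPositive2}.

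The main obstacle is to verify that the quadrics $q_{\xi(v^-),\xi(v^+)}$ for $v\in T^1_x\widetilde{S_g}$ actually lie in a common $2$-plane in $\mathcal{Q}$, so that $u(x)\in\Gra_2(\mathcal{Q})$ is well defined. This is automatic in the rank-one Fuchsian case but is a nontrivial linear constraint on $\xi$ for $n\geq 2$. Since Proposition \ref{prop:Exist fitting flow} only asks for a continuous map, a workable fallback is to build $u$ piecewise over a $\Gamma_g$-equivariant cell decomposition of $\widetilde{S_g}$: on each cell pick two equivariantly varying boundary pairs, take $u$ to be the $2$-plane spanned by the two associated quadrics, and glue across cells using local coordinates on $\Gra_2(\mathcal{Q})$ and a $\Gamma_g$-equivariant partition of unity. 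The fitting flow is constructed by the same two-endpoint motion and remains fitting on each cell by Lemma \ref{lem:ContactHyperboloidsPositive1}, and the gluing preserves continuity and equivariance.
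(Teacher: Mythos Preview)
Your starting point is the right one and matches the paper's: use the Anosov boundary map $\xi$ and the quadrics $q_{\xi(\zeta^-),\xi(\zeta^+)}$ from Section~\ref{subsec:quadrics special}, together with an auxiliary hyperbolic metric on $S_g$. You also correctly identify the central difficulty: for $n\geq 2$ the circle of quadrics $\{q_{\xi(v^-),\xi(v^+)}:v\in T^1_x\widetilde{S_g}\}$ does \emph{not} lie in a $2$-plane of $\mathcal{Q}$, so your first definition of $u(x)$ is not an element of $\Gra_2(\mathcal{Q})$.

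The genuine gap is in your fallback. A partition-of-unity gluing of pencils does not make sense: $\Gra_2(\mathcal{Q})$ is not affine, and averaging two $2$-planes in $\mathcal{Q}$ does not produce a $2$-plane. Even if you parametrize locally via $\Hom(P,\mathcal{Q}/P)$ and average there, you have no control ensuring the result stays in $\Gra_2^{\max}(\mathcal{Q})$, nor that the glued map admits any fitting flow. The flow you propose is also problematic independently of the gluing: if along a flow line the two endpoints are pushed strictly inside the previous pair so that $(v^-,v^-_t,v^+_t,v^+)$ is cyclically ordered for every $t>0$, then the two endpoints collapse in finite time and the flow cannot be complete on $\Sph u^*\mathcal{E}$; a ``small horocyclic perturbation of the geodesic flow'' does not produce this nesting, since the geodesic flow fixes both endpoints.

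The missing idea, which the paper uses, is to \emph{linearly average} rather than patch. Writing $q^\circ_w=q_{\xi((Jw)^+),\xi((-Jw)^+)}$ for $w\in T^1_x\widetilde{S_g}$, set
\[
q_v=\int_{T^1_x\widetilde{S_g}}\langle v,w\rangle\, q^\circ_w\, d\lambda(w).
\]
Because the weight $\langle v,w\rangle$ is linear in $v\in T_x\widetilde{S_g}$, the map $v\mapsto q_v$ is linear and its image is automatically a $2$-plane $u(x)\in\Gra_2(\mathcal{Q})$. This resolves the obstacle without any piecewise construction. One then takes $\Phi$ to be the \emph{unmodified} hyperbolic geodesic flow on $T^1\widetilde{S_g}\simeq\Sph u^*\mathcal{E}$. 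The positivity $q_{v'}-q_v>0$ along the flow comes out of the integral: for each $w$ with $\langle v,w\rangle>0$, parallel transport along the geodesic carries the orthogonal endpoints $\zeta_{\pm Jw}$ to points strictly nested inside them (negative curvature), so $q^\circ_{\phi(w)}-q^\circ_w>0$ by Lemma~\ref{lem:ContactHyperboloidsPositive1}; for $\langle v,w\rangle<0$ the sign reverses and the weight reverses with it. The check that $u(x)\in\Gra_2^{\max}(\mathcal{Q})$ is then exactly the homotopy argument you gave, applied to the averaged pencil.
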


We prove this lemma using some averaging argument, where the basic building blocks are the quadrics associated to pairs of Lagrangians in the limit curve. This construction is not unique, as we start by fixing a hyperbolic metric.

\begin{proof}

Fix a hyperbolic metric on $S_g$. Since $S_g$ is oriented it admits an associated complex structure $J$. For $v\in T^1_x\widetilde{S_g}$ write $\ell_v=\xi^n_\rho(\zeta_v)$ where $\zeta_v\in \partial \Gamma$ is the limit point of the geodesic with initial derivative $v$. We furthermore define:
$$q^\circ_v=q_{\ell_{Jv},\ell_{-Jv}}.$$

These quadrics for a fixed $x\in \widetilde{S_g}$ do not in general define a pencil of quadrics. We therefore define the following quadric associated to $v\in T_x\widetilde{S_g}$ :

$$q_v=\int_{w\in T^1_x\widetilde{S}}\langle v, w\rangle q^\circ_w\mathrm{d}\lambda.$$

Here we take the integral for the measure $\lambda$ on $T^1_x\widetilde{S}$ induced by the hyperbolic metric. For each $x\in \widetilde{S}$, we consider the pencil $u(x)= \lbrace q_v |v\in T\widetilde{S_g}\rbrace\in \Gra_2(\mathcal{Q})$ which is well defined since $q_v$ depends linearly on $v\in T_xS$.

\medskip

First we check that these pencils are in $\Gra_2^{(n,n)}(\mathcal{Q})$, by proving that they are actually $\omega$-regular. 

Let $v\in T^1_x\widetilde{S_g}$  be non-zero for some $x\in \widetilde{S_g}$. For all $w\in T_x\widetilde{S_g}$ if $\langle w,v\rangle> 0$, the triple $(\zeta_{-Jw},\zeta_v,\zeta_{Jw})$ is positively oriented and hence $(\ell_{-Jw},\ell_v,\ell_{Jw})$ is maximal and hence $q^\circ_w$ is negative on $\ell_v$. If $\langle w,v\rangle<0$, the triple $(\ell_{Jw},\ell_v,\ell_{-Jw})$ is maximal and hence $q^\circ_w$ is positive on $\ell_v$. Hence $q_v$ is negative on $\ell_v$, and by a similar argument $q_v$ is positive on $\ell_{-v}$, which are Lagrangians. In particular $q_v$ is $\omega$-regular for all $v\in T^1_x\widetilde{S_g}$, and so $u(x)$ is $\omega$-regular.

\begin{figure}[h!]
\begin{center}
\begin{tikzpicture}[scale=2.5]
\begin{scope}
    \clip (0,0) circle (1);
    \hgline{12}{105};
    \draw[blue](-45:1) -- (135:1);
\end{scope}

\draw (0,0) circle (1);

\node at (0,.25/2+.505) [left] {$v'$};
\node at (.18/2,.18/2+.505) [below right] {$\phi(w)$};
\draw[->,thick] (0,0+.505) -- (0,.5/2+.505);
\draw[->,thick] (0,0+.505) -- (.35/2,.35/2+.505);
\fill (0,.505) circle(.02);
\node at (0,0.505) [below] {$y$};
\fill (12:1) circle(.02);
\node at (12:1) [right] {$\zeta_{-J\phi(w)}$};
\fill (105:1) circle(.02);
\node at (105:1) [above left] {$\zeta_{J\phi(w)}$};

\node at (0,0) [below left] {$x$};
\fill (0,0) circle(.02);
\node at (0,.25/2) [left] {$v$};
\node at (.18/2,.18/2) [below right] {$w$};
\draw[->,thick] (0,0) -- (0,.5/2);
\draw[->,thick] (0,0) -- (.35/2,.35/2);
\fill (90:1) circle(.02);
\node at (90:1) [above] {$\zeta_v$};
\fill (-45:1) circle(.02);
\node at (-45:1) [below right] {$\zeta_{-Jw}$};
\fill (135:1) circle(.02);
\node at (135:1) [above left] {$\zeta_{Jw}$};

\end{tikzpicture}
\end{center}
\caption{Proof of Lemma \ref{prop:Exist fitting flow}.}
\label{fig:HypebolicConstruction}
\end{figure}

\medskip

We consider the geodesic flow on $u^*\Sph \mathcal{E}\simeq T^1\widetilde{S_g}$, and we prove that this flow is fitting. Let $t>0$, $x\in \widetilde{S_g}$ and $v\in T^1_x\widetilde{S_g}$. Let $(y,v')$ be the image of $(x,v)$ by the geodesic flow at time $t$, and let $\phi:T^1_x\widetilde{S_g}\to T^1_y\widetilde{S_g}$ be the identification given by the parallel transport along the geodesic between $x$ and $y$.

\medskip

Let $w\in T^1_x\widetilde{S}$ be such that $\langle w,v\rangle>0$. The following quadruple is positive $(\zeta_{Jw},\zeta_{-Jw}, \zeta_{-J\phi(w)},\zeta_{J\phi(w)})$ due to the negative curvature of the metric we put on $\widetilde{S_g}$, see Figure \ref{fig:HypebolicConstruction}. Hence the corresponding quadruple of Lagrangians is maximal. 
Therefore $q_{\phi(w)}^\circ-q_{w}^\circ$ is positive by Lemma \ref{lem:ContactHyperboloidsPositive1}. 
When $\langle w,v\rangle<0$, the following quadruple is positive $(\zeta_{-Jw}, \zeta_{Jw},\zeta_{J\phi(w)}, \zeta_{-J\phi(w) })$ and therefore $q_{\phi(w)}^\circ-q_{w}^\circ$ is negative. Hence :

$$q_{v'}-q_v= \int_{w\in T^1_x\widetilde{S}}\langle v, w\rangle \left(q_{\phi(w)}^\circ-q_{w}^\circ\right)\mathrm{d}\lambda>0.$$

We therefore have proven that the geodesic flow for the fixed hyperbolic metric is a fitting flow on $ T^1\widetilde{S_g}\simeq \Sph u^\mathcal{E}$.

\medskip

Finally as in the proof of Lemma \ref{lem:Sufficient condition to be maximal} the homotopy type of $\xi^n_\rho$ is equal to the homotopy type of $\xi_{u(x)}$ for all $x\in\widetilde{S_g}$. Hence these pencils are in $\Gra_2^{\max}(\mathcal{Q})$.

\end{proof}

\begin{rem}
These pencils always lie in the same connected component of $\Gra_2^\omega(\mathcal{Q})$. Indeed on can construct such a pencil $u(x)\in \Gra_2^\omega(\mathcal{Q})$ given any maximal continuous map $\xi$ from $\partial\widetilde{S_g}$ into the space of Lagrangians. The space of such maximal continuous maps being path connected, any two such pencils can be joined by a path in $\Gra_2^\omega(\mathcal{Q})$. It is not clear if $\Gra_2^{\max}(\mathcal{Q})$ only contains  this connected component.
\end{rem}

\section{Geometry of the symmetric space.}
\label{sec:Subsection Symmetric space}

In this section we prove Proposition \ref{prop:GaussMapSymmetric space for totally geodesic immersions} and Lemma \ref{lem:Transverse intersection with symmetric space} which are two facts independent from the main results of the paper. We show how to construct fitting immersions of pencils using totally geodesic surfaces in the symmetric space $\mathbb{P}(S^2V^{>0})$ as in \cite{Dav23}. We then prove that the codimension $d$ submanifolds corresponding to pencils in $\Gra^\omega_d(\mathcal{Q})$ intersect transversely the symmetric space of $\Sp(2n,\R)$ embedded in  $\mathbb{P}(S^2V^{>0})$.

\medskip

Given an immersion  $h:M\to \mathbb{P}(S^2V^{>0})$ from a manifold $M$ of dimension $d$ we define its \emph{Gauss map} $\mathcal{G}h :M\to \Gra_d(\mathcal{Q})$, that associates to $x\in M$ the pencil $P$ associated with the codimension $d$ projective subspace of $\Gr_d(\mathcal{Q})$ orthogonal to $\mathrm{d}h(TM)$ at $h(x)$, for the $\SL(V)$-invariant Riemannian metric on $\mathbb{P}(S^2V)$.

\medskip

The invariant Riemannian metric of the symmetric space associated to $\SL(V)$ can be described by a natural identification between $\mathbb{P}(S^2V^{>0})$ and its dual cone $\mathbb{P}(S^2(V^*)^{>0})$. We therefore reformulate the definition of $\mathcal{G}h$ as follows.

We first identify $\mathbb{P}(S^2V^{>0})$ with $\mathbb{P}(S^2(V^*)^{>0})$ via the map $[X]\mapsto [X^{-1}]$. Note as once again we view elements of $S^2V$ and $S^2(V^*)=\mathcal{Q}$ respectively as maps $V^*\to V$ and $V\to V^*$. An immersion $h:M\to \mathbb{P}(S^2V^{>0})$ hence defines a dual immersion $h^*:M\to \mathbb{P}(S^2(V^*)^{>0})$. Fixing a volume form on $V$ and $V^*$ allows us to lift this map to a map $\overline{h}^*$ into the space of elements in $S^2(V^*)^{>0}$ whose corresponding map $V\to V^*$ has determinant $1$. We define $\mathcal{G}h(x)=\mathrm{d}\overline{h}^*(T_xM)\subset S^2V^*=\mathcal{Q}$.

%
%Recall that $\mathbb{P}(S^2V^{>0})$ is a projective model for the symmetric space of $\SL(V)$ and it hence admits a $\SL(V)$-invariant Riemanian metric on $\mathbb{P}(S^2V^{>0})$ that is unique up to scaling. 
%
% One can identify  $\mathbb{P}(S^2V^{>0})$ with the space of tensors $V^*\to V$ whose determinant for some choice of a volume form is equal to $1$.
%Hence we can identify $\mathrm{d}h(T_xM)$ with a subspaceof dimension $d$ of $S^2V$.
%The inverse of $h(x)$ determines a bilinear pairing $V\to V^*$ that induces an identification of $S^2V$ with its dual $\mathcal{Q}$. 
%We define $\mathcal{G}h(x)\subset \mathcal{Q}$ to be the element corresponding to $\mathrm{d}h(T_xM)\subset S^2V$ via this identification.

\begin{prop} 
\label{prop:GaussMapSymmetric space for totally geodesic immersions}
Let $h:M\to \mathbb{P}(S^2V^{>0})$ be a totally geodesic immersion. Suppose that the image of the Gauss map $\mathcal{G}h :M\to \Gra_d(\mathcal{Q})$ contains only regular pencils, \ie pencils containing only non-degenerate quadrics. The immersion $\mathcal{G}h $ is then a fitting immersion. If $h$ is complete it is a globally fitting map. 
\end{prop}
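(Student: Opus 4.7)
The plan is to verify the fitting and globally fitting properties by direct matrix computations in an exponential chart. Since both the totally geodesic immersion $h$ and the Gauss map construction are $\SL(V)$-equivariant, I may work at a single base point $x_0 \in M$ and, after choosing an inner product on $V$ identifying $\mathcal{Q}$ with symmetric matrices, assume that $\overline{h}^*(x_0) = I$. The totally geodesic hypothesis supplies (by the classical Cartan correspondence) a Lie triple system $\mathfrak{m}$ of traceless symmetric matrices, of dimension $d$, such that $h(\exp_{x_0}(\xi)) = [e^{\xi}]$ and hence $\overline{h}^*(\exp_{x_0}(\xi)) = e^{-\xi}$ for $\xi \in \mathfrak{m}$ near $0$. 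Differentiating at $\xi = 0$ gives the pencil $\mathcal{G}h(x_0) = \mathfrak{m}$ as a subspace of $\mathcal{Q}$, and the regularity hypothesis becomes the statement that every nonzero $A \in \mathfrak{m}$ is an invertible matrix.

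To establish that $\mathcal{G}h$ is a fitting immersion, I would verify condition $(iii)$ of Proposition~\ref{prop:fitting immersions 3 items}. Given $v \in T_{x_0}M$ corresponding to $A \in \mathfrak{m}$, consider the smooth section $t \mapsto q(t) = -A e^{-tA}$, which is the velocity of the curve $t \mapsto e^{-tA} = \overline{h}^*(\exp_{x_0}(tA))$ and therefore lies in $\mathcal{G}h(\exp_{x_0}(tA))$ for every $t$. The pair $\bigl(\mathcal{G}h(\exp_{x_0}(tA)),\, q(t)\bigr)$ is a curve in $\mathcal{E}$ whose derivative at $t=0$ is a vector $\mathrm{w} \in T\mathcal{E}$ satisfying $\mathrm{d}p(\mathrm{w}) = \mathrm{d}(\mathcal{G}h)(v)$ and $\mathrm{d}\pi(\mathrm{w}) = q'(0) = A^2$. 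Since $A$ is invertible by the regularity hypothesis, $A^2$ is positive definite, which is precisely the positivity required by condition $(iii)$.

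For the globally fitting claim, let $x_1 \neq x_2 \in M$. Completeness of $h$ implies that $h(M)$ is a complete totally geodesic submanifold of the Cartan--Hadamard space $\mathbb{P}(S^2 V^{>0})$, and is therefore itself Cartan--Hadamard, so $x_1$ and $x_2$ are joined by a (unique) geodesic $\gamma:[0,1]\to M$. After applying an element of $\SL(V)$ to arrange $\overline{h}^*(x_1) = I$, the curve $\tilde{\gamma}(t) := \overline{h}^*(\gamma(t))$ takes the form $e^{-tA}$ for some $A \in \mathfrak{m}$, and
$$\tilde{\gamma}'(1) - \tilde{\gamma}'(0) = \int_0^1 \tilde{\gamma}''(t)\,\mathrm{d}t = \int_0^1 A^2 e^{-tA}\,\mathrm{d}t.$$
Since $A$ is invertible, $A^2$ is positive definite, $e^{-tA}$ is positive definite, and the two commute, so the integrand is positive definite for every $t$ and so is the integral. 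Because $\tilde{\gamma}'(0) \in \mathcal{G}h(x_1)$ and $\tilde{\gamma}'(1) \in \mathcal{G}h(x_2)$, condition $(ii)$ of Proposition~\ref{prop:WellFitted=FibrationCContinuous} ensures that $(\mathcal{G}h(x_1),\, \mathcal{G}h(x_2))$ is a fitting pair, whence $\mathcal{G}h$ is globally fitting.

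The substantive inputs beyond the elementary matrix calculation are the description of totally geodesic submanifolds through a point as exponentials of Lie triple systems, and the fact that a complete totally geodesic submanifold of a symmetric space of non-compact type is itself Cartan--Hadamard, ensuring that any two points are joined by a geodesic. Once these are in place the positivity of $A^2$ and of $A^2 e^{-tA}$ follows immediately from the regularity hypothesis, and no further estimate is needed.
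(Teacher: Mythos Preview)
Your argument is correct. The local (fitting immersion) part is essentially the same computation as the paper's: writing the geodesic through the base point as $t\mapsto e^{-tA}$ and observing that the derivative of the tangent quadric is $A^2$, which is positive definite precisely because the regularity hypothesis makes $A$ invertible. The paper diagonalizes to write this as $\sum_i \lambda_i^2\, e_i^*\otimes e_i^*$, whereas you keep it in matrix form, but the content is identical.

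For the globally fitting part your route genuinely differs from the paper's. The paper argues that a complete totally geodesic $h$ is equivariant under a cocompact group action and then invokes Corollary~\ref{cor:Globally fitting}, which in turn relies on the machinery of fitting flows (Proposition~\ref{prop:Flow lines of a fitting flow are quasi geodesic and exist between points}). Your argument instead joins any two points by a geodesic of $M$ and integrates $A^2 e^{-tA}$ along it to produce an explicit positive element $q_2-q_1$ with $q_i\in \mathcal{G}h(x_i)$, appealing directly to criterion~(ii) of Proposition~\ref{prop:WellFitted=FibrationCContinuous}. This is more elementary and self-contained, and it does not require any cocompactness assumption on $M$. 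One small wording issue: you call $M$ Cartan--Hadamard, but $M$ need not be simply connected; what you actually use (and what suffices) is Hopf--Rinow, which gives a geodesic between any two points of the complete manifold $M$. The uniqueness is irrelevant to your computation.
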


This proposition can be applied for instance to the totally geodesic surface associated to any representation of $\SL(2,\R)$ into $\Sp(2n,\R)$. This can also be applied to the totally geodesic immersion of $\mathbb{H}^3$ induced by the inclusion $\SL(2,\C)\subset \Sp(4,\R)$.

\begin{rem}
The fibration of a domain of $\mathbb{P}(V)$ induced by the fitting map in this proposition is a particular case of Theorem 6.3 from \cite{Dav23}.
\end{rem}

Note that if $d\geq 2$, the signature $(a,b)$ of the quadrics of a regular pencil must satisfy $a=b$ since $\Sph^{d-1}$ is connected. Hence the condition that the pencils are regular can be replaced in this case by the condition that the pencils belong to $\Gra_d^{(n,n)}(\mathcal{Q})$ where $\dim(V)=2n$.

\medskip

As a corollary on can construct fitting immersions for some representations that factor through $\SL(2,\R)$. Indeed if one has a representation $\iota:\SL(2,\R)\to \SL(V)$ there exist a $\iota$-equivariant totally geodesic map $h:\mathbb{H}^2\to \SL(V)$, see Proposition \ref{prop:GaussMapSymmetric space for totally geodesic immersions}.

\begin{proof}
Let $\gamma:\R\to M$ be a geodesic for the metric induced by $h$. Recall that $\overline{h}(\gamma(t))$ for $t\in \R$ is an element of $S^2V$, and hence is defines a positive symmetric endomorphism of $V^*$ and as such it is diagonalizable is an orthonormal basis with positive coefficients. We write the representative of $h(\gamma(o))$ with determinant $1$ in a basis $(e_i)_{i\in I}$ such that for some $\lambda^i\in \R$, for all $t\in \R$:
$$\overline{h}(\gamma(t))=\sum_{i\in I}e^{t\lambda_i}e_i\otimes e_i.$$

The dual immersion can be written as:
$$\overline{h}^*(\gamma(t))=\sum_{i\in I}e^{-t\lambda_i}e^*_i\otimes e^*_i.$$

The element $q_t\in\mathcal{Q}$ corresponding to $(\overline{h}^*\circ \gamma)'(t)$ is the symmetric bilinear form:
$$q_t=\sum_{i\in I}-\lambda_ie^{-t\lambda_i}e_i^*\otimes e_i^*.$$

The derivative of $(q_t)$ at $t=0$ equals:
$$\sum_{i\in I}\lambda_i^2 e^{-t\lambda_i}e_i^*\otimes e_i^*.$$

This is a positive bilinear form if and only if all the $\lambda_i$ are non-zero, which is the case if and only if the bilinear forms $q_t$ are non-degenerate, \ie if the image of $\mathcal{G}h$ contains only regular pencils.
In this case, the positivity of the derivative of $(q_t)$ implies that the geodesic flow on $\mathbb{S}TM$ induces a fitting flow on $\mathcal{G}h^*\mathbb{S}\mathcal{E}$, so $\mathcal{G}h$ is a fitting immersion.

\medskip

If moreover $h$ is complete, it is $\rho$-equivariant for the discrete and faithful action of some closed surface group $\Gamma$. Hence Corollary \ref{cor:Globally fitting} implies that $h$ is a globally fitting map.
\end{proof}

Let us fix a symplectic form $\omega$ on $V=\R^{2n}$. Let $\X_{\Sp}$ be the subset of $\mathbb{P}(S^2V^{>0})$ consisting of tensors $[q^{-1}]$ that are compatible with $\omega$, i.e. such that for some complex structure $J$ on $\R^{2n}$, the bilinear form $q+i\omega$ is a $J$-hermitian metric on $V$. 
Recall that $q:V\to V^*$ is a bilinear form, and $q^{-1}:V^*\to V$ is a tensor. 

\medskip

The space $\X_{\Sp}$ is a copy of the symmetric space associated to $\Sp(2n,\R)$, which is a totally geodesic subspace of the symmetric space associated to the Lie group $\SL(2n,\R)$ whose model is $\mathbb{P}(S^2V^{>0})$.
However it is not a projective subset : the closure of the projective convex hull of $\X_{\Sp}$ in $\mathbb{P}(S^2V^{>0})$ is equal to $\mathbb{P}(S^2V^{\geq 0})$ since it contains all the extreme points of $\mathbb{P}(S^2V^{\geq 0})$, \ie the rank one elements $S^2\mathbb{P}(\R^{2n})$.

 Indeed, we can write matrices of the form $\text{Diag}(\lambda, 1, \cdots, 1,\lambda^{-1})\in \Sp(2n,\R)$ for any $\lambda\in \R_{>0}$ and such that the first basis spans any fixed line $\ell\in \mathbb{RP}^{2n-1}$, one can observe that the closure of the orbit of any point of $\X_{\Sp}$ by these elements contains the rank one point corresponding to $\ell$. Hence the closure of $\X_{\Sp}$ contains all rank one points.

\medskip

The intersection of $\X_{\Sp}$ with a general linear subspace is not necessarily transverse.
However it is the case for some special subspaces. 

\begin{defn}
\label{defn:omegaRegularQuadric}
We say that an element $q\in \mathcal{Q}$ is \emph{$\omega$-regular} if for some Lagrangians $\ell^+$, $\ell^-$ the bilinear form $q$ is positive on $\ell^+$ and negative on $\ell^-$.

\medskip

We denote by $\Gra^\omega_d(\mathcal{Q})$ the set of pencils whose non-zero elements are $\omega$-regular.
\end{defn}

In particular an $\omega$-regular pencil $q$ has signature $(n,n)$. 

\begin{lem}
\label{lem:Transverse intersection with symmetric space}
Let $P\in \Gra^\omega_2(\mathcal{Q})$ be an $\omega$-regular pencil, i.e. such that all its non-zero elements are $\omega$-regular. The space $\mathbb{P}(P^\circ)$ intersects transversely the manifold $\X_{\Sp}$ in a codimension $2$ submanifold.
\end{lem}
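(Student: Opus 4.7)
The plan is to verify transversality pointwise. At every $[p_0] \in \X_{\Sp} \cap \mathbb{P}(P^\circ)$, transversality of $\X_{\Sp}$ with $\mathbb{P}(P^\circ)$ inside $\mathbb{P}(S^2V)$ is equivalent to surjectivity of the composition
$$T_{p_0}\X_{\Sp} \longrightarrow T_{[p_0]}\mathbb{P}(S^2V)\,/\, T_{[p_0]}\mathbb{P}(P^\circ) \cong S^2V/P^\circ \cong P^*,$$
or dually, to injectivity of the restriction map $q \in P \mapsto q|_{T_{p_0}\X_{\Sp}} \in (T_{p_0}\X_{\Sp})^*$. So it suffices to show: if a nonzero $q \in P$ satisfies $q(p_0)=0$ and annihilates $T_{p_0}\X_{\Sp}$, then $q$ fails to be $\omega$-regular.

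Since $\Sp(2n,\R)$ acts transitively on $\X_{\Sp}$ and preserves $\omega$-regularity, I may reduce to a reference point $p_0$ identified with the identity tensor in a symplectic basis, so that $J := p_0\omega$ is the standard complex structure. Writing $T_{p_0}\X_{\Sp}$ as the image $\{Xp_0 + p_0 X^T : X \in \mathfrak{sp}\}$ of the infinitesimal $\Sp$-action, a direct calculation — either in block coordinates or via the Cartan decomposition $\mathfrak{sp} = \mathfrak{k}\oplus\mathfrak{m}$ combined with the $\Sp$-equivariant isomorphism $\mathfrak{sp} \cong S^2V^*$ sending $X \mapsto \omega X$ — identifies the annihilator of $T_{p_0}\X_{\Sp}$ in $\mathcal{Q}$ with the subspace of $J$-invariant quadrics, namely those satisfying $q(Jv, Jw) = q(v,w)$ for all $v, w \in V$.

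The crux of the argument is then: a nonzero $J$-invariant $q$ with $q(p_0)=0$ cannot be positive or negative definite on any Lagrangian, hence is not $\omega$-regular. Such a $q$ is the real part of a unique Hermitian form $h$ on the complex vector space $(V, J)$, and the condition $q(p_0)=0$ rewrites as $\Tr h = 0$, the trace being taken with respect to the Kähler metric $g = p_0^{-1}$. Suppose $q|_\ell$ were positive definite on some Lagrangian $\ell$. Since $\ell$ is totally real of real dimension $n$ in $(V, J)$, any $g$-orthonormal real basis of $\ell$ serves as a unitary basis of $(V, J)$; in this basis the diagonal entries of the Hermitian matrix of $h$ coincide with the strictly positive diagonal entries of $q|_\ell$, forcing $\Tr h > 0$ and contradicting $\Tr h = 0$. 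The same reasoning applied to $-q$ rules out negative definiteness.

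The main obstacle will be establishing cleanly the identification between the annihilator of $T_{p_0}\X_{\Sp}$ in $\mathcal{Q}$ and the space of $J$-invariant quadrics, keeping track of the transpose and duality conventions; once this correspondence is in place, the passage to the Hermitian picture and the trace comparison in an orthonormal basis of a Lagrangian dispatch the rest of the argument.
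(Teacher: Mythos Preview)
Your argument is correct and begins exactly as the paper does: reduce transversality to the statement that no nonzero $q\in P$ lies in the annihilator of $T_{p_0}\X_{\Sp}$, normalize $p_0$ to the identity via the $\Sp(2n,\R)$-action, and identify that annihilator. Your description of it as the space of $J$-invariant quadrics is the coordinate-free restatement of the paper's block form $\begin{pmatrix} A & B\\ -B & A\end{pmatrix}$ with $A$ symmetric and $B$ antisymmetric.

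Where you diverge is in deriving the contradiction. The paper uses both Lagrangians: it normalizes $\ell^+$ by the $U(n)$-action to the standard Lagrangian $\langle x_1,\dots,x_n\rangle$, reads off $A>0$, then parametrizes $\ell^-$ as the image of $\begin{pmatrix} U\\ I_n\end{pmatrix}$ and computes $q|_{\ell^-}=UAU+A+[U,B]$, which cannot be negative definite since its trace is $\Tr(UAU)+\Tr(A)>0$. You instead exploit the condition $q(p_0)=0$---which the paper has available but never invokes---to pass to the Hermitian form $h$ with $\Tr h=0$, and then a single Lagrangian suffices: a $g$-orthonormal basis of $\ell$ is unitary, and the diagonal entries of $h$ in that basis are the values $q(e_i,e_i)>0$, forcing $\Tr h>0$. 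Your route is cleaner and more conceptual; the paper's, by not using $q(p_0)=0$, incidentally establishes the slightly stronger fact that no $J$-invariant quadric whatsoever is $\omega$-regular.
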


\begin{proof}
Let $q\in \mathcal{Q}$ be $\omega$-regular, and let $x\in \X_{\Sp}\cap \mathbb{P}(\langle q\rangle ^\circ)$ be an intersection point. Up to acting by $\Sp(2n,\R)$, one can assume that $x=[X^{-1}]\in \mathbb{P}(S^2V)$ where $X$ is the bilinear form whose associated matrix in  some symplectic basis is:
$$\begin{pmatrix}
I_n &  0\\
0 & I_n 
\end{pmatrix}.$$

The anihilator $(T\X_{\Sp})^\circ \subset S^2V$ of the tangent space to $\X_{\Sp}$ at this point, viewed as a linear subspace of $\mathcal{Q}$ can be identified with the space of following symmetric matrices where $A$ is symmetric and $B$ is antisymmetric:
$$\begin{pmatrix}
A &  B\\
-B & A 
\end{pmatrix}.$$

Suppose that the intersection is not transverse, i.e. that $q$ can be written in this form.
Let $\ell^+$ and $\ell^-$ be Lagrangians on which $q$ is respectively positive and negative.
Since the maximal compact $U(n)$ acts transitively on the space of Lagrangians one can assume that $\ell^+=\langle x_1,\cdots ,x_n\rangle$. Let $\ell_0^-=\langle y_1,\cdots ,y_n\rangle$ be its orthogonal for the fixed euclidean metric. The Lagrangian $\ell^-$ is transverse to $\ell^+$ so for some symmetric matrix $U$, one can $\ell^-$  as the image of :
$$\begin{pmatrix}
U \\
I_n 
\end{pmatrix}. $$

The fact that $q$ is positive on $\ell^+$ implies that $A$ is positive. The fact that $q$ is negative on $\ell^-$ implies that the following is negative:
$$UAU+A-BU+UB.$$
But $A$ and $UAU$ are both positive and the bracket $[B,U]$ has trace zero so it cannot be negative. Hence the intersection must be transverse.
\end{proof}

\begin{prop}
A tangent vector in $T\X_{\Sp} \subset T\mathbb{P}(S^2(V^*)^{>0})$ is $\omega$-regular if and only if the corresponding element of $\mathcal{Q}$ is non-degenerate.
\end{prop}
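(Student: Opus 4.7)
The plan is to use the $\Sp(2n,\R)$-equivariance of the statement to reduce to a single base point, and then verify both implications by an explicit computation in the Cartan subspace. Indeed $\Sp(2n,\R)$ acts transitively on $\X_{\Sp}$ and preserves both $\omega$ and the notion of Lagrangian, so the properties of being $\omega$-regular and of being non-degenerate are $\Sp(2n,\R)$-invariant properties of tangent vectors. I may therefore fix a symplectic basis $(x_1,\ldots,x_n,y_1,\ldots,y_n)$ and work at the base point $[X^{-1}]\in \X_{\Sp}$ whose lift $X\in S^2V^{>0}$ is the identity in this basis.

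One implication is purely linear-algebraic and does not use the tangent-space structure. Suppose $q\in \mathcal{Q}$ is $\omega$-regular with witnessing Lagrangians $\ell^+,\ell^-$. Since $q|_{\ell^+}$ is positive definite, $\ell^+$ meets trivially the subspace $V^-\oplus \Ker(q)\subset V$ on which $q$ is non-positive, and the inequality $\dim\ell^+\le \dim V-\dim(V^-\oplus\Ker(q))$ forces $q$ to have at least $n$ positive eigenvalues. Symmetrically $q$ has at least $n$ negative eigenvalues, and since $\dim V=2n$ the signature of $q$ is exactly $(n,n)$ and $q$ is non-degenerate.

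For the converse I would identify the tangent space to $\X_{\Sp}$ at the chosen base point, via the determinant-one lift used in the proof of Proposition \ref{prop:GaussMapSymmetric space for totally geodesic immersions}, with the Cartan complement $\mathfrak{p}\subset\mathfrak{sp}(2n,\R)$ consisting of the symmetric $\mathfrak{sp}(2n,\R)$-matrices $\begin{pmatrix} A & B \\ B & -A\end{pmatrix}$ with $A,B$ symmetric $n\times n$ blocks in the given symplectic basis. Under the isotropy action of $U(n)=\Sp(2n,\R)\cap O(2n)$ every element of $\mathfrak{p}$ can be conjugated into the standard Cartan subspace, that is, into a diagonal form $\mathrm{diag}(\lambda_1,\ldots,\lambda_n,-\lambda_1,\ldots,-\lambda_n)$ in an appropriate symplectic basis; since this conjugation preserves $\omega$, it preserves both properties of interest, and I may assume $q$ is of this diagonal form as a quadric. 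Non-degeneracy of $q$ is then equivalent to all $\lambda_i$ being non-zero, and in that case I would define $\ell^+=\mathrm{span}(z_1,\ldots,z_n)$ with $z_i=x_i$ if $\lambda_i>0$ and $z_i=y_i$ if $\lambda_i<0$: this is Lagrangian since it contains at most one of $x_i,y_i$ for each index, and the restriction of $q$ to it is positive definite because $q(z_i,z_i)=|\lambda_i|>0$. Swapping the roles of $x_i$ and $y_i$ yields a Lagrangian $\ell^-$ with $q|_{\ell^-}<0$, so $q$ is $\omega$-regular.

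I do not expect any serious obstacle. The main bookkeeping point is tracking the precise identification of the tangent space of $\X_{\Sp}$ inside $T\mathbb{P}(S^2(V^*)^{>0})$ with the Cartan complement $\mathfrak{p}$ and the corresponding sign conventions inherited from Proposition \ref{prop:GaussMapSymmetric space for totally geodesic immersions}, but this should present no genuine difficulty.
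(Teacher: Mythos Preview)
Your proposal is correct and follows essentially the same strategy as the paper: reduce via the $\Sp(2n,\R)$-action and the $U(n)$-isotropy to a diagonal element $\sum_i \lambda_i(x_i^*\otimes x_i^*-y_i^*\otimes y_i^*)$ in the Cartan subspace, where the equivalence becomes explicit. Your treatment is in fact slightly more careful than the paper's, which simply asserts positivity on $\langle x_1,\ldots,x_n\rangle$ without adjusting for the signs of the $\lambda_i$ as you do with your choice of $z_i$.
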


\begin{proof}
Given $\mathrm{v}\in T\X_{\Sp}$ one can write the corresponding element $q\in \mathcal{Q}$ for some symplectic basis $(x_1,\cdots x_n,y_1,\cdots y_n)$ as:
$$ \sum_{i=1}^n \lambda_i(x_i^*\otimes x_i^*-y_i^*\otimes y_i^*).$$

This bilinear form is regular if and only if all the $\lambda_i$ are non-zero. Else we can assume that $\lambda_i>0$ up to exchanging $x_i$ by $y_i$ and $y_i$ by $-x_i$ in the symplectic basis. Then the bilinear form is positive on the Lagrangian $\langle x_1,\cdots, x_n\rangle$ and negative on the Lagrangian $\langle y_1,\cdots, y_n\rangle$.
\end{proof}

\newpage

\appendix

\section{Immersions in the space of geodesics in \texorpdfstring{$\mathbb{H}^3$}{H3}}
\label{sec:H3}

\subsection{The space of geodesics.}
\label{subsec:Example with SL2C}

In this section we set $V=\C^2$, and restrict ourselves to pencils of Hermitian quadrics. Note that we can see this way $\SL(2,\C)$ as a subgroup of $\Sp(4,\R)\simeq \Sp(V,\omega)$ for $\omega$ the real part of a volume form on $\C^2$. The results of this Section serve as an illustration of the previously introduced notions of fitting pairs and fitting directions.

\medskip

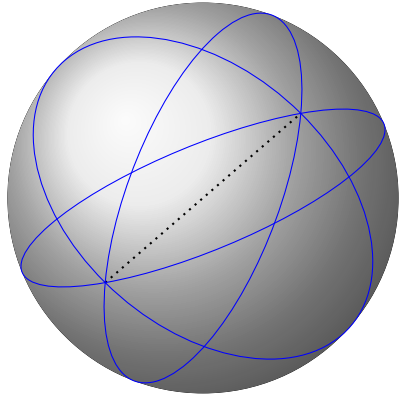
\begin{figure}
\begin{center}
\pgfmathsetmacro{\r}{2.6} %  
\tdplotsetmaincoords{60}{120}
\begin{tikzpicture}[
tdplot_main_coords,
%tdplot_rotated_coords,
font=\footnotesize,
Helpcircle/.style={blue,
%densely dashed
},
]

\pgfmathsetmacro{\h}{0.9*\r} %  
\coordinate (M) at (0,0,0);  %[label=$M$]
\coordinate (S) at (0,0,\h); 
\coordinate (A) at (0,\h,0); 

\rotateRPY{0}{35}{90}
\begin{scope}[RPY]
\draw[Helpcircle] (M) circle[radius=\r];
\draw[thick,dotted] (0,-\r,0) -- (0,\r,0);
\rotateRPY{0}{60}{0}
\begin{scope}[RPY]
\draw[Helpcircle] (M) circle[radius=\r];
\end{scope}
\rotateRPY{0}{120}{0}
\begin{scope}[RPY]
\draw[Helpcircle] (M) circle[radius=\r];
\end{scope}
\end{scope}

%\draw[Helpcircle] (S) circle[radius=sqrt(\r*\r-0.81*\r*\r)];

% Sphere
\begin{scope}[tdplot_screen_coords, on background layer]
\fill[ball color= gray!20, opacity = 0.4] (M) circle (\r); 
\end{scope}

%%% Points
%\foreach \P in {M,S, A}{
%\shade[ball color=white] (\P) circle (1.75pt);
%}

\end{tikzpicture}
\end{center}
\caption{Three Hermitian quadrics in a pencil  and the corresponding geodesic in $\mathbb{H}^3$.}
\label{fig:Pencil}
\end{figure}

Let $\mathcal{H}\subset \mathcal{Q}$ be the subset of Hermitian bilinear forms on $V=\C^2$. A Hermitian form $q\in \mathcal{H}$ that is not semi-positive or semi-negative is of Hermitian signature $(1,1)$, and hence its zero set in $\mathbb{CP}^1$ is a circle. 

\medskip

Let $S^2V=S^2_hV\oplus S^2_aV$ be the eigenspace decomposition for the operator $J\otimes J$ where $J$ is the complex conjugation. 
Here $S^2_hV$ is the $4$-dimensional eigenspace associated to $1$ and $S^2_aV=\mathcal{H}^\circ$, the $6$-dimensional space associated to $-1$. 
The intersection of $\mathbb{P}(S^2_hV)$ with the space of positive tensors is the projective Klein model for $\mathbb{H}^3$. 

\medskip

The annihilator of a pencil $P\in \Gra_2^{{\mix}}(\mathcal{H})$, \ie a plane $P\subset \mathcal{H}$ such that no non-zero vectors are semi-positive, is equal to $P^\circ=\mathcal{H}^\circ \oplus H$ where  $H$ is a plane in $S^2_hV$, which in turns corresponds to a geodesic in $\mathbb{H}^3$ in the projective Klein model.

\medskip

A pencil of quadrics $P\in \Gra^{{\mix}}_2(\mathcal{H})$ vanishes completely on two points in $\mathbb{CP}^1$, as in Figure \ref{fig:Pencil} where the zero set of three elements of the pencil are depicted. The space of mixed pencils in $\mathcal{H}$ can be identified with the space $\mathcal{G}$ of unoriented geodesics in $\mathbb{H}^3$. 

\begin{rem}
The space $\mathcal{G}$ is a $\SL(2,\C)$-homogeneous spac. Note that however the larger spaces $P\in \Gra^{{\mix}}_2(\mathcal{Q})$ or $P\in \Gra^{{\omega}}_2(\mathcal{Q})$ from the rest of the paper are not $\Sp(2n,\R)$ or $\SL(2n,\R)$ homogeneous spaces.
\end{rem}

\medskip
Two pencils in $\mathcal{H}$ form a fitting pair if and only if we can find a circle in each of the pencils that are disjoint. This is possible if and only if the two corresponding geodesics in $\mathbb{H}^3$ are disjoint, with disjoint endpoints. A fitting pair of pencils is illustrated in Figure \ref{fig:TwoPencils}.
 
\medskip

The space $\mathcal{G}$ of unoriented geodesics in $\mathbb{H}^3$ admits a pseudo-Riemannian metric of signature $(2,2)$ : we now compare the notion of fitting vectors with the notion of spacelike vectors. The tangent space at a geodesic with endpoints $(x,y)\in \mathbb{CP}^1$ can be identified with $T_x\mathbb{CP}^1\times T_y\mathbb{CP}^1$. The choice of a point in the geodesic provides an identification $\phi:T_x\mathbb{CP}^1\to T_y\mathbb{CP}^1$ and a metric $q_0$ on $T_y\mathbb{CP}^1$. Taking a different point in the geodesic means replacing $\phi$ by $\lambda\phi$ and $q_0$ by $\lambda^{-1} q_0$ for some $\lambda\in R_{>0}$.

\medskip

Hence we can consider the pseudo Riemannian metric that is invariant by the action of the isometry group of $\mathbb{H}^3$:
$$q:(T_x\mathbb{CP}^1\times T_y\mathbb{CP}^1)^2\to \R$$
$$(v_1,w_1),(v_2,w_2)\mapsto q_0(\phi(v_1),w_2)+q_0(\phi(v_2),w_1).$$

For this metric a vector $(v,w)\in T_x\mathbb{CP}^1\times T_y\mathbb{CP}^1$ is spacelike if and only if $q_0(\phi(v),w)>0$.

\begin{prop}
\label{prop:fitting in H3}

A pair of geodesics is a fitting pair if and only if the corresponding geodesics are disjoint.

\medskip

A tangent vector $(v,w)$ to $\mathcal{G}\simeq\Gra_2(\mathcal{H})$ is fitting if and only if $\phi(v)$ and $w$ are not positively anti-colinear, i.e there are no $\lambda,\mu\in \R^{\geq 0}$ such that $\lambda \phi(v)=- \mu w$.
\end{prop}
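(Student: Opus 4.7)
The plan is to treat the two claims in order, using the general theory from Sections~\ref{sec:An invariant convex domain and its fibrations} and~\ref{sec:Fitting flows}.

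For the first claim I will apply the equivalence (i)$\Leftrightarrow$(iii) of Proposition~\ref{prop:WellFitted=FibrationCContinuous}: the pencils $(P_1,P_2)$ form a fitting pair if and only if there exist $q_i\in P_i$ with $\{q_1\geq 0\}\subset \{q_2>0\}$ in $\mathbb{CP}^1$. The non-zero elements of a pencil $P\in \Gra_2^{\mix}(\mathcal{H})$ corresponding to a geodesic $\gamma$ with endpoints $\{x,y\}\subset\mathbb{CP}^1$ parametrize, up to scalar, the family of generalized circles through $x$ and $y$ (with the two semidefinite members being the points $\{x\}$ and $\{y\}$); correspondingly $\{q\geq 0\}$ runs through the family of closed disks in $\mathbb{CP}^1$ bounded by such circles. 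If $\gamma_1,\gamma_2$ are disjoint in $\mathbb{H}^3\cup \partial\mathbb{H}^3$, the four endpoints are distinct and unlinked on $\mathbb{CP}^1$, and one constructs explicitly a circle through $\{x_1,y_1\}$ and a circle through $\{x_2,y_2\}$ with the disk bounded by the first strictly contained in the interior of the disk bounded by the second. Conversely, strict containment forces the four endpoints to be distinct (a shared endpoint would lie on both bounding circles) and unlinked (otherwise every circle through one pair must cross every circle through the other), so $\gamma_1,\gamma_2$ are disjoint in $\mathbb{H}^3\cup\partial\mathbb{H}^3$.

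For the second claim I will apply Definition~\ref{defn:fitting vectors}: a vector is fitting if and only if its image in $\mathcal{H}/P$ contains the class of a positive Hermitian form. I work in coordinates where $x=[1:0]$, $y=[0:1]$, so that elements of $\mathcal{H}$ are parametrized as triples $(a,b,c)$ with $a,c\in\R$, $b\in\C$, and $P=\{(0,b,0):b\in\C\}$. Deforming to the pencil with endpoints $[1:t\dot x]$ and $[t\dot y:1]$ and expanding to first order in $t$, the tangent map sends $(0,b,0)\in P$ to the class of $(-2\mathrm{Re}(\dot x\bar b),\,0,\,-2\mathrm{Re}(\dot y\bar b))$ modulo $P$. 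Since a class $[a,0,c]\in\mathcal{H}/P$ admits a positive representative if and only if $a>0$ and $c>0$ (the off-diagonal entry of the Hermitian matrix may be chosen arbitrarily small), the vector is fitting exactly when there exists $b\in\C$ with $\mathrm{Re}(\dot x\bar b)<0$ and $\mathrm{Re}(\dot y\bar b)<0$.

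It remains to identify this with the non-positively-anti-colinear condition. Viewing $\dot x,\dot y,b\in\C\simeq\R^2$ with the standard Euclidean inner product, the two strict inequalities cut out two open half-planes through the origin, and their intersection is non-empty if and only if $\dot x$ and $\dot y$ are both non-zero and $\dot y$ is not a strictly negative real multiple of $\dot x$; this is exactly the negation of the positively anti-colinear condition (the degenerate cases $\lambda=0$ or $\mu=0$ in the definition corresponding to $\dot y=0$ and $\dot x=0$ respectively). The main delicate point is to check that the identification $\phi:T_x\mathbb{CP}^1\to T_y\mathbb{CP}^1$ appearing in the statement is, up to a positive real scalar (which does not affect the anti-colinearity relation), the identity under the two affine charts used above; this can be done by comparing with the $\SL(2,\C)$-invariant pseudo-Riemannian metric, whose spacelike condition $q_0(\phi(v),w)>0$ yields fitting vectors as a particular sub-cone of what we find here.
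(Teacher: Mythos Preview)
Your approach to the first claim via Proposition~\ref{prop:WellFitted=FibrationCContinuous}(iii) is a valid alternative to the paper's direct argument (which uses the splitting $S^2V=S^2_hV\oplus S^2_aV$ and the fact that $\mathbb{P}(S^2_aV)$ misses the positive cone). One small slip: a pencil in $\Gra_2^{\mix}(\mathcal{H})$ has \emph{no} semidefinite non-zero members, so your parenthetical about ``the two semidefinite members being the points $\{x\}$ and $\{y\}$'' is incorrect; fortunately it plays no role in the argument.

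The second claim, however, contains a genuine error. Your expansion of $q(v,v)$ at $v=(1,t\dot x)$ should give the first-order term $2t\,\mathrm{Re}(b\,\dot x)$, not $2t\,\mathrm{Re}(\dot x\,\bar b)$; these differ exactly by conjugating $\dot x$. The correct image of $(0,b,0)$ in $\mathcal{H}/P$ is the class of $\bigl(-2\,\mathrm{Re}(b\,\dot x),\,0,\,-2\,\mathrm{Re}(b\,\overline{\dot y})\bigr)$, and the resulting fitting criterion is that $\overline{\dot x}$ and $\dot y$ (not $\dot x$ and $\dot y$) fail to be positively anti-colinear. A quick sanity check exposes the discrepancy: take $(\dot x,\dot y)=(i,i)$; your criterion declares this fitting, but the curve of geodesics from $it$ to $-i/t$ stays in the vertical plane over the imaginary axis and visibly crosses the original geodesic, so the direction is \emph{not} fitting.

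This feeds into the second gap: the assertion that $\phi$ is the identity in your charts is not only unverified but false. Equivariance under the rotation $z\mapsto e^{i\theta}z$ (which acts by $e^{i\theta}$ on $T_x\mathbb{CP}^1$ and by $e^{-i\theta}$ on $T_y\mathbb{CP}^1$ in your affine charts) forces $\phi$ to be $\C$-antilinear, i.e.\ $\phi(\dot x)=c\,\overline{\dot x}$ for some $c>0$. With this correct $\phi$, the corrected criterion above matches the statement. The paper avoids all of this by arguing geometrically via Proposition~\ref{prop:fitting vector description}: given a non-anti-colinear pair one exhibits a totally geodesic half-plane through $x,y$ that separates the deformed geodesic from the original at linear rate. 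Your computational route is perfectly reasonable in principle, but it needs the conjugation fixed and $\phi$ actually identified.
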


\begin{rem}
\label{rem:SpacelikeFittingH3}
In particular spacelike vectors are fitting, but not all fitting vectors are spacelike.

\end{rem}

\begin{proof}

Let $P_1,P_2\in \Gra^{{\mix}}_2(\mathcal{H})$. One has $\mathbb{P}(S_a^2V)\cap \mathbb{P}(S^2V^{\geq 0})=\emptyset$. Hence the subsets $\mathbb{P}(P_1^\circ\cap S^2V^{\geq 0})$ and $\mathbb{P}(P_2^\circ\cap S^2V^{\geq 0})$ for $P_1,P_2\in \Gra_2^{{\mix}}(\mathcal{H})$  are disjoint if and only if the corresponding geodesics are disjoint.

\medskip

Let $(v,w)$ be a tangent vector to $\gamma\in \mathcal{G}$ and let $\gamma_t$ be a curve in $\mathcal{G}$ with this derivative at $t=0$.
If $\phi(v)$ and $w$ are not non-positively colinear, the distance between $\gamma=\gamma_0$ and $\gamma_t$ is greater that $\epsilon t$ for some $\epsilon>0$ and $t$ small enough. Indeed there exist $z\in T_x\mathbb{CP}^1$ such that $q_0(\phi(z),\phi(x)),q_0(\phi(z),w)\geq 0$, and the totally geodesic disk in $\mathbb{H}^3$ through $x,y$ normal to $z$ at $x$ contains $\gamma_0$ while being at distance $\epsilon t$ to $\gamma_t$.

Hence the distance between the subsets $\mathbb{P}(P_0^\circ\cap S^2V^{\geq 0})$ and $\mathbb{P}(P_t^\circ\cap S^2V^{\geq 0})$ is also greater than $\epsilon't$ for some $\epsilon'>0$ and all $t$ small enough. 
Therefore by Proposition \ref{prop:fitting vector description} this direction is fitting. 

Conversely if  If $\phi(v)$ and $w$ are non-positively colinear, then there is such a curve $\gamma_t$ such that the corresponding geodesics all have a common point. Therefore by Proposition \ref{prop:fitting vector description} this direction is not fitting. 

%If $(v,w)$ are not negatively collinear, there exist a circle in $\mathbb{CP}^1$ passing through $x,y$ such that $v,w$ both point to the same open half-sphere determined by this circle.
%Let $H\subset \mathbb{H}^2$ be the hyperbolic plane whose boundary is this chosen circle.
%This circle corresponds to a non-zero element $q\in P\subset \mathcal{H}$ where $P\in \Gra_2^{{\mix}}(\mathcal{H})$ is the element corresponding to the geodesic between $x$ and $y$.
%
%\medskip
%
%Let $\ell$ be a compelment of $P$ in $\mathcal{H}$ and let $\mathrm{v}\in T\Gra_2(\mathcal{H})simeq \Hom(P, \ell)$ be the tangent vector correspoding to $(v,w)$. The hermitian form $\mathrm{v}(q)$ is positive on \wip
%
%\medskip
%
%The last fact comes from the fact that two pairs of points in $\mathbb{CP}^1$ belong to the respective boundaries of two disjoint disks if and only if the geodesics they define are disjoint.
%
%%Given any smooth curve $(\lbrace x_t,y_t\rbrace)_{t>0}$ with derivative $(v,w)$ at $t=0$, the distance between $H$ and the geodesic joining $x_t$ and $y_t$ is greater than $\epsilon t$ for $t$ small enough and some $\epsilon>0$. Hence this is a fitting curve of pencils and 
%
%\wip

%This circle determines a class $[q]\in \mathbb{S}P$ when $P\in \mathcal{H}$ is the pencil corresponding to the geodesic with endpoints $(x,y)$.
%
%\medskip
%
%Let $\mathrm{v}\in \Hom(P,H/P)$ be the element corresponding to $(v,w)$, then $\mathrm{v}(q)$ is positive on $x,y\in \mathbb{CP}^1$, and hence $\mathrm{v}(q)$ is positive on $P^\circ\cap $...
%\wip

\end{proof}

\begin{rem}
\label{rem:Nearly Fuchsian}
If $S$ is a surface in $\mathbb{H}^3$ with principal curvature in $(-1,1)$, then the set of normal geodesics forms spacelike surface, for the pseudo-Riemannian structure on the space of geodesics described in Section \ref{subsec:Example with SL2C}. The corresponding map $\mathcal{G}u:S\to\mathcal{G}$ is called the \emph{Gauss map}. Nearly Fuchsian representations are representations of a closed surface group $\Gamma_g$ admitting an equivariant surface  with principal curvature in $(-1,1)$. They are a priori a larger class of representations than almost Fuchsian representations, for which the equivariant surface with principal curvature in $(-1,1)$ is required to be minimal.

The space $\mathcal{G}$ also admits a special $\SL_2(\mathbb{C})$-invariant symplectic structure. An immersion in $\mathcal{G}$ is locally the Gauss map of an immersion with principal curvature in $(-1,1)$ if and only if it is spacelike and Lagrangian for this symplectic structure \cite{Gauss}.
Therefore if the fitting immersion is not Lagrangian, it does not come as the Gauss map of a surface in $\mathbb{H}^3$. Hence there could be representations admitting fitting immersions that are not nearly Fuchsian.
\end{rem}

\begin{figure}
\begin{center}
%\begin{tikzpicture}
%\draw[blue] (0:1) -- (0+180:1);
%\draw[red, thick] (90:1) -- (90+180:1);
%\draw[blue] (45:1) -- (45+180:1);
%\draw[blue] (135:1) -- (135+180:1);
%\begin{scope}[shift={(3,0)}]
%\draw[blue] (0:1) -- (0+180:1);
%\draw[red, thick] (90:1) -- (90+180:1);
%\draw[blue] (45:1) -- (45+180:1);
%\draw[blue] (135:1) -- (135+180:1);
%\end{scope}
%\end{tikzpicture}
\pgfmathsetmacro{\r}{2.6} %  
\tdplotsetmaincoords{60}{110}
\begin{tikzpicture}[
tdplot_main_coords,
%tdplot_rotated_coords,
font=\footnotesize,
Helpcircle/.style={blue, very nearly transparent
%densely dashed
},
Helpcirclebis/.style={red,
%densely dashed
},
]

\pgfmathsetmacro{\h}{0.7*\r}
\pgfmathsetmacro{\rh}{sqrt(\r*\r-\h*\h)} %  
\coordinate (M) at (0,0,0);  %[label=$M$]
\coordinate (S) at (0,0,\h); 
\coordinate (A) at (0,\h,0); 

\rotateRPY{0}{30}{85}
\begin{scope}[RPY]

\draw[very thick,dotted, blue] (0,-\r,0) -- (0,\r,0);
\draw[dotted, blue] (\r,0,0) arc (0:180:\r);
\draw[blue] (\r,0,0) arc (0:-180:\r);
\draw[Helpcirclebis] (0,0,\h) circle[radius=sqrt(\r*\r-\h*\h)];
\rotateRPY{0}{0}{40}
\begin{scope}[RPY]
\draw[very thick,dotted, red] (0,-\rh,\h) -- (0,\rh,\h);
\end{scope}
\rotateRPY{0}{60}{0}
\begin{scope}[RPY]
%\draw[Helpcircle] (0,0,0) circle[radius=\r];
\end{scope}
\rotateRPY{0}{120}{0}
\begin{scope}[RPY]
%\draw[Helpcircle] (0,0,0) circle[radius=\r];
\end{scope}
\end{scope}

% Sphere
\begin{scope}[tdplot_screen_coords, on background layer]
\fill[ball color= gray!20, opacity = 0.4] (M) circle (\r); 
\end{scope}

%%% Points
%\foreach \P in {M,S, A}{
%\shade[ball color=white] (\P) circle (1.75pt);
%}

\end{tikzpicture}
\end{center}
\caption{Two disjoint geodesics in $\mathbb{H}^3$ and disjoint circles in $\mathbb{CP}^1$ between their endpoints.}
\label{fig:TwoPencils}
\end{figure}
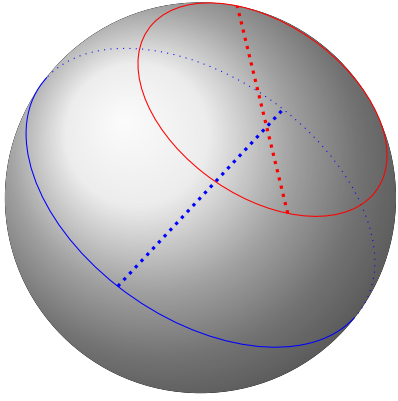

%Let $N$ be a compact $d$-dimensional manifold with universal cover $\widetilde{N}$. Let $\Gamma$ be its fundamental group, and let $\rho:\Gamma\to\GL(V)$ be a representation.
%
%\begin{thm}
%If $u:\widetilde{N}\to \Gra^\text{cal}_d(\mathcal{Q}_V)$ is a $\rho$-equivariant fitting immersion, then it is an embedding.
%\end{thm}
%
%In order to prove that $u$ is an embedding, we need to show the completeness 
%
%\begin{proof}
%\wip
%\end{proof}

\subsection{A quasi-Fuchsian representation with no fitting immersions.}
\label{subsec:QuasiFuchsian}

Having a representation that is Anosov is not sufficient to ensure that there exist an equivariant fitting immersion. We show that there are quasi-Fuchsian representations that admit no such immersions of Hermitian pencils of quadrics. We use here the notations from Section \ref{subsec:Example with SL2C}.

\begin{thm}
\label{thm:Quasi-Fuchsian with no fitting}
There exist a quasi-Fuchsian representation $\rho:\Gamma_g\to \SL(2,\C)$ for some genus $g$ large enough that admits no $\rho$-equivariant fitting immersion $u:\widetilde{S_g}\to \Gra_2^{{\mix}}(\mathcal{H})=\mathcal{G}$. 

Moreover it also admits no continuous map $u:\widetilde{S_g}\to \Gra_2^{{\mix}}(\mathcal{H})=\mathcal{G}$ that admits a $\rho$-equivariant fitting flow.
\end{thm}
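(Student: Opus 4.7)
The plan is to extract, from any $\rho$-equivariant continuous $u\colon\widetilde{S_g}\to\mathcal G$ admitting a $\rho$-equivariant fitting flow, a very rigid piece of equivariant boundary data, and then to construct a quasi-Fuchsian $\rho$ for which this data cannot exist.

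First I would specialise Corollary~\ref{cor:Globally fitting} and Proposition~\ref{prop:Completeness and fibration of DOD} to the Hermitian-pencil setting of Section~\ref{subsec:Example with SL2C}. Together they give: the fibres $u(x)\subset\mathbb{H}^{3}$ are pairwise disjoint geodesics and, together with their endpoints, exhaust $\mathbb{H}^{3}\cup\Omega$, each point of $\Omega$ being the endpoint of a unique fibre. For a quasi-Fuchsian $\rho$ the domain of discontinuity splits as $\Omega=\Omega^{+}\sqcup\Omega^{-}$, so by connectedness of $\widetilde{S_g}$ and a continuous choice of orientation of the pencils $u(x)$, one obtains two $\rho$-equivariant homeomorphisms $\xi^{\pm}\colon\widetilde{S_g}\to\Omega^{\pm}$ extending continuously to the Anosov limit map $\xi_\rho\colon\partial\Gamma\to\Lambda$, with $u(x)$ the geodesic from $\xi^{+}(x)$ to $\xi^{-}(x)$. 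By Proposition~\ref{prop:fitting in H3}, disjointness of $u(x_{1})$ and $u(x_{2})$ is equivalent to the pairs $\{\xi^{+}(x_{i}),\xi^{-}(x_{i})\}$ being unlinked on $\mathbb{CP}^{1}$. Hence the existence of $u$ reduces to the existence of a $\rho$-equivariant homeomorphism $\phi=\xi^{+}\circ(\xi^{-})^{-1}\colon\Omega^{-}\to\Omega^{+}$ extending to the identity on $\Lambda$ whose graph pairs $\{p,\phi(p)\}$ are pairwise unlinked on $\mathbb{CP}^{1}$.

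The heart of the proof is then to construct, for $g$ sufficiently large, a quasi-Fuchsian $\rho$ admitting no such equivariant unlinked pairing $\phi$. My plan is to start from a Fuchsian $\rho_{0}\colon\Gamma_g\to\PSL(2,\R)$ and apply a strong bending (or grafting) deformation along a simple closed geodesic $\alpha$ so that $\rho(\alpha)$ acquires a large rotation component while remaining inside the quasi-Fuchsian locus; having $g$ large enough is precisely what allows such a deformation while staying discrete and faithful. At each fixed point of $\rho(\alpha)$ on $\Lambda$, the Möbius action of $\rho(\alpha)$ on the two local invariant neighborhoods in $\Omega^{\pm}$ has a well-defined rotation number, and a $\rho(\alpha)$-equivariant homeomorphism $\phi$ between these neighborhoods extending to the identity on the fixed point has a well-defined index. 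The unlinking condition, applied to the $\rho(\alpha)^{n}$-iterates of a single graph pair, forces this index to lie in a small prescribed range, whereas the large rotation component of $\rho(\alpha)$ forces $\phi$ to have an index outside that range; the resulting contradiction rules out the existence of $\phi$.

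The main obstacle will be to make the "rotation versus unlinking" step quantitative: precisely identifying the restricted range of admissible indices for $\phi$ near a fixed point of $\rho(\alpha)$ under the unlinking hypothesis, and verifying that one can realise a quasi-Fuchsian deformation of $\rho_{0}$ with rotation at $\alpha$ outside this range. This is the step that fixes the genus lower bound. Note that the derivation of $\xi^{\pm}$ and of the unlinking condition uses only the existence of a fitting flow and not smoothness of $u$, so the same representation simultaneously obstructs fitting immersions and continuous maps with a fitting flow, giving both statements of the theorem.
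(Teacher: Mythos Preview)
Your first paragraph—extracting the endpoint maps $\xi^{\pm}\colon\widetilde{S_g}\to\Omega^{\pm}$ and the pairing $\phi=\xi^{+}\circ(\xi^{-})^{-1}$ from a fitting flow—is correct and is essentially what the paper uses as well. The gap is in the second step. The phrase ``unlinked on $\mathbb{CP}^{1}$'' is the crux of the problem: in $\mathbb{H}^{3}$, unlike in $\mathbb{H}^{2}$, disjointness of two geodesics is \emph{not} a topological condition on their four endpoints in $S^{2}$, since there is no linking of point pairs on a $2$-sphere; it is an open metric condition. Your proposed obstruction near a fixed point of $\rho(\alpha)$ is a rotation-number/index argument, hence purely topological, and cannot detect this metric constraint. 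Concretely, for a single loxodromic $g$ with \emph{any} rotation angle one can produce $g$-invariant families of pairwise disjoint geodesics from $\Omega^{-}$ to $\Omega^{+}$ spiralling onto the axis, so nothing local to one element yields a contradiction. There is also a secondary error: bending or grafting along $\alpha$ leaves the conjugacy class of $\rho(\alpha)$ unchanged, so that deformation does not give $\rho(\alpha)$ a large rotation component.

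The paper takes a different route. The feature of a fitting flow it exploits is not disjointness of the geodesic fibres but the \emph{nesting} of the associated circles in $\mathbb{CP}^{1}$ along each flow line: for $t_1<t_2$ one has $\{\pi(\Phi_{t_1}(q))\ge 0\}\subset\{\pi(\Phi_{t_2}(q))>0\}$. Using Proposition~\ref{prop:QuasiFuchsianApprox} one realises a quasi-Fuchsian $\rho$ whose limit set approximates a fixed spiralling Jordan curve chosen so that two marked limit points $x,z$ lie on a common round circle, and the two open subarcs $s_x,s_z$ of that circle separate the interior $I$ into regions $U_x\ni x$ and $U_z\ni z$ with $U_x\cap U_z=\emptyset$. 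Along a flow line from $x$ to $z$ the interior endpoint $x_t$ of $u(\gamma(t))$ lies on the circle $\{\pi(\Phi_t(q))=0\}$; at the unique time $t_0$ when this circle passes through the midpoint $y$, nesting forces $x_t\in U_z$ for $t\ge t_0$ and $x_t\in U_x$ for $t\le t_0$, contradicting $U_x\cap U_z=\emptyset$. The obstruction is thus a global feature of the limit curve combined with monotonicity of the circles, not a local rotation datum.
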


A particular corollary is the following result on Nearly Fuchsian representations, introduced in Remark \ref{rem:Nearly Fuchsian}.

\begin{cor}
There exist quasi-Fuchsian representations that are not nearly Fuchsian.
\end{cor}

Almost Fuchsian representations were already known to not contain all quasi-Fuchsian representations see for isntance \cite{QuasiFuchMinSurfaces}, but a recent result of Nguyen, Schlenker and Seppi \cite{SeppiNearly} proves that nearly Fuchsian representations are not all almost Fuchsian, making this corollary new. 

\medskip

%Note that, as previously mentioned, almost-Fuchsian representation do always admit an equivariant fitting immersion obtained as the Gauss map of the corresponding equivariant minimal surface in $\mathbb{H}^2$.
%
%\medskip

Here $V=\C^2$ and if $\rho:\Gamma_g \to \SL(2,\C)\subset \Sp(4,\R)$, the Guichard-Wienhard domain of discontinuity $\Omega_\rho$, mentionned in Proposition \ref{prop:Completeness and fibration of DOD} corresponds to the pullback in $\mathbb{RP}^3=\mathbb{P}_\R(\C^2)$, of a domain of discontinuity $\Omega_\rho^0\subset \mathbb{CP}^1=\mathbb{P}_\C(\C^2)$. This domain is the complement in $\mathbb{CP}^1$ of the limit set of $\rho$.

Since $\Gamma_g$ is a surface group, this domain in $\mathbb{CP}^1$ is the union of two topological disks, and hence for each $x\in \widetilde{S_g}$ the geodesic corresponding to $u(x)$ has one endpoint in each of these discs. This fact will be used in the proof of Theorem \ref{thm:Quasi-Fuchsian with no fitting}.

\medskip

An other ingredient of the proof of Theorem \ref{thm:Quasi-Fuchsian with no fitting} is the following.

\begin{prop}[{\cite[{Corollary 3.5}]{QuasiFuchMinSurfaces}}]
\label{prop:QuasiFuchsianApprox}
Given any $\mathcal{C}^1$ embedded circle $\gamma$ in $\mathbb{CP}^1$, and any $\epsilon >0$, there exist a quasi-Fuchsian representation $\rho:\Gamma_g\to \SL(2,\C)$ for some genus $g$ large enough whose limit set has Hausdorff distance at most $\epsilon$ with $\gamma$.
\end{prop}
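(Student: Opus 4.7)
The strategy is to show that a $\rho$-equivariant fitting immersion, or a continuous $\rho$-equivariant map with a fitting flow, imposes restrictive geometric structure on the limit set $\Lambda_\rho\subset\mathbb{CP}^1$, and then to invoke Proposition~\ref{prop:QuasiFuchsianApprox} to produce a quasi-Fuchsian representation violating that structure.

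First I would extract the relevant structure from $u$. Suppose $\rho:\Gamma_g\to\SL(2,\C)$ is quasi-Fuchsian and admits such a $u:\widetilde{S_g}\to\mathcal{G}$. Specializing Proposition~\ref{prop:Completeness and fibration of DOD} to $V=\C^2$, together with the identification of $\mathcal{G}$ with mixed Hermitian pencils from Section~\ref{subsec:Example with SL2C}, the fibers of $u$ (which are geodesics in $\mathbb{H}^3$ together with their endpoints in $\mathbb{CP}^1$) must foliate the Guichard--Wienhard domain in $\mathbb{RP}^3$, i.e.\ the lift of $\mathbb{CP}^1\setminus\Lambda_\rho$. By Proposition~\ref{prop:fitting in H3} these geodesics are pairwise disjoint with all four endpoints distinct, and taking endpoints therefore produces two $\rho$-equivariant continuous homeomorphisms $\alpha,\beta:\widetilde{S_g}\to\Omega_\pm$ onto the two complementary components of $\mathbb{CP}^1\setminus\Lambda_\rho$. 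Both extend continuously to $\partial\widetilde{S_g}$ as the common Anosov boundary map $\xi^1_\rho$ onto $\Lambda_\rho$. The fitting condition (Proposition~\ref{prop:fitting in H3}) together with the fact that flow lines are quasi-geodesic (Proposition~\ref{prop:Flow lines of a fitting flow are quasi geodesic and exist between points}) then imposes a quantitative compatibility between $\alpha$ and $\beta$ near the boundary---morally, a controlled matching of the two sides of $\Lambda_\rho$ inside $\mathbb{CP}^1$ that survives passage to the limit.

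Finally, by Proposition~\ref{prop:QuasiFuchsianApprox} one selects a $\mathcal{C}^1$ Jordan curve $\gamma\subset\mathbb{CP}^1$ designed so that no pair $(\alpha,\beta)$ with the above compatibility properties can exist whenever $\Lambda_\rho$ lies within a sufficiently small Hausdorff neighborhood of $\gamma$. A natural candidate is a $\gamma$ that is $\mathcal{C}^1$ but has tangent directions oscillating so wildly that the matching between its two sides required above cannot be realized. For $\varepsilon$ small enough, the quasi-Fuchsian representation produced by Proposition~\ref{prop:QuasiFuchsianApprox} (which exists only once $g$ is large enough) inherits the obstruction from $\gamma$, and hence admits neither a $\rho$-equivariant fitting immersion nor a $\rho$-equivariant continuous map with fitting flow.

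The hardest step is the last one: isolating a concrete, Hausdorff-stable invariant of $\gamma$ that rules out the pairs $(\alpha,\beta)$ described above. Making this invariant precise and proving its stability under Hausdorff approximation of the limit set is the technical heart of the argument, the rest of the proof being a direct application of the structural results already established for fitting immersions and fitting flows.
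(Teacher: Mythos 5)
You have proven the wrong statement. The statement to be proved is Proposition~\ref{prop:QuasiFuchsianApprox} itself: that any $\mathcal{C}^1$ Jordan curve in $\mathbb{CP}^1$ can be Hausdorff-approximated by the limit set of some quasi-Fuchsian representation of a closed surface group of large enough genus. Your proposal instead sketches an argument that some quasi-Fuchsian representation admits no equivariant fitting immersion or fitting flow --- that is Theorem~\ref{thm:Quasi-Fuchsian with no fitting}, not Proposition~\ref{prop:QuasiFuchsianApprox}. Worse, your argument explicitly \emph{invokes} Proposition~\ref{prop:QuasiFuchsianApprox} as an input, so it is circular as a purported proof of that proposition.

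As for the actual statement: in the paper it is a cited result, attributed to Corollary~3.5 of the reference on quasi-Fuchsian minimal surfaces, and the paper gives no proof of it. A genuine proof attempt would have to engage with that external construction (roughly: build bending or grafting deformations of Fuchsian groups, or approximate a given Jordan curve by limit sets of groups uniformizing nearby Riemann surface data), none of which appears in your write-up. Even the part of your sketch that targets Theorem~\ref{thm:Quasi-Fuchsian with no fitting} remains vague at the decisive step: you defer the identification of a ``Hausdorff-stable invariant'' of $\gamma$ to an unstated technical heart, whereas the paper's proof of that theorem isolates a concrete obstruction by choosing a spiraling Jordan curve and a triple of points $x,y,z$ on it so that the two shaded regions $U_x$, $U_z$ are disjoint, then tracks the inner endpoint $x_t$ of the fiber geodesic along a flow line from $x$ to $z$ and derives a contradiction when it must cross the circle through $y$.
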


\begin{proof}[{Proof of Theorem \ref{thm:Quasi-Fuchsian with no fitting}}]

We consider the Jordan curve $\gamma$ from Figure \ref{fig:Illustration curve quasi Fuchsian}. Let $x,y,z$ be as in the figure. We consider a quasi-Fuchsian representation of $\Gamma_g$ for a genus large enough such that its limit set $\Lambda$ contains $x,z$ and is close enough to $\gamma$ using Proposition \ref{prop:QuasiFuchsianApprox}. More precisely let $s_x$ and $s_z$ be the open arcs of the circle of $\mathbb{CP}^1$ passing through $x,y,z$ respectively between $x,y$ and $y,z$ and let $I$ be the interior of the Jordan curve $\Lambda$. We require that the union $U_x$ of all the connected component of $I\setminus s_z$ whose closure contain $x$ is disjoint from the union $U_z$ of all the connected component of $I\setminus s_x$ whose closure contain $z$. These two disjoint sets are illustrated for the curve $\gamma$ as the two gray regions.

\medskip

Let $\rho:\Gamma_g\to\SL(2,\C)$ then be such a quasi-Fuchsian representation. Suppose that it admits an equivariant continuous map  $u:\widetilde{S_g}\to \Gra^{{\mix}}_2(\mathcal{H})$ with an equivariant fitting flow $\Phi$ on $\Sph u^*\mathcal{E}$. By Proposition \ref{prop:Flow lines of a fitting flow are quasi geodesic and exist between points} there exist a flow line $(\Phi_t(q))_{t\in \R}$ such that its projection $\gamma:\R\to \widetilde{S_g}$ is a quasi-geodesic between the points of $\partial \Gamma$ corresponding to $x$ and $z$ in the limit set.

For all $t\in \R$, let $x_t\in \mathbb{CP}^1$ be the endpoint of the geodesic corresponding to $u(\gamma(t))$ that belongs to the interior of the Jordan curve $\Lambda$. When $t$ goes to $+\infty$, $x_t$ converges to $z$ and it converges to $x$ for $t$ going to $-\infty$. Moreover $x_t$ always belong to the circle determined by $\Phi_t(q)$. 

\medskip

There exist a $t_0\in \R$ such that $y$ belongs to the circle $\Phi_{t_0}(q)$. Note that this great circle splits $\mathbb{CP}^1$ in two parts, one containing $x$ and $s_x$ and one containing $z$ and $s_z$. In particular $(x_t)_{t\geq t_0}$ must lie in $U_z$ and $(x_t)_{t\leq t_0}$ must lie in $U_x$, leading to a contradiction. Hence no such map $u$ can exist.

\begin{figure}[h]
\begin{center}
\begin{tikzpicture}

\fill[gray!30]  (7,0) arc[start angle=0, end angle=180, x radius=3.5, y radius =3.5] arc[start angle=180, end angle=360, x radius=.5, y radius =.5] arc[start angle=180, end angle=0, x radius=2.5, y radius =2.5] -- cycle;
\fill[gray!30]  (2,0) arc[start angle=180, end angle=360, x radius=3.5, y radius =3.5] arc[start angle=0, end angle=180, x radius=.5, y radius =.5] arc[start angle=0, end angle=-180, x radius=2.5, y radius =2.5] -- cycle;
\node[left] () at (0,0) {$x$};
\node[above] () at (2.5,0) {$s_x$};
\node[below] () at (6.5,0) {$s_z$};
\node[right] () at (9,0) {$z$};
\node[above] () at (4.5,0) {$y$};
\draw (7,0) arc[start angle=0, end angle=180, x radius=3.5, y radius =3.5] ;
\draw (1,0) arc[start angle=0, end angle=-180, x radius=.5, y radius =.5] ;
\draw (6,0) arc[start angle=0, end angle=180, x radius=2.5, y radius =2.5] ;
\draw (4,0) arc[start angle=0, end angle=180, x radius=.5, y radius =.5] ;
\draw (9,0) arc[start angle=0, end angle=180, x radius=.5, y radius =.5] ;
\draw (5,0) arc[start angle=0, end angle=180, x radius=1.5, y radius =1.5] ;
\draw (9,0) arc[start angle=0, end angle=-180, x radius=3.5, y radius =3.5] ;
\draw (8,0) arc[start angle=0, end angle=-180, x radius=2.5, y radius =2.5] ;
\draw (7,0) arc[start angle=0, end angle=-180, x radius=1.5, y radius =1.5] ;
\draw (6,0) arc[start angle=0, end angle=-180, x radius=.5, y radius =.5] ;
\draw[dotted, thick, red] (0,0) -- (4.5,0);
\draw[dotted, thick, red] (4.5,0) -- (9,0);
\fill[red] (4.5,0) circle(.1);
\fill[blue] (0,0) circle(.1);
\fill[blue] (9,0) circle(.1);

\end{tikzpicture}
\end{center}
\caption{A Jordan curve in $\mathbb{CP}^1$}
\label{fig:Illustration curve quasi Fuchsian}
\end{figure}
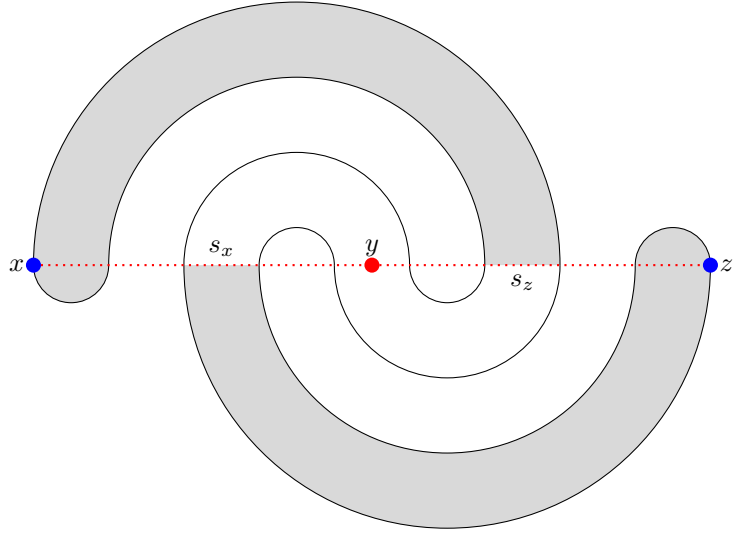

\end{proof}

\section{Fitting immersions and spacelike immersions for \texorpdfstring{$\Sp(4,\R)$}{Sp(4,R)}}
\label{sec:SP4}
In this section we explain how the data of a maximal immersion into the Pseudo-Riemannian space $\mathbb{H}^{2,2}$ with principal curvature in $(-1,1)$ induces a fitting immersion. Combining this with a result from Collier-Tholozan-Toulisse we show that our characterization of maximal representations can be improved in $\Sp(4,\R)$.

\medskip

Note first that one has the following exceptional isomorphism:
$$\PSp(4,\R)\simeq \SO_o(2,3).$$

This isomorphism comes form the fact that $\PSp(4,\R)$ preserves a subspace of dimension $5$ of $\Lambda^2\R^4$, as well as a symmetric bilinear form of signature $(2,3)$ on this subspace. Hence $\PSp(4,\R)$ acts naturally on the pseudo-Riemannian symmetric space with constant negative sectional curvature $\mathbb{H}^{2,2}$, which consists of vectors of norm $-1$ in $\R^{2,3}$. The space of Lagrangians $\mathcal{L}$ in $\R^4$ is naturally identified with the space of isotropic lines $\Ein^{1,2}$ in $\R^{2,3}$.

\medskip

To a pointed totally geodesic spacelike plane $(p,P)$ in $\mathbb{H}^{2,2}$ one can associate an element in a special $G$-orbit of $\Gra^{\max}_2(\mathcal{Q})$. For every geodesic in this plane passing though the base point, we consider the endpoints $\ell_1,\ell_2\in \mathcal{L}\simeq \Ein^{1,2}$, and the space generated by all such quadrics $q_{\ell_1,\ell_2}$ forms a plane which is a an element of $\Gra^\omega_2(\mathcal{Q})$. Indeed for some symplectic basis $(x_1,x_2,y_1,y_2)$ the Lagrangians corresponding to the boundary of the spacelike plane $P$ are for $\theta \in [0,2\pi]$:
$$\ell(\theta)=\langle \cos\left(\frac{\theta}{2}\right)x_1+\sin\left(\frac{\theta}{2}\right)y_1,\cos\left(\frac{\theta}{2}\right)x_2+\sin\left(\frac{\theta}{2}\right)y_2 \rangle.$$

The corresponding quadric $q_{\ell(0),\ell(\pi)}$ for $\theta=0$ in the basis $(x_1,y_1,x_2,y_2)$ is equal to:

$$\begin{pmatrix}
0 & 1 & 0 & 0\\
1 & 0 & 0 & 0\\
0 & 0 & 0 & 1\\
0 & 0 & 1 & 0
\end{pmatrix}.$$

Let $R_\theta$ be the following rotation matrix:
$$\begin{pmatrix}
\cos(\theta) & \sin(\theta) \\
-\sin(\theta) & \cos(\theta) 
\end{pmatrix}.$$

The corresponding quadric $q_{\ell(\theta),\ell(\theta+\pi)}$ in the basis $(x_1,y_1,x_2,y_2)$ is equal to:

$$\begin{pmatrix}
R_{\frac{\theta}{2}} & 0\\
0 & R_{\frac{\theta}{2}}
\end{pmatrix}q_{\ell(0),\ell(\pi)}\begin{pmatrix}
R_{-{\frac{\theta}{2}}} & 0\\
0 & R_{-{\frac{\theta}{2}}} 
\end{pmatrix}.$$

Hence $q_{\ell(\theta),\ell(\theta+\pi)}$ is equal to:
$$\begin{pmatrix}
\sin(\theta) & \cos(\theta) & 0 & 0\\
\cos(\theta) & -\sin(\theta) & 0 & 0\\
0 & 0 & \sin(\theta) & \cos(\theta)\\
0 & 0 & \cos(\theta) & -\sin(\theta)
\end{pmatrix}.$$

These quadrics span a plane in $\mathcal{Q}$ when $\theta$ varies, and these quadrics are $\omega$-regular so this plane is in $\Gra_2^\omega(\mathcal{Q})$.

\medskip

We define the Gauss map $\mathcal{G}u:S\to \Gra^{\max}_2(\mathcal{Q})$ of a spacelike immersion $u:S\to \mathbb{H}^{2,2}$ the map that associates to $x\in S$ the pencils corresponding to the pointed totally geodesic spacelike plane $(u(x),P)$ where $T_{u(x)}P=\mathrm{d}u(T_xS)$. Let $I\!\!I_u$ be the second fundamental form of the immersion $u$.

\begin{thm}
\label{thm:Gauss map is fitting in H22}
Let $u:S\to \mathbb{H}^{2,2}$ be a spacelike immersion such that for all $v\in TS$, $\lVert I\!\!I_u(v,v)\rVert <\lVert v\rVert^2$. The Gauss map $\mathcal{G}u:S\to \Gra^{\max}_2(\mathcal{Q})$ is a fitting immersion.
\end{thm}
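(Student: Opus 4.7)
We must check that $\mathcal{G}u$ is an immersion into $\Gra_2^{\max}(\mathcal{Q})$ all of whose tangent vectors are fitting in the sense of Definition \ref{defn:fitting vectors}. That $\mathcal{G}u(x)\in \Gra_2^{\max}(\mathcal{Q})$ is immediate from the construction recalled just before the theorem: the pencil associated to the pointed spacelike totally geodesic plane $(u(x),\mathrm{d}u(T_xS))$ always lies in the distinguished $\PSp(4,\R)$-orbit of $\Gra_2^{\max}(\mathcal{Q})$, and by definition this pencil is $\mathcal{G}u(x)$. Injectivity of $\mathrm{d}(\mathcal{G}u)$ will then be automatic once the fitting property is established, since $0\in\mathcal{Q}$ is not a positive bilinear form.

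Fix $x\in S$, set $p=u(x)\in\mathbb{H}^{2,2}\subset \R^{2,3}$, and pick a nonzero $w\in T_xS$ with $W=\mathrm{d}u(w)$. For each unit $v\in T^1_xS$ with $V=\mathrm{d}u(v)$, the spacelike geodesic through $p$ in direction $V$ has endpoints the isotropic lines $[p\pm V]\in\partial \mathbb{H}^{2,2}$ which, under the identification $\partial\mathbb{H}^{2,2}\simeq \mathcal{L}_n$, correspond to Lagrangians $\ell^+_v,\ell^-_v$; by construction $q_{\ell^+_v,\ell^-_v}\in \mathcal{G}u(x)$. Parallel-transporting $v$ along a curve in $S$ with velocity $w$ and using the Gauss formula in $\R^{2,3}$ yields
$$\dot V \,=\, I\!\!I_u(w,v)+(W\cdot V)\,p,$$
so after removing the component along $p\pm V$, the projective velocity of $[p\pm V]$ in $\mathbb{P}\R^{2,3}$ is represented by $W\pm I\!\!I_u(w,v)-(V\cdot W)V\in T_p\mathbb{H}^{2,2}$, tangent to the null cone at $p\pm V$.

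Using the exceptional isomorphism $\PSp(4,\R)\simeq \SO_o(2,3)$ and the identification $T_\ell\mathcal{L}_n\simeq \Sym(\ell^*)$, one translates these ambient variations into the intrinsic linearizations $\dot u^+\in\Hom(\ell^+_v,\ell^-_v)$ and $\dot u^-\in\Hom(\ell^-_v,\ell^+_v)$ appearing in Lemma \ref{lem:ContactHyperboloidsPositive2}. Choosing $v=w/\lVert w\rVert$, the hypothesis $\lVert I\!\!I_u(v,v)\rVert<\lVert v\rVert$ (together with its Cauchy-Schwarz bilinear extension controlling $I\!\!I_u(w,v)$ for arbitrary $v$) translates into the requirement that the normal (timelike) piece of the variation is strictly dominated by the tangential (spacelike) piece; this is exactly the positivity of $\omega(\cdot,\dot u^+(\cdot))$ on $\ell^+_v$ and of $\omega(\dot u^-(\cdot),\cdot)$ on $\ell^-_v$. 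Lemma \ref{lem:ContactHyperboloidsPositive2} then yields that the quadric $\dot q_{\ell^+_v,\ell^-_v}\in\mathcal{Q}$ is a positive bilinear form, which represents the class of $\mathrm{d}(\mathcal{G}u)(w)$ applied to $q_{\ell^+_v,\ell^-_v}$ in $\mathcal{Q}/\mathcal{G}u(x)$; by the second characterization in Definition \ref{defn:fitting vectors}, $\mathrm{d}(\mathcal{G}u)(w)$ is a fitting vector.

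The main obstacle is the identification step of the last paragraph: setting up the precise dictionary between variations of null lines in $\mathbb{P}\R^{2,3}$ and infinitesimal deformations of Lagrangians in $\mathcal{L}_n$, so that the ambient pseudo-Riemannian bound on $I\!\!I_u$ matches verbatim the abstract positivity hypothesis of Lemma \ref{lem:ContactHyperboloidsPositive2}. Once this dictionary is pinned down the theorem follows directly from the infinitesimal lemma already at our disposal.
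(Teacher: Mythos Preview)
Your approach is the same as the paper's --- compute the variation of the quadric $q_{\ell_v^+,\ell_v^-}$ along a direction $w$ and feed it into Lemma~\ref{lem:ContactHyperboloidsPositive2} --- but the choice $v=w/\lVert w\rVert$ is wrong, and with it the argument breaks.

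With $V$ parallel to $W$, your own formula gives projective velocity $W\pm I\!\!I_u(w,v)-(V\cdot W)V=\pm I\!\!I_u(w,v)$, a vector in the \emph{timelike} normal bundle. Under the dictionary $T_\ell\mathcal{L}_2\simeq\Sym(\ell^*)$ this corresponds to an \emph{indefinite} form $\omega(\cdot,\dot u^\pm(\cdot))$, not a definite one, so Lemma~\ref{lem:ContactHyperboloidsPositive2} does not apply. The degeneration is already visible in the totally geodesic case $I\!\!I_u=0$: the endpoints $[p\pm V]$ of the geodesic in direction $V=W/\lVert W\rVert$ do not move at all as you flow along that same geodesic, so $\dot q_{\ell_v^+,\ell_v^-}=0$, which has no positive representative in $\mathcal{Q}/\mathcal{G}u(x)$.

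The correct choice is $v\perp w$ (this is the $J$-twist implicit in the identification $u^*\mathcal{E}\simeq TS$, compare the $q^\circ_v=q_{\ell_{Jv},\ell_{-Jv}}$ of Proposition~\ref{prop:Exist fitting flow}). With $V\perp\gamma'$ the velocity of $[p\pm V]$ becomes $\gamma'\pm I\!\!I_u(V,\gamma')$: a unit spacelike part plus a timelike correction of norm $\lVert I\!\!I_u(V,\gamma')\rVert<1$. This is exactly the content of Lemma~\ref{lem:Secondfundamental from bound H22} in the paper, and it is this spacelike (positive-norm) velocity that translates into the positivity hypothesis of Lemma~\ref{lem:ContactHyperboloidsPositive2}. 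Once you make this change, your outline is correct and matches the paper's proof; the paper simply packages the computation by saying the geodesic flow on $\Sph TS\simeq \Sph u^*\mathcal{E}$ is a fitting flow.
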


Let $S$ be a spacelike surface in $\mathbb{H}^{2,n}$. Let $\gamma:[0,1]\to S$ be a geodesic for the induced metric on $S$ parametrized with unit speed an let $V:[0,1]\to TS$ be the unit orthogonal vector field to $\gamma'$ in $S$ along $\gamma$. We denote by $V^+, V^-:[0,1]\to \Ein^{1,2}$ the endpoints of the geodesic rays starting respectively at $V$ and $-V$. Up to changing the sign of $V$ one can assume that $(V^+,\gamma^+,V^-)$ is a maximal triple where $\gamma^+$ is the endpoint of the geodesic ray starting at $\gamma'$.

\medskip

We say that an immersed curve $c:\R\to \Ein^{1,2}$ is \emph{spacelike} if the tangent vectors to one and hence any lift of $c$ to $\R^{2,n+1}$ is spacelike. At every points $p\in \mathbb{H}^{2,2}$, the set tangent vectors of space type decomposes into two cones, a positive and a negative cone. One can make such a global continuous choice of positive cones on all of $\Ein^{1,2}$, and this choice is preserved by the action of $\SO_o(2,n+1)$, but not $\SO(2,n+1)$. We say that a spacelike curve is \emph{positive} if its derivative lies in the positive cones, and \emph{negative} if it lies in the other cones. 

The identification of $\mathcal{L}$ and $\Ein^{1,2}$ via the isomorphism $\PSp(4,\R)\simeq \SO_o(2,3)$ identifies a point $p\in \Ein^{1,2}$ with $\ell\in \mathcal{L}$ identifies the positive cone of vectors of spacelike type at $p\in \Ein^{1,2}$ with the cone of $T_\ell\mathcal{L}\simeq \Hom(\ell, \R^4/\ell)$ of elements $u\in \Hom(\ell,  \R^4/\ell)$ such that $\omega(\cdot, u(\cdot))$ is positive definite on $\ell$. 

\begin{rem}
This last identification is a consequence of the fact that the boundary of these cones are exactly the directions $v\in T_\ell\Ein^{1,2}\simeq T_\ell\mathcal{L}$ tangent to the locus of flags that are not transverse, i.e. in generic position, with $\ell$. Note that changing the choice of positive cones in $T\Ein^{1,2}$ amounts to changing the symplectic form $\omega$ to $-\omega$.
\end{rem}

\begin{lem}
\label{lem:Secondfundamental from bound H22}
Suppose that $\lVert I\!\!I_u(V,\gamma')\rVert\leq 1$, then the curves $V^+,V^-:[0,1]\to \Ein^{1,2}$ are respectively spacelike positive and spacelike negative, up to reversing the choice of positive cones.
\end{lem}

Note that in this lemma $I\!\!I_u(V,\gamma')$ is timelike, so its norm is the timelike norm that we see as a positive number. 
%A spacelike curve in $\Ein^{1,2}$ correspond exactly to 

\begin{proof}

We fix a orthogonal basis $(e_i)$ of $\R^{2,n+1}$ such that $e_1$ and $e_2$ have norm $1$ and $e_i$ for $i\geq 3$ have norm $-1$. Without any loss of generality we suppose that $\gamma(0)=e_3$, $\gamma'(0)=e_1$ and $V(0)=e_2$. Let $d$ be the flat connection on $\R^{2,n}$ and let $\nabla$ be the Levi-civita connection on $u(S)$ for the induced spacelike metric. We view $V$ as a vector field along the cure $\gamma$. For $t$ close to $0$ we have $V(t)=V(0)+tdV(0)$  and along the curve $\gamma$ we have $dV=\nabla V+ I\!\!I_{\mathbb{H}^{2,n}}(V,\gamma')+I\!\!I_u(V,\gamma')$. 
Here the second fundamental form of $\mathbb{H}^{2,n}$ inside $\R^{2,n+1}$ is equal to $I\!\!I_{\mathbb{H}^{2,n}}(v_1,v_2)=\langle v_1,v_2\rangle v_0$ for $v_1,v_2\in T_{v_0}\mathbb{H}^{2,n}$.
Note also that since $\gamma$ is a geodesic and $V$ a orthogonal unit vector field along $\gamma$, $\nabla V=0$.

\medskip

 Hence $V(t)=e_2+tI\!\!I_u(V,\gamma')+o(t)$. Since $V$ has norm $1$, we can write a representative of the isotropic line $V^+(t)$ as $v^+(t)=V(t)+\gamma(t)$. Therefore:
$$v^+(t)=e_2+t\left(I\!\!I_u(V,\gamma')+e_1\right)+o(t).$$
This curve is spacelike since $e_2$ is spacelike of norm $1$ and the (timelike) norm of $I\!\!I_u(V,\gamma')$ is strictly less than $1$. The same holds for $V^-$, $v^-(t)=-V(t)+\gamma(t)$, and we get:
$$v^-(t)=-e_2+t\left(-I\!\!I_u(V,\gamma')+e_1\right)+o(t).$$

\medskip

Now we check that these two tangent directions lies in different families of cones,:
 $$(v^+)'(0)=e_1+I\!\!I_u(V,\gamma')\in (e_2+e_3)^\perp\simeq T_{[e_2+e_3]}\Ein^{1,2},$$
 $$(v^-)'(0)=e_1-I\!\!I_u(V,\gamma')\in (-e_2+e_3)^\perp\simeq T_{[-e_2+e_3]}\Ein^{1,2}.$$
For that we move continuously the second vector by the rotation of angle $\theta$ in the plane $\langle e_1,e_2\rangle$. When $\theta=\pi$, the vector $(v^-)'(0)$ is sent to:
 $$-e_1-I\!\!I_u(V,\gamma')\in (e_2+e_3)^\perp\simeq T_{[e_2+e_3]}\Ein^{1,2}.$$ 
 This vector is in the opposite cone of spacelike vectors to $(v^+)'(0)$, hence $(v^-)'(0)$ and $(v^+)'(0)$ belong to opposite families of cones of spacelike vectors. The curves $V^+$, $V^-$ are hence spacelike with one being positive and one negative.

\end{proof}

\begin{proof}[{Proof of Theorem \ref{thm:Gauss map is fitting in H22}}]
The Gauss map $\mathcal{G}u:S\to \Gra^{\max}_2(\mathcal{Q})$ comes with an identification between $u^*\mathcal{E}$ and $TS$, $\mathcal{E}$ being the tautological bundle over $\Gra^{\max}_2(\mathcal{Q})$. We consider the geodesic flow $\Phi$ on $\Sph TS$ for the metric induced by $u$.

Lemma \ref{lem:Secondfundamental from bound H22} implies the the curves $V^+$ and $V^-$ are spacelike. Up to exchaging the positive cones and negative cones, we can assume that for all starting point $\gamma(0)$ and tangent vector $\gamma'(0)$, the associated curve $V^+$ is positive and $V^-$ is negative. Hence the corresponding curves of Lagrangians $\ell^+,\ell^-$ in $\R^4$ satisfy the hypothesis of Lemma  \ref{lem:ContactHyperboloidsPositive2}.
 
\medskip 
 
In particular the derivative of the associated quadrics along this flow is positive so $\Phi$ is a fitting flow. Since $u$ admits a fitting flow it is in particular a well-fitting immersion.
\end{proof}

The existence of a maximal spacelike immersion was proven by Collier-Tholozan-Toulisse and a bound of its second fundamental form is a consequence of a result from Cheng.

\begin{thm}[{\cite{Collier_2019},\cite{Cheng}}]
\label{thm:CCTandCheng}
Every maximal representation $\rho:\Gamma_g\to \SO_o(2,3)$ admits a unique $\rho$-equivariant maximal spacelike immersion $u:\widetilde{S_g}\to \mathbb{H}^{2,2}$. Moreover it is an embedding and for all $v\in TS$, $\lVert I\!\!I_u(v,v)\rVert< \lVert v\rVert^2$.
\end{thm}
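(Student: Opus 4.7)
My plan is to establish the theorem by combining the two cited works in sequence: existence and uniqueness of the maximal spacelike immersion as in \cite{Collier_2019}, and the strict bound on the second fundamental form as in \cite{Cheng}.

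For existence, I would start from the observation that since $\rho$ is maximal, it is $\lbrace 2\rbrace$-Anosov (Theorem \ref{thm:BIW charac}) with a positively oriented Lagrangian boundary curve $\xi^2_\rho : \partial \Gamma_g \to \mathcal{L}_2 \simeq \partial \mathbb{H}^{2,2}$. This curve bounds an \emph{invisible domain} $\Omega_\rho \subset \mathbb{H}^{2,2}$, an open convex set on which $\rho(\Gamma_g)$ acts properly discontinuously and cocompactly. Inside $\Omega_\rho$ I would produce a maximal (\ie vanishing mean curvature vector) spacelike immersion $u : \widetilde{S_g} \to \Omega_\rho$ with asymptotic boundary $\xi^2_\rho(\partial \Gamma_g)$, either by a variational argument on the space of $\rho$-equivariant spacelike disks (the area functional being well-behaved in the spacelike regime of a negatively curved target), or by solving the maximal surface equation as a quasilinear elliptic PDE with the prescribed asymptotic Dirichlet condition. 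Cocompactness of the action together with the positivity of the boundary curve furnish the required compactness and a priori spacelike estimates.

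For uniqueness, I would use a maximum-principle comparison: if $u_1$ and $u_2$ were two such immersions, expressing both as graphs over a common Cauchy-type slice in $\Omega_\rho$ and taking their difference yields a linear elliptic inequality whose only cocompact solution vanishes. The embedding property is then automatic from the graphical representation over such a slice.

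For the strict curvature bound, I would invoke the argument of \cite{Cheng}: the squared norm $|I\!\!I_u|^2$ of the second fundamental form of a maximal spacelike surface in $\mathbb{H}^{2,2}$ satisfies a Simons-type elliptic equation involving the constant ambient sectional curvature $-1$ together with the maximality condition. On the compact quotient $u(\widetilde{S_g})/\rho(\Gamma_g)$ I would apply the maximum principle to a well-chosen function such as $|I\!\!I_u|^2/(1 - |I\!\!I_u|^2)$, first securing the weak bound $|I\!\!I_u| \leq 1$ by a barrier comparison against totally geodesic spacelike planes, and then upgrading it to the strict pointwise inequality $\lVert I\!\!I_u(v,v)\rVert < \lVert v\rVert$ for unit tangent $v$ by the strong maximum principle. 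The main technical obstacle I expect is the existence step: producing a smooth spacelike maximal immersion with the given merely continuous asymptotic boundary, which requires controlling the geometry of the immersion as its tangent spaces degenerate into the positive Lagrangian directions along $\xi^2_\rho(\partial \Gamma_g)$, and is the technical heart of \cite{Collier_2019}.
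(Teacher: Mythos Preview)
Your proposal is essentially correct and aligned with the paper's treatment. The paper does not give a self-contained proof of this theorem: it simply cites \cite{Collier_2019} and \cite{Cheng}, adding only the one-line observation that the reference \cite[Corollary 5.2]{LT} gives $|I\!\!I_u|^2\leq 2$, and that maximality (trace-free $I\!\!I_u$) then forces each principal value to be at most $1$. Your sketch expands on what those cited arguments actually contain, and is accurate in outline.

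One small point worth noting: the paper's remark, as written, only extracts the \emph{non-strict} bound $|\lambda|\leq 1$ on the principal values from $|I\!\!I_u|^2\leq 2$, whereas the theorem statement claims the strict inequality. You correctly identify the strong maximum principle as the mechanism that upgrades this to a strict bound (equality would force the surface to be a flat totally-geodesic-in-a-copy-of-$\mathbb{H}^{2,1}$ model, which is incompatible with cocompactness of a genus $g\geq 2$ surface group action). So in that respect your sketch is actually slightly more complete than the paper's accompanying comment.
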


The bound on the second fundamental from is a consequence of a maximal principle, see \cite[{Corollary 5.2}]{LT}. Note that this reference it is written that the square norm of $I\!\!I_u$ is at most $2$, but since $u$ is maximal it implies that the principal values are at most equal to $1$. 

\medskip

Putting together Theorem \ref{thm:Gauss map is fitting in H22}, Theorem \ref{thm:CCTandCheng} and Theorem \ref{thm: fitting implies maximal} we obtain:

\begin{cor}
\label{cor:MaxReprSP4}
Every maximal representation $\rho:\Gamma_g\to \Sp(4,\R)$ admits a fitting immersion $u:\widetilde{S_g}\to \Gra^{\max}_2(\mathcal{Q})$ that is $\rho$-equivariant. This characterizes representations which are maximal for some orientation of $S_g$.
\end{cor}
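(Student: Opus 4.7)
The plan is to combine the three results that have just been cited: Theorem \ref{thm:CCTandCheng} provides an equivariant spacelike embedding, Theorem \ref{thm:Gauss map is fitting in H22} turns it into a fitting immersion of pencils, and Theorem \ref{thm: fitting implies maximal} supplies the converse needed for the characterization statement.

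First, I would pass from the maximal representation $\rho:\Gamma_g\to \Sp(4,\R)$ to the corresponding representation $\bar{\rho}:\Gamma_g\to \PSp(4,\R)\simeq \SO_o(2,3)$, which is again maximal. Theorem \ref{thm:CCTandCheng} then produces a unique $\bar{\rho}$-equivariant maximal spacelike embedding $h:\widetilde{S_g}\to \mathbb{H}^{2,2}$ whose second fundamental form satisfies $\lVert I\!\!I_h(v,v)\rVert < \lVert v\rVert$ for every non-zero $v\in T\widetilde{S_g}$. This is exactly the hypothesis of Theorem \ref{thm:Gauss map is fitting in H22}, so the Gauss map $\mathcal{G}h:\widetilde{S_g}\to \Gra_2^{\max}(\mathcal{Q})$ is a fitting immersion.

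Next I would check $\rho$-equivariance of $\mathcal{G}h$: the Gauss map construction is defined purely in terms of pointed totally geodesic spacelike planes in $\mathbb{H}^{2,2}$, and under the isomorphism $\PSp(4,\R)\simeq \SO_o(2,3)$ these are identified with the pointed configurations of Lagrangians used to define the corresponding pencil. Consequently the $\Sp(4,\R)$-action on $\Gra_2(\mathcal{Q})$ intertwines $\mathcal{G}$, so $\bar{\rho}$-equivariance of $h$ transfers to $\rho$-equivariance of $\mathcal{G}h$. This proves the first half of the corollary.

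For the characterization, suppose conversely that $\rho:\Gamma_g\to \Sp(4,\R)$ admits a $\rho$-equivariant fitting immersion $u:\widetilde{S_g}\to \Gra_2^{\max}(\mathcal{Q})$. By Proposition \ref{prop:ExistenceFittingFlow} part $(ii)$, the immersion $u$ automatically carries a $\rho$-equivariant fitting flow. Theorem \ref{thm: fitting implies maximal} then concludes that $\rho$ is maximal for some orientation of $S_g$. Since no step of this argument is genuinely hard — the technical work has already been done in the cited statements — the only point that requires care is the naturality check that identifies the $\SO_o(2,3)$-equivariance of $h$ with the $\Sp(4,\R)$-equivariance of $\mathcal{G}h$ at the level of the pencil construction on $\R^4$.
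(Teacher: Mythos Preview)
Your proof is correct and follows exactly the approach the paper indicates: it simply combines Theorem \ref{thm:CCTandCheng}, Theorem \ref{thm:Gauss map is fitting in H22}, and Theorem \ref{thm: fitting implies maximal} (together with Proposition \ref{prop:ExistenceFittingFlow} for the converse). The only addition you make is spelling out the passage through $\PSp(4,\R)\simeq \SO_o(2,3)$ and the naturality of the Gauss map, which the paper leaves implicit.
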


\begin{rem}
\label{rem:comparaisonCCT}

Note that a maximal spacelike surface in $\mathbb{H}^{2,2}$ always defines a fibration of the space of photons and of the symmetric space of $\Sp(4,\R)$, without assuming any bound on the second fundamental form. In this case however the bound is useful to deduce that the fibration extends to a fibration of the symmetric space of $\SL(4,\R)$.  
\end{rem}

\newpage

\bibliographystyle{hamsalpha}
\bibliography{biblio}

\providecommand{\bysame}{\leavevmode\hbox to3em{\hrulefill}\thinspace}
\providecommand{\href}[2]{#2}
\providecommand{\arXiv}[1]{\eprint{arXiv:#1}}
\providecommand{\eprint}{\begingroup \urlstyle{rm}\Url}
\begin{thebibliography}{GGKW17}

\bibitem[BG09]{BG}
Jairo Bochi and Nicolas Gourmelon, \emph{Some characterizations of domination},
  Math. Z. \textbf{263} (2009), no.~1, 221--231.

\bibitem[BILW05]{BurgerIozziWienhardLabourie}
Marc Burger, Alessandra Iozzi, Fran\c{c}ois Labourie, and Anna Wienhard,
  \emph{Maximal representations of surface groups: symplectic {A}nosov
  structures}, Pure Appl. Math. Q. \textbf{1} (2005), no.~3, Special Issue: In
  memory of Armand Borel. Part 2, 543--590.

\bibitem[BIW03]{BIW03}
Marc Burger, Alessandra Iozzi, and Anna Wienhard, \emph{{Surface group
  representations with maximal Toledo invariant}}, Comptes Rendus Mathematique
  \textbf{336} (2003), no.~5, 387--390.

\bibitem[BIW11]{BIW10}
Marc Burger, Alessandra Iozzi, and Anna Wienhard, \emph{Higher teichm\"uller
  spaces: from sl(2,r) to other lie groups}, 2011, \eprint{1004.2894}.

\bibitem[BPS19]{BPS}
Jairo Bochi, Rafael Potrie, and Andr\'{e}s Sambarino, \emph{Anosov
  representations and dominated splittings}, J. Eur. Math. Soc. (JEMS)
  \textbf{21} (2019), no.~11, 3343--3414.

\bibitem[Che93]{Cheng}
Qing Cheng, \emph{Space-like surfaces in an anti-de sitter space}, Colloquium
  Mathematicum \textbf{66} (1993), 201--208.

\bibitem[CTT19]{Collier_2019}
Brian Collier, Nicolas Tholozan, and J\'{e}r\'{e}my Toulisse, \emph{The
  geometry of maximal representations of surface groups into {${\rm
  SO}_0(2,n)$}}, Duke Math. J. \textbf{168} (2019), no.~15, 2873--2949.

\bibitem[Dav23]{Dav23}
Colin Davalo, \emph{Nearly geodesic immersions and domains of discontinuity},
  2023, \eprint{2303.11260}.

\bibitem[Eps86]{Epstein}
Charles~L. Epstein, \emph{The hyperbolic {G}auss map and quasiconformal
  reflections}, J. Reine Angew. Math. \textbf{372} (1986), 96--135.

\bibitem[ES22]{Gauss}
Christian~El Emam and Andrea Seppi, \emph{On the gauss map of equivariant
  immersions in hyperbolic space}, 2022, \eprint{2008.07390}.

\bibitem[GGKW17]{Gu_ritaud_2017}
François Guéritaud, Olivier Guichard, Fanny Kassel, and Anna Wienhard,
  \emph{Anosov representations and proper actions}, Geometry \&amp; Topology
  \textbf{21} (2017), no.~1, 485–584.

\bibitem[GW12]{GWDoD}
Olivier Guichard and Anna Wienhard, \emph{Anosov representations: domains of
  discontinuity and applications}, Invent. Math. \textbf{190} (2012), no.~2,
  357--438.

\bibitem[Hat02]{Hatcher}
Allen Hatcher, \emph{Algebraic topology}, Cambridge University Press,
  Cambridge, 2002.

\bibitem[HW15]{QuasiFuchMinSurfaces}
Zheng Huang and Biao Wang, \emph{Counting minimal surfaces in quasi-fuchsian
  three-manifolds}, Transactions of the American Mathematical Society
  \textbf{367} (2015), no.~9, 6063--6083.

\bibitem[LT22]{LT}
François Labourie and Jérémy Toulisse, \emph{Quasicircles and quasiperiodic
  surfaces in pseudo-hyperbolic spaces}, 2022, \eprint{2010.05704}.

\bibitem[LV13]{lion2013weil}
G.~Lion and M.~Vergne, \emph{The weil representation, maslov index and theta
  series}, Progress in Mathematics, Birkh{\"a}user Boston, 2013.

\bibitem[NSS25]{SeppiNearly}
Manh-Tien Nguyen, Jean-Marc Schlenker, and Andrea Seppi, \emph{Weakly
  almost-fuchsian manifolds are nearly-fuchsian}, 2025, \eprint{2501.12277}.

\bibitem[Wig98]{ThesisTopologyFlag}
Mark Wiggerman, \emph{The fundamental group of a real flag manifold}, Indag.
  Math. (N.S.) \textbf{9} (1998), no.~1, 141--153.

\end{thebibliography}
%\nocite{*}
\end{document}